\let\footnote\endnote
\theoremstyle{plain}
\newcommand{\id}{\operatorname{id}}
\newcommand{\im}{\operatorname{im}}
\newcommand{\pr}{\operatorname{pr}}
\newcommand{\ev}{\operatorname{ev}}
\newcommand{\Sh}{\operatorname{Sh}}
\newcommand{\res}{\operatorname{res}}
\newcommand{\Res}{\operatorname{Res}}
\newcommand{\Hom}{\operatorname{Hom}}
\newcommand{\End}{\operatorname{End}}
\newcommand{\Ext}{\operatorname{Ext}}
\newcommand{\Mod}{\operatorname{Mod}}
\newcommand{\ind}{\operatorname{ind}}
\newcommand{\Tor}{\operatorname{Tor}}
\newcommand{\Spec}{\operatorname{Spec}}
\newcommand{\chara}{\operatorname{char}}
\newcommand{\cores}{\operatorname{cores}}
\newcommand{\val}{\operatorname{val}}
\def\fhom{\mathfrak h}
\def\ften{\mathfrak t}
\def\X{{\mathbf X}}
\def\anti{\EuScript J}
\def\trace{\EuScript S}
\def\ima{\check\alpha([\mathbb F_q^\times])}
\def\kera{\mu_{\check\alpha}}
\newtheorem{theorem}{Theorem}[section]
\newtheorem{corollary}[theorem]{Corollary}
\newtheorem{lemma}[theorem]{Lemma}
\newtheorem{proposition}[theorem]{Proposition}
\theoremstyle{definition}
\newtheorem{remark}[theorem]{Remark}
\title{\textbf{The modular pro-$p$ Iwahori-Hecke $\Ext$-algebra}}
\author{Rachel Ollivier, Peter Schneider}
\date{July 23, 2018}
\address{University of British Columbia, Mathematics Department, 1984 Mathematics Road, Vancouver, BC V6T 1Z2, Canada}
\email{ollivier@math.ubc.ca}
\urladdr{http://www.math.ubc.ca/~ollivier}
\address{ Universit\"at M\"unster,  Mathematisches Institut,  Einsteinstr. 62, 48291 M\"unster, Germany}
\email{pschnei@uni-muenster.de}
\urladdr{http://www.uni-muenster.de/math/u/schneider/}
\begin{document}

\maketitle

\rightline{\it Dedicated to J.\ Bernstein on the occasion of his 72nd birthday}

\begin{abstract}
Let $\mathfrak F$ be a locally compact nonarchimedean field of positive residue
characteristic $p$ and $k$ a field of characteristic $p$. Let    $G$ be the group of $\mathfrak{F}$-rational points
of a connected reductive group  over $\mathfrak{F}$ which we suppose $\mathfrak F$-split. Given a pro-$p$ Iwahori subgroup $I$ of $G$, we consider the space $\X$ of $k$-valued functions with compact
support on $G/I$.
It is naturally an object   in the category ${\Mod(G)}$ of all smooth $k$-representations of $G$.

We study the graded Ext-algebra  $E^*=\Ext_{\Mod(G)}^*(\mathbf X, \mathbf X)$.
Its degree zero piece $E^0$ is the usual pro-$p$ Iwahori Hecke algebra $H$.
We describe  the product  in $E^*$ and provide an involutive anti-automorphism of $E^*$. When $I$ is a Poincar\'e group of dimension $d$, the $\Ext$-algebra $E^*$  is
supported in degrees $i\in\{0\dots d\}$ and we establish  a duality theorem between
$E^i$ and $E^{d-i}$. Under the same hypothesis (and assuming that $\mathbf G$ is
almost simple and simply connected), we compute
$E^d$ as an $H$-module on the left and on the
right. We prove that it is a direct sum of the
trivial character, and of supersingular modules.

\end{abstract}

\tableofcontents

\section{Introduction}\label{sec:Intro}

Let $\mathfrak F$ be a locally compact nonarchimedean field with residue
characteristic $p$,   and  let    $G$ be the group of $\mathfrak{F}$-rational points
of a connected reductive group  $\mathbf G$ over $\mathfrak{F}$. We
suppose that $\mathbf G$ is $\mathfrak F$-split in this article.

Let $k$ be a field and let  $\Mod(G)$ denote the category of all smooth
representations of $G$ in $k$-vector spaces. When $k=\mathbb C$,
by a theorem of Bernstein  \cite{Ber} Cor.\ 3.9.ii, one of the blocks of $\Mod(G)$ is equivalent to the category of modules over the Iwahori-Hecke algebra of $G$. This block is the subcategory of all representations which are generated by their Iwahori-fixed vectors. It does not contain any supercuspidal representation of $G$.

When $k$ has characteristic $p$, it is natural to consider the Hecke algebra $H$ of the
pro-$p$ Iwahori subgroup $I \subset G$. In this case the natural left exact functor
\begin{align*}
 \fhom: \Mod(G) & \longrightarrow \Mod(H) \\
              V & \longmapsto V^I = \Hom_{k[G]}(\mathbf{X},V) \
\end{align*}
sends a nonzero representation onto a nonzero module. Its left adjoint is
\begin{align*}
\ften:\Mod(H) & \longrightarrow  \Mod^I(G)\subseteq \Mod(G) \\
                M & \longmapsto \mathbf{X} \otimes_H M \ .
\end{align*}
Here $\mathbf X$ denotes the space of $k$-valued functions with compact
support on $G/I$ with the natural left action of $G$.
The functor $\ften$ has values in the category  $\Mod^I(G)$ of all smooth
$k$-representations of $G$ generated by their $I$-fixed vectors.  This category,
which a priori has no reason to be an abelian subcategory of $\Mod(G)$, contains all
irreducible representations including the supercuspidal ones. But in general $\fhom$ and $\ften$ are not quasi-inverse equivalences of categories
and little is known about $\Mod^I(G)$ and $\Mod(G)$ unless $G={\rm GL}_2(\mathbb Q_p)$ or $G={\rm SL}_2(\mathbb Q_p)$ (\cite{Koz1}, \cite{Ollequiv}, \cite{OS2}, \cite{Pas}).

From now on we assume $k$ has characteristic $p$. The functor $\fhom$, although left exact, is not right exact since $p$ divides the pro-order of $I$. It is therefore natural to consider the derived functor. In \cite{SDGA} the following result is shown: When
$\mathfrak F$ is a finite extension of $\mathbb Q_p$ and $I$ is a torsionfree
pro-$p$ group, there exists a derived version of the functor $\fhom$ and $\ften $ providing an equivalence between the derived category of smooth representations of $G$ in $k$-vector spaces and the derived category of differential graded modules over a certain differential graded pro-$p$ Iwahori-Hecke algebra $H^\bullet$.

The current article is largely motivated by this theorem. The derived categories  involved are not understood,
and in fact the  Hecke differential graded algebra $H^\bullet$
itself has no concrete description yet.
We provide here our first results on the
structure of its cohomology algebra $\Ext_{\Mod(G)}^*(\mathbf X, \mathbf X)$:

\begin{itemize}

\item[-] We describe explicitly the product in  $\Ext_{\Mod(G)}^*(\mathbf X, \mathbf
X)$ (Proposition \ref{prop:techn-formula}).

\item[-] We deduce the existence of an
involutive anti-automorphism of $\Ext_{\Mod(G)}^*(\mathbf X, \mathbf X)$ as a  graded
$\Ext$-algebra (Proposition \ref{prop:anti+product}).
\item[-] When $I$ is a Poincar\'e group of dimension $d$, the $\Ext$ algebra is
supported in degrees $0$ to  $d$ and we establish  a duality theorem between its
$i^{\rm th}$ and $d-i^{\rm th}$ pieces (Proposition \ref{prop:bimodule}).

\item[-] Under the same hypothesis (and assuming that $\mathbf G$ is
almost simple and simply connected), we compute
$\Ext_{\Mod(G)}^d(\mathbf X, \mathbf X)$ as an $H$-module on the left and on the
right (Corollary  \ref{coro:supersingEd}). We prove that it is a direct sum of the
trivial character, and of supersingular modules.
\end{itemize}

We hope  that these results illustrate that the $\Ext$-algebra $\Ext_{\Mod(G)}^*(\mathbf X, \mathbf X)$
is a natural object whose structure is  rich and interesting in itself, even beyond its link to the representation theory of $p$-adic reductive groups. As a derived version of the Hecke algebra of the $I$-equivariant functions on an (almost) affine flag variety, we suspect that it will contribute to relating the mod $p$ Langlands program to methods that appear in the study of geometric Langlands.

Both authors thank the PIMS at UBC Vancouver for support and for providing a very stimulating atmosphere during the \emph{Focus Period on Representations in Arithmetic}. The first author is partially funded by NSERC Discovery Grant.

\section{Notations and preliminaries}\label{sec:Not}

Throughout  the paper we fix a locally compact nonarchimedean field $\mathfrak{F}$ (for now of any characteristic) with ring of integers $\mathfrak{O}$, its maximal ideal $\mathfrak{M}$,  and a prime element $\pi$. The residue field $\mathfrak{O}/\pi \mathfrak{O}$ of $\mathfrak{F}$ is $\mathbb{F}_q$ for some power $q = p^f$ of the residue characteristic $p$. We choose the valuation $\val_{\mathfrak{F}}$ on   $\mathfrak{F}$  normalized by $\val_{\mathfrak{F}}(\pi)=1$ We let $G := \mathbf{G}(\mathfrak{F})$ be the group of $\mathfrak{F}$-rational points of a connected reductive group $\mathbf{G}$  over $\mathfrak{F}$ which we always assume to be $\mathfrak{F}$-split.

We fix an $\mathfrak{F}$-split maximal torus $\mathbf{T}$ in $\mathbf{G}$, put $T := \mathbf{T}(\mathfrak{F})$, and let $T^0$ denote the maximal compact subgroup of $T$ and $T^1$ the pro-$p$ Sylow subgroup of $T^0$. We also fix a chamber $C$ in the apartment of the semisimple Bruhat-Tits building  $\mathscr X$ of $G$ which corresponds to $\mathbf{T}$. The stabilizer $\EuScript P_C^\dagger$ of $C$ contains an Iwahori subgroup $J$. Its pro-$p$ Sylow subgroup $I$ is called the pro-$p$ Iwahori subgroup and is the main player in this paper. We have $T\cap J= T^0$ and $T\cap I= T^1$. If $N(T)$ is the normalizer of $T$ in $G$, then we define the group $\widetilde{W} := N(T)/T^1$. In particular, it contains $T^0/T^1$. The quotient $W:=N(T)/T^0\cong\widetilde{W}/(T^0/T^1)$ is the extended affine Weyl group. The finite Weyl group is $W_0:= N(T)/T$. For any compact open subset $A \subseteq G$ we let $\mathrm{char}_A$ denote the characteristic function of $A$.

The coefficient field for all representations in this paper is an arbitrary field $k$ of characteristic $p > 0$. For any open subgroup $U \subseteq G$ we let $\Mod(U)$ denote the abelian category of smooth representations of $U$ in $k$-vector spaces. As usual, $K(U)$ denotes the homotopy category of unbounded (cohomological) complexes in $\Mod(U)$ and $D(U)$ the corresponding derived category.

\subsection{Elements of Bruhat-Tits theory}

We consider  the root data associated to the choice of the maximal $\mathfrak F$-split torus $\mathbf T$ and record in this section the notations and properties we will need.
We follow the exposition of \cite{OS1} \S4.1--\S4.4 which refers mainly to \cite{SchSt} I.1. Further references are given in \cite{OS1}.

\subsubsection{\label{subsubsec:apartment}}

The root datum $(\Phi , X^*({T }), {\check\Phi}, X_*({T }))$ is reduced because the group $\mathbf{G}$ is $\mathfrak F$-split.  Recall that $X^*({T })$ and $X_*({T })$ denote respectively the group of algebraic characters and cocharacters of $T $. Similarly, let $X^*({ Z})$ and $X_*({ Z})$ denote respectively the group of algebraic characters and cocharacters
of the connected center ${Z}$ of $G$. The standard apartment $\mathscr A$ attached to $T$ in the semisimple building $\mathscr X$ of $G$ is denoted by  $\mathscr A$. It can be seen as the vector space
\begin{equation*}
    \mathbb R\otimes _{\mathbb Z}(X_*({T})/X_*({\rm Z}))
\end{equation*}
considered as an affine space on itself. We fix a hyperspecial vertex of the chamber $C$ and, for simplicity, choose it to be the zero point in $\mathscr A$. 
Denote by
$
    \langle  \, .\,,.\, \rangle :X_*({T })\times X^*({T })\rightarrow \mathbb Z
$
the natural perfect pairing, as well as its $\mathbb R$-linear extension. Each root $\alpha \in \Phi$ defines a function $x\mapsto  \alpha(x) $ on  $\mathscr A$. For any subset $Y$ of $\mathscr{A}$, we write $\alpha(Y)\geq 0$ if $\alpha$ takes nonnegative values on $Y$. To $\alpha$ is also associated a coroot $\check\alpha\in{\check\Phi}$ such that $\langle \check\alpha, \alpha   \rangle =2$ and a reflection on  $\mathscr{A}$ defined by
\begin{equation*}
    s_\alpha: x\mapsto x- \alpha( x)\check\alpha \mod X_*({\rm Z})\otimes_{\mathbb Z}\mathbb R\ .
\end{equation*}
The subgroup of the transformations of $\mathscr{A}$  generated by these reflections identifies with the finite Weyl group $W_0$. The finite Weyl group $W_0$ acts by conjugation on $T$ and this induces a faithful linear action on $\mathscr A$.
Thus $W_0$ identifes with a subgroup of the transformations of $\mathscr{A}$ and this subgroup is the one  generated by the reflections     $s_\alpha$ for all $\alpha\in\Phi$.
To an element $g\in T $ corresponds a vector $\nu(g)\in \mathbb R\otimes _{\mathbb Z}X_*({T })$ defined by
\begin{equation*}
    \langle \nu(g),\, \chi\rangle  =-\val_{\mathfrak F}(\chi(g))  \qquad \textrm{for any } \chi\in X^*(T ).
\end{equation*}
 The quotient of $T $ by  $ \ker(\nu)= T^0$ is a free abelian group $\Lambda$ with rank equal to ${\rm dim}(T )$, and $\nu$ induces an isomorphism $\Lambda \cong X_*(T )$. The group $\Lambda$ acts by translation on $\mathscr{A}$ via $\nu$.
The actions of $W_0$ and $\Lambda$ combine into an action of ${W}$ on $\mathscr{A}$.
The extended affine Weyl group $W $  is  the semi-direct product ${W }_0\ltimes \Lambda$ if we identify $W _0$ with the subgroup of $W$ that fixes any lift of $x_0$ in the extended building of $G$.

\subsubsection{Affine roots and root subgroups}

We now recall the definition of the affine roots and the properties of the  associated root subgroups.
To a root $\alpha$ is attached a unipotent subgroup ${\EuScript U}_\alpha$ of $G$ such that for any $u\in {\EuScript U}_\alpha\setminus\{1\}$, the intersection ${\EuScript U}_{-\alpha}u {\EuScript U}_{-\alpha}\cap N(T )$ consists in only one element called $m_\alpha(u)$.  The image in $W $ of this element $m_\alpha(u)$ is the reflection at the affine hyperplane $\{x\in \mathscr{A}, \: \alpha(x)=-\mathfrak h_\alpha(u)\}$  for a certain $\mathfrak h_\alpha(u) \in \mathbb{R}$. Denote by $\Gamma_\alpha$ the discrete unbounded subset  of $\mathbb R$ given by $\{\mathfrak h_\alpha(u), \: u\in {\EuScript U}_\alpha\setminus\{1\}\}$. Since our group $\mathbf{G}$ is $\mathfrak F$-split we have
$\{\mathfrak h_\alpha(u), \: u\in {\EuScript U}_\alpha\setminus\{1\}\}=\mathbb Z.$
The affine functions
\begin{equation*}
    (\alpha, \mathfrak h):=\alpha(\,.\,)+\mathfrak h  \qquad\text{for $\alpha\in\Phi$ and $\mathfrak h\in \mathbb Z$}
\end{equation*}
are called the affine roots.

We identify an element $\alpha$ of $\Phi $ with the affine root $(\alpha,0)$ so that the set of affine roots $\Phi _{aff}$ contains $\Phi $. The action of $W _0$  on $\Phi $ extends  to an action of $W $ on $\Phi _{aff}$. Explicitly, if $w=w_0  t_\lambda \in W $ is the composition of the translation by $\lambda\in  \Lambda$ with $w_0\in W _0$, then the action of $w$ on the affine root $(\alpha,\mathfrak h)$
\begin{equation*}
    (w_0(\alpha),  \mathfrak h + (\val_{\mathfrak F} \circ \alpha)(\lambda) ) = (w_0(\alpha),  \mathfrak h - \langle \nu(\lambda), \alpha\rangle)\ .
\end{equation*}


Define a filtration of ${\EuScript U}_\alpha$, $\alpha\in \Phi $ by
\begin{equation*}
    {\EuScript U}_{\alpha , r}:=\{u\in {\EuScript U}_\alpha\setminus\{1\}, \: \mathfrak h_\alpha(u)\geq r\}\cup\{1\}\textrm{  for } r\in \mathbb R \ .
\end{equation*} For $(\alpha, \mathfrak h)\in \Phi_{aff}$, we put ${\EuScript U}_{(\alpha, \mathfrak h)} := {\EuScript U}_{\alpha, \mathfrak h}$.  Obviously, for $r\in\mathbb R$ a real number,  $\mathfrak h\geq r$ is equivalent to ${\EuScript U}_{(\alpha, \mathfrak h)} \subseteq {\EuScript U}_{\alpha, r}$.


By abuse of notation we write throughout the paper $w Mw^{-1}$, for some $w \in W $ and some subgroup $M \subseteq G$, whenever the result of this conjugation is independent of the choice of a representative of $w$ in $N(T )$. For example, for $(\alpha, \mathfrak h)\in \Phi_{aff}$ and $w\in W $, we have \begin{equation}\label{f:conjU}w {\EuScript U}_{(\alpha, \mathfrak h)}w^{-1}= {\EuScript U}_{w(\alpha, \mathfrak h)}\ .\end{equation}

For any non empty subset $\mathbf Y\subset \mathscr{A}$, define
\begin{align*}
    f_{\mathbf Y} : \Phi  & \longrightarrow \mathbb R\cup \{\infty\} \\ \alpha & \longmapsto  -\inf_{x\in \mathbf Y} \alpha(x) \ .
\end{align*}
and the subgroup of $G$
\begin{equation}
\label{defiU}
{\EuScript U}_{\mathbf Y}= \; < {\EuScript U}_{\alpha, f_{\mathbf Y}(\alpha)}, \:\: \alpha\in \Phi >
\end{equation}
generated by  all ${\EuScript U}_{\alpha, f_{\mathbf Y}(\alpha)}$ for $ \alpha\in \Phi $.
We have (\cite{SchSt} Page 103, point 3.)
\begin{equation}
  {\EuScript U}_{\mathbf Y} \cap {\EuScript U}_\alpha= \;  {\EuScript U}_{\alpha, f_{\mathbf Y}(\alpha)} \: \textrm{ for any }\alpha\in\Phi \ .
\end{equation}

\subsubsection{Positive roots and length \label{sec:posiroots}}

The choice of the chamber $C$  determines the   subset $\Phi ^+$ of  the positive  roots, namely the set of $\alpha\in\Phi$ taking nonnegative values on $C$.  Denote by $\Pi $ a basis for $\Phi ^+$.
Likewise, the set of positive affine roots ${\Phi _{aff}^+}$ is defined to be the set of  affine roots taking nonnegative values on  $C$. The set of negative affine roots is $\Phi^-_{aff}:= -\Phi^+_{aff}$.

\begin{lemma}\phantomsection\label{lemma:UC}
\begin{itemize}
\item[i.] We have
$\EuScript{U}_{\alpha, f_C(\alpha)} =
    \EuScript{U}_{\alpha,0}$  for $\ \alpha \in \Phi^+$ and     $\EuScript{U}_{\alpha, f_C(\alpha)} =\EuScript{U}_{\alpha,1}$ for $\ \alpha \in \Phi^-$.

\item[ii.] The pro-$p$ Iwahori subgroup $I$ is generated by $T^1$ and $\EuScript U_C$, namely by $T^1$ and
 all root subgroups $\EuScript U_A$ for $A\in \Phi_{aff}^+$.
 \item[iii.] For $\alpha\in \Phi$, we have $I\cap \EuScript{U}_\alpha= {\EuScript U}_{\alpha, f_{\mathbf C}(\alpha)}$.
\end{itemize}
\end{lemma}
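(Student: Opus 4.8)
The plan is to work out each of the three items directly from the definitions of the filtration subgroups ${\EuScript U}_{\alpha,r}$ and of $f_{\mathbf Y}$, using the identification of positive roots with those nonnegative on $C$.

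For item (i): by definition $f_C(\alpha) = -\inf_{x\in C}\alpha(x)$. If $\alpha\in\Phi^+$ then $\alpha$ takes nonnegative values on $C$, and since $0$ (our chosen hyperspecial vertex) lies in the closure of $C$ we get $\inf_{x\in C}\alpha(x)=0$, hence $f_C(\alpha)=0$. Now ${\EuScript U}_{\alpha,0}=\{u\in{\EuScript U}_\alpha\setminus\{1\}:\fhom_\alpha(u)\geq 0\}\cup\{1\}$; but $\Gamma_\alpha=\mathbb Z$, so the condition $\fhom_\alpha(u)\geq 0$ defines the same group whether we use the real number $0$ or not — in other words ${\EuScript U}_{\alpha,f_C(\alpha)}={\EuScript U}_{\alpha,0}$. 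If instead $\alpha\in\Phi^-$, then $-\alpha\in\Phi^+$ takes nonnegative values on $C$, so $\alpha$ takes nonpositive values on $C$; the hyperplane $\alpha=0$ bounds $C$ and $C$ lies strictly on the side where $\alpha<0$ in its interior. One needs here that $C$ is a chamber for the \emph{affine} root system, so that the wall $\alpha = -1$ (equivalently $-\alpha = 1$, the lowest positive affine root in the direction $-\alpha$... ) — more carefully: the affine roots vanishing on a wall of $C$ for the direction $\alpha\in\Phi^-$ are $\alpha+1$ (taking value $\geq 0$ on $C$) and $-\alpha-1=(-\alpha,-1)$. Hence $\inf_{x\in C}\alpha(x)=-1$, giving $f_C(\alpha)=1$, and as before ${\EuScript U}_{\alpha,f_C(\alpha)}={\EuScript U}_{\alpha,1}$ because $\Gamma_\alpha=\mathbb Z$. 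I would double-check the normalization ensuring the relevant wall is at level $1$ rather than some other integer; this is exactly the statement that $C$ is the fundamental alcove for the choice of hyperspecial vertex $0$, and is the one genuinely content-bearing point.

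For item (ii): by the definition \eqref{defiU}, ${\EuScript U}_C=\langle {\EuScript U}_{\alpha,f_C(\alpha)}:\alpha\in\Phi\rangle$, which by item (i) equals $\langle {\EuScript U}_{\alpha,0}\ (\alpha\in\Phi^+),\ {\EuScript U}_{\alpha,1}\ (\alpha\in\Phi^-)\rangle$. I claim this equals $\langle {\EuScript U}_A:A\in\Phi_{aff}^+\rangle$. Indeed, for $\alpha\in\Phi^+$ the positive affine roots with vector part $\alpha$ are $(\alpha,\fhom)$ with $\fhom\geq 0$, and ${\EuScript U}_{(\alpha,\fhom)}\subseteq{\EuScript U}_{\alpha,0}$ with equality for $\fhom=0$; for $\alpha\in\Phi^-$ the positive affine roots with vector part $\alpha$ are $(\alpha,\fhom)$ with $\fhom\geq 1$, and ${\EuScript U}_{(\alpha,\fhom)}\subseteq{\EuScript U}_{\alpha,1}$ with equality for $\fhom=1$. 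So the two generating sets generate the same subgroup, i.e. ${\EuScript U}_C=\langle {\EuScript U}_A:A\in\Phi_{aff}^+\rangle$. Then that $I$ is generated by $T^1$ and ${\EuScript U}_C$ is the standard Iwahori factorization statement for the pro-$p$ Iwahori subgroup attached to $C$; I would cite \cite{SchSt} I.1 (or \cite{OS1} \S4) for this, since it is recalled there in exactly this form.

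For item (iii): this is immediate from the general identity ${\EuScript U}_{\mathbf Y}\cap{\EuScript U}_\alpha={\EuScript U}_{\alpha,f_{\mathbf Y}(\alpha)}$ (displayed just before the lemma, from \cite{SchSt} p.\ 103) applied to $\mathbf Y=C$, combined with $I\cap{\EuScript U}_\alpha = {\EuScript U}_C\cap{\EuScript U}_\alpha$. The latter equality holds because ${\EuScript U}_C\subseteq I$ by (ii), while conversely $I\cap{\EuScript U}_\alpha\subseteq{\EuScript U}_{\alpha,f_C(\alpha)}\subseteq{\EuScript U}_C$ since the root subgroup filtration is cut out inside ${\EuScript U}_\alpha$ by $I$ (again the Iwahori factorization of $I$, which shows $I$ meets ${\EuScript U}_\alpha$ in precisely the piece coming from ${\EuScript U}_C$ and nothing from $T^1$ or the other root directions). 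So all three parts reduce to the combinatorics of $f_C$ on $\Phi$ plus the known structure theory of $I$; the only place requiring care is pinning down the values $f_C(\alpha)\in\{0,1\}$ in item (i), and hence that is the main (minor) obstacle.
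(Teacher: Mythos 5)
Your proposal is essentially the paper's proof: the paper disposes of the lemma by citing \cite{SchSt} Prop.\ I.2.2 (the product decomposition of $I$, recalled in \eqref{f:prodI}) together with the identity $\EuScript{U}_{\mathbf Y}\cap\EuScript{U}_\alpha=\EuScript{U}_{\alpha,f_{\mathbf Y}(\alpha)}$, and your items (ii) and (iii) rest on exactly these inputs, with the alcove combinatorics of $f_C$ written out explicitly.

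One correction to your item (i): it is not true in general that $\inf_{x\in C}\alpha(x)=-1$ for $\alpha\in\Phi^-$, i.e.\ that every negative root direction contributes a wall of $C$ at level $1$. For instance in type $G_2$, with $\alpha_1$ the simple short root, one has $\sup_{x\in C}\alpha_1(x)=1/3$, so $f_C(-\alpha_1)=1/3$. What is true is that $0<\beta(x)<1$ for every $\beta\in\Phi^+$ and $x\in C$ (since $\beta$ is bounded above on the dominant chamber by the highest root of its irreducible component, which is $<1$ on $C$), hence $f_C(\alpha)\in(0,1]$ for $\alpha\in\Phi^-$. Because $\mathfrak h_\alpha$ takes integer values, this still gives $\EuScript{U}_{\alpha,f_C(\alpha)}=\EuScript{U}_{\alpha,1}$, and the identification in (ii) of the positive affine roots with vector part $\alpha\in\Phi^-$ as the $(\alpha,\mathfrak h)$ with $\mathfrak h\geq 1$ also survives. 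So the conclusions of (i)--(iii) and the rest of your argument are unaffected; only the claimed value of $f_C(\alpha)$ needs this adjustment.
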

\begin{proof}
i., ii. This is given by \cite{SchSt} Prop. I.2.2 (recalled in \cite{OS1} Proof of Lemma 4.8) and
\cite{OS1} Proof of Lemma 4.2. iii. This also follows from \cite{SchSt} Prop.\ I.2.2.
\end{proof}

The finite Weyl group $W _0$ is a Coxeter system generated by  the set $S := \{s_\alpha : \alpha \in \Pi\}$ of reflections associated to the simple roots $\Pi$. It is endowed with a length function denoted by $\ell$.
This length extends to $W $ in such a way that the length of an element $w\in W $ is the cardinality of $\{ A\in\Phi ^+_{aff},\: w(A)\in {\Phi _{aff}^-}\}$. For any affine root $(\alpha, \mathfrak h)$, we have in $W $ the reflection $    s_{(\alpha, \mathfrak h)} $ at the affine hyperplane $\alpha(\, .\,) = - \mathfrak{h}$. The affine Weyl group is defined as the subgroup $W_{aff}$ of $W$ generated by all
$ s_A$ for all  $ A \in \Phi_{aff}$.

There is  a partial order on $\Phi $ given by $\alpha\leq \beta$ if and only if $\beta -\alpha$ is a linear combination with (integral) nonnegative coefficients of elements in $\Pi$. Let
$\Phi^{min} := \{\alpha \in \Phi : \alpha\ \textrm{is minimal for $\leq$}\}$
and
$\Pi_{aff} := \Pi \cup \{(\alpha,1) : \alpha \in \Phi^{min}\} \subseteq \Phi^+_{aff}$. Let  $  S_{aff} := \{s_A : A \in \Pi_{aff}\}$, then
the pair $(W_{aff}, S_{aff})$ is a Coxeter system and the length function $\ell$ restricted to $W _{aff}$ coincides with the length function of this Coxeter system. For any $s\in S_{aff}$ there is a unique positive affine root $A_s\in \Phi^+_{aff}$ such that $sA_s\in \Phi^-_{aff}$. In fact $A_s$ lies in $\Pi_{aff}$.

We have the following formula, for every $A \in \Pi_{aff}$ and $w\in W$ (\cite{Lu} \S 1):
\begin{equation}\label{add}
   \ell(w s_A)=
   \begin{cases}
       \ell(w)+1 & \textrm{ if }w (A)\in {\Phi_{aff}^+},\\  \ell(w)-1 & \textrm{ if }w (A)\in {\Phi_{aff}^-}.
    \end{cases}
\end{equation}

The Bruhat-Tits decomposition of $G$ says that $G$ is the disjoint union of the double cosets $JwJ$ for $w \in {W}$.
As in \cite{OS1} \S 4.3 (see the references there) we will denote by $\Omega$ the abelian subgroup of $ W$ of all elements with length zero and recall that $\Omega$ normalizes $S_{aff}$. Furthermore, $W$ is the semi-direct product $W=\Omega\ltimes W_{aff}$. The length function is constant on the double cosets $\Omega w \Omega$ for $w\in W$.
The stabilizer $\EuScript P_C^\dagger$ of $C$ in $G$
is the disjoint union of the double cosets $J\omega J$ for $\omega \in {\Omega}$ (\cite{OS1} Lemma 4.9). We have
\begin{equation*}
  I\subseteq J \subseteq  \EuScript P^\dagger_C
\end{equation*}
and $I$, the pro-unipotent radical of the parahoric subgroup $J$, is normal in $\EuScript P_C^\dagger$ (see \cite{OS1} \S 4.5). In fact, $J$ is also normal in $\EuScript P_C^\dagger$ since $J$ is generated by $T^0$ and $I$, and since the action by conjugation of $\omega\in \Omega$ on $T$ normalizes $T^0$.

\subsubsection{\label{pro-p Weyl}}

Recall that we denote by $\widetilde W$ the quotient of $N(T)$   by $T^1$ and obtain the exact sequence
\begin{equation*}
  0 \rightarrow T^0/T^1 \rightarrow \widetilde W \rightarrow W \rightarrow 0 \ .
\end{equation*}
The length function $\ell$ on $W$ pulls back to a length function $\ell$ on $\widetilde W$ (\cite{Vigprop} Prop.\ 1). The Bruhat-Tits decomposition of $G$ says that $G$ is the disjoint union of the double cosets $IwI$ for $w \in \widetilde{W}$. We will denote by $\widetilde \Omega$ the preimage of $\Omega$ in $\widetilde W$. It contains $T^0/T^1$.

Obviously $\widetilde{W}$ acts by conjugation on its normal subgroup $T^0/T^1$, and we denote this action simply by $(w,t)\mapsto w(t)$. But, since $T/T^1$ is abelian the action in fact factorizes through $W_0 = \widetilde{W}/T$. On the other hand $W_0$, by definition, acts on $T$ and this action stabilizes the maximal compact subgroup $T^0$ and its pro-$p$ Sylow $T^1$. Therefore we again have an action of $W_0$ on $T^0/T^1$. These two actions, of course, coincide.

\subsubsection{On certain open compact subgroups of the pro-$p$ Iwahori subgroup}

Let $g\in G$. We let \begin{equation} I_g:= I\cap g I g^{-1}\ .\end{equation} Since this definition depends only on $gJ$, we may consider $I_w:= I\cap w I w^{-1}$ for any $w\in W$ or $w\in \widetilde W$. Since $I$ is normal in $\EuScript P_C^\dagger$, we have $I_{w\omega}= I_w$ for any $\omega \in \Omega$ and any $w\in W$ (but in general not $I_{\omega w}= I_w$).

Note that if $\tilde w\in\widetilde W$ lifts $w\in W$, then $I_{\tilde w}= I_w$.

\begin{lemma} \label{lemma:vw}  Let $v,w\in  W$  such that $\ell(vw)=\ell(v)+\ell(w)$. We have
\begin{equation}\label{f:length2}
 I_{vw} \subseteq I_v
\end{equation} and
\begin{equation}\label{f:length1}
 I\subseteq  I_{v^{-1}} w I w^{-1}\ .
\end{equation}
\end{lemma}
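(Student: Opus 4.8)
The plan is to reduce both inclusions, via the root‑subgroup description of $I$ and its $W$‑conjugates, to elementary statements about the action of $W$ on the affine roots. Write $N(w):=\{A\in\Phi_{aff}^+ : w(A)\in\Phi_{aff}^-\}$, so that $|N(w)|=\ell(w)$. Since $I_{vw}\subseteq I$ holds automatically, conjugating by $v^{-1}$ shows that \eqref{f:length2} is equivalent to $v^{-1}Iv\cap wIw^{-1}\subseteq I$. For \eqref{f:length1} I would use the Iwahori‑type factorization $I=(I\cap v^{-1}Iv)\cdot U_{N(v)}$, with $U_{N(v)}:=\langle\,\EuScript U_A : A\in N(v)\,\rangle$; since $I\cap v^{-1}Iv=I_{v^{-1}}$, the inclusion $I\subseteq I_{v^{-1}}wIw^{-1}$ then follows as soon as $U_{N(v)}\subseteq wIw^{-1}$.

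Here I would invoke the Bruhat--Tits structure theory of $I$ and its conjugates (\cite{SchSt} I.1, see also \cite{OS1} \S4). By Lemma~\ref{lemma:UC}(ii), formula \eqref{f:conjU}, and the fact that a representative of $w$ in $N(T)$ normalizes $T^1$, one has for every $w\in W$ the equality $wIw^{-1}=\langle\, T^1,\ \EuScript U_A : A\in w(\Phi_{aff}^+)\,\rangle$; more generally, for subsets $\Psi\subseteq\Phi_{aff}$ of the shape occurring here --- $W$‑translates of $\Phi_{aff}^+$, equivalently subsets which in each root direction have the form $\{(\alpha,n):n\ge r(\alpha)\}$ --- the group $\langle T^1,\EuScript U_A : A\in\Psi\rangle$ is an ordered product of $T^1$ and the $\EuScript U_A$, two such groups meet along the intersection of their root sets, and the factorization used above holds. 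Consequently $v^{-1}Iv\cap wIw^{-1}=\langle\, T^1,\ \EuScript U_A : A\in v^{-1}(\Phi_{aff}^+)\cap w(\Phi_{aff}^+)\,\rangle$, so \eqref{f:length2} is reduced to the inclusion $v^{-1}(\Phi_{aff}^+)\cap w(\Phi_{aff}^+)\subseteq\Phi_{aff}^+$; and $U_{N(v)}\subseteq wIw^{-1}$ holds whenever $N(v)\subseteq w(\Phi_{aff}^+)$, so \eqref{f:length1} is reduced to $N(v)\subseteq w(\Phi_{aff}^+)$.

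Both of these follow from the standard characterization of length additivity: $\ell(vw)=\ell(v)+\ell(w)$ is equivalent to $N(v)\cap N(w^{-1})=\emptyset$ (standard for the Coxeter system $(W_{aff},S_{aff})$, and it extends to $W$ since $\Omega$ acts on it by length‑preserving automorphisms; alternatively it can be read off the cocycle formula for $N$). Indeed, $A\in N(v)$ forces $A\notin N(w^{-1})$, i.e.\ $w^{-1}(A)\in\Phi_{aff}^+$, i.e.\ $A\in w(\Phi_{aff}^+)$, which gives $N(v)\subseteq w(\Phi_{aff}^+)$. And if some $A\in v^{-1}(\Phi_{aff}^+)\cap w(\Phi_{aff}^+)$ were negative, then $D:=-A\in\Phi_{aff}^+$ would satisfy $w^{-1}(D)=-w^{-1}(A)\in\Phi_{aff}^-$ and $v(D)=-v(A)\in\Phi_{aff}^-$, whence $D\in N(v)\cap N(w^{-1})=\emptyset$, a contradiction; this gives $v^{-1}(\Phi_{aff}^+)\cap w(\Phi_{aff}^+)\subseteq\Phi_{aff}^+$.

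The step I expect to be the real obstacle is the second one: formulating and justifying, in exactly the generality needed, the product decomposition of the subgroups $\langle T^1,\EuScript U_A : A\in\Psi\rangle$, the rule governing their intersections, and the factorization $I=I_{v^{-1}}\cdot U_{N(v)}$. Once these are in place, the remainder is routine bookkeeping with affine roots and the length function.
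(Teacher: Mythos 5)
Your argument is correct, but for \eqref{f:length2} it takes a genuinely different route from the paper. The paper proves \eqref{f:length2} geometrically: it reduces by induction to $w=s\in S_{aff}$, takes $y\in I_{vs}$, uses that the apartment $y\mathscr A$ contains every minimal gallery from $C$ to $vsC$ (hence contains $vC$), and that only two chambers of $y\mathscr A$ contain the common codimension-one facet of $vC$ and $vsC$ in their closure, to conclude $y\in\EuScript P_{vC}^\dagger\cap I\subseteq vIv^{-1}$ via \cite{OS1} Lemma 4.10. Your version instead rewrites \eqref{f:length2} as $v^{-1}Iv\cap wIw^{-1}\subseteq I$, identifies the left side as the group attached to the affine root set $v^{-1}(\Phi_{aff}^+)\cap w(\Phi_{aff}^+)$, and reduces to the purely combinatorial fact that this set is contained in $\Phi_{aff}^+$ when $N(v)\cap N(w^{-1})=\emptyset$. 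This is sound: the intersection formula you need is exactly the argument of Lemma \ref{lemma:CwC} and Remark \ref{rema:fg} applied to the two conjugates $R_{v^{-1}C}$ and $R_{wC}$ (no circularity, since that lemma rests only on \cite{SchSt} Prop.\ I.2.2), and your equivalence $\ell(vw)=\ell(v)+\ell(w)\Leftrightarrow N(v)\cap N(w^{-1})=\emptyset$ is the standard consequence of the cocycle formula for inversion sets, which does extend to $W$ because $\ell=|N(\cdot)|$ there by definition and $\Omega$ preserves $\Phi_{aff}^+$. What the paper's gallery argument buys is independence from the finer product-decomposition bookkeeping; what yours buys is a uniform treatment of both inclusions by one combinatorial lemma, with no induction on length. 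For \eqref{f:length1} the two proofs essentially coincide: the paper also splits $\Phi_{aff}^+$ into the part generating $I_{v^{-1}}$ and the part $N(v)$ landing in $wIw^{-1}$, proving the needed root-theoretic dichotomy by induction on $\ell(v)$ where you quote the inversion-set formula. The step you rightly flag as the obstacle --- the factorization $I=I_{v^{-1}}\cdot U_{N(v)}$, or equivalently passing from ``each generating root subgroup lies in $I_{v^{-1}}$ or in $wIw^{-1}$'' to $I\subseteq I_{v^{-1}}\,wIw^{-1}$ --- is also the step the paper treats most tersely; it is supplied by the ordered-product decompositions of \cite{SchSt} Prop.\ I.2.2 (one cannot simply multiply generators lying in a union of two subgroups), so you should make the per-direction splitting of each factor $\EuScript U_{\alpha,f_C(\alpha)}$ explicit rather than leave it at the level of generating sets.
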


\begin{proof} \eqref{f:length2}: The claim is clear when $w$ has length $0$ since we then have $I_{vw}= I_v$. By induction, it suffices to treat the case when $w=s\in S_{aff}$.
Note first that adjoining $ vsC$ to a given minimal gallery of $\mathscr{X}$  between $C$ and $vC$ gives a minimal gallery between $C$ and $vsC$.

Let $y\in I_{vs}$. The apartment $y\mathscr{A}$ contains $ vsC$ and $C$ and therefore it contains any minimal gallery from $C$ to $vsC$ by \cite{BT1} 2.3.6. In particular, it contains $ vC$. Let $F$ be the facet with codimension 1 which is contained in both the closures of  the chambers $ vC$ and $ vsC$. It is fixed by $y$ and therefore the image $C'$ of the chamber $  vC$  under the action of $y$  also contains $F$ in its closure. By \cite{BT1} 1.3.6, only two chambers of $y\mathscr{A}$  contain $F$ in their closure, so $C'= vC$.  Therefore, $y \in    v \EuScript P_{C}^\dagger v^{-1}  \cap I =\EuScript P_{vC}^\dagger  \cap I $ where $ \EuScript P_{vC}^\dagger$ denotes the stabilizer in $G$ of the chamber $v C$. By \cite{OS1} Lemma 4.10, the intersection $\EuScript P_{vC}^\dagger  \cap I $ is contained in the parahoric subgroup $ v J v^{-1}$ of $\EuScript P_{vC}^\dagger $, and since it is a pro-$p$ group, it is contained in its  pro-$p$ Sylow subgroup $v I v^{-1}$. We have proved that $y$ lies in $I_v$.

\eqref{f:length1}: Let $w\in W$. We prove the following statement by induction on $\ell(v)$: let $v\in W$ such that $\ell(vw)=\ell(v)+\ell(w)$; then any $A\in \Phi_{aff}^+$ such that $v A\in \Phi^-_{aff}$ satisfies $w^{-1} A\in \Phi_{aff}^+$. Using \eqref{f:conjU} and Lemma \ref{lemma:UC}.ii, this means that  for $v\in W$ such that $\ell(vw)=\ell(v)+\ell(w)$  and $A\in \Phi_{aff}^+$  we have $v A\in \Phi^+_{aff}$ and
${\EuScript U}_A\subseteq v^{-1} {\EuScript U}_C v^{-1}\cap I \subseteq I_{v^{-1}}$, or $v A\in \Phi^-_{aff}$  and $w^{-1} A\in \Phi_{aff}^+$ so ${\EuScript U}_A\subseteq w {\EuScript U}_C w^{-1} \subseteq  w I w^{-1}$.  This implies that for $v\in W$ such that $\ell(vw)=\ell(v)+\ell(w)$ we have ${\EuScript U}_C\subseteq I_{v^{-1}} w Iw^{-1}$ and again using Lemma \ref{lemma:UC}.ii, that  $I\subseteq I_{v^{-1}} w Iw^{-1}$. We now proceed to the proof of the claim by induction.

When $v$ has length zero the claim is clear. Now  let $v$ such that $\ell(vw)=\ell(v)+\ell(w)$ and $s\in S_{aff}$ such that $\ell(sv)=\ell(v)-1$.  This implies that $\ell(svw)\leq \ell(sv)+\ell(w)=\ell(vw)-1$ and therefore $\ell(svw)=\ell(vw)-1=\ell(sv)+\ell(w)$. In particular we have $(vw)^{-1} A_s\in \Phi^-_{aff}$ with the notation introduced in \S\ref{sec:posiroots}
 (see \cite{Lu} Section 1, recalled in \cite{OS1} (4.2)). By induction hypothesis,   any $A\in \Phi_{aff}^+$ such that $ sv A\in \Phi^-_{aff}$ satisfies $w^{-1} A\in \Phi_{aff}^+$.
Now let $A\in \Phi_{aff}^+$ such that $v A\in \Phi^{-}_{aff}$. We need to show that $w^{-1} A\in \Phi^{+}_{aff}$. If  $sv A\in \Phi^-_{aff}$ then it follows from  the induction hypothesis. Otherwise, it means that
$ vA= -A_s$   and therefore $w^{-1} A= -(vw)^{-1} A_s\in \Phi^+_{aff}$.
\end{proof}

\begin{lemma} \label{lemma:CwC}Let $w\in W$.
The product map induces a   bijection
\begin{equation}\label{f:prodIw}
     \prod_{\alpha\in \Phi^-} {\EuScript U}_{\alpha, f_{C\cup wC}(\alpha)} \times  T^1 \times \prod_{\alpha\in \Phi^+} {\EuScript U}_{\alpha, f_{C\cup wC}(\alpha)}\overset{\sim}\longrightarrow I_w
\end{equation}
where the products on the left hand side are ordered in some arbitrarily chosen way.
\end{lemma}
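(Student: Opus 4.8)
The plan is to realise $I$, its conjugate $wIw^{-1}$ and the intersection $I_w$ as groups of the shape $\widehat{\EuScript U}_{\mathbf Y}:=T^1\cdot{\EuScript U}_{\mathbf Y}$ for suitable bounded subsets $\mathbf Y\subseteq\mathscr A$, and to read off \eqref{f:prodIw} from the Iwahori-type factorisation of such groups together with the identity $\widehat{\EuScript U}_{\mathbf Y_1}\cap\widehat{\EuScript U}_{\mathbf Y_2}=\widehat{\EuScript U}_{\mathbf Y_1\cup\mathbf Y_2}$.

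First I would record, for any nonempty bounded $\mathbf Y\subseteq\mathscr A$, the factorisation of $\widehat{\EuScript U}_{\mathbf Y}$. By \cite{SchSt} I.2 (see also \cite{OS1} \S4 and the identity ${\EuScript U}_{\mathbf Y}\cap{\EuScript U}_\alpha={\EuScript U}_{\alpha,f_{\mathbf Y}(\alpha)}$ following \eqref{defiU}) the product map
\[
\prod_{\alpha\in\Phi^-}{\EuScript U}_{\alpha,f_{\mathbf Y}(\alpha)}\ \times\ \prod_{\alpha\in\Phi^+}{\EuScript U}_{\alpha,f_{\mathbf Y}(\alpha)}\ \overset{\sim}{\longrightarrow}\ {\EuScript U}_{\mathbf Y}
\]
is a bijection, for any fixed ordering of the factors. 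Since $T^1\subseteq T^0=\ker\nu$, conjugation by an element of $T^1$ preserves every ${\EuScript U}_{\alpha,r}$ (the relevant shift $\val_{\mathfrak F}(\alpha(t))$ vanishes on $T^0$) and hence preserves ${\EuScript U}_{\mathbf Y}$, so $\widehat{\EuScript U}_{\mathbf Y}=T^1{\EuScript U}_{\mathbf Y}={\EuScript U}_{\mathbf Y}T^1$ is a subgroup of $G$; interleaving $T^1$ and using the (elementary) injectivity of the big-cell product map $\prod_{\alpha\in\Phi^-}{\EuScript U}_\alpha\times T\times\prod_{\alpha\in\Phi^+}{\EuScript U}_\alpha\to G$, valid because $\mathbf G$ is $\mathfrak F$-split, one obtains that
\[
\prod_{\alpha\in\Phi^-}{\EuScript U}_{\alpha,f_{\mathbf Y}(\alpha)}\ \times\ T^1\ \times\ \prod_{\alpha\in\Phi^+}{\EuScript U}_{\alpha,f_{\mathbf Y}(\alpha)}\ \overset{\sim}{\longrightarrow}\ \widehat{\EuScript U}_{\mathbf Y}
\]
is a bijection as well, again for any fixed ordering. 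As $C\cup wC$ is bounded with $f_{C\cup wC}=\max(f_C,f_{wC})$, taking $\mathbf Y=C\cup wC$ turns \eqref{f:prodIw} into the single identity $I_w=\widehat{\EuScript U}_{C\cup wC}$.

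To prove this identity I would first note that $\widehat{\EuScript U}_C=I$ by Lemma \ref{lemma:UC}.ii, which also gives $I=\langle T^1,\,{\EuScript U}_A:A\in\Phi^+_{aff}\rangle$; conjugating by a lift of $w$ — which normalises $T^1$ and, by \eqref{f:conjU}, sends each ${\EuScript U}_A$ to ${\EuScript U}_{wA}$ — yields $wIw^{-1}=\langle T^1,\,{\EuScript U}_B:B\in w\Phi^+_{aff}\rangle=\widehat{\EuScript U}_{wC}$, the last equality being the analogue of Lemma \ref{lemma:UC}.ii for the chamber $wC$ (equivalently, the definition \eqref{defiU} unwound for $wC$). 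Hence $I_w=I\cap wIw^{-1}=\widehat{\EuScript U}_C\cap\widehat{\EuScript U}_{wC}$, so it remains to prove the general intersection formula $\widehat{\EuScript U}_{\mathbf Y_1}\cap\widehat{\EuScript U}_{\mathbf Y_2}=\widehat{\EuScript U}_{\mathbf Y_1\cup\mathbf Y_2}$ for $\mathbf Y_1,\mathbf Y_2\subseteq\mathscr A$ nonempty bounded, and to specialise to $\mathbf Y_1=C$, $\mathbf Y_2=wC$. The inclusion $\supseteq$ is immediate from $f_{\mathbf Y_1\cup\mathbf Y_2}=\max(f_{\mathbf Y_1},f_{\mathbf Y_2})\geq f_{\mathbf Y_i}$ and the fact that $({\EuScript U}_{\alpha,r})_r$ is a decreasing filtration. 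For $\subseteq$, an element $g$ of the intersection admits, by the factorisation just established, a decomposition with root factors in the ${\EuScript U}_{\alpha,f_{\mathbf Y_1}(\alpha)}$ and torus factor in $T^1$, and a second one with root factors in the ${\EuScript U}_{\alpha,f_{\mathbf Y_2}(\alpha)}$ and torus factor in $T^1$; by injectivity of the big-cell product map these two decompositions of $g$ agree factor by factor, so each root factor of $g$ lies in ${\EuScript U}_{\alpha,f_{\mathbf Y_1}(\alpha)}\cap{\EuScript U}_{\alpha,f_{\mathbf Y_2}(\alpha)}={\EuScript U}_{\alpha,\max(f_{\mathbf Y_1},f_{\mathbf Y_2})(\alpha)}={\EuScript U}_{\alpha,f_{\mathbf Y_1\cup\mathbf Y_2}(\alpha)}$, whence $g\in\widehat{\EuScript U}_{\mathbf Y_1\cup\mathbf Y_2}$. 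In particular $I_w=\widehat{\EuScript U}_C\cap\widehat{\EuScript U}_{wC}=\widehat{\EuScript U}_{C\cup wC}$, which by the first step is exactly the image of the product map \eqref{f:prodIw}.

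The only step carrying real content is the intersection formula, and within it the use of the \emph{uniqueness} of the big-cell factorisation of an element of $G$ to force two a priori unrelated decompositions of $g$ to coincide term by term; the remainder is the assembly of standard facts. The points deserving a little care are that the factorisation of ${\EuScript U}_{\mathbf Y}$ is available in \cite{SchSt} in precisely the form used here — arbitrary nonempty bounded $\mathbf Y$, arbitrary ordering of the factors, and with the freedom to interleave $T^1$ — and that ${\EuScript U}_{wC}$ and ${\EuScript U}_{\mathbf Y_1\cup\mathbf Y_2}$ are literally the groups attached by \eqref{defiU} to these subsets; both are routine in the $\mathfrak F$-split setting.
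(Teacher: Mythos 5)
Your proof is correct and follows essentially the same route as the paper: both rest on the injectivity of the big-cell product map $\prod_{\alpha\in\Phi^-}\EuScript{U}_\alpha\times T\times\prod_{\alpha\in\Phi^+}\EuScript{U}_\alpha\hookrightarrow G$, the Iwahori-type factorisations of $I$ and $wIw^{-1}$ from \cite{SchSt} Prop.\ I.2.2, the resulting factor-by-factor identification of the intersection $I_w$, and the identity $\EuScript{U}_{\alpha,f_C(\alpha)}\cap\EuScript{U}_{\alpha,f_{wC}(\alpha)}=\EuScript{U}_{\alpha,f_{C\cup wC}(\alpha)}$ coming from $f_{C\cup wC}=\max(f_C,f_{wC})$. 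The only cosmetic difference is that you obtain the factorisation of $wIw^{-1}$ by conjugating that of $I$, whereas the paper quotes it directly as $R_{wC}$.
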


\begin{proof}
The multiplication in $G$ induces an injective map
\begin{equation*}
   \prod_{\alpha\in \Phi^-} {\EuScript U}_{\alpha} \times  T \times \prod_{\alpha\in \Phi^+} {\EuScript U}_{\alpha} \hookrightarrow G \ .
\end{equation*}
In the notation of \cite{SchSt} \S I.2 we have $I = R_C$ and $wIw^{-1} = R_{wC}$. Therefore \cite{SchSt} Prop.\ I.2.2 says that the above map restricts to bijections
\begin{equation}\label{f:prodI}
     \prod_{\alpha\in \Phi^-} {\EuScript U}_{\alpha, f_{C}(\alpha)}\times  T^1 \times \prod_{\alpha\in \Phi^+} {\EuScript U}_{\alpha, f_{C}(\alpha)}\overset{\sim}\longrightarrow I
\end{equation}
and
\begin{equation*}
     \prod_{\alpha\in \Phi^-} {\EuScript U}_{\alpha, f_{wC}(\alpha)} \times  T^1 \times \prod_{\alpha\in \Phi^+} {\EuScript U}_{\alpha, f_{wC}(\alpha)}\overset{\sim}\longrightarrow wIw^{-1} \ ,
\end{equation*}
and hence to the bijection
\begin{equation*}
     \prod_{\alpha\in \Phi^-} {\EuScript U}_{\alpha, f_{C}(\alpha)} \cap {\EuScript U}_{\alpha, f_{wC}(\alpha)} \times  T^1 \times \prod_{\alpha\in \Phi^+} {\EuScript U}_{\alpha, f_{C}(\alpha)} \cap {\EuScript U}_{\alpha, f_{wC}(\alpha)}\overset{\sim}\longrightarrow I_w \ .
\end{equation*}
Since, obviously, $f_{C\cup wC}(\alpha) = \max (f_{C}(\alpha), f_{wC}(\alpha))$ we have $ {\EuScript U}_{\alpha, f_{C}(\alpha)} \cap {\EuScript U}_{\alpha, f_{wC}(\alpha)} = {\EuScript U}_{\alpha, f_{C\cup wC}(\alpha)}$.

\end{proof}

\begin{remark}\label{rema:fg}
Let $\alpha\in \Phi$  and $w\in W$. Define  $g_w(\alpha):={\rm min}\lbrace  m \in\mathbb Z, \: (\alpha, m) \in \Phi_{aff}^+\cap w \Phi_{aff}^+\rbrace$. We have $\EuScript U_{\alpha, g_{ w}(\alpha)} =
\EuScript U_{\alpha, f_{C\cup wC}(\alpha)}$. This is Lemma \ref{lemma:UC}.i when $w=1$.
\end{remark}

\begin{proof}
First note that $ \Phi_{aff}^+\cap w \Phi_{aff}^+$ is the set of affine roots
which are positive on $C\cup wC$.
Let $\alpha\in \Phi$. Since  $\alpha+g_w(\alpha)\geq 0$ on $C\cup wC$ we have  $f_{C\cup wC}(\alpha)\leq g_w(\alpha)$ and
$\EuScript U_{\alpha,  g_w(\alpha)}\subseteq
\EuScript U_{\alpha, f_{C\cup wC}(\alpha)  }$. Now let $u\in  \EuScript U_{\alpha,  f_{C\cup wC}(\alpha) }\setminus\{1\}$. It implies $\mathfrak h_\alpha(u)\geq f_{C\cup wC}(\alpha) $ so $\alpha+ \mathfrak h_\alpha(u)\geq 0$ on $C\cup wC$, therefore  $\mathfrak h_\alpha(u)\geq g_w(\alpha)$ so $u\in
\EuScript U_{\alpha,  g_w(\alpha)}$.
\end{proof}

\begin{corollary}\label{coro:known}
Let $v,w\in W$ and $s\in S_{aff}$ with respective lifts $\tilde v, \tilde w$ and $\tilde s$ in $\widetilde W$. We have:
\begin{itemize}
\item[i.] $\vert I/I_w\vert= q^{\ell(w)}$;
\item[ii.]  if $\ell(vw)=\ell(v)+\ell(w)$ then $I \tilde v I \cdot I \tilde w I= I \tilde v \tilde w I$;
\item[iii.] if $\ell(ws)=\ell(w)+1$ then $I_{ws}$ is a normal subgroup of $I_w$ of index $q$.
\end{itemize}
\end{corollary}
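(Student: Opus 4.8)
The plan is to prove the three assertions in the order i., ii., iii., deriving each from the product decomposition of $I_w$ established in Lemma \ref{lemma:CwC} together with Lemma \ref{lemma:vw}. For i., I would compute $|I/I_w|$ by comparing the two product decompositions \eqref{f:prodIw} (for $I_w$) and \eqref{f:prodI} (for $I = I_1$). Using Remark \ref{rema:fg}, the factor at $\alpha$ is $\EuScript U_{\alpha, g_w(\alpha)}$ for $I_w$ versus $\EuScript U_{\alpha, g_1(\alpha)} = \EuScript U_{\alpha, f_C(\alpha)}$ for $I$, so the index is $\prod_{\alpha \in \Phi} [\EuScript U_{\alpha, f_C(\alpha)} : \EuScript U_{\alpha, g_w(\alpha)}]$; since $\EuScript U_{\alpha, r}/\EuScript U_{\alpha, r+1}$ has order $q$ for each $r$ (the group is $\mathfrak F$-split, so $\Gamma_\alpha = \mathbb Z$), this product equals $q^N$ where $N = \sum_{\alpha \in \Phi}(g_w(\alpha) - f_C(\alpha))$. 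It then remains to identify $N$ with $\ell(w) = \#\{A \in \Phi_{aff}^+ : w(A) \in \Phi_{aff}^-\}$: for each $\alpha$, the integers $m$ with $f_C(\alpha) \le m < g_w(\alpha)$ are exactly those with $(\alpha,m) \in \Phi_{aff}^+$ but $(\alpha,m) \notin w\Phi_{aff}^+$, i.e.\ $(\alpha,m) \in \Phi_{aff}^+$ and $w^{-1}(\alpha,m) \in \Phi_{aff}^-$, and summing the cardinalities of these sets over $\alpha \in \Phi$ and reindexing $A = w^{-1}(\alpha,m)$ gives precisely $\#\{A \in \Phi_{aff}^+ : w(A) \in \Phi_{aff}^-\} = \ell(w)$ (one must check that the map $(\alpha,m) \mapsto w^{-1}(\alpha,m)$ biject these index sets, which is clear from $w$ acting bijectively on $\Phi_{aff}$).

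For ii., I would argue by induction on $\ell(w)$, reducing to the case $w = s \in S_{aff}$. The inclusion $I\tilde v I \cdot I\tilde s I \supseteq I\tilde v\tilde s I$ is immediate. For the reverse inclusion, write $I\tilde v I\cdot I\tilde s I$ as a union of cosets; the key point is that $I\tilde s I = I\tilde s I_s$ (since $I_s$ has index $q$ in $I$ and the $q$ cosets of $I_s$ in $I$ map under left multiplication by $\tilde s$ to the $q$ nontrivial... actually more cleanly: $I\tilde v I \cdot I\tilde s I = I\tilde v (I\tilde s I)$, and $I\tilde s I \subseteq I\tilde s I \cup I\tilde s I$ decomposes via the braid-type relation). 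The cleanest route: since $\ell(vs) = \ell(v)+1$, formula \eqref{f:length1} of Lemma \ref{lemma:vw} gives $I \subseteq I_{v^{-1}} s I s^{-1}$, hence $\tilde v I \tilde s \subseteq \tilde v I_{v^{-1}} \tilde s I \tilde s^{-1} \tilde s = (\tilde v I_{v^{-1}} \tilde v^{-1})\tilde v \tilde s I \tilde s^{-1}\tilde s$; since $\tilde v I_{v^{-1}}\tilde v^{-1} = \tilde v(I \cap \tilde v^{-1} I \tilde v)\tilde v^{-1} = \tilde v I \tilde v^{-1} \cap I \subseteq I$, this yields $I\tilde v I \tilde s I \subseteq I \tilde v \tilde s I$, as desired. (I would double-check the manipulation of representatives here, invoking the standing convention about $w M w^{-1}$ being well-defined.)

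For iii., assume $\ell(ws) = \ell(w)+1$. That $[I_w : I_{ws}] = q$ is immediate from i.: $|I/I_{ws}| = q^{\ell(ws)} = q \cdot q^{\ell(w)} = q|I/I_w|$, and $I_{ws} \subseteq I_w$ by \eqref{f:length2} of Lemma \ref{lemma:vw}. Normality requires more: I would use the product decomposition \eqref{f:prodIw}. By Remark \ref{rema:fg}, $I_{ws} = \prod_{\alpha \in \Phi^-}\EuScript U_{\alpha, g_{ws}(\alpha)} \times T^1 \times \prod_{\alpha \in \Phi^+}\EuScript U_{\alpha, g_{ws}(\alpha)}$ and similarly for $I_w$ with $g_w$, and the condition $\ell(ws) = \ell(w)+1$ means there is exactly one affine root, namely $A_s$ up to the $W$-action, accounting for the difference — concretely $g_{ws}(\alpha) = g_w(\alpha)$ for all $\alpha$ except one, say $\alpha_0$ (with $\alpha_0$ determined by $w A_s = (\alpha_0, \cdot)$ perhaps up to sign), where $g_{ws}(\alpha_0) = g_w(\alpha_0)+1$; thus $I_{ws}$ is obtained from $I_w$ by replacing the single factor $\EuScript U_{\alpha_0, g_w(\alpha_0)}$ by its index-$q$ subgroup $\EuScript U_{\alpha_0, g_w(\alpha_0)+1}$. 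To get normality I would show $I_w$ normalizes this: conjugation by the torus part $T^1$ preserves each $\EuScript U_{\alpha, r}$, and conjugation by the other root factors $\EuScript U_{\beta, g_w(\beta)}$ ($\beta \ne \alpha_0$) sends $\EuScript U_{\alpha_0, g_w(\alpha_0)+1}$ into the subgroup generated by $\EuScript U_{m\alpha_0 + n\beta, \ast}$ with $(m,n)$ positive, and a commutator-filtration estimate (Chevalley commutator formula together with the fact that $C \cup wsC$ controls the relevant values) shows all these land back in $I_{ws}$. The main obstacle I anticipate is precisely this last normality verification: pinning down which single $\alpha_0$ changes and then running the commutator estimate carefully enough to see the conjugates stay inside $I_{ws}$ rather than merely inside $I_w$. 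An alternative that avoids commutator bookkeeping: show directly that $I_{ws}$ is the kernel of the natural map $I_w \to I_w/(\text{something})$, e.g.\ realize $I_{ws} = I_w \cap s I_w s$ and use that $I_w \subseteq \EuScript P_{wC}^\dagger \cap \EuScript P_C^\dagger$ normalizes appropriate pro-$p$ radicals; I would try this route first as it may be shorter.
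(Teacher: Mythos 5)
Parts i.\ and ii.\ of your proposal are essentially sound. For i.\ you compute the index in one shot by comparing the product decompositions of $I$ and $I_w$ and counting $\sum_\alpha (g_w(\alpha)-g_1(\alpha)) = \ell(w)$; the paper gets the same count by induction on $\ell(w)$, peeling off one simple reflection at a time via $\Phi_{aff}^+\cap ws\Phi_{aff}^+ = (\Phi_{aff}^+\cap w\Phi_{aff}^+)\setminus\{wA_s\}$ — same mechanism, different packaging. For ii.\ your "cleanest route" via \eqref{f:length1} is legitimate and in fact needs no induction at all: $I\tilde w \subseteq I_{v^{-1}}\tilde w I$ and $\tilde v I_{v^{-1}} = I_v\tilde v \subseteq I\tilde v$ give $I\tilde v I\tilde w I\subseteq I\tilde v\tilde w I$ directly for general $w$. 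The paper instead reduces to $v=s$ and uses $I\tilde s I = I\tilde s\,\EuScript U_{A_s}$ together with $\EuScript U_{w^{-1}A_s}\subseteq I$; your argument is arguably more economical, and there is no circularity since Lemma \ref{lemma:vw} is proved before the corollary.

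The genuine gap is the normality claim in iii. You correctly identify that $I_{ws}$ differs from $I_w$ in exactly one root factor, replaced by its index-$q$ subgroup, but the assertion that "a commutator-filtration estimate shows all these land back in $I_{ws}$" is precisely the hard point and you do not carry it out. The delicate case is conjugation by the \emph{opposite} root factor $\EuScript U_{-\alpha_0, g_w(-\alpha_0)}$: there the conjugate of an element of $\EuScript U_{\alpha_0, g_w(\alpha_0)+1}$ is not controlled by the Chevalley commutator formula for linearly independent roots but requires an explicit $\mathrm{SL}_2$-computation producing torus and lower-unipotent contributions, whose filtration levels must be checked against $f_{C\cup wsC}$. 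Your fallback ("realize $I_{ws}=I_w\cap sI_ws$ and use that $I_w$ normalizes appropriate pro-$p$ radicals") is too vague to substitute for this, and as written the identity does not typecheck (what is true is $I_{ws}=I_w\cap ws\,I\,(ws)^{-1}$). The paper avoids all of this: for $w=1$ it identifies $I_s$ with the pro-unipotent radical $I_F$ of the parahoric subgroup $\mathbf G_F^\circ(\mathfrak O)$ attached to the wall $F$ between $C$ and $sC$ — whose normality in $I=I_C$ is quoted from \cite{SchSt} Prop.\ I.2.11 — the identification $I_s=I_F$ being exactly the single-factor comparison of product decompositions you describe; the general case then follows by conjugating $I\cap sIs^{-1}\trianglelefteq I$ by $w$ and intersecting with $I$. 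If you want a self-contained argument along your lines, you must either do the $\mathrm{SL}_2$ valuation bookkeeping for $\pm\alpha_0$ explicitly or import the normality of $I_F$ as the paper does.
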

\begin{proof}
Points i. and ii. are well known. Compare i.\ with  \cite{IM} Prop.\ 3.2 and \S I.5
and ii.\ with \cite{IM} Prop.\ 2.8(i). For the convenience of the reader we add the arguments.

i. We obtain the result by induction on $\ell(w)$.  Suppose that $\ell(ws)=\ell(w)+1$. Again by \cite{Lu} Section 1 (recalled in \cite{OS1} (4.2))
we have $wA_s\in \Phi_{aff}^+$ and $\Phi_{aff}^+\cap ws \Phi_{aff}^+ = (\Phi_{aff}^+\cap w \Phi_{aff}^+) \setminus \{wA_s\}$. So if we let $ (\beta, m):= w A_s$, then using Remark \ref{rema:fg} we have
\begin{equation}\label{f:notbeta}
   \EuScript{U}_{\alpha, f_{C\cup wC}(\alpha)}=  \EuScript{U}_{\alpha, f_{C\cup wsC}(\alpha)} \quad\text{ for any $\alpha\in \Phi$, $\alpha\neq\beta$}
\end{equation}
and
\begin{equation}\label{f:beta}
   \EuScript{U}_{\beta, f_{C\cup wC}(\beta)}=  \EuScript{U}_{wA_s}= \EuScript{U}_{(\beta,m)} \quad\text{ and }\quad  \EuScript{U}_{\beta, f_{C\cup wsC}(\beta)}= \EuScript{U}_{(\beta, 1+m)} .
\end{equation}
Hence using Lemma \ref{lemma:CwC} we deduce that $I_{ws} \subseteq I_w$ and
\begin{equation}\label{f:IandU}
  I_w/I_{ws} \simeq \EuScript{U}_{\beta, f_{C\cup wC}(\beta)}/ \EuScript{U}_{\beta, f_{C\cup wsC}(\beta)}=  \EuScript{U}_{\beta, m}/ \EuScript{U}_{\beta,m+1} \ ,
\end{equation}
which has cardinality $q$ by \cite{Tit} 1.1.

ii.  It suffices to treat the case  $v=s\in S_{aff}$. The claim then follows by  induction on $\ell(v)$.
Using Lemma \ref{lemma:UC} we have $I \tilde s I=I \tilde s \EuScript U_{A_s}$ since $s A\in \Phi_{aff}^+$ for any $A\in\Phi_{aff}^+\setminus\{A_s\}$. Now $\ell(sw)=\ell(w)+1$ means that $w^{-1}A_s\in \Phi^+_{aff}$ therefore (again by Lemma \ref{lemma:UC}.ii) $I \tilde s I \tilde w I= I \tilde s \EuScript U_{A_s} \tilde w I=I \tilde s  \tilde w \EuScript U_{w^{-1}A_s} I = I \tilde s  \tilde w  I$.

iii. We first treat the case that $w=1$. By i. we only need to show that $I_s$ is normal in $I$. Let $F$ be the $1$-codimensional facet common to $C$ and $sC$. The pro-unipotent radical $I_F$ of the parahoric subgroup $\mathbf G_F^\circ(\mathfrak O)$ attached to $F$ is a normal subgroup of $\mathbf G_F^\circ(\mathfrak O)$ (\cite{SchSt} I.2).  It follows from \cite{SchSt} Prop.\ I.2.11 and its proof that $I_F$ is a normal subgroup of $I = I_C$. Obviously $I_s \subseteq I_F$. Hence it suffices to show equality. By \cite{SchSt} Prop. I.2.2 (see also \cite{OS1} Proof of Lemma 4.8), the product map induces a  bijection
\begin{gather*}
     \prod_{\alpha\in \Phi^-} \EuScript{U}_{\alpha, f^*_F(\alpha)} \times
     T^1 \times \prod_{\alpha\in \Phi^+} \EuScript{U}_{\alpha, f^*_F(\alpha)} \overset{\sim}\longrightarrow I_F
\end{gather*}
where $f_F^*(\alpha)= f_F(\alpha)$ if $\alpha\neq \alpha_0$ and
$f_F^*(\alpha_0)= f_F(\alpha_0)+1 = \epsilon +1$ with $(\alpha_0, \epsilon) = A_s$.  In view of Lemma \ref{lemma:CwC} it remains to show that $\EuScript{U}_{\alpha,f_{F}^*(\alpha)} = \EuScript{U}_{\alpha,f_{C\cup sC}(\alpha)}$ for any $\alpha \in \Phi$. But this is immediate from \eqref{f:notbeta} and \eqref{f:beta} applied with $w=1$.

Coming back to the general case we see that $wIw^{-1} \cap wsI(ws)^{-1}$ is normal in $wIw^{-1}$. Hence $I_w \cap I_{ws} = I \cap wIw^{-1} \cap wsI(ws)^{-1}$ is normal in $I_w$. But in the proof of i. we have seen that $I_{ws} \subseteq I_w$. The assertion about the index also follows from i.
\end{proof}

\subsubsection{\label{subsubsec:Chevalley}Chevalley basis and double cosets decompositions}

Let $\mathbf{G}_{x_0}$ denote the Bruhat-Tits group scheme over $\mathfrak{O}$ corresponding to the hyperspecial vertex $x_0$ (cf. \cite{Tit}). As part of a Chevalley basis we have (cf.\ \cite{BT2} 3.2), for any root $\alpha \in \Phi$, a homomorphism $\varphi_\alpha : {\rm SL}_2 \longrightarrow \mathbf{G}_{x_0}$ of $\mathfrak{O}$-group schemes which restricts to isomorphisms
\begin{equation*}
    \{ \begin{pmatrix}
    1 & \ast \\ 0 & 1
    \end{pmatrix}
    \}
    \xrightarrow{\;\cong\;} {\EuScript U}_\alpha
    \qquad\text{and}\qquad
    \{ \begin{pmatrix}
    1 & 0 \\ \ast & 1
    \end{pmatrix}
    \}
    \xrightarrow{\;\cong\;} {\EuScript U}_{-\alpha} \ .
\end{equation*}
Moreover, one has $\check\alpha (x) = \varphi_\alpha( \bigl( \begin{smallmatrix}
x & 0 \\ 0 & x^{-1}
\end{smallmatrix}
\bigr) )$. We let the subtorus $\mathbf{T}_{s_\alpha} \subseteq \mathbf{T}$ denote the image (in the sense of algebraic groups) of the cocharacter $\check\alpha$. We always view these as being defined over $\mathfrak{O}$ as subtori of $\mathbf{G}_{x_0}$. The group of $\mathbb{F}_q$-rational points $\mathbf{T}_{s_\alpha} (\mathbb{F}_q)$ can be viewed as a subgroup of $T ^0 / T ^1 \xrightarrow{\cong} \mathbf{T}(\mathbb{F}_q)$ (and is abstractly isomorphic to $\mathbb{F}_q^\times$). Given $z\in \mathbb F^\times_q$, we consider $[z]\in\mathfrak O^\times$ the Teichm\"uller representative (\cite{Se2} II.4 Prop. 8) and denote by $\check\alpha([_-])$ the composite morphism of groups
\begin{equation}\label{f:alphafq}
   \check\alpha([_-])\::\:\mathbb F_q^\times\xrightarrow{[_-]} \mathfrak O^\times \overset{\check\alpha} \longrightarrow \mathbf{T}(\mathfrak{O}) = T^0 \ .
\end{equation}
We will denote its kernel by $\kera$. Looking at the commutative diagram
\begin{equation*}
  \xymatrix{
    \mathfrak{O}^\times \ar[d]_{red} \ar[r]^-{\check{\alpha}} & \mathbf{T}_{s_\alpha}(\mathfrak{O}) \ar[d]_{red} \ar[r]^{\subseteq} & \mathbf{T}(\mathfrak{O}) \ar[d]_{red} \ar[r]^{\pr} & T^0/T^1 \ar[dl]^{red}_{\cong} \\
    \mathbb{F}_q^\times \ar[r]^-{\check{\alpha}} & \mathbf{T}_{s_\alpha}(\mathbb{F}_q) \ar[r]^{\subseteq} & \mathbf{T}(\mathbb{F}_q) &    }
\end{equation*}
of reduction maps and using that the Teichm\"uller map is a section of the reduction map $\mathfrak{O}^\times \twoheadrightarrow \mathbb{F}_q^\times$ we deduce that the reduction map induces isomorphisms
\begin{equation*}
  \ima \xrightarrow{\cong} \check{\alpha}(\mathbb{F}_q^\times)  \qquad\text{and}\qquad \kera \xrightarrow{\cong} \ker(\check{\alpha}_{|\mathbb{F}_q^\times}) \ .
\end{equation*}

\begin{remark}\label{not:bart}
In Propositions \ref{prop:explicitleftaction} and \ref{prop:Hdformulas} we will need to differentiate between an element $t\in\ima\subset T^0$ and its image in $T^0/T^1\subset \widetilde W$ which we will denote by $\bar t$.
\end{remark}

\begin{remark}\label{rema:kernelalpha}
  If $\varphi_\alpha$ is not injective then its kernel is $\{\left(\begin{smallmatrix}
a & 0  \\
0 & a
\end{smallmatrix}\right) : a = \pm 1\}$ (cf.\ \cite{Jan} II.1.3(7)). It follows that $\kera$ has cardinality $1$ or $2$.
\end{remark}

For the following two lemmas below, recall that the  notation for the action of $W$  on $T^0/T^1$ was introduced in \S\ref{pro-p Weyl}.

\begin{lemma}\label{lemma:Omega}
   Suppose that  $\mathbf{G}$ is semisimple simply connected, then:
\begin{itemize}
\item[i.]  $\check{\alpha}(\mathbb{F}_q^\times) = \mathbf T_{s_\alpha} (\mathbb F_q)$ for any $\alpha\in\Phi$;
\item[ii.] $T^0/T^1=\widetilde \Omega$ is generated by the union of its subgroups $\mathbf{T}_{s_\alpha} (\mathbb{F}_q)$ for $\alpha \in \Pi$;
\end{itemize}
\end{lemma}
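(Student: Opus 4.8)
The plan is to reduce both statements to the description of the cocharacter lattice $X_*(T)$: when $\mathbf G$ is semisimple and simply connected, $X_*(T)$ coincides with the coroot lattice $Q^\vee := \sum_{\alpha\in\Phi}\mathbb Z\check\alpha = \bigoplus_{\beta\in\Pi}\mathbb Z\check\beta$, and the coroots are primitive vectors of this lattice.

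For part i., recall that $\mathbf T_{s_\alpha}$ is by definition the image of the cocharacter $\check\alpha\colon\mathbb G_m\to\mathbf T$, a rank-one split $\mathfrak O$-torus. In view of the isomorphisms induced by the reduction maps in the diagram preceding Remark \ref{not:bart}, the assertion $\check\alpha(\mathbb F_q^\times)=\mathbf T_{s_\alpha}(\mathbb F_q)$ amounts to the injectivity of $\check\alpha|_{\mathbb F_q^\times}$, i.e.\ to $\kera=\{1\}$. By Remark \ref{rema:kernelalpha} the only alternative is that $\varphi_\alpha$ has kernel $\{\pm 1\}$, which forces $\check\alpha$ to factor through the squaring map of $\mathbb G_m$ and hence $\tfrac12\check\alpha\in X_*(T)$. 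I will exclude this by showing that every coroot $\check\alpha$ is primitive in $X_*(T)=Q^\vee$: every root is $W_0$-conjugate to a simple root, so every coroot is $W_0$-conjugate to a simple coroot, which is a member of a $\mathbb Z$-basis of $Q^\vee$ and hence primitive, and $W_0$ acts on $Q^\vee$ by lattice automorphisms. Being then an isomorphism $\mathbb G_m\xrightarrow{\ \sim\ }\mathbf T_{s_\alpha}$ of split $\mathfrak O$-tori, $\check\alpha$ induces a bijection on $\mathbb F_q$-points, which is the claim.

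For part ii., I will first identify $\widetilde\Omega$ with $T^0/T^1$. Since $\mathbf G$ is semisimple and simply connected, $\Lambda\cong X_*(T)=Q^\vee$, so the extended affine Weyl group $W=W_0\ltimes\Lambda$ already equals the affine Weyl group $W_{aff}=W_0\ltimes Q^\vee$; therefore $\Omega=\{1\}$, and $\widetilde\Omega$, being the preimage of $\Omega$ in $\widetilde W$, equals $\ker(\widetilde W\to W)=T^0/T^1$ by the exact sequence of \S\ref{pro-p Weyl}. It then remains to show that $T^0/T^1$ is generated by the subgroups $\mathbf T_{s_\alpha}(\mathbb F_q)$, $\alpha\in\Pi$; by part i.\ this is the same as generation by the $\check\alpha(\mathbb F_q^\times)$, $\alpha\in\Pi$. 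Under the reduction isomorphism $T^0/T^1\xrightarrow{\ \cong\ }\mathbf T(\mathbb F_q)$, and using that the simple coroots form a $\mathbb Z$-basis of $X_*(T)$ (note $|\Pi|=\dim\mathbf T$ as $\mathbf G$ is semisimple), the morphism $\prod_{\alpha\in\Pi}\mathbb G_m\to\mathbf T$, $(z_\alpha)_\alpha\mapsto\prod_{\alpha\in\Pi}\check\alpha(z_\alpha)$, is an isomorphism of split $\mathfrak O$-tori; passing to $\mathbb F_q$-points shows $\mathbf T(\mathbb F_q)$ is generated by the $\check\alpha(\mathbb F_q^\times)$, $\alpha\in\Pi$, as required.

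The only genuinely non-formal input is the primitivity of the coroots in the coroot lattice, and even that is soft once one invokes that each root is $W_0$-conjugate to a simple one. I therefore expect the main thing to watch not to be a real difficulty but the bookkeeping: keeping straight whether $\check\alpha(\mathbb F_q^\times)$ is being read inside $T^0$, inside $T^0/T^1$, or inside $\mathbf T(\mathbb F_q)$, as flagged by the diagram before Remark \ref{not:bart}, and making sure the identifications $\Omega=\{1\}$ and $\widetilde\Omega=T^0/T^1$ use only the simply connected hypothesis. Everything else is unwinding the definitions of $\widetilde\Omega$, of $\mathbf T_{s_\alpha}$, and of the split torus $\mathbf T$.
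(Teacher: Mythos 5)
Your proof is correct and follows essentially the same route as the paper's: both deduce part ii from the fact that the simple coroots form a $\mathbb Z$-basis of $X_*(T)$ in the simply connected case (so that $\prod_{\alpha\in\Pi}\check\alpha$ is an isomorphism of tori), and both deduce part i from the primitivity of coroots in $X_*(T)$, which rules out the kernel $\{\pm 1\}$ of Remark \ref{rema:kernelalpha} and reduces the claim to a bijection $\mathbb F_q^\times\to\mathbf T_{s_\alpha}(\mathbb F_q)$. The only difference is cosmetic: you spell out the identification $\widetilde\Omega=T^0/T^1$ via $\Omega=\{1\}$ and argue primitivity through $W_0$-conjugacy to simple coroots, where the paper invokes that any root is part of some basis of the root system, and you replace the paper's equal-cardinality argument by the observation that an injective cocharacter is an isomorphism onto its image torus.
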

\begin{proof} By our assumption that $\mathbf{G}$ is semisimple simply connected the set $\{\check{\alpha} : \alpha \in \Pi\}$ is a basis of the cocharacter group $X_*(T)$. This means that
\begin{equation*}
  \prod_{\alpha \in \Pi} \mathbb{G}_m \xrightarrow{\prod_\alpha \check{\alpha}} \mathbf{T}
\end{equation*}
is an isomorphism of algebraic tori. It follows that multiplication induces an isomorphism
\begin{equation*}
  \prod_{\alpha \in \Pi} \mathbf{T}_{s_\alpha}(\mathbb{F}_q) \xrightarrow{\; \cong \;} \mathbf{T}(\mathbb{F}_q) \ .
\end{equation*}
This implies ii. But, since any root is part of some basis of the root system, we also obtain that $\Phi \cap 2X_*(T) = \emptyset$. Hence $\varphi_\alpha$ and $\check{\alpha}$ are injective for any $\alpha \in \Phi$ (cf. \cite{Jan} II.1.3(7)). For ii. it therefore remains to notice that $\check{\alpha} : \mathbb{F}_q^\times \rightarrow \mathbf{T}_{s_\alpha}(\mathbb{F}_q)$ is a map between finite sets of the same cardinality.
\end{proof}
\begin{lemma}\label{lemma:normal}
   For any $t\in T^0/T^1$ and any $A = (\alpha,\mathfrak{h}) \in \Phi_{aff}$, we have
    $s_A(t) t^{-1} \in \ima$.
\end{lemma}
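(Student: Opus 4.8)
The plan is to reduce the assertion to the classical identity $s_\alpha(t)\,t^{-1}=\check\alpha(\alpha(t))^{-1}$ for the reflection $s_\alpha$ acting on the split torus $\mathbf T$, and then to read it off after reduction modulo $\pi$.

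First I would recall from \S\ref{pro-p Weyl} that the conjugation action of $\widetilde W$ on $T^0/T^1$ factors through $W_0=\widetilde W/T$ and coincides with the action of $W_0$ on $\mathbf T(\mathbb F_q)=T^0/T^1$ obtained by reducing the algebraic action of $W_0$ on $\mathbf T$ modulo $\pi$. Since the affine reflection $s_A=s_{(\alpha,\mathfrak h)}$ has image $s_\alpha\in W_0$, the parameter $\mathfrak h$ plays no role and it suffices to understand $s_\alpha(t)\,t^{-1}$ for the algebraic reflection $s_\alpha$.

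Next I would carry out the torus computation, which is valid over any $\mathfrak O$-algebra and in particular over $\mathbb F_q$: for $\chi\in X^*(\mathbf T)$ and $t\in\mathbf T$, using that $s_\alpha$ is an involution and that $s_\alpha(\chi)=\chi-\langle\check\alpha,\chi\rangle\alpha$ on characters, one obtains
\begin{equation*}
   \chi\bigl(s_\alpha(t)\,t^{-1}\bigr)=(s_\alpha\chi)(t)\cdot\chi(t)^{-1}=\alpha(t)^{-\langle\check\alpha,\chi\rangle}=\chi\bigl(\check\alpha(\alpha(t)^{-1})\bigr)\ .
\end{equation*}
Since $\mathbf T$ is split, a point is determined by the values of all characters on it, so $s_\alpha(t)\,t^{-1}=\check\alpha(\alpha(t)^{-1})$; specialising to $t\in\mathbf T(\mathbb F_q)$, this element lies in $\check\alpha(\mathbb F_q^\times)$.

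Finally I would invoke the reduction isomorphism $\ima\xrightarrow{\cong}\check\alpha(\mathbb F_q^\times)\subseteq\mathbf T(\mathbb F_q)=T^0/T^1$ coming from the commutative diagram in \S\ref{subsubsec:Chevalley} (this is the identification underlying Remark \ref{not:bart}), which turns the above into $s_A(t)\,t^{-1}\in\ima$. Alternatively the same computation can be run directly with $t\in T^0=\mathbf T(\mathfrak O)$: decomposing $\alpha(t)\in\mathfrak O^\times$ via its Teichm\"uller lift, the $1+\mathfrak M$-component of $\check\alpha(\alpha(t)^{-1})$ reduces to $1$ and hence lies in $T^1$, leaving $\check\alpha([\mathbb F_q^\times])$ modulo $T^1$. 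I do not expect a substantial obstacle; the only points requiring care are purely bookkeeping: pinning down the sign in the reflection formula (character versus cocharacter side, and the use of $s_\alpha^2=\id$), and making sure that the $W_0$-action in play is the genuine algebraic action on $\mathbf T$ rather than the action on the apartment $\mathscr A$, which is only defined modulo $X_*(Z)$ — this causes no trouble since $T^0/T^1$ records the full torus $\mathbf T(\mathbb F_q)$.
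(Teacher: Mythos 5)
Your argument is correct and is essentially the paper's proof in dual form: the paper decomposes $t$ along a basis of cocharacters evaluated at Teichm\"uller representatives and applies $s_\alpha(\xi)=\xi-\langle\xi,\alpha\rangle\check\alpha$ to get $s_\alpha(\xi([y]))\in\xi([y])\,\ima$, while you test against all characters of the split torus to obtain the closed formula $s_\alpha(t)\,t^{-1}=\check\alpha(\alpha(t))^{-1}$ and then pass to $\mathbb F_q$-points. Both versions rest on the same reflection formula and on the identification of $\ima$ with $\check\alpha(\mathbb F_q^\times)$ under reduction, so there is nothing to add.
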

\begin{proof}
Recall from \S\ref{pro-p Weyl} that the action of $s_A$ on $t$ is inflated from the action of its image $s_\alpha \in W_0$. The action of $W_0$ on $X_*(T)$ is induced by its action on $T$, i.e., we have
\begin{equation*}
  (w(\xi))(x) = w(\xi(x))  \qquad\text{for any $w \in W_0$, $\xi \in X_*(T)$, and $x \in T$}.
\end{equation*}
On the other hand the action of $s_\alpha$ on $\xi\in X_*(T)$  is given by
\begin{equation*}
  s_\alpha(\xi)= \xi-\langle \xi, \alpha \rangle \check \alpha \ .
\end{equation*}
So for any $\xi \in X_*(T)$ and any $y \in \mathbb{F}_q^\times$ we have
\begin{equation*}
  s_\alpha(\xi([y]))= (s_\alpha(\xi))([y]) = \xi([y]) \check\alpha([y])^{-\langle \xi, \alpha \rangle} \in \xi([y]) \check\alpha([\mathbb{F}_q^\times]) \ .
\end{equation*}
It remains to notice that, if $\xi_1, \ldots, \xi_m$ is a basis of $X_*(T)$, then $\prod_i \xi([\mathbb{F}_q^\times]) = T^0/T^1$.
\end{proof}

For any $\alpha \in \Phi$  we have the additive isomorphism $x_\alpha : \mathfrak{F} \xrightarrow{\cong} \EuScript{U}_\alpha$ defined by
\begin{equation}
  x_\alpha (u) := \varphi_\alpha( \left(
  \begin{smallmatrix}
    1 & u \\ 0 & 1
  \end{smallmatrix}
  \right) ) \ .\label{f:xalpha}
\end{equation}
Let  $(\alpha, \mathfrak h)\in \Pi_{aff}$ and $s= s_{(\alpha,\mathfrak h)}$. We put
\begin{equation*}
    n_s := \varphi_\alpha (
    \begin{pmatrix}
    0 & \pi^\mathfrak{h} \\
    - \pi^{-\mathfrak{h}} & 0
    \end{pmatrix}
    ) \in N(T ) \ .
\end{equation*}
We have $n_s^2 = \check\alpha(-1) \in T ^0$ and $n_s T ^0 = s \in W $.
We set:
\begin{equation}\label{f:ns}
   \tilde s= n_sT^1 \:\:\in \widetilde W\ .
\end{equation}

From \eqref{f:IandU}, Lemma \ref{lemma:CwC} and \cite{Tit} 1.1 we deduce that $\{x_{\alpha}(\pi^{\mathfrak h} u)\}$ is a system of representatives of $I/I_s$ when $u$ ranges over a system of representatives of $\mathfrak O/\pi \mathfrak O$. We have the decomposition
\begin{equation}
    I n_s I= n_s I\:\dot \cup\: \dot\bigcup_{z\in\mathbb F_q^\times} x_{\alpha}(\pi^{\mathfrak h}[z])n_s I \ .
\end{equation}
Note that since $[-1]= -1\in \mathfrak O$, we have ${x_{\alpha}(\pi^{\mathfrak h} [z])}^{-1}= {x_{\alpha}(-\pi^{\mathfrak h} [z])}={x_{\alpha}(\pi^{\mathfrak h} [-z])}$ and for $z\in\mathbb F_q^\times$, we compute, using $\varphi_\alpha$, that
\begin{equation}\label{f:usefulequality}
   x_{\alpha}(\pi^{\mathfrak h}[z]) \check\alpha([z]) n_s= n_s  x_{\alpha}(\pi^{\mathfrak h}[-z^{-1}])n_s x_{\alpha}(\pi^{\mathfrak h}[-z])\in   n_s I n_s I
\end{equation}
because the Teichm\"uller map $[_-]: \mathbb F_q^\times \rightarrow \mathfrak O^\times$ is a morphism of groups. Since $I n_s I= n_s I\:\dot \cup\: \dot\bigcup_{z\in\mathbb F_q^\times} x_{\alpha}(\pi^{\mathfrak h}[-z^{-1}]) n_s I$, it follows that
\begin{equation}\label{f:sIsI}
   n_s I n_s I=I {n_s^2} \;\dot\cup \; \dot\bigcup_{z\in\mathbb F_q^\times} x_{\alpha}(\pi^{\mathfrak h}[z]) \check\alpha([z]) n_s I \subset  I {n_s^2} \;\dot\cup \;\bigcup_{z\in\mathbb F_q^\times}I  \check\alpha([z]) n_s I
\end{equation}
and hence
\begin{equation}\label{f:quadratic}
  I n_s I\cdot In_s I=  I n_s^2 \; \dot\cup \; \dot\bigcup_{t\in \ima}  I  t n_ s I \ .
\end{equation}

\begin{remark}
Let $s\in S_{aff}$ with lift $\tilde s\in\widetilde W$. Let
 $w\in \widetilde W$ such that $\ell(\tilde sw)=\ell(w)-1$.
From \eqref{f:quadratic} and Cor.\ \ref{coro:known}.ii, we  deduce that
\begin{equation}\label{f:quadratic2}
  I\tilde s I \cdot IwI =  I \tilde sI \cdot I \tilde s I\cdot I(\tilde s^{-1} w) I = I \tilde s w I \;\dot\cup \; \dot\bigcup_{t\in\ima}  I  t w I \ .
\end{equation}
\end{remark}

\begin{lemma}\label{lemma:IuI}
For $u, v,w\in\widetilde W$ satisfying $IuI \subseteq IvI \cdot IwI$, we have:
\begin{equation*}
  |\ell(w) - \ell(v)| \leq \ell(u) \leq \ell(w) + \ell(v) \ .
\end{equation*}

\end{lemma}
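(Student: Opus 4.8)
The plan is to derive the statement from one elementary counting fact about products of double cosets in $G$, applied three times. The fact is: \emph{if $x,y,z\in\widetilde W$ satisfy $IxI\subseteq IyI\cdot IzI$, then $\ell(x)\le\ell(y)+\ell(z)$.} To see it, recall from Corollary \ref{coro:known}.i that $IyI$ is a disjoint union of $|I/I_y|=q^{\ell(y)}$ left cosets of $I$, say $IyI=\bigsqcup_{i}g_iI$, and likewise $IzI$ is a union of $q^{\ell(z)}$ left cosets of $I$. Since $I\cdot IzI=IzI$, we have $IyI\cdot IzI=\bigcup_i g_i(IzI)$, and left translation by $g_i$ permutes the left $I$-cosets, so each $g_i(IzI)$ is a union of $q^{\ell(z)}$ left $I$-cosets; hence $IyI\cdot IzI$ is a union of at most $q^{\ell(y)+\ell(z)}$ left $I$-cosets. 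Now $IxI$ is itself a union of left $I$-cosets, namely of $q^{\ell(x)}$ of them (Corollary \ref{coro:known}.i again), and these must be among the ones making up $IyI\cdot IzI$; thus $q^{\ell(x)}\le q^{\ell(y)+\ell(z)}$, i.e.\ $\ell(x)\le\ell(y)+\ell(z)$.

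Applied to the hypothesis $IuI\subseteq IvI\cdot IwI$, this fact gives at once the upper bound $\ell(u)\le\ell(v)+\ell(w)$. For the lower bound I would fix a lift $\dot{u}\in N(T)$ of $u$ belonging to $IvI\cdot IwI$ and write $\dot{u}=a\,\dot{v}\,b\,\dot{w}\,c$ with $a,b,c\in I$, where $\dot{v},\dot{w}\in N(T)$ are lifts of $v,w$. Isolating $\dot{w}$ gives $\dot{w}=b^{-1}\dot{v}^{-1}a^{-1}\,\dot{u}\,c^{-1}\in I\dot{v}^{-1}I\cdot I\dot{u}I$; since $IwI$ is the unique double coset containing $\dot{w}$ while $Iv^{-1}I\cdot IuI$ is a left-and-right $I$-stable subset containing $\dot{w}$, it follows that $IwI\subseteq Iv^{-1}I\cdot IuI$. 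Applying the counting fact (and $\ell(v^{-1})=\ell(v)$) yields $\ell(w)\le\ell(v)+\ell(u)$. Isolating $\dot{v}$ in the same identity gives $\dot{v}=a^{-1}\,\dot{u}\,c^{-1}\dot{w}^{-1}b^{-1}\in I\dot{u}I\cdot I\dot{w}^{-1}I$, hence $IvI\subseteq IuI\cdot Iw^{-1}I$ and $\ell(v)\le\ell(u)+\ell(w)$. The inequalities $\ell(w)-\ell(v)\le\ell(u)$ and $\ell(v)-\ell(w)\le\ell(u)$ together give $|\ell(w)-\ell(v)|\le\ell(u)$, as required.

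I expect no serious obstacle. The two points worth a remark are the decomposition of $IvI$ into exactly $q^{\ell(v)}$ left $I$-cosets (Corollary \ref{coro:known}.i) and the fact that left translation permutes left $I$-cosets bijectively, which together give the crude multiplicative bound that is the heart of the counting fact; the identity $\ell(v^{-1})=\ell(v)$ on $\widetilde W$, used tacitly, is immediate from the description of $\ell$ as the number of positive affine roots sent to negative ones.
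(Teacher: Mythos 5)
Your proof is correct, but it takes a genuinely different route from the paper's. The paper first reduces to the Iwahori level ($JuJ\subseteq JvJ\cdot JwJ$) and then runs an induction on $\ell(v)$, using the braid relation from Corollary \ref{coro:known}.ii together with the $J$-level quadratic relation $JsJ\cdot JsJ = J\,\dot\cup\,JsJ$ to control exactly which double cosets can appear in a product $JvJ\cdot JwJ$; the lower bound is then extracted by passing to inverses. You instead prove the upper bound by a direct cardinality count of left $I$-cosets --- $IvI\cdot IwI$ is covered by at most $q^{\ell(v)+\ell(w)}$ cosets $g_ih_jI$ while $IuI$ contains exactly $q^{\ell(u)}$ of them by Corollary \ref{coro:known}.i --- and you obtain the lower bound by rearranging $\dot u = a\dot v b\dot w c$ to get $IwI\subseteq Iv^{-1}I\cdot IuI$ and $IvI\subseteq IuI\cdot Iw^{-1}I$ and applying the same count. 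Both rearrangements are sound (the products $Iv^{-1}I\cdot IuI$ and $IuI\cdot Iw^{-1}I$ are bi-$I$-invariant, hence unions of double cosets, so containing a representative means containing the whole double coset), and the identity $\ell(x^{-1})=\ell(x)$ you invoke is also used tacitly in the paper. Your argument is shorter and avoids the induction entirely; what the paper's inductive approach buys in exchange is finer information about the actual double coset decomposition of $JvJ\cdot JwJ$ (e.g.\ formula \eqref{f:quadraticJ2}), which is reused elsewhere, whereas your counting argument yields only the length inequalities --- which is all the lemma asserts.
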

\begin{proof}
It is enough the prove that for  $u, v,w\in W$ satisfying $JuJ \subseteq JvJ \cdot JwJ$, we have:
\begin{equation*}
  |\ell(w) - \ell(v)| \leq \ell(u) \leq \ell(w) + \ell(v) \ .
\end{equation*}
 Let $w\in  W$.
We prove  by induction  with respect to $\ell(v)$ that for $u\in W$ such that $JuJ\subseteq J v J\cdot J w J$ we have
\begin{equation}\label{f:ineqs}
  \ell(w) - \ell(v) \leq \ell(u) \leq \ell(w) + \ell(v) \ .
\end{equation}
This will prove the lemma because  $Ju^{-1}J\subseteq J w^{-1} J\cdot J v^{-1} J$ so we have $\ell(u)=\ell(u^{-1})\geq \ell(v^{-1})-\ell(w^{-1})=\ell(v)-\ell(w)$. If $v$ has length $0$ then
$J  v J w J= J v w J$ since $v$ normalizes $J$.  Therefore $u= vw$  and $\ell(u)= \ell(w)$ so the claim is true. Now suppose $v$ has length $1$ meaning $v=\omega s$ for some $s\in S_{aff}$ and $\omega\in \Omega$.
Recall that  $s^2 = 1$. From \eqref{f:quadratic} we deduce
\begin{equation}\label{f:quadraticJ}
  J s J \cdot J s J=  J \; \dot\cup \;  J  s J \ ,
\end{equation}
since $J= T^0 I$ contains $n_s^2$ and $\ima$, where  $(\alpha,\mathfrak h)\in \Pi_{aff}$ is such that $s=s_{(\alpha,\mathfrak h)}$.

If $\ell( s w)=\ell(w)+1$, then $\ell( vw)=\ell(v)+\ell(w)$ and  $J  v J \cdot J w J =  J  v w J$  using  Cor. \ref{coro:known}.ii. Therefore
 \eqref{f:ineqs} is obviously satisfied when  $u=v w$.  Otherwise,  $\ell( s w)=\ell(w)-1$ and $\ell( vw)=\ell(w)-1$. By  \eqref{f:quadraticJ} we get
\begin{equation}\label{f:quadraticJ2}
  J s J \cdot JwJ=  J  sJ \cdot J s J\cdot Js w J = J s w J \;\dot\cup \; J   w J.
\end{equation} Therefore $  J v J \cdot JwJ=  J  \omega s w J \;\dot\cup \; J   \omega w J$
and we see that \eqref{f:ineqs} is satisfied for all $u\in\{vw,\, \omega w\}$.

Now let  $v\in W$ with length $>1$ and  $s\in S_{aff}$ such that $\ell( s v)=\ell(v)-1$.  By induction hypothesis, $J s vJ\cdot J w J$ is the disjoint union of double cosets of the form $J u' J$ with
\begin{equation} \label{f:inequ'}
   \ell(w) - \ell(v)+1 \leq \ell(u') \leq \ell(w) + \ell(v) -1
\end{equation}
and, using the previous case,   $J v J \cdot J w J= J  s J\cdot J  s v J \cdot J w J$ is a union of
double cosets of the form $J u J$ with
\begin{equation}\label{f:inequu'}
   \ell(u') - 1 \leq \ell(u) \leq \ell(u') + 1 \ .
\end{equation}
Combining \eqref{f:inequ'} and  \eqref{f:inequu'}, we see that $u$ satisfies $\ell(w) - \ell(v) \leq \ell(u) \leq \ell(w) + \ell(v)$.
\end{proof}

\subsection{\label{pro-p}The pro-$p$ Iwahori-Hecke algebra}

We start from the compact induction $\mathbf{X} := \ind_I^G(1)$ of the trivial $I$-representation.  It can be seen as the space of  compactly supported functions $G\rightarrow k$ which are constant on the left cosets mod $I$. It lies in $\Mod(G)$.
For $Y$ a compact subset of $G$ which is right invariant under $I$, we denote by $\chara_Y$ the characteristic function of $Y$. It is an element of $\X$.

The pro-$p$ Iwahori-Hecke algebra is defined to be the $k$-algebra $$H := \End_{k[G ]}(\mathbf{X})^{\mathrm{op}}\ .$$ We often will identify $H$, as a right $H$-module, via the map
\begin{align*}
    H & \xrightarrow{\; \cong \;}  \mathbf{X} ^I \\
    h & \longmapsto (\mathrm{char}_I ) h
\end{align*}
 with the submodule $\X^I $ of $I $-fixed vectors in $\X$.
The Bruhat-Tits decomposition of $G$ says that $G$ is the disjoint union of the double cosets $IwI$ for $w \in \widetilde{W}$. Hence we have the $I$-equivariant decomposition
\begin{equation*}
  \mathbf{X} = \oplus_{w \in \widetilde{W}} \mathbf{X}(w)  \quad\text{with}\quad  \mathbf{X}(w) := \ind_I^{IwI}(1) \ ,
\end{equation*}
where the latter denotes the subspace of those functions in $\mathbf{X}$ which are supported on the double coset $IwI$. In particular, we have $\mathbf{X}(w)^I = k \tau_w$ where $\tau_w := \mathrm{char}_{IwI}$ and hence
\begin{equation*}
  H = \oplus_{w \in \widetilde{W}} k \tau_w \
\end{equation*} as a  $k$-vector space. If $g \in IwI$ we sometimes also write $\tau_g := \tau_w$.
The defining relations of $H$ are the
braid relations (see \cite{Vigprop} Thm. 1)
\begin{equation}\label{braid}
  \tau_{w} \tau_{w'} = \tau_{ww'} \qquad\textrm{ for $w,w'\in \tilde W $ such that } \ \ell(ww')=\ell(w)+\ell(w')
\end{equation}
together with  the
quadratic relations which we describe now (compare with \cite{OS1} \S 4.8, and see  more references therein). We refer to the notation introduced in \S\ref{subsubsec:Chevalley}, see \eqref{f:alphafq} in particular. To  any $s = s_{(\alpha,\mathfrak{h})} \in S_{aff}$,  we attach the following idempotent element:
\begin{equation}\label{f:thetas}
    \theta_s := -\vert\kera\vert \sum_{t \in \ima} \tau_t\ \in H\ .
\end{equation}
The quadratic relations in $H$ are:
\begin{equation}\label{quadratic}
    \tau_{n_s}^2 =  -\tau_{n_s}\, \theta_s =-\theta_s\,\tau_{n_s}  \qquad\text{for any $s \in S_{aff}$},
\end{equation}
Since in the existing literature the definition of $\theta_s$ is not correct we will give a proof further below.
Recall that we defined $\tilde s= n_sT^1\in\widetilde W$ in \eqref{f:ns}.
The quadratic relation says that $\tau_{\tilde s}^2= -\theta_s \tau_{\tilde s}=-\tau_{\tilde s } \theta_s$.
A general element $w \in \widetilde W$ can be decomposed into $w = \omega \tilde{s_1} \ldots \tilde{s_\ell}$ with $\omega \in \widetilde \Omega$, $s_i \in S_{aff}$, and $\ell = \ell(w)$.  The braid relations imply $\tau_w = \tau_\omega \tau_{\tilde{s_1}} \ldots \tau_{\tilde{s_\ell}}$.

The subgroup of $G$ generated by all parahoric subgroups is denoted by $G_{aff}$ as in \cite{OS1} \S 4.5. It is a normal subgroup of $G$ and we have $G/G_{aff}= \Omega$. By Bruhat decomposition, $G_{aff}$ is the disjoint union of all $I w I$ for $w$ ranging over  the preimage  $\widetilde W_{aff}$ of $W_{aff}$ in $\widetilde W$. The subalgebra of $H$ of the functions with support in $G_{aff}$ is denoted by $H_{aff}$ and has basis the set of all $\tau_w$, $w\in\widetilde W_{aff}$. When $\mathbf G$ is simply connected semisimple, we have $G=G_{aff}$ and $H= H_{aff}$.

Recall that there is a unique involutive automorphism of $H$ satisfying
\begin{equation}\label{f:upiota}
   \upiota(\tau_{n_s})=-\tau_{n_s}- \theta_s \quad \textrm{and } \upiota(\tau_\omega)=\tau_{\omega} \textrm{ for all $s\in S_{aff}$ and $\omega\in \widetilde \Omega$}
\end{equation}
(see for example \cite{OS1} \S 4.8). It restricts to an
involutive automorphism of $H_{aff}$.  

\begin{proof}[Proof of  \eqref{quadratic}]   First we notice that $\theta_s$ is indeed an idempotent because
\begin{align*}
   \theta_s^2 & = \vert\kera\vert^2\sum_{u,t\in \ima}\tau_{ut} =
   \vert \kera\vert^2\vert\alpha([\mathbb F_q^\times])\vert   \sum_{t\in \ima}\tau_{t}  \\
   & = (q-1)\vert \kera\vert  \sum_{t\in \ima}\tau_{t} = -\vert \kera\vert  \sum_{t\in \ima}\tau_{t} = \theta_s \ .
\end{align*}
The support of $\tau_{n_s}^2$ is contained in $I n_s In_ sI$. Its value at $h \in G$ is equal to $|I n_s I \cap hIn_s^{-1}I/I| \cdot 1_k$, and by \eqref{f:quadratic} we need to consider the cases of $h= n_s^2$ and of $h= t n_s$ for $t\in\ima$. For $h = n_s^2$ this value is equal to $|In_s I/I| \cdot 1_k = \vert I/I_{s}\vert \cdot 1_k = q \cdot 1_k = 0$.  From \eqref{f:sIsI}, we deduce that   for
$t=\check\alpha([\zeta])$ where $\zeta\in\mathbb F_q^\times$, we have
\begin{equation*}
  tn_ s In_ s ^{-1}I=I t \;\dot\cup \;\dot \bigcup_{z\in \mathbb F_q^\times} t x_{\alpha}(\pi^{\mathfrak h}[z]) \check\alpha([z]) n_s^{-1}I=I \check\alpha(\zeta) \;\dot\cup \;\dot\bigcup_{z\in \mathbb F_q^\times} \check\alpha(\zeta) x_{\alpha}(\pi^{\mathfrak h}[z]) \check\alpha([z]) n_s^{-1}I \ .
\end{equation*}
For $z\in\mathbb F_q^\times$ we compute
\begin{equation*}
  \check\alpha([\zeta]) x_{\alpha}(\pi^{\mathfrak h}[z]) \check\alpha([z]) n_s^{-1}= x_{\alpha}(\pi^{\mathfrak h}[\zeta^2z])  \check\alpha([-\zeta z])n_s\in I  \check\alpha([-\zeta z])n_s I \ .
\end{equation*}
So
\begin{equation*}
  tn_ s In_ s ^{-1}I\cap I n_s I= \dot\bigcup_{z\in\mathbb F_q^\times, \: \check\alpha([-\zeta z])=1 } \check\alpha([\zeta]) x_{\alpha}(\pi^{\mathfrak h}[z]) \check\alpha([z]) n_s^{-1}I
\end{equation*}
and $\vert tn_ s In_ s ^{-1}I\cap I n_s I/I\vert=\vert \kera\vert.$ We have proved that  $$\tau_{n_s}^2=\vert \kera\vert\sum_{t\in \ima}  \tau_{tn_s}=-\theta_s n_s\ .$$
Noticing that $\ima n_s=n_s\ima$, we then  obtain $\tau_{n_s}^2=- \tau_{n_s}\theta_s$.
\end{proof}

\subsubsection{\label{sec:idempotents}Idempotents in $H$.}

We now introduce idempotents in $H$ (compare with \cite{OS2} \S 3.2.4). To any $k$-character $\lambda : T^0/T^1 \rightarrow k^\times$ of $T^0/T^1$, we  associate the following idempotent in $H$:
\begin{equation}\label{defel}
   e_\lambda := - \sum_{t \in {T^0/T^1}} \lambda(t^{-1}) \tau_t \ .
\end{equation}
It satisfies
\begin{equation}\label{f:elt}
   e_\lambda\,\tau_t = \tau_t \,e_\lambda=\lambda(t)\, e_\lambda
\end{equation}
for any $t\in {T^0/T^1}$.  In particular, when $\lambda$ is the trivial character we obtain the idempotent element denoted by $e_1$.   Let $s= s_{(\alpha,\mathfrak h)}\in S_{aff}$ with the corresponding idempotent $\theta_s$ as in \eqref{f:thetas}. Using \eqref{f:elt}, we easily see that
$e_\lambda \theta_s=-\sum_{z\in\mathbb F_q^\times} \lambda(\check\alpha([z])) e_\lambda$ therefore
\begin{equation}\label{f:elthetas}
   e_\lambda \theta_s =
   \begin{cases}
     e_\lambda& \textrm{if the restriction of $\lambda$ to $\ima$ is trivial,}\cr
     0&\textrm{otherwise.}
   \end{cases}
\end{equation}

The action of the finite Weyl group $W_0$  on $T^0/T^1$ gives an action on  the characters $T^0/T^1\rightarrow k$. This actions inflates to an action of $\widetilde W$ denoted by $(w,\lambda)\mapsto {}^{w\!}\lambda$. From the braid relations \eqref{braid}, one sees  that  for $w\in\widetilde W$, we have \begin{equation}\label{f:conjel}
  \tau_w\, e_\lambda= e_{{^{w}\lambda}} \, \tau_w \ .
\end{equation}

Suppose  for a moment that  $\mathbb{F}_q \subseteq k$. We then denote by $\widehat{T^0/T^1}$ the set of $k$-characters of $T^0/T^1$. The family  $\{e_\lambda\}_\lambda\in \widehat{T^0/T^1}$ is a family of orthogonal idempotents with sum equal to $1$. It  gives the following ring decomposition
\begin{equation}\label{f:idempotentsOmega}
   k[{T^0/T^1}] = \prod_{\lambda \in \widehat{{T^0/T^1}}} k e_\lambda \ .
\end{equation}
Let $\Gamma$ denote the set of $W_0$-orbits in $\widehat{{T^0/T^1}}$. To $\gamma \in \Gamma$ we attach the element $e_\gamma:=\sum_{\lambda\in \gamma} e_\lambda$. It is a central idempotent in $H$  (see \eqref{f:conjel}), and we have the obvious ring decomposition
\begin{equation}\label{f:idempotents}
   H = \prod_{\gamma \in \Gamma} H e_\gamma \ .
\end{equation}

\subsubsection{\label{sec:charH}Characters of $H$ and $H_{aff}$}

Since for $s\in S_{aff}$ we have $\tau_{n_s}(\tau_{n_s}+\theta_s)=0$ where $\theta_s$ is an idempotent element, we see that a character $H\rightarrow k$ (resp. $H_{aff}\rightarrow k$) takes value $0$ or $-1$ at $\tau_{n_s}$. In fact, the following morphisms of $k$-algebras $H\rightarrow k$ are well defined (compare with \cite{OS1} Definition after Remark 6.13):
\begin{equation}\label{defitriv}
    \chi_{triv}: \tau_{\tilde s}\longmapsto 0, \: \tau_{\omega} \longmapsto 1, \textrm{ for any $s\in S_{aff}$ and $\omega \in \widetilde \Omega$} \ ,
\end{equation}
\begin{equation}\label{defisign}
   \chi_{sign}: \tau_{\tilde s}\longmapsto -1, \: \tau_{\omega} \longmapsto 1, \textrm{ for any $s\in S_{aff}$ and $\omega \in  \widetilde \Omega$} \ .
\end{equation}
They satisfy in particular $\chi_{triv}(e_1)=\chi_{sign}(e_1)=1$. They are called the trivial and the sign character of $H$, respectively. Notice that $\chi_{sign}=\chi_{triv}\circ \upiota$ (see \eqref{f:upiota}). The restriction to $H_{aff}$ of $\chi_{triv}$ (resp. $\chi_{sign}$) is called the  trivial (resp. sign) character of $H_{aff}$.

We call a twisted sign character of $H_{aff}$ a character $\chi: H_{aff}\rightarrow k$ such that $\chi(\tau_{n_s})=-1$ for all $s\in S_{aff}$. The precomposition by $\upiota$ of a
twisted sign character of $H_{aff}$ is called a twisted trivial character. This definition given in \cite{VigIII} coincides with the definition given in \cite{Oll} \S 5.4.2 but it  is simpler and more concise.

\begin{remark}\phantomsection\label{rema:twistedonT'}
\begin{itemize}
\item[i.] A twisted sign character $\chi$  of $H_{aff}$ satisfies  $\chi(\theta_s)=1$ for all $s=s_{(\alpha, \mathfrak h)}\in S_{aff}$ which is equivalent to  $\chi(\tau_t)=1$ for all $t\in \ima$. This is also true for a twisted trivial character since the involution $\upiota$ fixes $\tau_t$ for $t\in T^0/T^1$. Therefore, the twisted sign (resp. trivial) characters of $H_{aff}$ are in bijection with the characters $\lambda: T^0/T^1\rightarrow k$ which are equal to $1$ on
    the subgroup $(T^0/T^1)'$ of $T^0/T^1$ generated by all $\ima$ for $\alpha\in\Phi$, equivalently by all  $\ima$ for $\alpha\in\Pi$.

\item[ii.] The twisted trivial characters of $H_{aff}$ are characterized by the fact that they send  $\tau_w$ to $0$ for all $w\in \widetilde W_{aff}$ with length $>0$ and $\tau_t$ to $1$ for all $t\in (T^0/T^1)'$.

 \item[iii.]    By  Lemma \ref{lemma:normal},
 the action of $\widetilde W$ on $T^0/T^1$, which is inflated from the action of $W_0$,    induces the trivial action on the quotient $(T^0/T^1)/(T^0/T^1)'$. This has the following consequences:
 \begin{itemize}
 \item   $(T^0/T^1)'$ is a normal subgroup of $\widetilde W$.
 \item  given $\lambda: {T^0/T^1}\rightarrow k^\times$ a character which is equal to $1$ on $(T^0/T^1)'$, the corresponding idempotent $e_\lambda$ is central in $H$ (use  \eqref{f:conjel})
 \item   the natural action $(\omega, \chi)\mapsto \chi({\tau_\omega^{-1}}_-\tau_\omega)$ of $\widetilde\Omega$ on the characters of $H_{aff}$ fixes the twisted trivial characters.  Since $\upiota$ fixes the elements $\tau_\omega$ for $\omega\in\widetilde\Omega$, this action also fixes the twisted sign characters.\end{itemize}
   \item[iv.]
 When $\mathbf G$ is  semisimple simply connected, then
$H=H_{aff}$ and $(T^0/T^1)'= T^0/T^1$ (Lemma \ref{lemma:Omega}.i) so the trivial (resp. sign) character of $H= H_{aff}$ is the only twisted   trivial (resp. sign)  character.
\end{itemize}
\end{remark}

As in \cite{VigIII} \S1.4, we notice that the  Coxeter system $(W_{aff}, S_{aff})$ is the direct product of the irreducible affine Coxeter systems $(W_{aff}^i , S_{aff}^i )_{1\leq i \leq r}$ corresponding  to the irreducible components $(\Phi^i, \Pi^i)_{1\leq i \leq r}$ of the based root system $(\Phi, \Pi)$. For $i\in\{1,\ldots, r\}$, the $k$-module of basis $(\tau_w)_{w\in\widetilde W_{aff}^i}$ is a subalgebra of $H_{aff}$.  Remark that it contains $\{\tau_t, t\in T^0/T^1\}$. Remark also that the anti-involution $\upiota$  restricts to an anti-involution of  $H_{aff}^i$. We call a twisted sign character of $H^i_{aff}$ a character $\chi: H^i_{aff}\rightarrow k$ such that $\chi(\tau_{n_s})=-1$ for all $s\in S^i_{aff}$. The precomposition by $\upiota$ of a twisted sign character of $H^i_{aff}$ is called a twisted trivial character. The algebras $(H_{aff}^i)_{1\leq i \leq r}$ will be called the irreducible components of $H_{aff}$.

A character of $H_{aff}\rightarrow k$ will be called \textbf{supersingular} if, for every $i\in\{1,\dots, r\}$, it does not restrict to a twisted trivial or sign character of $H_{aff}^i$. This terminology is justified in the following  subsection.

\subsection{\label{subsec:supersing}Supersingularity}

We refer here to  definitions and results of \cite{Oll} \S2 and \S5. Note that there the field  $k$ was algebraically closed,  but is easy to see that  the claims   that we are going to use are valid when $k$ is not necessarily algebraically closed. In fact, in \cite{VigIII}, these  definitions and results are generalized to the case where $k$ is an arbitrary  field of characteristic $p$ and  $\mathbf G$ is a general  connected reductive $\mathfrak F$-group.

In \cite{Oll} \S2.3.1. a central subalgebra  $\mathcal Z^0(H)$ of $H$  is defined (it is denoted by $\mathcal Z_T$ in \cite{VigIII}). This algebra is isomorphic to the affine semigroup algebra $k[X_*^{dom}(T)]$, where
$X_*^{dom}(T)$ denotes the semigroup of all dominant cocharacters of $T$ (\cite{Oll} Prop. 2.10).
The cocharacters $\lambda\in X_*^{dom}(T)\setminus (-X_*^{dom}(T))$ generate a proper ideal of $k[X_*^{dom}(T)]$, the image of which in $\mathcal Z^0(H)$  is denoted by $\mathfrak J$ as in \cite{Oll} \S5.2 (it coincides with the ideal $\mathcal Z_{T, \ell>0}$ of \cite{VigIII}).


Generalizing \cite{Oll} Prop.-Def.\ 5.10 and \cite{VigIII} Def. 6.10 we call an $H$-module $M$ supersingular  if any element in $M$ is annihilated by a power of $\mathfrak J$. Recall that a finite length $H$-module is always finite dimensional (see for example \cite{OS1} Lemma 6.9). Hence, if $M$ has finite length, then it is supersingular if and only if it is annihilated by a power of $\mathfrak{J}$. Also note that supersingularity can be tested after an arbitrary extension of the coefficient field $k$.

The supersingular characters of $H_{aff}$ were defined at the end of \S\ref{sec:charH}. The two notions of supersingularity are related by the following fact.

\begin{lemma}
\begin{itemize}
\item[-] Let $\chi: H_{aff}\rightarrow k$ be a supersingular  character of $H_{aff}$. The left (resp. right) $H$-module  $H\otimes _{H_{aff}} \chi$ (resp.  $\chi\otimes  _{H_{aff}} H $) is  annihilated by $\mathfrak J$. In particular, it is a supersingular $H$-module.
\item[-] Let $\chi: H_{aff}\rightarrow k$ be a  twisted trivial or sign  character of $H_{aff}$. The left (resp. right) $H$-module  $H\otimes _{H_{aff}} \chi$ (resp.  $\chi\otimes  _{H_{aff}} H $) does not have any  nonzero supersingular subquotient.
\end{itemize}
\phantomsection
\label{lemma:critsupersing}
\end{lemma}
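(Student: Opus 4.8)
The plan is to reduce everything to the commutative central subalgebra $\mathcal Z^0(H) \cong k[X_*^{dom}(T)]$ and to understand how the ideal $\mathfrak J$ acts on the induced modules $H\otimes_{H_{aff}}\chi$ and $\chi\otimes_{H_{aff}}H$. The key structural input is the ring decomposition $W = \widetilde\Omega\ltimes\widetilde W_{aff}$ (so $H = \bigoplus_{\omega\in\Omega} H_{aff}\tau_\omega$ is free of finite rank over $H_{aff}$ on both sides), together with the fact that $\Omega$ acts on $\widetilde W_{aff}$ and hence by conjugation on the characters of $H_{aff}$, permuting them. So as an $H_{aff}$-module, $H\otimes_{H_{aff}}\chi \cong \bigoplus_{\omega\in\Omega} {}^{\omega}\chi$ as a direct sum of characters of $H_{aff}$ (each ${}^\omega\chi$ obtained by the twist $\chi(\tau_\omega^{-1}(-)\tau_\omega)$). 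By Remark \ref{rema:twistedonT'}.iii, the $\widetilde\Omega$-action fixes the twisted trivial and twisted sign characters; and since supersingularity of a character of $H_{aff}$ is defined component-by-component on the irreducible factors $H_{aff}^i$, and $\Omega$ merely permutes these factors (and twists by elements normalizing $S_{aff}$), the $\widetilde\Omega$-conjugate of a supersingular character of $H_{aff}$ is again supersingular. This will be the main preliminary observation.

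First I would treat the left-module statements; the right-module case is symmetric (use the anti-involution, or argue identically with $\chi\otimes_{H_{aff}}H$). For the first bullet, I would show that a generator $m = 1\otimes 1$ of $H\otimes_{H_{aff}}\chi$ is annihilated by $\mathfrak J$. The elements of $\mathcal Z^0(H)$ are, by \cite{Oll} Prop. 2.10, certain explicit sums $z_\lambda = \sum_{w} \tau_w$ over $W_0$-orbit-type data attached to a dominant cocharacter $\lambda$; the key point from \cite{Oll} \S2 is that $\mathcal Z^0(H)\subseteq H_{aff}$ when $\lambda$ is such that $z_\lambda\in\mathfrak J$, i.e. the generators of $\mathfrak J$ lie in $H_{aff}$ (they are supported on $G_{aff}$ since the corresponding $\tau_w$ all have $w\in\widetilde W_{aff}$). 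Hence $\mathfrak J\cdot(1\otimes 1) = 1\otimes(\mathfrak J\cdot 1) = 1\otimes(\chi(\mathfrak J)\cdot 1)$. Now $\chi$ supersingular means precisely (after possibly extending $k$, which is harmless) that $\chi$ is annihilated by $\mathfrak J$ — this is the content of the identification of supersingular characters in \S\ref{sec:charH} with the definition via $\mathfrak J$, which is exactly \cite{Oll} Prop.-Def. 5.10 / \cite{VigIII} Def. 6.10; I would invoke that equivalence. So $\mathfrak J\cdot(1\otimes 1)=0$. Since $\mathfrak J$ is an ideal of the central subalgebra $\mathcal Z^0(H)$ and $1\otimes 1$ generates the $H$-module, $\mathfrak J$ kills the whole module, which by definition makes it supersingular.

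For the second bullet, suppose $\chi$ is a twisted trivial or sign character and suppose for contradiction that $H\otimes_{H_{aff}}\chi$ has a nonzero supersingular subquotient $N$. As an $H_{aff}$-module, $H\otimes_{H_{aff}}\chi \cong \bigoplus_{\omega\in\Omega}{}^\omega\chi$, and by the preliminary observation each ${}^\omega\chi$ is again a twisted trivial or sign character of $H_{aff}$; in particular each restricts, on every irreducible component $H_{aff}^i$, to a twisted trivial or sign character. I would then use the finiteness: any nonzero $H$-module has a nonzero finitely generated (hence, being a quotient of a finite sum of copies of $H$ which is finite over the Noetherian-ish $\mathcal Z^0(H)$, finite-length/finite-dimensional) subquotient — more simply, $H\otimes_{H_{aff}}\chi$ is already finite-dimensional over $k$, so all its subquotients are finite-dimensional, and supersingularity of a finite-dimensional module means annihilation by a power of $\mathfrak J$. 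Restricting $N$ to $H_{aff}$, it is a subquotient of $\bigoplus_\omega {}^\omega\chi$, hence a direct sum of copies of the characters ${}^\omega\chi$. The contradiction comes by exhibiting an element of $\mathfrak J$ that acts invertibly (in fact as a nonzero scalar) on each ${}^\omega\chi$: for a twisted trivial or sign character, the central elements $z_\lambda\in\mathfrak J$ evaluate (via the braid relations and $\chi(\tau_t)=1$ on $(T^0/T^1)'$, $\chi(\tau_w)=0$ for $\ell(w)>0$ for twisted trivial, and the analogous computation with $\upiota$ for twisted sign) to an explicit nonzero element of $k$ — this is the computation underlying \cite{Oll} Prop.-Def. 5.10 characterizing which characters are supersingular. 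So some $z\in\mathfrak J$ acts as a unit on $N|_{H_{aff}}$, hence on $N$, so no power of $\mathfrak J$ can annihilate the nonzero $N$ — contradiction.

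The main obstacle I expect is bookkeeping rather than conceptual: getting the precise generators of $\mathfrak J$ inside $H_{aff}$, verifying carefully that they lie in the support $G_{aff}$ (so that the tensor relation $z\otimes\chi = 1\otimes\chi(z)$ makes sense with $z$ viewed in $H_{aff}$), and checking that evaluation of these generators against a twisted trivial/sign character is genuinely nonzero on \emph{every} irreducible component simultaneously — equivalently, repackaging the equivalence between the two definitions of supersingularity for characters (the one in \S\ref{sec:charH} and the $\mathfrak J$-annihilation one) as stated in \cite{Oll} \S5 and \cite{VigIII}. Everything else (the $\Omega$-conjugation being compatible with the component decomposition, finite-dimensionality forcing supersingular $=$ $\mathfrak J^n$-annihilated, the right-module case by symmetry via $\upiota$ or a mirror argument) is routine.
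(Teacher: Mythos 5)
Your overall strategy is the same as the paper's: for the first bullet, kill the generator $1\otimes 1$ by $\mathfrak J$ and use centrality of $\mathfrak J$ to propagate to the whole module; for the second, decompose $H\otimes_{H_{aff}}\chi$ as a direct sum of copies of $\chi$ over $H_{aff}$ (Remark \ref{rema:twistedonT'}.iii) and derive a contradiction with the existence of a supersingular subquotient. However, two of your concrete mechanisms fail, and the first is not the "bookkeeping" you hope for.

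First, the generators $z_{\{\lambda\}}$ of $\mathfrak J$ do \emph{not} lie in $H_{aff}$ in general. They are indexed by $W_0$-orbits of $\lambda\in\Lambda$ of positive length, and such $\lambda$ need not lie in $\widetilde W_{aff}$; equivalently, the double cosets supporting $z_{\{\lambda\}}$ need not be contained in $G_{aff}$. (Take $\mathbf G=\mathrm{GL}_2$ and $\lambda=\mathrm{diag}(\pi,1)$: it has length $1$, but its determinant is $\pi\notin\mathfrak O^\times$, so $I\lambda I\not\subseteq G_{aff}$.) Consequently the identity $\mathfrak J\cdot(1\otimes1)=1\otimes\chi(\mathfrak J)$ is not available, the "evaluation" $\chi(z_\lambda)$ in your second bullet is undefined, and the assertion "$\chi$ supersingular means $\chi$ is annihilated by $\mathfrak J$" does not even typecheck for a character of $H_{aff}$. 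What the paper does instead is invoke the \emph{proof} (not merely the statement) of Theorem 5.14 of \cite{Oll} / Corollary 6.13 of \cite{VigIII}: that proof computes, inside $H$, the action of the Bernstein generators of $\mathfrak J$ on any vector supporting a character of $H_{aff}$ and shows it vanishes precisely under the combinatorial supersingularity condition. Citing only "the equivalence" is insufficient, both because the equivalence is stated for simple modules (and $H\otimes_{H_{aff}}\chi$ is not simple, nor even of finite length in general) and because the content you need is exactly that computation.

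Second, $H$ is free over $H_{aff}$ of rank $|\Omega|$, which is infinite whenever $\mathbf G$ is not semisimple (again $\mathrm{GL}_n$, where $\Omega\cong\mathbb Z$). So your shortcut "more simply, $H\otimes_{H_{aff}}\chi$ is already finite-dimensional" is unavailable. You must, as the paper does, pass from a nonzero supersingular subquotient to a finitely generated one, then to a nonzero \emph{simple} supersingular subquotient $M$, and contradict Lemma 5.12 of \cite{Oll} / Corollary 6.13 of \cite{VigIII} using that $M$ is a direct sum of copies of $\chi$ as an $H_{aff}$-module. Your proposed contradiction — an element of $\mathfrak J$ acting as a nonzero scalar computed from $\chi$ — again presupposes $\mathfrak J\subseteq H_{aff}$; since it is not, the action of $z\in\mathfrak J$ on a subquotient is not determined by the $H_{aff}$-module structure alone.
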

\begin{proof}  Since supersingularity can be tested after an arbitrary extension of the coefficient field $k$, we may assume  in this proof that $k$ is algebraically closed. We only need to show that the generator $1 \otimes 1$ of $H \otimes_{H_{aff}}\chi$ is annihilated by $\mathfrak J$. In that case,
Theorem 5.14 in \cite{Oll} (when the root system is irreducible) and  Corollary 6.13 in \cite{VigIII} state that, if a simple (left) $H$-module $M$ contains a  supersingular character  $\chi$ of  $H_{aff}$, then it is a  supersingular $H$-module (in fact they state that this is an equivalence). The proof of the statement    consists in picking an element $m$ in $M$ supporting the character $\chi$ and proving that $\mathfrak J$ acts by zero on it, the simplicity of $M$ being used only to ensure that $\mathfrak J$ acts by zero on the whole $M$. Therefore the arguments apply to the left $H$-module    $H \otimes_{H_{aff}}\chi$ when  $\chi$ is a supersingular  character of $H_{aff}$ (although this module may not even be of finite length).

Now let $\chi: H_{aff}\rightarrow k$ be a  twisted trivial (resp. sign)  character of $H_{aff}$.
As an $H_{aff}$-module, $H\otimes_{H_{aff}}\chi$ is isomorphic to a direct sum
of  copies of $\chi$ (Remark \ref{rema:twistedonT'}.iii).
A nonzero $H$-module which is a subquotient of
 $H\otimes_{H_{aff}}\chi$
 is therefore also  a direct sum
of  copies of $\chi$ as an $H_{aff}$-module.
Suppose that  $H\otimes_{H_{aff}}\chi$ has a nonzero supersingular subquotient. Then it has a
nonzero supersingular finitely generated subquotient. Since the latter admits a nonzero simple  quotient,   the $H$-module $H\otimes_{H_{aff}}\chi$ has a nonzero simple supersingular subquotient  $M$. This is not compatible with $M$ being a direct sum of  copies of $\chi$ as an $H_{aff}$-module, as proved
in   \cite{Oll} Lemma 5.12 when the root system is irreducible or in \cite{VigIII}  Corollary  6.13.

The proof is the same for right $H$-modules.
\end{proof}

Define the decreasing filtration
\begin{equation}\label{f:defifil}
   F^n H:=\oplus _{\ell(w)\geq n}k\tau_w \qquad \textrm{  for $n\geq 0$}
\end{equation}
of $H$ as a bimodule over itself.

\begin{lemma}\label{lemma:F1/Fm}
Under the hypothesis that $\mathbf G$ is semisimple simply connected with irreducible root system, we have:
\begin{itemize}
\item[i.] As an $H$-module on the left and on the right, $F^m H/  F^{m+1} H$, for any $m\geq 1$, is annihilated by $\mathfrak{J}$; in particular:
\begin{equation*}
  \mathfrak J^{m-1}\cdot F^1 H= F^1 H\cdot \mathfrak J^{m-1}\subset  F^{m} H \ .
\end{equation*}
\item[ii.] As an $H$-module on the left and on the right,   $(1-e_1)\cdot (F^0 H/  F^{1} H)$ is annihilated by $\mathfrak{J}$; in particular:
\begin{equation*}
  \mathfrak J^{m}\cdot [(1- e_1) \cdot F^0 H+ F^1H]\subset  F^{m} H \ .
\end{equation*}
\end{itemize}
\end{lemma}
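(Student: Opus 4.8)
The plan is to reduce both parts to the statement that a supersingular character of $H=H_{aff}$ is annihilated by $\mathfrak J$ (Lemma \ref{lemma:critsupersing}), by realising the bimodules in question as direct sums of such characters. First the reductions. Since $\mathbf G$ is semisimple, $X_*^{dom}(T)\cap(-X_*^{dom}(T))=\{0\}$, so $\mathfrak J$ is the augmentation ideal of the \emph{central} subalgebra $\mathcal Z^0(H)\cong k[X_*^{dom}(T)]$; hence the left- and right-module assertions coincide, and, supersingularity being unaffected by field extensions, I may assume $k[T^0/T^1]=\prod_\lambda ke_\lambda$. The two ``in particular'' clauses are formal: ``$\mathfrak J$ kills $F^kH/F^{k+1}H$'' means $\mathfrak J\cdot F^kH\subseteq F^{k+1}H$, whose iteration gives $\mathfrak J^{m-1}F^1H\subseteq F^mH$, and together with ``$\mathfrak J(1-e_1)F^0H\subseteq F^1H$'' this yields $\mathfrak J^m[(1-e_1)F^0H+F^1H]\subseteq F^mH$. (A one-line induction on $\ell(v)$ from \eqref{braid} and \eqref{quadratic} gives $\tau_v\tau_w\in F^{\ell(w)}H$, so each $F^nH$ really is a two-sided ideal and the subquotients below are $H$-bimodules.) So it remains to prove that $\mathfrak J$ annihilates $F^mH/F^{m+1}H$ for every $m\ge1$ and annihilates $(1-e_1)(F^0H/F^1H)$.

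For part (i) I would fix $m\ge1$ and decompose $\bar H_m:=F^mH/F^{m+1}H=\bigoplus_{\ell(\bar w)=m}M_{\bar w}$, where $\bar w$ runs over the elements of length $m$ of the (infinite) affine Weyl group $W_{aff}=W$ and $M_{\bar w}$ is spanned by the classes of those $\tau_w$, $w\in\widetilde W$, lying over $\bar w$. Using the braid relations and $\tau_{\tilde s}^2=-\theta_s\tau_{\tilde s}$ one computes, for $\ell(w)=m$ and $\omega\in\widetilde\Omega=T^0/T^1$, that $\tau_\omega\bar\tau_w=\bar\tau_{\omega w}$, that $\tau_{\tilde s}\bar\tau_w=0$ when $s$ is a left ascent of $\bar w$ (because then $\tau_{\tilde s}\tau_w=\tau_{\tilde s w}$ has length $m+1$), and that $\tau_{\tilde s}\bar\tau_w=-\theta_s\bar\tau_w$ when $s$ is a left descent of $\bar w$ (write $w=\tilde s\cdot\tilde s^{-1}w$ with lengths adding and use the quadratic relation), where $-\theta_s\in k[T^0/T^1]$. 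Hence each $M_{\bar w}$ is an $H$-submodule, it is the regular representation of $k[T^0/T^1]$ under the $\tau_\omega$, and the whole $H$-action on it factors through the commutative subalgebra $k[T^0/T^1]$. Splitting by the idempotents $e_\lambda$ therefore writes $M_{\bar w}$ as a direct sum of one-dimensional $H$-modules, the $\lambda$-component affording the character $\tau_\omega\mapsto\lambda(\omega)$ and, by \eqref{f:elthetas}, $\tau_{\tilde s}\mapsto-1$ if $s$ is a left descent of $\bar w$ and $\lambda$ is trivial on $\ima$, and $\tau_{\tilde s}\mapsto0$ otherwise. Since $\bar w\ne1$ has a left descent while, $W_{aff}$ being infinite, it also has a left ascent, this character is neither $\chi_{triv}$ (it sends some $\tau_{\tilde s}$ to $-1$ if $\lambda$ is trivial, and otherwise sends some $\tau_\omega$ to a value $\ne1$) nor $\chi_{sign}$ (it sends some $\tau_{\tilde s}$ to $0$); so, $\mathbf G$ being simply connected with irreducible root system, it is a supersingular character of $H_{aff}=H$ and is killed by $\mathfrak J$. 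Thus $\mathfrak J$ kills every $M_{\bar w}$, proving (i).

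For part (ii), $F^0H/F^1H$ is the algebra $k[T^0/T^1]$, annihilated by the two-sided ideal $F^1H$, so $(1-e_1)(F^0H/F^1H)=\bigoplus_{\lambda\ne\mathbf 1}e_\lambda(F^0H/F^1H)$ is a direct sum of the one-dimensional $H$-modules with character $\tau_\omega\mapsto\lambda(\omega)$, $\tau_{\tilde s}\mapsto0$; for $\lambda\ne\mathbf 1$ this is neither $\chi_{triv}$ (wrong value at some $\tau_\omega$) nor $\chi_{sign}$ (value $0\ne-1$ at $\tau_{\tilde s}$), hence supersingular and killed by $\mathfrak J$. The main obstacle — the step I expect to require genuine care — is the computation invoked in part (i): showing that modulo $F^{m+1}H$ the left action of each $\tau_{\tilde s}$ on the length-$m$ part of $H$ is either zero or multiplication by the element $-\theta_s$ of the commutative algebra $k[T^0/T^1]$, and in particular never mixes the $W_{aff}$-fibres of $\widetilde W\twoheadrightarrow W$. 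This is precisely where the characteristic-$p$ shape of the quadratic relation is used, and it is what forces $\bar H_m$ to be a sum of one-dimensional modules, all of them supersingular; the remaining input — that these characters avoid $\chi_{triv}$ and $\chi_{sign}$ — is then the short combinatorial observation that a nontrivial element of the infinite group $W_{aff}$ has both a descent and an ascent.
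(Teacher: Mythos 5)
Your proof is correct and follows essentially the paper's own argument: both exhibit $F^mH/F^{m+1}H$ and $(1-e_1)\cdot(F^0H/F^1H)$ as direct sums of one-dimensional modules spanned by the elements $e_\lambda\tau_{\tilde w}$, identify the resulting characters of $H=H_{aff}$ via the braid and quadratic relations together with \eqref{f:elthetas}, rule out $\chi_{triv}$ and $\chi_{sign}$ using a descent and an ascent of $\bar w$ (irreducibility, $m\geq 1$, $W_{aff}$ infinite), and conclude with Lemma \ref{lemma:critsupersing}. The only cosmetic difference is that you first observe that the action on each fibre $M_{\bar w}$ factors through the commutative algebra $k[T^0/T^1]$ and then split by the idempotents, which lets you bypass the paper's explicit appeal to \eqref{f:conjel} and Lemma \ref{lemma:normal} when checking that $e_\lambda\tau_{\tilde w}$ is an eigenvector.
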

\begin{proof}
In this proof we consider left $H$-modules. The arguments are the same for the structures of right modules. Recall that under the hypothesis of the lemma, we have $W = W_{aff}$ and $H= H_{aff}$. Furthermore, we may assume that $\mathbb{F}_q \subseteq k$ and that $\mathbf{G} \neq 1$.

i. We will show that $F^m H/  F^{m+1} H$, in fact, is a direct sum of supersingular characters. Since $\mathbb F_q\subseteq k$, a basis for $F^m H/  F^{m+1} H$  is given  by all $e_\lambda\tau_{\tilde w}$ for $w\in W$  with length $m$ and lift $\tilde w\in \widetilde W$ and all $\lambda\in\widehat {T^0/T^1}$ (notation in \S\ref{sec:idempotents}). For $s= s_{(\alpha, \mathfrak h)}\in S_{aff}$ we pick the  lift $\tilde s$ as in \eqref{f:ns}.
Using \eqref{f:conjel}, \eqref{braid} and  \eqref{quadratic}:
\begin{equation*}
  \tau_{\tilde s}\cdot e_\lambda  \tau_{\tilde w} =
  \begin{cases}
  e_{{}^{s\!}\lambda}  \, \tau_{\tilde s}^2\, \tau_{ \tilde s ^{-1}\tilde w}= - \theta_s \,e_{{}^{s\!}\lambda} \,\tau_{ \tilde w} & \textrm{ if $\ell(sw)=\ell(w)-1$,}\cr
  e_{{}^{s\!}\lambda} \tau_{\tilde s \tilde w}& \textrm{ if $\ell(sw)=\ell(w)+1$.}
  \end{cases}
\end{equation*}
So in  $  F^m H/ F^{m+1} H$,  we have, using  \eqref{f:elthetas}:
\begin{equation*}
  \tau_{\tilde s}\cdot  e_\lambda\tau_{\tilde w}=\left\lbrace\begin{array}{ll} 0 & \textrm{ if $\ell(sw)=\ell(w)-1$ and $\lambda\vert_{\ima}\neq 1$,}\cr
  -e_{{}^{s\!}\lambda}\tau_{\tilde w} & \textrm{ if $\ell(sw)=\ell(w)-1$ and  $\lambda\vert_{\ima}= 1$,}\\
  0& \textrm{ if $\ell(sw)=\ell(w)+1$.}
 \end{array}\right.
\end{equation*}
Using Lemma \ref{lemma:normal}, notice that if $\lambda$ is trivial on $\ima$, then ${}^s\lambda=\lambda$. This proves that $e_\lambda\tau_{\tilde w}$ supports the character $\chi: H_{aff}\rightarrow k$ defined by $\chi(\tau_t)= \lambda(t)$ and
\begin{equation*}
  \chi(\tau_{\tilde s})=\left\lbrace\begin{array}{ll} 0 & \textrm{ if $\ell(sw)=\ell(w)-1$ and $\lambda\vert_{\ima}\neq 1$ or if $\ell(sw)=\ell(w)+1$,}\cr
  -1 & \textrm{ if $\ell(sw)=\ell(w)-1$ and  $\lambda\vert_{\ima}= 1$} \end{array}\right.
\end{equation*}
for $s\in S_{aff}$. If there is $s_{(\alpha, \mathfrak h)}\in S_{aff}$ such that $\lambda\vert_{\ima}\neq 1$ then $\chi$ is supersingular (Remark \ref{rema:twistedonT'}). Otherwise $\lambda$ is trivial on $T^0/T^1$ and we want to check that $\chi$ is not a twisted sign or a twisted trivial character. This is because $m\geq 1$ and the root system of $\mathbf G$ is irreducible of rank $>0$. Therefore there are $s,s'\in S_{aff}$ such that $\ell(s w)=\ell(w)-1$ and $\ell(s'w)=\ell(w)+1$. Since $H_{aff}$ has only one irreducible component we see that $\chi$ is a supersingular character. By Lemma \ref{lemma:critsupersing}, the left $H$-module $F^m H/ F^{m+1 }H$ then is annihilated by $\mathfrak J$, which concludes the proof of i.

ii. Again we will show that $(1-e_1)\cdot (F^0 H/  F^{1} H)$, in fact, is  a direct sum of supersingular characters. A basis for $(1-e_1)\cdot (F^0 H/  F^{1} H)$  is given  by all $e_\lambda$ for all $\lambda\in\widehat {T^0/T^1}\setminus\{1\}$. But $\tau_{\tilde s} e_\lambda \in F^1 H$ by \eqref{braid}. This proves that $e_\lambda$ supports the character $\chi: H_{aff}\rightarrow k$ defined by $\chi(\tau_t)= \lambda(t)$ (see \eqref{f:elt}) and $ \chi(\tau_{\tilde s})=0$  for $s\in S_{aff}$. It is not a twisted sign or a twisted trivial character since $\lambda$ is nontrivial on $(T^0/T^1)' = T^0/T^1$. As the root system is irreducible, it is a supersingular character. Conclude using point i. and Lemma \ref{lemma:critsupersing}.
\end{proof}

\section{The $\Ext$-algebra}\label{sec:Ext}

\subsection{The definition}\label{subsec:def}

In order to introduce the algebra in the title we  again start from the compact induction $\mathbf{X} = \ind_I^G(1)$ of the trivial $I$-representation,  which lies in $\Mod(G)$. We form the graded $\Ext$-algebra
\begin{equation*}
  E^* := \Ext_{\Mod(G)}^*(\mathbf{X},\mathbf{X})^{\mathrm{op}}
\end{equation*}
over $k$ with the multiplication being the (opposite of the) Yoneda product. Obviously
\begin{equation*}
  H := E^0 = \End_{\Mod(G)}^*(\mathbf{X},\mathbf{X})^{\mathrm{op}}
\end{equation*}
is the usual pro-$p$ Iwahori-Hecke algebra over $k$. By using Frobenius reciprocity for compact induction and the fact that the restriction functor from $\Mod(G)$ to $\Mod(I)$ preserves injective objects we obtain the identification
\begin{equation}\label{f:frob}
  E^* = \Ext_{\Mod(G)}^*(\mathbf{X},\mathbf{X})^{\mathrm{op}} = H^*(I,\mathbf{X}) \ .
\end{equation}
The only part of the multiplicative structure on $E^*$ which is still directly visible on the cohomology $H^*(I,\mathbf{X})$ is the right multiplication by elements in $E^0 = H$, which is functorially induced by the right action of $H$ on $\mathbf{X}$. It is one of the main technical issues of this paper to make the full multiplicative structure visible on $H^*(I,\mathbf{X})$. We recall that for $* = 0$ the above identification is given by
\begin{align*}
  H & \xrightarrow{\; \cong \;} \mathbf{X}^I  \\
  \tau & \longmapsto (\mathrm{char}_I) \tau \ .
\end{align*}

\subsection{The technique}\label{subsec:technique}

The technical tool for studying the algebra $E^*$ is the $I$-equivariant decomposition
\begin{equation*}
  \mathbf{X} = \oplus_{w \in \widetilde{W}} \mathbf{X}(w)
\end{equation*}
introduced in section \ref{pro-p}. Noting that the cohomology of profinite groups commutes with arbitrary sums, we obtain
\begin{equation*}
  H^*(I,\mathbf{X}) = \oplus_{w \in \widetilde{W}} H^*(I,\mathbf{X}(w)) \ .
\end{equation*}
Similarly as we write $IwI$, since this double coset only depends on the coset $w \in N(T)/T^1$, we will silently abuse notation in the following whenever something only depends on the coset $w$. We have the isomorphism of $I$-representations
\begin{align*}
  \ind_I^{IwI}(1) & \xrightarrow{\;\cong\;} \ind_{I_w}^I(1) \\
  f & \longmapsto \phi_f(a) := f(aw) \ .
\end{align*}
This gives rise to the left hand cohomological isomorphism
\begin{equation*}
  H^*(I,\mathbf{X}(w)) \xrightarrow{\;\cong\;} H^*(I,\ind_{I_w}^I(1)) \xrightarrow{\;\cong\;} H^*(I_w,k)
\end{equation*}
which we may combine with the right hand Shapiro isomorphism. For simplicity we will call in the following the above composite isomorphism the Shapiro isomorphism and denote it by $\Sh_w$. Equivalently, it can be described as the composite map
\begin{equation}\label{f:Shapiro1}
  \Sh_w : H^*(I,\mathbf{X}(w)) \xrightarrow{\; \res \;} H^*(I_w,\mathbf{X}(w)) \xrightarrow{\; H^*(I_w,\ev_w) \;} H^*(I_w,k)
\end{equation}
where
\begin{align*}
   \ev_w : \mathbf{X}(w) & \longrightarrow k \\
      f & \longrightarrow f(w) \ .
\end{align*}
 We leave it as an exercise to the reader to check that the map
\begin{equation}\label{f:Shapiro-inverse}
  \Sh_w^{-1} : H^*(I_w,k) \xrightarrow{\; \mathrm{i}_w \;} H^*(I_w,\mathbf{X}(w)) \xrightarrow{\; \cores \;} H^*(I,\mathbf{X}(w)) \ ,
\end{equation}
where
\begin{align*}
   \mathrm{i}_w : k & \longrightarrow \mathbf{X}(w) \\
      a & \longrightarrow a \chara_{wI} \ ,
\end{align*}
is the inverse of the Shapiro isomorphism $\Sh_w$.

\subsection{The cup product}\label{subsec:cup-prod}

There is a naive product structure on the cohomology $H^*(I,\mathbf{X})$. By multiplying maps we obtain the $G$-equivariant map
\begin{align*}
  \mathbf{X} \otimes_k \mathbf{X} & \longrightarrow \mathbf{X} \\
  f \otimes f' & \longmapsto f f' \ .
\end{align*}
It gives rise to the cup product
\begin{equation}\label{f:cup}
  H^i(I, \mathbf{X}) \otimes_k H^j(I,\mathbf{X}) \xrightarrow{\; \cup \;} H^{i+j}(I,\mathbf{X}) \
\end{equation} which,
quite obviously, has the property that
\begin{equation}\label{f:orth}
  H^i(I, \mathbf{X}(v)) \cup H^j(I,\mathbf{X}(w)) = 0 \quad\text{whenever $v \neq w$}.
\end{equation}
On the other hand, since $\ev_{{w}}(f f') = \ev_{{w}}(f) \ev_{{w}}(f')$ and since the cup product is functorial and commutes with cohomological restriction maps, we have the commutative diagrams
\begin{equation}\label{f:cup+Sh}
  \xymatrix{
     H^i(I, \mathbf{X}(w)) \otimes_k H^j(I,\mathbf{X}(w)) \ar[d]_{\Sh_w \otimes \Sh_w} \ar[r]^-{\cup} & H^{i+j}(I,\mathbf{X}(w)) \ar[d]^{\Sh_w} \\
     H^i(I_w,k) \otimes_k H^j(I_w,k) \ar[r]^-{\cup} & H^{i+j}(I_w,k)   }
\end{equation}
for any $w \in \widetilde{W}$, where the bottom row is the usual cup product on the cohomology algebra $H^*(I_w,k)$.  In particular, we see that the cup product \eqref{f:cup} is anticommutative.

\section{Representing cohomological operations on resolutions}\label{sec:resolutions}

\subsection{The Shapiro isomorphism}\label{subsec:Shapiro}

The Shapiro isomorphism \eqref{f:Shapiro1} also holds for nontrivial coefficients provided we choose once and for all, as we will do in the following, a representative $\dot{w} \in N(T)$ for each $w \in \widetilde{W}$. Compact induction is an exact functor
\begin{align*}
  \ind_I^G : \Mod(I) & \longrightarrow \Mod(G) \\
   Y & \longmapsto \ind_I^G(Y) \ .
\end{align*}
Moreover, as before we have the decomposition $\ind_I^G(Y) = \oplus_{w \in \widetilde{W}} \ind_I^{IwI}(Y)$ and the isomorphism
\begin{align*}
  \ind_I^{IwI}(Y) & \xrightarrow{\;\cong\;} \ind_{I_{\dot{w}}}^I(\dot{w}_* \res_{I_{\dot{w}^{-1}}}^I (Y)) \\
  f & \longmapsto \phi_f(a) := f(a\dot{w})
\end{align*}
as $I$-representations. On cohomology we obtain the commutative diagram
\begin{equation}\label{f:Shapiro2}
  \xymatrix{
  H^*(I,\ind_I^{IwI}(Y)) \ar[r]^-{\cong} \ar[dr]_{\res} & H^*(I,\ind_{I_w}^I(\dot{w}_* \res_{I_{w^{-1}}}^I(Y)) \ar[r]^-{\cong} & H^*(I_w, \dot{w}_* Y)      \\
                &  H^*(I_w,\ind_I^{IwI}(Y))   \ar[ur]_{H^*(I_w,\ev_{\dot{w}})}             }
\end{equation}
in which $\ev_{\dot{w}}$ now denotes the evaluation map in $\dot{w}$ and in which the composite map in the top row is an isomorphism denoted by $\Sh_{\dot{w}}$.

To lift this to the level of complexes we first make the following observation.

\begin{lemma}\label{Sh-injective}
If $\mathcal{J}$ is an injective object in $\Mod(I)$ then $\ind_I^{IwI}(\mathcal{J})$, for any $w \in \widetilde{W}$, is an injective object in $\Mod(I)$ as well.
\end{lemma}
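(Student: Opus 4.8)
The plan is to reduce the statement to a standard fact about induced modules and an identification of $I_w$ with a conjugate subgroup of $I$. First I would recall the chain of isomorphisms already set up in the excerpt: we have $\ind_I^{IwI}(\mathcal{J}) \cong \ind_{I_{\dot w}}^I(\dot w_* \res^I_{I_{\dot w^{-1}}}(\mathcal{J}))$ as $I$-representations. So it suffices to show that for any open subgroup $I' \subseteq I$ (here $I' = I_{\dot w}$) and any injective object $\mathcal{J}'$ in $\Mod(I')$, the induced representation $\ind_{I'}^I(\mathcal{J}')$ is injective in $\Mod(I)$ — and then to check that $\dot w_* \res^I_{I_{\dot w^{-1}}}(\mathcal{J})$ is injective in $\Mod(I_{\dot w})$ whenever $\mathcal{J}$ is injective in $\Mod(I)$.

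For the first point, the key is that for an open subgroup $I' \subseteq I$ of a profinite group, compact induction $\ind_{I'}^I$ is right adjoint to restriction $\res^I_{I'}$ (not just left adjoint): since $I'$ has finite index is not needed, but $I'$ open in the profinite $I$ means $\ind_{I'}^I$ and the ``full'' smooth induction $\Ind_{I'}^I$ agree, or at least $\ind_{I'}^I$ is a right adjoint of the exact restriction functor. Having a right adjoint to an exact functor forces $\ind_{I'}^I$ to preserve injectives. So $\ind_{I'}^I(\mathcal{J}')$ is injective in $\Mod(I)$. For the second point, $\dot w_*(-)$ is an equivalence of categories $\Mod(I_{\dot w^{-1}}) \to \Mod(\dot w I_{\dot w^{-1}} \dot w^{-1}) = \Mod(I_{\dot w})$, hence preserves injectives; and $\res^I_{I_{\dot w^{-1}}}$ preserves injectives because its left adjoint $\ind_{I_{\dot w^{-1}}}^I$ is exact (again $I_{\dot w^{-1}}$ is open in $I$). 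Composing, $\dot w_* \res^I_{I_{\dot w^{-1}}}(\mathcal{J})$ is injective in $\Mod(I_{\dot w})$, and then $\ind_{I_{\dot w}}^I$ of it is injective in $\Mod(I)$; transporting back along the isomorphism above gives that $\ind_I^{IwI}(\mathcal{J})$ is injective in $\Mod(I)$.

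I would expect the main obstacle, or at least the only point requiring care, to be the assertion that compact induction from an open subgroup is a right adjoint of restriction and hence preserves injectives — this is the exact opposite of the adjunction used to define $\ften$ in the introduction (compact induction $\ind_I^G$ is left adjoint to restriction for arbitrary open $I \subseteq G$). The resolution is that $I$ here is \emph{profinite}, so $I' \subseteq I$ open is automatically of finite index in every compact open subgroup it is contained in, smooth induction and compact induction coincide, and the smooth induction functor $\Ind_{I'}^I$ is genuinely right adjoint to $\res^I_{I'}$. One should state this cleanly, perhaps as: for $I' \subseteq I$ open in a profinite group, $\ind_{I'}^I = \Ind_{I'}^I$ is exact and right adjoint to the exact functor $\res^I_{I'}$, so both functors preserve injectives. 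Once that lemma is in hand the proof is a two-line composition of the observations above.

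Alternatively, and perhaps more cheaply, one can avoid the adjunction discussion entirely by noting $I$ is profinite so $I' \subseteq I$ open has finite index; then $\ind_{I'}^I$ and $\mathrm{coind}_{I'}^I$ agree (finite index), $\mathrm{coind}_{I'}^I$ is right adjoint to the exact restriction, done. Either way the statement follows formally from ``a right adjoint of an exact functor preserves injectives'' together with the elementary fact that conjugation and the equivalences $\dot w_*$ preserve injectivity. I would present the argument in this order: state and prove the open-subgroup induction lemma; note $\dot w_*$ is an equivalence; assemble via the isomorphism $\ind_I^{IwI}(\mathcal{J}) \cong \ind_{I_{\dot w}}^I(\dot w_*\res^I_{I_{\dot w^{-1}}}\mathcal{J})$.
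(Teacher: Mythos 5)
Your proposal is correct and follows essentially the same route as the paper: the same isomorphism $\ind_I^{IwI}(\mathcal{J}) \cong \ind_{I_{\dot w}}^I(\dot w_* \res^I_{I_{\dot w^{-1}}}\mathcal{J})$, plus the three facts that restriction to an open subgroup, the equivalence $\dot w_*$, and induction from a finite-index open subgroup all preserve injectives. The only difference is that you prove the last fact via the induction-equals-coinduction adjunction argument, where the paper simply cites Vign\'eras I.5.9.b); both are fine.
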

\begin{proof}
We use the isomorphism $\ind_I^{IwI}(\mathcal{J}) \cong \ind_{I_w}^I(\dot{w}_* \res_{I_{w^{-1}}}^I (\mathcal{J}))$. As recalled before, the restriction functor to open subgroups preserves injective objects. The functor $\dot{w}_* : \Mod(I_{w^{-1}}) \xrightarrow{\simeq} \Mod(I_w)$ is an equivalence of categories and hence preserves injective objects. Finally, the induction functor from open subgroups of finite index also preserves injective objects (cf.\ \cite{Vig} I.5.9.b)).
\end{proof}

Let now $k \xrightarrow{\sim} \mathcal{I}^\bullet$ and $k \xrightarrow{\sim} \mathcal{J}^\bullet$ be any two injective resolutions in $\Mod(I)$ of the trivial representation. By Lemma \ref{Sh-injective} then $\mathbf{X}(w) \xrightarrow{\sim} \ind_I^{IwI}(\mathcal{J}^\bullet)$ is an injective resolution in $\Mod(I)$ as well. Hence
\begin{equation*}
  H^*(I,\mathbf{X}(w)) = \Hom_{K(I)}(\mathcal{I}^\bullet,\ind_I^{IwI}(\mathcal{J}^\bullet)[*]) \ ,
\end{equation*}
i.e., any cohomology class in $H^*(I,\mathbf{X}(w))$ is of the form $[\alpha^\bullet]$ for some homomorphism of complexes $\alpha^\bullet : \mathcal{I}^\bullet \longrightarrow \ind_I^{IwI}(\mathcal{J}^\bullet)[*]$ in $\Mod(I)$ (which is unique up to homotopy). Composition with the evaluation map gives rise to the homomorphism of injective complexes
\begin{equation*}
  \Sh_{\dot{w}}(\alpha^\bullet) := \ev_{\dot{w}} \circ \alpha^\bullet : \mathcal{I}^\bullet \longrightarrow \dot{w}_* \mathcal{J}^\bullet [*]
\end{equation*}
in $\Mod(I_w)$ whose cohomology class is $\Sh_w([\alpha^\bullet]) \in H^*(I_w,k) = \Hom_{K(I_w)}(\mathcal{I}^\bullet,\dot{w}_*\mathcal{J}^\bullet[*])$.

In fact it will be more convenient  later on to  use the following modified version of the Shapiro isomorphism. For this we assume that $k \xrightarrow{\sim} \mathcal{J}^\bullet$ is actually  an injective resolution in $\Mod(G)$ (and hence in $\Mod(I)$). Then the composite map
\begin{equation}\label{f:modified-Sh}
  \Sh'_{\dot{w}}(\alpha^\bullet) : \mathcal{I}^\bullet \xrightarrow{\Sh_{\dot{w}}(\alpha^\bullet)} \dot{w}_* \mathcal{J}^\bullet [*] \xrightarrow{y \mapsto \dot{w}y} \mathcal{J}^\bullet [*]
\end{equation}
is defined and is also a homomorphism of injective complexes in $\Mod(I_w)$ representing the same cohomology class as $\Sh_{\dot{w}}(\alpha^\bullet)$ but viewed in $H^*(I_w,k) = \Hom_{K(I_w)}(\mathcal{I}^\bullet,\mathcal{J}^\bullet[*])$, i.e., we have
\begin{equation}\label{f:Shapiro-compl}
  [\Sh'_{\dot{w}}(\alpha^\bullet)] = [\Sh_{\dot{w}}(\alpha^\bullet)] = \Sh_w([\alpha^\bullet]) \ .
\end{equation}
 The homomorphism $\alpha^\bullet$ can be reconstructed from $\Sh'_{\dot{w}}(\alpha^\bullet)$ by the formula
\begin{equation}\label{f:reconstruct}
  \alpha^\bullet(x)(a\dot{w} b) = b^{-1}(({^{a^{-1}}(}\alpha^\bullet(x)))(\dot{w})) = b^{-1}(\alpha^\bullet (a^{-1}x)(\dot{w})) = (a\dot{w}b)^{-1} a (\Sh'_{\dot{w}}(\alpha^\bullet)(a^{-1}x))
\end{equation}
for any $x \in \mathcal{I}^\bullet$ and any $a,b \in I$.

\subsection{The Yoneda product}\label{subsec:Yoneda}

Here we consider an injective resolution $\mathbf{X} \xrightarrow{\sim} \mathcal{I}^\bullet$ of our $G$-representation $\mathbf{X}$ in $\Mod(G)$. Then
\begin{equation*}
  E^* = \Ext_{\Mod(G)}^*(\mathbf{X},\mathbf{X}) = \Hom_{D(G)}(\mathbf{X},\mathbf{X}[*]) = \Hom_{K(I)}(\mathcal{I}^\bullet,\mathcal{I}^\bullet [*]) \ ,
\end{equation*}
and the Yoneda product is the obvious composition of homomorphisms of complexes (cf.\ \cite{Har} Cor.\ I.6.5). We recall, though, that our convention is to consider the opposite of this composition. For our purposes it is crucial to replace $\mathcal{I}^\bullet$ by a quasi-isomorphic complex constructed as follows.

We begin with an injective resolution $k \xrightarrow{\sim} \mathcal{J}^\bullet$ in $\Mod(I)$ of the trivial representation. Then $\mathbf{X} \xrightarrow{\sim} \ind_I^G(\mathcal{J}^\bullet) = \oplus_{w \in \widetilde{W}} \ind_I^{IwI}(\mathcal{J}^\bullet)$ is a resolution in $\Mod(G)$. By Lemma \ref{Sh-injective} each term $\ind_I^{IwI}(\mathcal{J}^\bullet)$ is injective in $\Mod(I)$. Since the cohomology functor $H^*(I,-)$ commutes with arbitrary sums in $\Mod(I)$ it follows that each term $\ind_I^G(\mathcal{J}^\bullet)$ is an $H^0(I,-)$-acyclic object in $\Mod(I)$. But by Frobenius reciprocity we have the isomorphism $\Hom_{\Mod(G)}(\ind_I^G(1),-) \cong H^0(I,-)$ of left exact functors on $\Mod(G)$. We conclude that $\mathbf{X} \xrightarrow{\sim} \ind_I^G(\mathcal{J}^\bullet)$ is a resolution of $\mathbf{X}$ in $\Mod(G)$ by $\Hom_{\Mod(G)}(\mathbf{X},-)$-acyclic objects. It follows that
\begin{align}\label{f:Ext}
  \Ext^*_{\Mod(G)} (\mathbf{X}, \mathbf{X}) & = h^*(\Hom_{\Mod(G)}(\mathbf{X}, \ind_I^G(\mathcal{J}^\bullet))) \\
   & = h^*(\ind_I^G(\mathcal{J}^\bullet)^I)  \nonumber \\
   & = \oplus_{w \in \widetilde{W}} \; h^*(\ind_I^{IwI}(\mathcal{J}^\bullet)^I) \cong \oplus_{w \in \widetilde{W}} \; h^*(\ind_{I_w}^I(\dot{w}_* \res_{I_{w^{-1}}}^I (\mathcal{J}^\bullet))^I) \nonumber \\
   & \cong \oplus_{w \in \widetilde{W}} \; h^*((\mathcal{J}^\bullet)^{I_{w^{-1}}}) \ .   \nonumber
\end{align}
In order to lift  these equalities to the level of complexes we consider the commutative diagram
\begin{equation*}
  \xymatrix{
    \Ext^*_{\Mod(G)} (\mathbf{X}, \mathbf{X})                       &   \\
    \Hom_{D(G)}(\ind_I^G(\mathcal{J}^\bullet), \ind_I^G(\mathcal{J}^\bullet)[*]) \ar@{=}[u]_{} \ar@{=}[r]^{} & h^*(\Hom_{\Mod(G)}(\mathbf{X}, \ind_I^G(\mathcal{J}^\bullet))) \\
    \Hom_{K(G)}(\ind_I^G(\mathcal{J}^\bullet), \ind_I^G(\mathcal{J}^\bullet)[*]) \ar[u]_{} &  \\
    \Hom_{K(I)}(\mathcal{J}^\bullet, \ind_I^G(\mathcal{J}^\bullet)[*]) \ar[u]_{\cong}^{\text{Frobenius reciprocity}} \ar[r]^{} & H^*(I,\mathbf{X})) \ar[uu]_{\text{Frobenius reciprocity}}^{\cong} \\
    \oplus_{w \in \widetilde{W}} \Hom_{K(I)}(\mathcal{J}^\bullet, \ind_I^{IwI}(\mathcal{J}^\bullet)[*]) \ar[u]^{} \ar[r]^-{\cong} & \oplus_{w \in \widetilde{W}} H^*(I,\mathbf{X}(w)). \ar@{=}[u]^{}  }
\end{equation*}
The isomorphism in the bottom row is a consequence of Lemma \ref{Sh-injective}. The computation \eqref{f:Ext} shows that the composite map in the first column is an isomorphism.

We point out that these two ways of representing $E^*$ by homomorphisms of complexes, through $\mathcal{I}^\bullet$ and through $\ind_I^G(\mathcal{J}^\bullet)$, are related by the unique (up to homotopy) homomorphism of complexes in $\Mod(G)$ which makes the diagram
\begin{equation*}
  \xymatrix@R=0.5cm{
                &         \ind_I^G(\mathcal{J}^\bullet) \ar@{-->}[dd]     \\
  \mathbf{X} \ar[ur]^-{\sim} \ar[dr]_-{\sim}                 \\
                &         \mathcal{I}^\bullet                 }
\end{equation*}
commutative.

Consider any classes $[\alpha^\bullet] \in H^i(I,\mathbf{X}(v))$ and $[\beta^\bullet] \in H^j(I,\mathbf{X}(w))$ represented by homomorphisms of complexes
\begin{equation*}
  \alpha^\bullet : \mathcal{J}^\bullet \longrightarrow \ind_I^{IvI}(\mathcal{J}^\bullet)[i] \subseteq \ind_I^G(\mathcal{J}^\bullet)[i]  \quad\text{and}\quad  \beta^\bullet : \mathcal{J}^\bullet \longrightarrow \ind_I^{IwI}(\mathcal{J}^\bullet)[j] \subseteq \ind_I^G(\mathcal{J}^\bullet)[j] \ ,
\end{equation*}
respectively. According to the above diagram these induce, by Frobenius reciprocity, homomorphisms of complexes $\tilde{\alpha}^\bullet : \ind_I^G(\mathcal{J}^\bullet) \longrightarrow \ind_I^G(\mathcal{J}^\bullet)[i]$ and $\tilde{\beta}^\bullet : \ind_I^G(\mathcal{J}^\bullet) \longrightarrow \ind_I^G(\mathcal{J}^\bullet)[j]$ which represent our original classes when viewed in $\Ext^i_{\Mod(G)} (\mathbf{X}, \mathbf{X})$ and $\Ext^j_{\Mod(G)} (\mathbf{X}, \mathbf{X})$, respectively. The Yoneda product of the latter is represented by the composite $\tilde{\beta}^\bullet[i] \circ \tilde{\alpha}^\bullet$, which we may write as $\tilde{\beta}^\bullet[i] \circ \tilde{\alpha}^\bullet = \tilde{\gamma}^\bullet$ for a homomorphism of complexes $\gamma^\bullet : \mathcal{J}^\bullet \longrightarrow \ind_I^G(\mathcal{J}^\bullet)[i+j]$. By writing out the Frobenius reciprocity isomorphism we see that
\begin{align}\label{f:tilde}
  \tilde{\beta}^\bullet : \ind_I^G(\mathcal{J}^\bullet) & \longrightarrow \ind_I^G(\mathcal{J}^\bullet)[j]  \\
                                                      f & \longmapsto \sum_{g \in G/I} g \beta^\bullet (f(g)) \ .   \nonumber
\end{align}
We deduce that, if $f$ has support in the subset $S \subseteq G/I$, then $\tilde{\beta}^\bullet (f) = \sum_{g \in S} g \beta^\bullet (f(g))$ has support in $S \cdot IwI$. Applying this to functions in the image of $\alpha^\bullet$, which are supported in $IvI$, we obtain that $\gamma^\bullet$, in fact, is a homomorphism of complexes
\begin{equation*}
  \gamma^\bullet : \mathcal{J}^\bullet \longrightarrow \ind_I^{IvI \cdot IwI}(\mathcal{J}^\bullet)[i+j] \ .
\end{equation*}
This shows that, if $\cdot$ denotes the multiplication on $H^*(I,\mathbf{X})$ induced by the opposite of the Yoneda product, then we have $[\alpha^\bullet] \cdot [\beta^\bullet] =  (-1)^{ij}  [\gamma^\bullet]$ and hence
\begin{equation}\label{f:support}
  H^i(I,\mathbf{X}(v)) \cdot H^j(I,\mathbf{X}(w)) \subseteq H^{i+j}(I, \ind_I^{IvI \cdot IwI}(1)) \ .
\end{equation}

\subsection{The cup product}\label{subsec:cup-pr2}

Let $U$ be any profinite group.  It is well known that, under the identification $H^*(U,k) = \Ext^*_{\Mod(U)}(k,k)$, the cup product pairing
\begin{equation*}
  H^i(U,k) \times H^j(U,k) \xrightarrow{\; \cup \;} H^{i+j}(U,k)
\end{equation*}
coincides with the Yoneda composition product
\begin{equation*}
  \Ext^i_{\Mod(U)} (k,k) \times \Ext^j_{\Mod(U)}(k,k) \xrightarrow{\; \circ \;} \Ext^{i+j}_{\Mod(U)}(k,k) \ .
\end{equation*}
For discrete groups this is, for example, explained in \cite{Bro} V\S4. The argument there uses projective resolutions and therefore cannot be generalized directly to profinite groups. Instead one may use the axiomatic approach in \cite{Lan} Chap.\ IV (see also p.\ 136).

We will use this in the following way. Let $k \xrightarrow{\sim} \mathcal{I}^\bullet$, $k \xrightarrow{\sim} \mathcal{J}^\bullet$, and $k \xrightarrow{\sim} \mathcal{K}^\bullet$ be three injective resolutions in $\Mod(U)$. Any two cohomology classes $\alpha \in H^i(U,k)$ and $\beta \in H^j(U,k)$ can be represented by homomorphisms of complexes $\alpha^\bullet : \mathcal{J}^\bullet \rightarrow \mathcal{K}^\bullet[i]$ and $\beta : \mathcal{I}^\bullet \rightarrow \mathcal{J}^\bullet[j]$, respectively. Then $\alpha \cup \beta \in H^{i+j}(U,k)$ is represented by the composite $\alpha^\bullet[j] \circ \beta^\bullet : \mathcal{I}^\bullet \rightarrow \mathcal{K}^\bullet[i+j]$.

\subsection{Conjugation}\label{subsec:conj}

The cohomology of profinite groups is functorial in pairs $(\xi,f)$ where $\xi : V' \rightarrow V$ is a continuous homomorphism of profinite groups and $f : M \rightarrow M'$ is a $k$-linear map between an $M$ in $\Mod(V)$ and an $M'$ in $\Mod(V')$ such that
\begin{equation*}
  f(\xi(g) m) = g f(m)  \qquad\text{for any $g \in V$ and $m \in M$}.
\end{equation*}
One method to construct the corresponding map on cohomology $(\xi,f)^* : H^i(V,M) \rightarrow H^i(V',M')$ proceeds as follows. We pick injective resolutions $M \xrightarrow{\sim} \mathcal{I}^\bullet_M$ in $\Mod(V)$ and $M' \xrightarrow{\sim} \mathcal{I}^\bullet_{M'}$ in $\Mod(V')$. By viewing, via $\xi$, $M \xrightarrow{\sim} \mathcal{I}^\bullet_M$ as a resolution in $\Mod(V')$ we see that $f$ extends to a homomorphism of complexes $f^\bullet : \mathcal{I}^\bullet_M \rightarrow \mathcal{I}^\bullet_{M'}$ such that
\begin{equation}\label{f:compatible}
  f^i(\xi(g) x) = g f^i(x)  \qquad\text{for any $i \geq 0$, $g \in V$, and $x \in \mathcal{I}^i_{M}$}.
\end{equation}
Then
\begin{align*}
  (\xi,f)^* : H^i(V,M) = \Hom_{K(V)}(k,\mathcal{I}^\bullet_M[i]) & \longrightarrow \Hom_{K(V')}(k,\mathcal{I}^\bullet_{M'}[i]) = H^i(V',M') \\
  \alpha^\bullet & \longmapsto f^\bullet[i] \circ \alpha^\bullet \ .
\end{align*}

We are primarily interested in the case where $M = k$ and $M' = k$ are the trivial representations, $f = \id_k$, and $\xi$ is an isomorphism. We simply write $\xi^* := (\xi,\id_k)^*$ in this case. We may then  take $\mathcal{I}^\bullet_{M'} := \xi^* \mathcal{I}^\bullet_k$ to be $\mathcal{I}^\bullet_k$ but with $V'$ acting through $\xi$ and $f^\bullet := \id_{\mathcal{I}^\bullet_k}$. Let $k \xrightarrow{\sim} \mathcal{I}^\bullet$ be another injective resolution in $\Mod(V)$. We have the commutative diagram
\begin{equation*}
  \xymatrix{
    \Hom_{K(V)}(k,\mathcal{I}^\bullet_k[i]) \ar[rr]^-{\alpha^\bullet  \mapsto \alpha^\bullet} && \Hom_{K(V')}(k,\xi^* \mathcal{I}^\bullet_k[i])  \\
    \Hom_{K(V)}(\mathcal{I}^\bullet,\mathcal{I}^\bullet_k[i]) \ar[u]^{\cong} \ar[rr]^-{\alpha^\bullet  \mapsto \alpha^\bullet} && \Hom_{K(V')}(\xi^*\mathcal{I}^\bullet,\xi^*\mathcal{I}^\bullet_k[i]). \ar[u]_\cong  }
\end{equation*}
In other words, if the cohomology class $\alpha \in H^i(V,k)$ is represented by the homomorphism of complexes $\alpha^\bullet : \mathcal{I}^\bullet \rightarrow \mathcal{I}^\bullet_k[i]$, then its image $\xi^* \alpha \in H^i(V',k)$ is represented by
\begin{equation}\label{f:conj0}
  \xi^* \alpha^\bullet := \alpha^\bullet : \xi^* \mathcal{I}^\bullet \rightarrow \xi^* \mathcal{I}^\bullet_k[i] \quad\text{(viewed as a $V'$-equivariant homomorphism via $\xi$)}.
\end{equation}

A specific instance of this situation is the following. Assume that the profinite group $V$ is a subgroup of some topological group $H$, let $h \in H$ be a fixed element, $V' := hVh^{-1}$, and $\xi :V' \rightarrow V$ be the isomorphism given by conjugation by $h^{-1}$. We then write
\begin{equation*}
  h_* = (h^{-1})^* := \xi^*: H^i(V,k) \longrightarrow H^i(hVh^{-1},k)
\end{equation*}
for the map on cohomology and $h_* \alpha^\bullet$ for the representing homomorphisms. We suppose now that $V$ is open in $H$, in which case there is the following alternative description. We choose injective resolutions $k \xrightarrow{\sim} \mathcal{I}^\bullet$ and $k \xrightarrow{\sim} \mathcal{J}^\bullet$ in $\Mod(H)$. Then they are also  injective resolutions in $\Mod(V)$ and $\Mod(V')$, so that we may take $\mathcal{I}^\bullet_k := \mathcal{J}^\bullet$. The map $f^\bullet : \mathcal{J}^\bullet \xrightarrow{\; h \cdot \;} \mathcal{J}^\bullet$ satisfies the condition \eqref{f:compatible}, and we obtain
\begin{align*}
   h_*: H^i(V,k) = \Hom_{K(V)}(k,\mathcal{J}^\bullet[i]) & \longrightarrow \Hom_{K(hVh^{-1})}(k,\mathcal{J}^\bullet[i]) = H^i(hVh^{-1},k) \\
  \alpha^\bullet & \longmapsto h \alpha^\bullet \ .
\end{align*}
This time one checks that the diagram
\begin{equation*}
  \xymatrix{
    \Hom_{K(V)}(k,\mathcal{J}^\bullet[i]) \ar[rr]^-{\alpha^\bullet  \mapsto h \alpha^\bullet} && \Hom_{K(hVh^{-1})}(k,\mathcal{J}^\bullet[i])  \\
    \Hom_{K(V)}(\mathcal{I}^\bullet,\mathcal{J}^\bullet[i]) \ar[u]^{\cong} \ar[rr]^-{\alpha^\bullet  \mapsto h \alpha^\bullet h^{-1}} && \Hom_{K(hVh^{-1})}(\mathcal{I}^\bullet,\mathcal{J}^\bullet[i]) \ar[u]_\cong  }
\end{equation*}
is commutative. We conclude that, if the cohomology class $\alpha \in H^i(V,k)$ is represented by the homomorphism of complexes $\alpha^\bullet : \mathcal{I}^\bullet \rightarrow \mathcal{J}^\bullet[i]$, then its image $h_* \alpha \in H^i(hVh^{-1},k)$ is represented by
\begin{equation}\label{f:conj1}
  {^{h_*} \alpha^\bullet} := h\alpha^\bullet h^{-1} : \mathcal{I}^\bullet \rightarrow \mathcal{J}^\bullet[i] \ .
\end{equation}

\subsection{The corestriction}\label{subsec:cores}

Let $U$ be a profinite group with open subgroup $V \subseteq U$ and let $M$ be in $\Mod(U)$. In this situation we have the corestriction map $\cores^V_U : H^*(V,M) \rightarrow H^*(U,M)$. It can be constructed as follows (cf.\ \cite{NSW} I\S5.4). Let $M \xrightarrow{\sim} \mathcal{I}^\bullet_M$ be an injective resolution in $\Mod(U)$. Then
\begin{align*}
   \cores^V_U : H^i(V,M) = \Hom_{K(V)}(k,\mathcal{I}^\bullet_M[i]) & \longrightarrow \Hom_{K(U)}(k,\mathcal{I}^\bullet_M[i]) = H(U,M) \\
  \alpha^\bullet & \longmapsto \sum_{g \in U/V} g \alpha^\bullet \ .
\end{align*}
For a variant of this, which we will need, let $k \xrightarrow{\sim} \mathcal{I}^\bullet$ be an injective resolution in $\Mod(U)$. One easily checks that the diagram
\begin{equation*}
  \xymatrix{
    \Hom_{K(V)}(k,\mathcal{I}^\bullet_M[i]) \ar[rrr]^-{\alpha^\bullet  \mapsto \sum_{g \in U/V} g\alpha^\bullet} &&& \Hom_{K(U)}(k,\mathcal{I}^\bullet_M[i])  \\
    \Hom_{K(V)}(\mathcal{I}^\bullet,\mathcal{I}^\bullet_M[i]) \ar[u]^{\cong} \ar[rrr]^-{\alpha^\bullet  \mapsto \sum_{g \in U/V} g\alpha^\bullet g^{-1}} &&& \Hom_{K(U)}(\mathcal{I}^\bullet,\mathcal{I}^\bullet_M[i]). \ar[u]_\cong  }
\end{equation*}
is commutative. This means that, if the cohomology class $\alpha \in H^i(V,M)$ is represented by the homomorphism of complexes $\alpha^\bullet : \mathcal{I}^\bullet \rightarrow \mathcal{I}^\bullet_M[i]$, then its image $\cores^V_U(\alpha) \in H^i(U,M)$ is represented by
\begin{equation}\label{f:cores}
  \sum_{g \in U/V} g\alpha^\bullet g^{-1} : \mathcal{I}^\bullet \rightarrow \mathcal{I}^\bullet_M[i] \ .
\end{equation}

\subsection{Basic properties\label{sec:basicprop}}

 For later reference we record from \cite{NSW} Prop.\ 1.5.4 that on cohomology restriction as well as corestriction commute with conjugation and from \cite{NSW} Prop.\ 1.5.3(iv) that the projection formulas
\begin{equation*}
  \cores^V_U(\alpha \cup \res^U_V(\beta)) = \cores^V_U(\alpha) \cup \beta \quad\text{and}\quad \cores^V_U(\res^U_V(\beta) \cup \alpha) = \beta \cup \cores^V_U(\alpha)
\end{equation*}
hold when $V$ is an open subgroup of the profinite group $U$ and $\alpha \in H^*(V,M)$, $\beta \in H^*(U,M)$.

\section{The product  in $E^*$}\label{sec:Yoneda-cup}

\subsection{A technical formula relating the Yoneda and cup products}\label{subsec:tech}

We fix classes $[\alpha^\bullet] \in H^i(I,\mathbf{X}(v))$ and $[\beta^\bullet] \in H^j(I,\mathbf{X}(w))$ represented by homomorphisms of complexes
\begin{equation*}
  \alpha^\bullet : \mathcal{J}^\bullet \longrightarrow \ind_I^{IvI}(\mathcal{J}^\bullet)[i]   \quad\text{and}\quad  \beta^\bullet : \mathcal{J}^\bullet \longrightarrow \ind_I^{IwI}(\mathcal{J}^\bullet)[j] \ ,
\end{equation*}
respectively. Here we always take $k \xrightarrow{\sim} \mathcal{I}^\bullet$ and $k \xrightarrow{\sim} \mathcal{J}^\bullet$ to be injective resolutions in $\Mod(G)$ (and hence in $\Mod(I)$). By \eqref{f:tilde} and \eqref{f:support} their Yoneda product $[\gamma^\bullet] := (-1)^{ij} [\alpha^\bullet] \cdot [\beta^\bullet]$ is represented by the homomorphism
\begin{align*}
  \gamma^\bullet : \mathcal{J}^\bullet & \longrightarrow \ind_I^{IvI \cdot IwI}(\mathcal{J}^\bullet)[i+j]   \\
  x & \longmapsto \sum_{g \in IvI/I} g \beta^\bullet[i] (\alpha^\bullet (x)(g)) \ .
\end{align*}
In fact, we introduce, for any $u \in \widetilde{W}$ such that $IuI \subseteq IvI \cdot IwI$, the homomorphism
\begin{equation*}
  \gamma^\bullet_u(-) := \gamma^\bullet(-)_{| IuI} : \mathcal{J}^\bullet \longrightarrow \ind_I^{IuI}(\mathcal{J}^\bullet)[i+j] \ .
\end{equation*}
Then
\begin{equation}\label{f:careful-ij}
  (-1)^{ij} [\alpha^\bullet] \cdot [\beta^\bullet] = \sum_{IuI \subseteq IvI \cdot IwI} [\gamma^\bullet_u] \ .
\end{equation}
Our goal here is to give a formula for the class $[\Sh'_{\dot{u}}(\gamma^\bullet_u)] = \Sh_u([\gamma^\bullet_u]) \in H^{* + i + j}(I_u,k)$ (cf.\ \eqref{f:Shapiro-compl}) in terms of group cohomological operations. We fix throughout a $u \in \widetilde{W}$ such that $IuI \subseteq IvI \cdot IwI$.

\begin{remark}\label{rem:bij}
The map
\begin{align*}
  \{a \in I/I_v : v^{-1} a u \in IwI\} & \xrightarrow{\; \simeq \;} I_{v^{-1}} \backslash (v^{-1} Iu \cap IwI) \\
  a & \longmapsto v^{-1} a^{-1} \dot{u}
\end{align*}
is a well defined bijection.
\end{remark}
\begin{proof}
The map is well defined since $v^{-1} I_v = I_{v^{-1}} v^{-1}$. It  is obviously  surjective. For injectivity suppose that $I_{v^{-1}} v^{-1} a \dot{u} = I_{v^{-1}} v^{-1} b \dot{u}$ for some $a,b \in I$. Then $v^{-1} I_v a^{-1} = v^{-1} I_v b^{-1}$ and hence $a I_v = b I_v$.
\end{proof}

Using the above formula for $\gamma^\bullet$ and Remark \ref{rem:bij} we compute
\begin{align}\label{f:tech1}
  \Sh'_{\dot{u}}(\gamma^\bullet_u)(x) & = \dot{u} (\gamma^\bullet_u(x)(\dot{u})) = \dot{u} (\gamma^\bullet(x)(\dot{u})) = \dot{u} \big( \big(\sum_{a \in I/I_v} av \beta^\bullet[i] (\alpha^\bullet(x)(av)) \big)(\dot{u}) \big)   \\  \nonumber
   & = \sum_{a \in I/I_v} \dot{u} ( \beta^\bullet[i] (\alpha^\bullet(x)(av)) (v^{-1} a^{-1} \dot{u}) )   \\  \nonumber
   & = \sum_{a \in I/I_v, v^{-1}a^{-1}u \in IwI} \dot{u} (\beta^\bullet[i] (\alpha^\bullet(x)(av)) (v^{-1} a^{-1} \dot{u}) )   \\   \nonumber
   & = \sum_{h \in I_{v^{-1}} \backslash (v^{-1} Iu \cap IwI)} \dot{u} (\beta^\bullet[i] (\alpha^\bullet(x)(\dot{u} h^{-1}))(h)) \ .
\end{align}
We fix, for the moment, an element $h \in v^{-1} Iu \cap IwI$ written as $h = c \dot{w} d = \dot{v}^{-1} a^{-1} \dot{u}$ with $a,c,d \in I$ and put
\begin{align*}
  \Gamma^\bullet_{\dot{u},h}(x) & := \dot{u} (\beta^\bullet[i] (\alpha^\bullet(x)(\dot{u} h^{-1}))(h)) = \dot{u} (\beta^\bullet[i] (\alpha^\bullet(x)(a \dot{v}))(c \dot{w} d)) \\
   & = \dot{u}h^{-1} ({^{c_*}\Sh'_{\dot{w}}(\beta^\bullet[i])} (\alpha^\bullet(x)(a \dot{v})))  \\
   & = \dot{u}h^{-1} ({^{c_*}\Sh'_{\dot{w}}(\beta^\bullet[i])} (\dot{v}^{-1}a^{-1} ({^{a_*}\Sh'_{\dot{v}}(\alpha^\bullet)}(x))))  \\
   & = \dot{u}h^{-1}\dot{v}^{-1} a^{-1} ({^{(a\dot{v})_* c_*}\Sh'_{\dot{w}}(\beta^\bullet[i])}  ({^{a_*}\Sh'_{\dot{v}}(\alpha^\bullet)}(x)))   \\
   & = {^{(a\dot{v}c)_*}\Sh'_{\dot{w}}(\beta^\bullet[i])}  ({^{a_*}\Sh'_{\dot{v}}(\alpha^\bullet)}(x))  \ .
\end{align*}
In the second and third line we have used the reconstruction formula \eqref{f:reconstruct} for $\beta^\bullet$ and $\alpha^\bullet$, respectively, as well as \eqref{f:conj1}.

\begin{lemma}\phantomsection\label{lem:bij}
\begin{itemize}
  \item[i.] In the commutative diagram of surjective projection maps
\begin{equation*}
  \xymatrix@R=0.5cm{
  I_{v^{-1}} \backslash (v^{-1} Iu \cap IwI) \ar[dd] \ar[dr]            \\
                & I_{v^{-1}} \backslash (v^{-1} Iu \cap IwI)/I_{u^{-1}} \ar[dl]         \\
  I_{v^{-1}} \backslash (v^{-1} IuI \cap IwI)/I                 }
\end{equation*}
  the lower oblique arrow is bijective.
  \item[ii.] For $h \in v^{-1} Iu \cap IwI$ the map $b \mapsto I_{v^{-1}} hb$ from the set $(I_{u^{-1}} \cap h^{-1} I h) \backslash I_{u^{-1}}$ to the fiber of the projection map $I_{v^{-1}} \backslash (v^{-1} Iu \cap IwI) \twoheadrightarrow I_{v^{-1}} \backslash (v^{-1} Iu \cap IwI)/I_{u^{-1}}$ in the point $I_{v^{-1}} h I_{u^{-1}}$ is a bijection.

\end{itemize}
\end{lemma}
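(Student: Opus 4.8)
The plan is to treat this as a purely group-theoretic statement about the double cosets built from the open subgroups $I_v = I \cap vIv^{-1}$, $I_{v^{-1}} = v^{-1}I_v v = I \cap v^{-1}Iv$ and $I_{u^{-1}} = I \cap u^{-1}Iu$, the only inputs being the identity $v^{-1}I_v = I_{v^{-1}}v^{-1}$ and the observation that conjugating $u$ by an element $b \in I$ keeps one inside $I$ precisely when $b \in u^{-1}Iu$. First I would record two bookkeeping facts that make all the arrows in the diagram meaningful and surjective: (a) the subset $v^{-1}Iu \cap IwI$ of $G$ is stable under left multiplication by $I_{v^{-1}}$ and right multiplication by $I_{u^{-1}}$ — for the latter, if $b \in I_{u^{-1}}$ then $ub = (ubu^{-1})u$ with $ubu^{-1} \in I$, so $(v^{-1}au)b \in v^{-1}Iu$, while stability of $IwI$ under right translation by $I \supseteq I_{u^{-1}}$ is clear; and (b) $(v^{-1}Iu \cap IwI)\cdot I = v^{-1}IuI \cap IwI$, because if $x = v^{-1}aub \in v^{-1}IuI \cap IwI$ then $xb^{-1} = v^{-1}au$ still lies in $IwI = IwI \cdot I$. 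The standing hypothesis $IuI \subseteq IvI \cdot IwI$ is used only to ensure these sets are nonempty; it plays no further role.

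For part i, the diagram commutes and, by (b), the vertical projection $I_{v^{-1}}\backslash(v^{-1}Iu \cap IwI) \twoheadrightarrow I_{v^{-1}}\backslash(v^{-1}IuI \cap IwI)/I$ is surjective; since it factors through the lower oblique arrow, that arrow is surjective, and the content is injectivity. I would argue: suppose $h,h' \in v^{-1}Iu \cap IwI$ have the same image in the bottom, i.e.\ $h' = \varepsilon h b$ with $\varepsilon \in I_{v^{-1}}$ and $b \in I$, and show $b \in I_{u^{-1}}$. Writing $h = v^{-1}au$, $h' = v^{-1}a'u$ with $a,a' \in I$ and $\varepsilon = v^{-1}\varepsilon_0 v$ with $\varepsilon_0 \in I_v$, the relation reduces to $\varepsilon_0^{-1}a'u = aub$, hence $b = u^{-1}(a^{-1}\varepsilon_0^{-1}a')u$ with $a^{-1}\varepsilon_0^{-1}a' \in I$; therefore $b \in I \cap u^{-1}Iu = I_{u^{-1}}$ and $h' \in I_{v^{-1}}hI_{u^{-1}}$, as needed. (Notably this uses only $h,h' \in v^{-1}Iu$, not the condition $\in IwI$.)

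For part ii, fix $h \in v^{-1}Iu \cap IwI$. By (a) the fiber over $I_{v^{-1}}hI_{u^{-1}}$ of the projection is precisely $\{I_{v^{-1}}hb : b \in I_{u^{-1}}\}$, so $b \mapsto I_{v^{-1}}hb$ is onto it, and it remains to identify when two elements have the same image. I would compute: $I_{v^{-1}}hb = I_{v^{-1}}hb'$ iff $h\eta h^{-1} \in I_{v^{-1}}$ with $\eta := bb'^{-1} \in I_{u^{-1}}$. Here $h\eta h^{-1} \in v^{-1}Iv$ holds automatically, since writing $h = v^{-1}au$ gives $v h\eta h^{-1}v^{-1} = a(u\eta u^{-1})a^{-1} \in I$ because $\eta \in u^{-1}Iu$ and $a \in I$; thus the condition collapses to $h\eta h^{-1} \in I$, equivalently $\eta \in h^{-1}Ih$. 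Together with $\eta \in I_{u^{-1}}$ this reads $bb'^{-1} \in I_{u^{-1}}\cap h^{-1}Ih$, i.e.\ $b,b'$ lie in a common right coset of $I_{u^{-1}}\cap h^{-1}Ih$ in $I_{u^{-1}}$, so the map descends to the asserted bijection from $(I_{u^{-1}}\cap h^{-1}Ih)\backslash I_{u^{-1}}$ onto the fiber.

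I do not expect a serious obstacle: the statement is elementary, and the care needed is confined to (i) verifying the stability and product facts (a)–(b) so that the double-coset spaces and the fibers are genuinely well defined, and (ii) keeping left versus right cosets straight throughout. The single recurring idea — used both to force $b \in I_{u^{-1}}$ in part i and to make the "$v^{-1}Iv$-half" of the membership condition automatic in part ii — is that an element of $I$ conjugated by $u$ (or by $h \in v^{-1}Iu$) remains in $I$ exactly under the relevant conjugate-subgroup condition.
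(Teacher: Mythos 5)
Your proposal is correct and follows essentially the same route as the paper: part i is the same computation showing that the element $b$ relating $h$ and $h'$ is forced into $I\cap u^{-1}Iu=I_{u^{-1}}$ by conjugating through $u$, and part ii is the same identification $h^{-1}I_{v^{-1}}h\cap I_{u^{-1}}=I_{u^{-1}}\cap h^{-1}Ih$ (the paper computes $h^{-1}v^{-1}Ivh=u^{-1}Iu$ directly, which is your observation that the $v^{-1}Iv$-half of the condition is automatic). The only difference is that you spell out the stability facts (a)--(b) making the double-coset spaces well defined, which the paper leaves implicit.
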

\begin{proof}
i. Let $I_{v^{-1}} h I = I_{v^{-1}} h' I$ with $h = \dot{v}^{-1} a^{-1} \dot{u}$, $h' = \dot{v}^{-1} a'^{-1} \dot{u}$, and $a,a' \in I$. Then $I_v a^{-1} \dot{u} I = I_v a'^{-1} \dot{u} I$ and hence $a'^{-1} = A a^{-1} \dot{u} B \dot{u}^{-1}$ for some $A \in I_v$ and $B \in I$. It follows that $B \in I_{u^{-1}}$ and $h' = \dot{v}^{-1} A a^{-1} \dot{u} B \dot{u}^{-1} \dot{u} = (\dot{v}^{-1} A \dot{v}) hB \in I_{v^{-1}} h I_{u^{-1}}$.

ii. The equality $I_{v^{-1}} h b = I_{v^{-1}} h$ for some $b \in I_{u^{-1}}$ is equivalent to
\begin{equation*}
  b \in h^{-1}I_{v^{-1}} h = h^{-1} I h \cap h^{-1} v^{-1} I vh = h^{-1} I h \cap u^{-1} I u \ .
\end{equation*}
But the latter is equivalent to $b \in I \cap u^{-1} I u \cap h^{-1} I h = I_{u^{-1}} \cap h^{-1} I h$.
\end{proof}

Coming back to $\Gamma^\bullet_{\dot{u},h}$ we note that, for $b \in I_{u^{-1}}$, we have $hb = cw(db) = \dot{v}^{-1} (\dot{u} b^{-1} \dot{u}^{-1} a)^{-1} \dot{u}$, where $c, db, \dot{u} b^{-1} \dot{u}^{-1} a \in I$. It follows that
\begin{equation}\label{f:equiv}
  \Gamma^\bullet_{\dot{u},hb}(x) = \dot{u}b^{-1} \dot{u}^{-1} \Gamma^\bullet_{\dot{u},h}(\dot{u} b\dot{u}^{-1}x) \ .
\end{equation}
By inserting \eqref{f:equiv} into \eqref{f:tech1} and by using Lemma \ref{lem:bij} we obtain
\begin{align}\label{f:tech2}
  \Sh'_{\dot{u}}(\gamma^\bullet_u)(x) & = \sum_{h \in I_{v^{-1}} \backslash (v^{-1} Iu \cap IwI)} \Gamma^\bullet_{\dot{u},h}(x)    \\   \nonumber
   & = \sum_{h \in I_{v^{-1}} \backslash (v^{-1} Iu \cap IwI)/I_{u^{-1}}}  \sum_{b \in (I_{u^{-1}} \cap h^{-1} I h) \backslash I_{u^{-1}}} \dot{u} b^{-1} \dot{u}^{-1} \Gamma^\bullet_{\dot{u},h}(\dot{u} b\dot{u}^{-1}x)  \\ \nonumber
   & = \sum_{h \in I_{v^{-1}} \backslash (v^{-1} Iu \cap IwI)/I_{u^{-1}}}  \sum_{b \in (I_u \cap \dot{u}h^{-1} I h\dot{u}^{-1}) \backslash I_u}  b^{-1} \Gamma^\bullet_{\dot{u},h}(bx)  \ .
\end{align}
Above and in the following every summation over $h$ is understood to be over a chosen set of representatives in $G$ of the respective double cosets. It also follows from \eqref{f:equiv} that
\begin{equation*}
 b \Gamma^\bullet_{\dot{u},h}(-) = \Gamma^\bullet_{\dot{u},h}(b -)  \qquad\text{for any $b \in I_u \cap \dot{u}h^{-1} I h\dot{u}^{-1}$}.
\end{equation*}
This says that $\Gamma^\bullet_{\dot{u},h} : \mathcal{J}^\bullet \longrightarrow \mathcal{J}^\bullet[* + i + j]$ is a homomorphism of injective complexes in $\Mod(I_u \cap \dot{u}h^{-1} I h\dot{u}^{-1})$ and therefore defines a cohomology class $[\Gamma^\bullet_{\dot{u},h}] \in H^{* + i + j}(I_u \cap \dot{u}h^{-1} I h\dot{u}^{-1},k)$. By \eqref{f:cores} the equality \eqref{f:tech2} then gives rise on cohomology to the equality
\begin{equation}\label{f:tech3}
  \Sh_u([\gamma^\bullet_u]) = [\Sh'_{\dot{u}}(\gamma^\bullet_u)] = \sum_{h \in I_{v^{-1}} \backslash (v^{-1} Iu \cap IwI)/I_{u^{-1}}} \mathrm{cores}^{I_u \cap \dot{u}h^{-1} I h\dot{u}^{-1}}_{I_u} ([\Gamma^\bullet_{\dot{u},h}])
\end{equation}
in $H^{*+i+j}(I_u,k)$.

We recall that $h = c \dot{w} d = \dot{v}^{-1} a^{-1} \dot{u}$ with $a,c,d \in I$ and
\begin{align*}
  \Gamma^\bullet_{\dot{u},h}(x) & = {^{(a\dot{v}c)_*}\Sh'_{\dot{w}}(\beta^\bullet[i])}  ({^{a_*}\Sh'_{\dot{v}}(\alpha^\bullet)}(x)) \ .
\end{align*}
Note that both groups $a I_v a^{-1} = I \cap \dot{u}h^{-1} I h\dot{u}^{-1}$ and $(a\dot{v}c) I_w (a\dot{v}c)^{-1} = u I u^{-1} \cap \dot{u}h^{-1} I h\dot{u}^{-1}$ contain $I_u \cap \dot{u}h^{-1} I h\dot{u}^{-1}$; in fact the latter is the intersection of the former two. Therefore the above identity should, more precisely, be written as
\begin{equation*}
  \Gamma^\bullet_{\dot{u},h} = \res^{u I u^{-1} \cap \dot{u}h^{-1} I h\dot{u}^{-1}}_{I_u \cap \dot{u}h^{-1} I h\dot{u}^{-1}} \big({^{(a\dot{v}c)_*}\Sh'_{\dot{w}}(\beta^\bullet[i])}\big) \circ \res^{I \cap \dot{u}h^{-1} I h\dot{u}^{-1}}_{I_u \cap \dot{u}h^{-1} I h\dot{u}^{-1}} \big( {^{a_*}\Sh'_{\dot{v}}(\alpha^\bullet)} \big) \ .
\end{equation*}
Using Subsection \ref{subsec:cup-pr2} as well as \eqref{f:Shapiro-compl} we deduce that on cohomology classes we have the equality
\begin{equation}\label{f:tech4}
  [\Gamma^\bullet_{\dot{u},h}] = \res^{u I u^{-1} \cap \dot{u}h^{-1} I h\dot{u}^{-1}}_{I_u \cap \dot{u}h^{-1} I h\dot{u}^{-1}} \big((a\dot{v}c)_*\Sh_{w}([\beta^\bullet])\big) \cup \res^{I \cap \dot{u}h^{-1} I h\dot{u}^{-1}}_{I_u \cap \dot{u}h^{-1} I h\dot{u}^{-1}} \big( a_*\Sh_v([\alpha^\bullet]) \big)
\end{equation}
in $H^{* + i + j}(I_u \cap \dot{u}h^{-1} I h\dot{u}^{-1},k)$.

\begin{proposition}\label{prop:techn-formula}
For any cohomology classes $\alpha \in H^i(I,\mathbf{X}(v))$ and $\beta \in H^j(I,\mathbf{X}(w))$ we have $\alpha \cdot \beta = \sum_{u \in \widetilde{W}, IuI \subseteq IvI \cdot IwI} \gamma_u$ with $\gamma_u\in H^{i+j}(I,\mathbf{X}(u))$ and
\begin{equation*}
  \Sh_u(\gamma_u) = \sum_{h \in I_{v^{-1}} \backslash (v^{-1} Iu \cap IwI)/I_{u^{-1}}} \mathrm{cores}^{I_u \cap \dot{u}h^{-1} I h\dot{u}^{-1}}_{I_u}
   \big( \tilde{\Gamma}_{u,h}\big)\\
\end{equation*}
with

\begin{equation*}
    \tilde{\Gamma}_{u,h}:=  \res^{I \cap \dot{u}h^{-1} I h\dot{u}^{-1}}_{I_u \cap \dot{u}h^{-1} I h\dot{u}^{-1}} \big( a_*\Sh_v(\alpha)
   \big) \cup \res^{u I u^{-1} \cap \dot{u}h^{-1} I h\dot{u}^{-1}}_{I_u \cap \dot{u}h^{-1} I h\dot{u}^{-1}} \big((a\dot{v}c)_*\Sh_w(\beta)\big)  \ ,
\end{equation*}

where $h = c \dot{w} d = \dot{v}^{-1} a^{-1} \dot{u}$ with $a,c,d \in I$.
\end{proposition}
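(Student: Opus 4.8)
The plan is to observe that essentially all the work has already been carried out in the computations leading up to the statement, so that the proof amounts to assembling \eqref{f:careful-ij}, \eqref{f:tech3} and \eqref{f:tech4} and then reconciling the signs. Concretely, I would first represent $\alpha$ and $\beta$ by homomorphisms of complexes $\alpha^\bullet : \mathcal{J}^\bullet \to \ind_I^{IvI}(\mathcal{J}^\bullet)[i]$ and $\beta^\bullet : \mathcal{J}^\bullet \to \ind_I^{IwI}(\mathcal{J}^\bullet)[j]$ over injective resolutions in $\Mod(G)$, as set up in Subsection \ref{subsec:Yoneda}. By \eqref{f:tilde}, \eqref{f:support} and \eqref{f:careful-ij} their Yoneda product is represented, after decomposing according to the double cosets $IuI \subseteq IvI\cdot IwI$, by the homomorphisms $\gamma^\bullet_u$, and one sets $\gamma_u := (-1)^{ij}[\gamma^\bullet_u]$ so that $\alpha\cdot\beta = \sum_u \gamma_u$.

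Next I would compute $\Sh'_{\dot u}(\gamma^\bullet_u)$ directly: evaluating at $\dot u$ and re-indexing the resulting sum over $I_{v^{-1}}\backslash(v^{-1}Iu\cap IwI)$ by means of Remark \ref{rem:bij} gives \eqref{f:tech1}. For each representative $h = c\dot w d = \dot v^{-1}a^{-1}\dot u$ one introduces $\Gamma^\bullet_{\dot u,h}$ and rewrites it, using the reconstruction formula \eqref{f:reconstruct} for $\alpha^\bullet$ and $\beta^\bullet$ together with the conjugation formula \eqref{f:conj1}, as the composite ${}^{(a\dot v c)_*}\Sh'_{\dot w}(\beta^\bullet[i]) \circ {}^{a_*}\Sh'_{\dot v}(\alpha^\bullet)$. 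The equivariance relation \eqref{f:equiv} and Lemma \ref{lem:bij} then let one reorganize the sum over $h$ into a sum over the double cosets $I_{v^{-1}}\backslash(v^{-1}Iu\cap IwI)/I_{u^{-1}}$ with the inner sum recognized as a corestriction, yielding \eqref{f:tech2} and hence \eqref{f:tech3}. Interpreting the composite of the two modified Shapiro maps as a cup product — using that Yoneda composition of maps between injective resolutions of $k$ computes the cup product on $H^*(-,k)$ (Subsection \ref{subsec:cup-pr2}) together with the restriction compatibilities recorded in Subsection \ref{sec:basicprop} and \eqref{f:Shapiro-compl} — produces \eqref{f:tech4}.

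Finally, to match the stated formula one applies the graded-commutativity of the cup product on $H^*(I_u\cap\dot u h^{-1}Ih\dot u^{-1},k)$, which follows from \eqref{f:cup+Sh}: swapping the two factors in \eqref{f:tech4} introduces a sign $(-1)^{ij}$, which exactly cancels the $(-1)^{ij}$ appearing in the definition $\gamma_u = (-1)^{ij}[\gamma^\bullet_u]$ coming from \eqref{f:careful-ij}. Combining this with \eqref{f:tech3} gives $\Sh_u(\gamma_u) = \sum_h \cores^{I_u\cap\dot u h^{-1}Ih\dot u^{-1}}_{I_u}(\tilde\Gamma_{u,h})$ with $\tilde\Gamma_{u,h}$ as stated.

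I expect the genuine difficulty to lie not in this final assembly but in the bookkeeping that precedes it: keeping straight the three flavours of the Shapiro isomorphism ($\Sh_w$, $\Sh_{\dot w}$, $\Sh'_{\dot w}$), applying the reconstruction formula \eqref{f:reconstruct} with the correct left translations, and verifying that the various intersection subgroups ($I_u\cap\dot u h^{-1}Ih\dot u^{-1}$, $I\cap\dot u h^{-1}Ih\dot u^{-1}$, $uIu^{-1}\cap\dot u h^{-1}Ih\dot u^{-1}$, with the first being the intersection of the latter two) fit together so that the restriction maps in $\tilde\Gamma_{u,h}$ and the cup-versus-Yoneda identification are all legitimate. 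Once those compatibilities are in place, the sign reconciliation via graded-commutativity is the only remaining point to check carefully.
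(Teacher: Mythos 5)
Your proposal is correct and follows essentially the same route as the paper: the paper's own proof is exactly the final assembly you describe, namely inserting \eqref{f:tech4} into \eqref{f:tech3} and using the anticommutativity of the cup product so that the sign from swapping the two factors cancels the $(-1)^{ij}$ in \eqref{f:careful-ij}. The preparatory computations \eqref{f:tech1}--\eqref{f:tech4} that you recapitulate are precisely those carried out in the paper before the statement, so no new ingredient is needed beyond the sign bookkeeping you identify.
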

\begin{proof}
Insert \eqref{f:tech4} into \eqref{f:tech3}  and use the anticommutativity of the cup product together with \eqref{f:careful-ij}.
\end{proof}

\begin{remark}  Let $u,v,w\in \widetilde W$ such that  $IuI \subseteq IvI \cdot IwI$.  Suppose that
$I_{v^{-1}} \backslash (v^{-1} Iu \cap IwI)/I_{u^{-1}}$ contains a single element
$I_{v^{-1}} h I_{u^{-1}}$ and that
$I_u \subset \dot{u}h^{-1} I h\dot{u}^{-1}$.
Then
for any cohomology classes $\alpha \in H^i(I,\mathbf{X}(v))$ and $\beta \in H^j(I,\mathbf{X}(w))$ the component $\gamma_u$ of
$\alpha\cdot \beta$ in $H^{i+j}(I, \X(u))$ is such that
\begin{equation*}\Sh_u(\gamma_u) =  \res^{I \cap \dot{u}h^{-1} I h\dot{u}^{-1}}_{I_u } \big( a_*\Sh_v(\alpha)
   \big) \cup \res^{u I u^{-1} \cap \dot{u}h^{-1} I h\dot{u}^{-1}}_{I_u} \big((a\dot{v}c)_*\Sh_w(\beta)\big)
\end{equation*}with $a$ and $c$ as in the proposition.
If $i=0$ and $\alpha=\tau_v$ (resp. $j=0$ and $\beta=\tau_w$)
it is simply equal to   $\res^{u I u^{-1} \cap \dot{u}h^{-1} I h\dot{u}^{-1}}_{I_u} \big((a\dot{v}c)_*\Sh_w(\beta)\big)$
 (resp. $ \res^{I \cap \dot{u}h^{-1} I h\dot{u}^{-1}}_{I_u} \big(a_*\Sh_v(\alpha)\big)$) because $\tau_v$ (resp. $\tau_w$) corresponds to the constant function equal to $1$ in $ H^0 (I_v, k)$ (resp. $ H^0 (I_w, k)$). Therefore for general $\alpha$ and $\beta$  in the context of this remark,  the components of
$\alpha\cdot \beta$ and of $\alpha\cdot \tau_w\cup \tau_v\cdot \beta$ in $H^{i+j}(I, \X(u))$   coincide.
\label{rema:uniquecoset}
\end{remark}

\begin{corollary}\label{coro:prod-goodlength}
Let $v,w \in \widetilde W$ such that  $\ell(vw)=\ell(v)+\ell(w)$. For any  cohomology classes $\alpha \in H^i(I,\mathbf{X}(v))$ and $\beta \in H^j(I,\mathbf{X}(w))$ we have $\alpha\cdot \beta \in H^{i+j}(I, \X(vw))$, and

\begin{equation}\label{f:prod-goodlength}
 \alpha\cdot \beta = (\alpha\cdot \tau_w) \cup  (\tau_v\cdot \beta) \ ,
\end{equation}
where we use the cup product in the sense of subsection \ref{subsec:cup-prod}; moreover
\begin{equation}\label{f:prod-goodlength2}
  \Sh_{vw}(\alpha\cdot \tau_w) =
 \res^{I _v}_{I_{vw}} \big(\Sh_v(\alpha)
   \big)  \quad\textrm{ and }\quad  \Sh_{vw}( \tau_v\cdot \beta)   =
   \res^{ v I_w v^{-1}}_{I_{vw}} \big(v_*\Sh_w(\beta)\big)\ .
\end{equation}
\end{corollary}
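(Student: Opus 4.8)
The plan is to derive both assertions directly from Proposition \ref{prop:techn-formula}, and more precisely from the Remark \ref{rema:uniquecoset} following it, applied with $u = vw$. First, since $\ell(vw) = \ell(v)+\ell(w)$, Corollary \ref{coro:known}.ii gives $I\dot vI\cdot I\dot wI = I\dot v\dot wI = IvwI$. Combined with \eqref{f:support} this shows that $\alpha\cdot\beta$, $\alpha\cdot\tau_w$ and $\tau_v\cdot\beta$ all lie in the single summand $H^*(I,\mathbf{X}(vw))$, so the only $u\in\widetilde W$ with $IuI\subseteq IvI\cdot IwI$ is $u=vw$, and the sum in Proposition \ref{prop:techn-formula} has a single term.

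Next I would verify the two hypotheses of Remark \ref{rema:uniquecoset} for $u=vw$. For the uniqueness of the relevant double coset, Lemma \ref{lem:bij}.i reduces the claim to showing that $\dot v^{-1}I\dot uI\cap I\dot wI$ is a single $I_{v^{-1}}$-$I$ double coset; but $\dot vI\dot wI\subseteq I\dot vI\dot wI=I\dot uI$, so any $g'\in I\dot wI$ satisfies $\dot vg'\in\dot vI\dot wI\subseteq I\dot uI$, hence $g'\in\dot v^{-1}I\dot uI$, which together with the trivial reverse inclusion gives $\dot v^{-1}I\dot uI\cap I\dot wI=I\dot wI$. Thus the double coset space is a single point, represented by $h:=\dot w$, and one checks $\dot w=\dot v^{-1}a^{-1}\dot u$ with $a:=\dot u\dot w^{-1}\dot v^{-1}\in T^1\subseteq I$ and $c=d=1$. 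For the inclusion $I_u\subseteq\dot uh^{-1}Ih\dot u^{-1}$, use $\dot u^{-1}I_u\dot u=I_{u^{-1}}$ and, since $\ell(w^{-1}v^{-1})=\ell(w^{-1})+\ell(v^{-1})$, Lemma \ref{lemma:vw}.\eqref{f:length2} applied to $w^{-1},v^{-1}$ gives $I_{u^{-1}}=I_{w^{-1}v^{-1}}\subseteq I_{w^{-1}}\subseteq\dot w^{-1}I\dot w$; conjugating back by $\dot u$ yields $I_u\subseteq\dot u\dot w^{-1}I\dot w\dot u^{-1}=\dot uh^{-1}Ih\dot u^{-1}$.

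Then it remains to identify the groups in the formula of Remark \ref{rema:uniquecoset}. Since $\dot uh^{-1}=\dot u\dot w^{-1}=a\dot v$ with $a\in T^1$, one gets $I\cap\dot uh^{-1}Ih\dot u^{-1}=I\cap a\dot vI\dot v^{-1}a^{-1}=aI_va^{-1}=I_v$ (last step because $T^1\subseteq I_v$, so conjugation by $a$ is an inner automorphism of $I_v$) and likewise $uIu^{-1}\cap\dot uh^{-1}Ih\dot u^{-1}=a\dot vI_w\dot v^{-1}a^{-1}=\dot vI_w\dot v^{-1}=vI_wv^{-1}$ (using $\dot v^{-1}a\dot v\in T^1\subseteq I_w$), while $I_u\cap\dot uh^{-1}Ih\dot u^{-1}=I_u=I_{vw}$ by the second hypothesis, with $I_{vw}\subseteq I_v$ and $I_{vw}\subseteq vI_wv^{-1}$ again by Lemma \ref{lemma:vw}.\eqref{f:length2}. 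Feeding these into Remark \ref{rema:uniquecoset}, taking $\beta=\tau_w$ (resp.\ $\alpha=\tau_v$) for the first (resp.\ second) identity, and observing that the conjugations $a_*$ and $(a\dot v)_*=a_*\circ v_*$ act as the identity on cohomology because $a$ lies in $I_v$, resp.\ in $vI_wv^{-1}$, gives exactly the two formulas of \eqref{f:prod-goodlength2}. Finally, \eqref{f:prod-goodlength} follows from the last sentence of Remark \ref{rema:uniquecoset}: the $\mathbf{X}(vw)$-components of $\alpha\cdot\beta$ and of $(\alpha\cdot\tau_w)\cup(\tau_v\cdot\beta)$ coincide, and by the first paragraph and \eqref{f:orth} both $\alpha\cdot\beta$ and $(\alpha\cdot\tau_w)\cup(\tau_v\cdot\beta)$ are entirely supported on $\mathbf{X}(vw)$, so they are equal.

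The step I expect to be the main obstacle is the collapse of the transporter set: showing $\dot v^{-1}I\dot uI\cap I\dot wI=I\dot wI$ and hence, via Lemma \ref{lem:bij}.i, that the double coset $I_{v^{-1}}\backslash(v^{-1}Iu\cap IwI)/I_{u^{-1}}$ is a single point represented by $\dot w$ — this is where length additivity is really used. The accompanying bookkeeping, namely that the chosen representative $a=\dot u\dot w^{-1}\dot v^{-1}$ lands in $T^1$ (so that every conjugation map $a_*$ becomes invisible on cohomology), is routine but must be done carefully because the $\dot w$'s are only fixed up to $T^1$.
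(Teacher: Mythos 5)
Your overall strategy --- reducing everything to Remark \ref{rema:uniquecoset} with $u=vw$ and $h=\dot w$ --- is the same as the paper's, and most of the verification is correct. However, there is a genuine gap at the step you yourself identify as the crux. You establish the set-theoretic identity $\dot v^{-1}I\dot uI\cap I\dot wI=I\dot wI$ and then conclude ``thus the double coset space is a single point''. That inference is invalid: $I\dot wI$ is a single $(I,I)$-double coset, but a priori it may decompose into several $(I_{v^{-1}},I)$-double cosets. Concretely, $I_{v^{-1}}\backslash I\dot wI/I\cong I_{v^{-1}}\backslash I/I_w$, and this is a single point if and only if $I=I_{v^{-1}}I_w$ --- which is not formal. (For instance, if one drops length additivity and takes $w=v^{-1}=s\in S_{aff}$, then $I_{s}\backslash I/I_s$ has $q$ elements, since $I_s$ is normal of index $q$ in $I$.) The missing ingredient is precisely \eqref{f:length1} of Lemma \ref{lemma:vw}: $I\subseteq I_{v^{-1}}\,wIw^{-1}$, which yields $I\dot wI=I_{v^{-1}}\dot wI$ and hence the desired collapse to a single $(I_{v^{-1}},I)$-double coset. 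This is the one place where length additivity enters in a genuinely non-formal way (your identity $I\dot vI\cdot I\dot wI=I\dot uI$ also uses it, but only via Cor.\ \ref{coro:known}.ii), and the paper's proof invokes \eqref{f:length1} for exactly this inclusion.

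Once that inclusion is supplied, the rest of your argument is correct and coincides in substance with the paper's: the verification that $I_u\subseteq\dot u\dot w^{-1}I\dot w\dot u^{-1}$ (the paper gets the same conclusion by computing $I_u\cap\dot uh^{-1}Ih\dot u^{-1}=I_u\cap I_v=I_{vw}$ via \eqref{f:length2}), the identifications $I\cap\dot uh^{-1}Ih\dot u^{-1}=I_v$ and $uIu^{-1}\cap\dot uh^{-1}Ih\dot u^{-1}=vI_wv^{-1}$, the bookkeeping showing $a=\dot u\dot w^{-1}\dot v^{-1}\in T^1$ so that $a_*$ is invisible on the relevant cohomology groups, and the deduction of \eqref{f:prod-goodlength} from the last sentence of Remark \ref{rema:uniquecoset} together with \eqref{f:orth} all go through as you wrote them.
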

\begin{proof} Note before starting the proof that, for $x\in \widetilde W$, we have $\Sh_x(\tau_x)= 1\in H^0(I_x,k)=k$. If $\ell(vw)=\ell(v)+\ell(w)$, then $IvI \cdot IwI= I vw I$  (cf.\ Cor.\ \ref{coro:known}.ii) and there is only $u = vw$ to consider in Prop.\ \ref{prop:techn-formula}. We have  $\alpha \cdot \tau_w\in H^{i}(I, \X(vw))$ and $\tau_v \cdot \beta\in H^{j}(I, \X(vw))$ and $\alpha\cdot \beta\in H^{i+j}(I,\X(vw))$ by Prop.\   \ref{prop:techn-formula}.

From Lemma \ref{lem:bij}.i we know that the projection map
\begin{equation*}
  I_{v^{-1}} \backslash (v^{-1}Ivw \cap IwI) / I_{(vw)^{-1}} \xrightarrow{\; \sim \;} I_{v^{-1}} \backslash (v^{-1}IvwI \cap IwI) / I
\end{equation*}
is a bijection. On the other hand we have the inclusions
\begin{equation*}
  IwI \subseteq I_{v^{-1}} wI \subseteq v^{-1}IvwI \cap IwI \ ,
\end{equation*}
the left one coming from \eqref{f:length1} and the right one being trivial. It follows that, in fact,
\begin{equation*}
  I_{v^{-1}} wI = v^{-1}IvwI \cap IwI \ .
\end{equation*}
Furthermore, it is straightforward to check
\begin{equation*}
  I_{v^{-1}} w I_{(vw)^{-1}} \subseteq v^{-1}Ivw \cap IwI \subseteq I_{v^{-1}} wI \ .
\end{equation*}
We claim that the left inclusion actually is an equality. Let $g \in v^{-1}Ivw \cap IwI$ be an arbitrary element. Using the second inclusion above we may find a $g_0 \in I_{v^{-1}} w \subseteq v^{-1}Ivw \cap IwI$ and a $g_1 \in I$ such that $g = g_0 g_1$. The above bijectivity then implies that necessarily $g_1 \in I_{(vw)^{-1}}$. We conclude that $g \in I_{v^{-1}} w I_{(vw)^{-1}}$. This proves that indeed
\begin{equation*}
  v^{-1}Ivw \cap IwI = I_{v^{-1}} w I_{(vw)^{-1}} \ .
\end{equation*}

Hence it suffices to consider $h = \dot{w}$. We have $I_u \cap \dot{u}h^{-1} I h\dot{u}^{-1} = I_u \cap I_v = I_{vw}$  by \eqref{f:length2}. Using Remark \ref{rema:uniquecoset}, we have proved \eqref{f:prod-goodlength}. Moreover, we have  $I \cap \dot{u}h^{-1} I h\dot{u}^{-1}= I_v$  and $u I u^{-1} \cap \dot{u} h^{-1} I h\dot{u}^{-1}=  v I_{w} v^{-1}$ for obvious reasons. Furthermore, we may take $c = 1$ and $a\in T^1$ such that $\dot u=  a \dot v\dot w $. Note that $a$ is contained in  both $ v I_w v^{-1}$ and  $I_v$ so its acts trivially on the cohomology spaces $H^i(I_v,k)$ and $ H^j(v I_w v^{-1},k)$.
Therefore in this case the formula of Proposition \ref{prop:techn-formula} gives:
\begin{equation*}
  \Sh_{vw}(\alpha\cdot \tau_w) =
 \res^{I _v}_{I_{vw}} \big(\Sh_v(\alpha)
   \big)  \quad \textrm{ and } \quad  \Sh_{vw}( \tau_v\cdot \beta)   =
   \res^{ v I_w v^{-1}}_{I_{vw}} \big(\dot{v}_*\Sh_w(\beta)\big)
\end{equation*}
and
\begin{equation*}
 \Sh_{vw}(\alpha\cdot \beta) =
    \Sh_{vw}( \alpha\cdot \tau_w) \cup \Sh_{vw}( \tau_v\cdot \beta) \ .
\end{equation*}
\end{proof}

\subsection{Explicit left action of $H$ on the  Ext-algebra\label{subsec:Hactions}}

Here we draw from Prop. \ref{prop:techn-formula} the formula for the explicit left  action of $H$ on $E^*$.   The proposition and its proof use notation introduced in \S\ref{subsubsec:Chevalley}. See in particular Remark \ref{not:bart}.

\begin{proposition}\label{prop:explicitleftaction}
Let $\beta\in H^j(I, \X(w))$ with $w\in \widetilde W$ and $j\geq 0$. For $\omega \in \widetilde \Omega$, we have $\tau_\omega	\cdot \beta\in H^j(I, \X(\omega w))$ and
\begin{equation}\label{f:leftomega}
  \Sh_{\omega w}(\tau_\omega	\cdot \beta)=\omega_*\Sh_w(\beta) \ .
\end{equation}
For $ s= s_{(\alpha, \mathfrak h)}\in S_{aff}$  we have either $\ell(\tilde s w)=\ell(w)+1$
and $\tau_{\tilde s} \cdot \beta\in H^j(I, \X(\tilde s w))$  with
\begin{equation}\label{f:leftsgood}
  \Sh_{\tilde s w}(\tau_{\tilde s}	\cdot \beta)=\res^{\tilde s I_w \tilde s^{-1}}_{I_{sw}} \big(\tilde s_* \Sh_w(\beta)\big)\ ,
\end{equation}
or  $\ell(\tilde s w)=\ell(w)-1$ and
\begin{equation}\label{f:leftsbad-support}
  \tau_{\tilde s}	\cdot \beta=\gamma_{\tilde s w}+\sum_{t\in\ima}\gamma_{\bar{t} w}\in H^j(I, \X(\tilde s w))\oplus \bigoplus_{t\in\ima} H^j(I, \X(\bar{t} w))
\end{equation}
with
\begin{equation}\label{f:leftsbad1}
  \Sh_{\tilde s w}(\gamma_{\tilde s w})=\cores_{I_{\tilde sw}}^{\tilde s I_w \tilde s^{-1}}\big(\tilde s_* \Sh_w(\beta)\big) \textrm{ and }
\end{equation}
\begin{equation}\label{f:leftsbad2}
\Sh_{\bar{t}  w}(\gamma_{\bar{t} w})=  \sum_{z\in \mathbb F_q^\times,\: \check\alpha([z])=t}(n_s t^{-1} x_{\alpha}(\pi^{\mathfrak h}[z]) n_s^{-1})_*\Sh_w(\beta)\ .
\end{equation}
(Note that  the statements above in the case $\ell(\tilde s w)=\ell(w)+1$ are true for any lift $\tilde s$ for $s$ in $\widetilde W$ whereas \eqref{f:leftsbad-support} and the subsequent formulas are valid only for the specific choice of $\tilde s$ made in \eqref{f:ns}.)
\end{proposition}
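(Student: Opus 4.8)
The plan is to read every assertion off the master formula of Proposition~\ref{prop:techn-formula}, applied with $v\in\{\omega,\tilde s\}$, $\alpha=\tau_v\in H^0(I,\X(v))$ and $i=0$. The key simplification I would exploit is that $\Sh_v(\tau_v)$ is the constant function $1\in H^0(I_v,k)$, so cupping with it is the identity and, for each relevant $u$ and each representative $h$, the class $\tilde\Gamma_{u,h}$ of Proposition~\ref{prop:techn-formula} collapses to $\res^{uIu^{-1}\cap\dot uh^{-1}Ih\dot u^{-1}}_{I_u\cap\dot uh^{-1}Ih\dot u^{-1}}\big((a\dot v c)_*\Sh_w(\beta)\big)$, exactly as already observed in Remark~\ref{rema:uniquecoset}. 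One then distinguishes whether or not $v$ and $w$ are length-additive.

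In the length-additive cases the argument is soft. For $v=\omega$ one has $\ell(\omega w)=\ell(w)=\ell(\omega)+\ell(w)$, and for $v=\tilde s$ with $\ell(\tilde s w)=\ell(w)+1$ one has $\ell(\tilde s w)=\ell(\tilde s)+\ell(w)$; in both cases Corollary~\ref{coro:prod-goodlength} applies verbatim and gives $\tau_v\cdot\beta\in H^j(I,\X(vw))$ with $\Sh_{vw}(\tau_v\cdot\beta)=\res^{vI_wv^{-1}}_{I_{vw}}\big(v_*\Sh_w(\beta)\big)$. For $v=\tilde s$ this is precisely \eqref{f:leftsgood}. For $v=\omega$ one uses in addition that $\omega$ normalizes $I$ (since $I\trianglelefteq\EuScript P_C^\dagger$ and $\omega\in\widetilde\Omega$), so $\omega I_w\omega^{-1}=I_{\omega w}$ and the restriction is the identity, which yields \eqref{f:leftomega}.

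The substance is the case $\ell(\tilde s w)=\ell(w)-1$, where I would fix the lift $\dot{\tilde s}=n_s$ of \eqref{f:ns} throughout, since the computation runs through the decompositions \eqref{f:sIsI} and \eqref{f:usefulequality}, which single out $n_s$. First, \eqref{f:support} together with \eqref{f:quadratic2} gives $I\tilde sI\cdot IwI=I\tilde s wI\,\dot\cup\,\dot\bigcup_{t\in\ima}ItwI$, so that $\tau_{\tilde s}\cdot\beta$ has only the components $\gamma_{\tilde s w}$ and $\gamma_{\bar t w}$, $t\in\ima$; this is \eqref{f:leftsbad-support}. For the component $\gamma_{\tilde s w}$ I would show, via Lemma~\ref{lem:bij}.i and a count resting on $I=I_s I_{\tilde s w}$ (itself a consequence of $I_s\cap I_{\tilde s w}=\tilde s I_w\tilde s^{-1}$ and $I_{\tilde s^{-1}}=I_s$), that the index set $I_{\tilde s^{-1}}\backslash(\tilde s^{-1}I\tilde s w\cap IwI)/I_{(\tilde s w)^{-1}}$ is the single class of $h=\dot w$, for which $a\in T^1$, $c=d=1$, and $\dot uh^{-1}Ih\dot u^{-1}=\tilde sI\tilde s^{-1}$; since $\ell(sw)=\ell(w)-1$ forces $I_w\subseteq I_s$ by \eqref{f:length2}, the three conjugates of $I$ in Proposition~\ref{prop:techn-formula} then collapse to $\tilde sI_w\tilde s^{-1}\subseteq I_{\tilde s w}$ of index $q$, and $a\in T^1\subseteq\tilde sI_w\tilde s^{-1}$ acts trivially on cohomology, which gives \eqref{f:leftsbad1}. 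For each component $\gamma_{\bar t w}$ (note $\ell(\bar t w)=\ell(w)$ and $I_{\bar t w}=\bar tI_w\bar t^{-1}$ because $T^0$ normalizes $I$) I would instead run the representatives $\{1\}\cup\{x_\alpha(\pi^{\mathfrak h}[z]):z\in\mathbb F_q^\times\}$ of $I/I_{\tilde s}$ through the condition ``$\tilde s^{-1}a^{-1}(\bar t w)\in IwI$'', using \eqref{f:usefulequality} to see that $a=x_\alpha(\pi^{\mathfrak h}[z])$ contributes exactly when $\check\alpha([z])=t$, that the relevant $h$ can be chosen so that all three conjugates of $I$ in Proposition~\ref{prop:techn-formula} collapse to $I_{\bar t w}$ (so that no restriction or corestriction survives), and that then $(a\dot v c)_*=(n_st^{-1}x_\alpha(\pi^{\mathfrak h}[z])n_s^{-1})_*$ up to an inner automorphism of $I_w$; summing over the $|\kera|$ values of $z$ with $\check\alpha([z])=t$ gives \eqref{f:leftsbad2}.

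The main obstacle is precisely this last piece of double-coset bookkeeping in the case $\ell(\tilde s w)=\ell(w)-1$: correctly identifying the index sets $I_{v^{-1}}\backslash(v^{-1}Iu\cap IwI)/I_{u^{-1}}$, extracting the conjugating triples $(a,c,d)$, and verifying that the nested intersections of conjugates of $I$ appearing in Proposition~\ref{prop:techn-formula} degenerate exactly to $I_{\tilde s w}$, $\tilde sI_w\tilde s^{-1}$ and $I_{\bar t w}$, with all the $T^1$- and $I_w$-twists acting trivially on the relevant cohomology. The explicit identities \eqref{f:sIsI}--\eqref{f:usefulequality}, handled with the Chevalley-basis computations of \S\ref{subsubsec:Chevalley}, are the tool that makes this tractable.
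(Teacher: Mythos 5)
Your proposal is correct and follows essentially the same route as the paper: the length-additive cases via Corollary \ref{coro:prod-goodlength} and Remark \ref{rema:uniquecoset}, and for $\ell(\tilde s w)=\ell(w)-1$ the identification of the double cosets $I_{v^{-1}}\backslash(v^{-1}Iu\cap IwI)/I_{u^{-1}}$ (a single class $h=\dot w$ for $u=\tilde s w$, and the classes $h_{t,z}=n_s^{-1}x_\alpha(\pi^{\mathfrak h}[z])\dot u_t$ for $u=\bar t w$), the collapse of the intersections to $\tilde sI_w\tilde s^{-1}$ resp.\ $I_{\bar t w}$, and the extraction of $(a\dot v c)_*=(n_st^{-1}x_\alpha(\pi^{\mathfrak h}[z])n_s^{-1})_*$ via \eqref{f:usefulequality}. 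The only cosmetic difference is that you derive the single-coset claim for $\gamma_{\tilde s w}$ from $I=I_sI_{\tilde s w}$ rather than quoting \eqref{f:length1} directly; both yield $\tilde s^{-1}I\tilde s w\cap IwI=I_{s^{-1}}wI$.
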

\begin{proof}
Except for the case of $\ell(\tilde s w)=\ell(w)-1$, the statements follow easily from
Corollary \ref{coro:prod-goodlength}, see in particular formula \eqref{f:prod-goodlength2}.
So we consider the remaining case $\ell(\tilde s w)=\ell(w)-1$ and recall that   there is $(\alpha,\mathfrak h)\in\Pi_{aff}$ such that $s=s_{(\alpha,\mathfrak h)}$ and that   $\tilde s= n_sT^1$ was defined in \eqref{f:ns}. Using the notation from \S\ref{subsubsec:Chevalley} (see in particular \eqref{f:sIsI}),  we have
\begin{equation*}
  n_s I n_s^{-1} I=I \;\dot\cup \; \dot\bigcup_{z\in\mathbb F_q^\times} x_{\alpha}(\pi^{\mathfrak h}[z]) \check\alpha([z]) n_s^{-1} I \subset  I  \;\dot\cup \;\dot\bigcup_{t\in\ima}I tn_s^{-1} I
\end{equation*}
and hence, using Cor.\ \ref{coro:known}.ii,
\begin{align}\label{f:quadraticw}
  n_s I  w I & =   n_s I  n_s^{-1}  I n_s \dot w I=
  I  n_s \dot w I\;\dot\cup \; \dot\bigcup_{z\in\mathbb F_q^\times} x_{\alpha}(\pi^{\mathfrak h}[z]) \check\alpha([z]) n_s^{-1} I n_s \dot w I \\
  & \subset  In_s \dot w I  \;\dot\cup \;\dot\bigcup_{t\in\ima}I  t  \dot w I \ .  \nonumber
\end{align}
This proves \eqref{f:leftsbad-support} using Proposition \ref{prop:techn-formula}. It remains to compute  $\gamma_{\tilde s w}$ and $\gamma_{\bar{t}w}$ for $t\in\ima$.

Let $u:= \tilde s w $. From \eqref{f:length1} we deduce  that $\tilde s ^{-1}I \tilde s  w\subset I_{s ^{-1}} w I$, therefore $\tilde s ^{-1}I\tilde s  w I\cap I w I= I_{ s^{-1}} w I$, and using  Lemma \ref{lem:bij}.i we see that $I_{s^{-1}} \backslash (\tilde s^{-1} I u \cap IwI)/I_{u^{-1}}$ is made of the single double coset $I_{s^{-1}} w  I_{u^{-1}}$. We have $I_u= I_{\tilde s w}$ and
\begin{equation*}
  I_u \cap u w^{-1} I w u^{-1} = I_{\tilde s w}\cap \tilde s I \tilde s^{-1}= \tilde s ( wIw^{-1}\cap \tilde s ^{-1} I \tilde s \cap I )\tilde s^{-1}=
 \tilde s ( I_w\cap \tilde s ^{-1} I \tilde s)\tilde s^{-1}= \tilde s I_w \tilde s^{-1} \ ,
\end{equation*}
where the last equality is justified by \eqref{f:length2}. Furthermore
\begin{equation*}
  u I u^{-1} \cap uw^{-1} I w u^{-1}=\tilde s ( w I w^{-1}\cap I)\tilde s^{-1}=\tilde s I_w \tilde s^{-1} \ .
\end{equation*}
So Proposition \ref{prop:techn-formula} says that the component $\gamma_{\tilde s w}$ in  $H^{i+j}(I, \X(\tilde s w))$ of $\tau_{\tilde s}\cdot \beta$  is given by
\begin{equation*}
  \Sh_{\tilde s w}(\gamma_{\tilde s w}) =  \mathrm{cores}^{\tilde s I_w \tilde s^{-1}}_{I_{\tilde{s} w}}
   \big( \res^{\tilde s I_w \tilde s^{-1}}_{\tilde s I_w \tilde s^{-1}} ({\tilde s}_*\Sh_w(\beta)\big) = \mathrm{cores}^{\tilde s I_w \tilde s^{-1}}_{I_{\tilde{s} w}}
   \big( {\tilde s}_*\Sh_w(\beta)\big) \ .
\end{equation*}

Let $t \in \ima$ and $u_t:= \bar{t} w$.  We pick $\dot u_t:=   t \dot w \in N(T)$. We have $n_s^{-1} Iu_t I\cap IwI=n_ s^{-1}(I t w  I\cap n_ s I w I).$ From \eqref{f:quadraticw}
we obtain that
\begin{equation*}
  I \bar{t} w  I\cap n_ s I w I=
  \dot\bigcup_{z\in\mathbb F_q^\times, \:\check\alpha([z])=t} x_{\alpha}(\pi^{\mathfrak h}[z]) t n_s^{-1} I n_s \dot w I= \dot\bigcup_{z\in\mathbb F_q^\times, \:\check\alpha([z])=t}  I_s x_{\alpha}(\pi^{\mathfrak h}[z]) t \dot w I \ .
\end{equation*}
The second equality comes from the fact that $t$ and $x_{\alpha}(\pi^{\mathfrak h}[z])$ normalize $I_s$ (see Cor.\ \ref{coro:known}.iii and Lemma \ref{lemma:UC} for the latter) and from  \eqref{f:length1}. Therefore
\begin{equation*}
  n_s^{-1} Iu_t I\cap IwI=
  \dot\bigcup_{z\in\mathbb F_q^\times, \:\check\alpha([z])=t}  I_{s^{-1}} n_s^{-1} x_{\alpha}(\pi^{\mathfrak h}[z]) t \dot w I \ .
\end{equation*}
Let $z\in\mathbb F_q^\times$ such that  $\check\alpha([z])=t$ and $h_{t, z}:= n_s^{-1} x_{\alpha}(\pi^{\mathfrak h}[z]) \dot u_ t $.  It lies in $n_ s^{-1} I \dot u _t\cap IwI$.
Using  Lemma \ref{lem:bij} and the above equalities, we obtain that $I_{s^{-1}} \backslash (n_ s^{-1} I \dot u _t\cap IwI)/I_{u_t^{-1}}$ is made of the (distinct) double cosets $I_{s^{-1}} h_{t,z}  I_{u_t^{-1}}$ where  $z\in\mathbb F_q^\times$ is such that  $\check\alpha([z])=t$. (By Remark \ref{rema:kernelalpha}, there is one or two such double cosets.) Furthermore, we have $I_{u_t}= I_w$ and $\dot u_t  h_{t,z} ^{-1} I h_{t,z} \dot u_t^{-1}=x_{\alpha}(\pi^{\mathfrak h}[z])^{-1}n_ s I n_ s^{-1}x_{\alpha}(\pi^{\mathfrak h}[z]) $.
Therefore
\begin{align*}
   I_{u_t}\cap \dot u_t  h_{t,z} ^{-1} I h_{t,z} \dot u^{-1}&= I\cap w I w^{-1}\cap x_{\alpha}(\pi^{\mathfrak h}[z]) ^{-1}n_ s I  n_s^{-1}x_{\alpha}(\pi^{\mathfrak h}[z]) \cr&=x_{\alpha}(\pi^{\mathfrak h}[z])  ^{-1} I_s x_{\alpha}(\pi^{\mathfrak h}[z])  \cap w I w^{-1}= I_s\cap w I w^{-1}=I_s\cap I_w= I_w= I_{u_t} \ ,
\end{align*}
where the third equality uses Cor.\ \ref{coro:known}.iii  and the fifth equality uses \eqref{f:length2}. Now to apply the formula of Prop. \ref{prop:techn-formula}, we  need to find $a_{t,z}$ and $c_{t,z}$ in $I$ such that $h_{t,z}= n_s^{-1} a_{t,z}^{-1} \dot u_t \in c_{t,z}\dot w I$ where $z\in\mathbb F_q^\times$ is such that $\check\alpha([z])=t$. Before giving them explicitly,  first notice that
$a_{t,z}n_s c_{t,z}$ lies in $\bar{t} wI w^{-1}$ thus it normalizes $wI w^{-1}$ and it also lies in $I n_s I$ thus normalizes $I_s$ by  Corollary \ref{coro:known}.iii. By \eqref{f:length2} we have $I_w= w I w^{-1}\cap I_s$ hence $a_{t,z}n_s c_{t,z}$ normalizes $I_w$ and it follows that
$(a_{t,z}n_sc_{t,z}) I_w (a_{t,z}n_sc_{t,z})^{-1} = u_t I u_t^{-1} \cap u_t h_{t,z}^{-1} I h_{t,z}{u_t}^{-1}$ coincides with $I_w=I_{u_t}$. Therefore, we have
\begin{equation*}
  \Sh_{u_t}(\gamma_{u_t})=\sum_{z\in \mathbb F_q^\times,\: \check\alpha([z])=t}(a_{t,z} n_sc_{t,z})_*\Sh_w(\beta) \ .
\end{equation*}
By the above definitions we  have  $a_{t,z} = x_{\alpha}(\pi^{\mathfrak h}[z])^{-1}$. To find a suitable element $c_{t,z}$, notice that
\begin{equation*}
  n_s x_{\alpha}(\pi^{\mathfrak h}[z^{-1}]) h_{t,z} \dot w^{-1} n_s^{-1}=n_s x_{\alpha}(\pi^{\mathfrak h}[z^{-1}])n_s^{-1} x_{\alpha}(\pi^{\mathfrak h}[z])  t n_s^{-1}= x_{\alpha}(\pi^{\mathfrak h}[-z])\in \EuScript U_{(\alpha, \mathfrak h)}
\end{equation*}
by \eqref{f:usefulequality}. By \eqref{f:conjU}, we have $\dot w ^{-1}  n_s^{-1} \EuScript U_{(\alpha, \mathfrak h)} n_s \dot w= \EuScript U_{(s \hat w)^{-1}(\alpha, \mathfrak h)}$  where $\hat w$ denotes the image of $w$ in $W$. By \eqref{add}, $(s \hat w)^{-1}(\alpha, \mathfrak h)$ lies in $\Phi_{aff}^+$ and from Lemma \ref{lemma:UC}.ii we deduce that $ \EuScript U_{(s \hat w)^{-1}(\alpha, \mathfrak h)}$ is contained in $I$. Therefore $\dot w^{-1} x_{\alpha}(\pi^{\mathfrak h}[z^{-1}]) h_{t,z}\in I$ and we may pick $c_{t,z}:=x_{\alpha}(\pi^{\mathfrak h}[z^{-1}]) ^{-1}$. Lastly using \eqref{f:usefulequality}, we see that  with this choice we have $a_{t,z} n_s c_{t,z}=  n_s t^{-1} x_{\alpha}(\pi^{\mathfrak h}[z]) n_s^{-1}$,
which concludes the proof of \eqref{f:leftsbad2}.
\end{proof}

\subsection{Appendix}\label{subsec:Ron}

In \cite{Ron} \S4.1 a groupoid cohomology class is a $G$-equivariant function
\begin{equation*}
  f : (G \times G)/(I \times I) \longrightarrow \oplus_{(g_1,g_2) \in G^2/I^2} H^*(g_1 Ig_1^{-1} \cap g_2 Ig_2^{-1}, k)
\end{equation*}
such that
\begin{itemize}
  \item[--] $f$ is supported on finitely many $G$-orbits, and
  \item[--] $f(g_1I,g_2I) \in H^*(g_1 Ig_1^{-1} \cap g_2 Ig_2^{-1}, k)$ for any $(g_1,g_2) \in G^2$.
\end{itemize}
The product of two such functions $f$ and $\tilde{f}$ is defined as follows. We use the maps
\begin{align*}
  \iota_{\mu,\nu} : G \times G \times G & \longrightarrow G \times G \\
                     (g_1,g_2,g_3) & \longmapsto (g_\mu,g_\nu) \ ,
\end{align*}
for $1 \leq \mu < \nu \leq 3$, in order to first introduce the pulled back functions
\begin{equation*}
  (\iota_{1,2}^* f)(g_1,g_2,g_3) := \res^{g_1 Ig_1^{-1} \cap g_2 Ig_2^{-1}}_{g_1 Ig_1^{-1} \cap g_2 Ig_2^{-1} \cap g_3 Ig_3^{-1}} f(g_1,g_2)
\end{equation*}
and
\begin{equation*}
  (\iota_{2,3}^* f)(g_1,g_2,g_3) := \res^{g_2 Ig_2^{-1} \cap g_3 Ig_3^{-1}}_{g_1 Ig_1^{-1} \cap g_2 Ig_2^{-1} \cap g_3 Ig_3^{-1}} f(g_2,g_3)
\end{equation*}
(with value in $H^*(g_1 Ig_1^{-1} \cap g_2 Ig_2^{-1} \cap g_3 Ig_3^{-1},k)$) on $G^3/I^3$. These functions are no longer supported on finitely many $G$-orbits. Nevertheless we consider their cup product
\begin{equation*}
  F(g_1,g_2,g_3) := (\iota_{1,2}^* f)(g_1,g_2,g_3) \cup (\iota_{2,3}^* f)(g_1,g_2,g_3) \ .
\end{equation*}
One can check (in fact, it follows from the subsequent computations) that its push forward
\begin{equation*}
  (\iota_{1,3*} F)(g_1,g_3) := \sum_{g \in (g_1 Ig_1^{-1} \cap g_3 Ig_3^{-1})\backslash G/I} \cores^{g_1 Ig_1^{-1} \cap g Ig^{-1} \cap g_3 Ig_3^{-1}}_{g_1 Ig_1^{-1} \cap g_3 Ig_3^{-1}} F(g_1,g,g_3)
\end{equation*}
is well defined and again is a groupoid cohomology class, which is defined in \cite{Ron} Prop.\ 7 to be the product $f \cdot \tilde{f}$.

We now fix two ``ordinary'' cohomology classes $\alpha \in H^i(I,\mathbf{X}(v))$ and $\beta \in H^j(I,\mathbf{X}(w))$ and introduce the corresponding groupoid cohomology classes $f_\alpha$ and $f_\beta$  supported on $G(1,v)I^2$ and $G(1,w)I^2$ by
\begin{equation*}
  f_\alpha(1,v) := \Sh_v(\alpha) \in H^i(I_v,k) \quad \text{and} \quad f_\beta(1,w) := \Sh_w(\beta) \in H^j(I_w,k) \ ,
\end{equation*}
respectively. In the following we compute the product $f_\alpha \cdot f_\beta$. By the $G$-equivariance it suffices to compute the classes
\begin{equation*}
  (f_\alpha \cdot f_\beta)(1,u) \in H^{i+j}(I_u,k)  \qquad\text{for}\ u \in \widetilde{W}.
\end{equation*}
We have the three injective maps between double coset spaces
\begin{align*}
  D_v : I_v \backslash G/I & \hookrightarrow G \backslash G^3/I^3 \\
                        h & \mapsto (1,v,h) ,  \\
  D_w : I_w \backslash G/I & \hookrightarrow G \backslash G^3/I^3 \\
                        h & \mapsto (h,1,w) , \ \text{and}  \\
  D_u : I_u \backslash G/I & \hookrightarrow G \backslash G^3/I^3 \\
                        h & \mapsto (1,h,u) .  \\
\end{align*}
The functions $\iota^*_{1,2} f_\alpha$ and $\iota^*_{2,3} f_\beta$ are supported on the $G$-orbits in $\im (D_v)$ and $\im (D_w)$, respectively. Hence $F := \iota^*_{1,2} f_\alpha \cup \iota^*_{2,3} f_\beta$ is supported on the $G$-orbits in $\im (D_v) \cap \im (D_w)$. Moreover, we have
\begin{equation}\label{f:Ron1}
  (f_\alpha \cdot f_\beta)(1,u) = (\iota_{1,3*} F)(1,u) = \sum_{h \in I_u \backslash G/I} \cores^{I_u \cap hIh^{-1}}_{I_u} F(1,h,u) \ .
\end{equation}
Of course, on the right hand side only those $h$ can occur for which we have $D_u(h) \in \im (D_v) \cap \im (D_w)$.

\begin{lemma}\label{intersecD}
$D_u^{-1}(\im (D_v) \cap \im (D_w)) = I_u \backslash (uIw^{-1}I \cap IvI)/I$.
\end{lemma}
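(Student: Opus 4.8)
The plan is to make the membership conditions for $\im(D_v)$ and $\im(D_w)$ completely explicit in terms of relative positions of triples in $G^3/I^3$, and then to specialize to the triple $D_u(h)$.

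First I would observe that a $G$-orbit $G\cdot(g_1I,g_2I,g_3I)$ in $G^3/I^3$ lies in $\im(D_v)$ if and only if $g_1^{-1}g_2\in IvI$. Indeed, this orbit equals $G\cdot(I,vI,h'I)$ for some $h'$ exactly when there is $g\in G$ with $gg_1\in I$ and $gg_2\in vI$, the third coordinate imposing no constraint; writing $g=ag_1^{-1}$ with $a\in I$, the second condition reads $g_1^{-1}g_2\in a^{-1}vI\subseteq IvI$, and conversely a factorization $g_1^{-1}g_2=a'vb$ with $a',b\in I$ produces $g=a'^{-1}g_1^{-1}$. (Note that the condition $g_1^{-1}g_2\in IvI$ is $G$-invariant and right $I^3$-invariant, so it indeed descends to $G\backslash G^3/I^3$.) The same computation applied to $D_w$, where now the first coordinate is free and the conditions are $gg_2\in I$ and $gg_3\in wI$, gives $G\cdot(g_1I,g_2I,g_3I)\in\im(D_w)$ if and only if $g_2^{-1}g_3\in IwI$.

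Next I would specialize to $D_u(h)=(I,hI,\dot uI)$, i.e.\ $g_1=1$, $g_2=h$, $g_3=\dot u$. By the previous paragraph, $D_u(h)\in\im(D_v)$ if and only if $h\in IvI$, and $D_u(h)\in\im(D_w)$ if and only if $h^{-1}\dot u\in IwI$; taking inverses, the latter condition becomes $\dot u^{-1}h\in Iw^{-1}I$, i.e.\ $h\in uIw^{-1}I$. Hence $D_u(h)\in\im(D_v)\cap\im(D_w)$ precisely when $h\in uIw^{-1}I\cap IvI$. Finally, to pass to double cosets: the set $uIw^{-1}I\cap IvI$ is right $I$-invariant, and it is left $I_u$-invariant because $IvI$ is even left $I$-invariant while for $x=\dot u y\dot u^{-1}\in I_u$ with $y\in I$ one has $x\cdot uIw^{-1}I=uyIw^{-1}I=uIw^{-1}I$; thus this set is a union of $(I_u,I)$-double cosets, and since $D_u$ is defined on $I_u\backslash G/I$ its preimage of $\im(D_v)\cap\im(D_w)$ is exactly the image of that set, namely $I_u\backslash(uIw^{-1}I\cap IvI)/I$, as claimed.

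I do not expect a genuine obstacle here: the argument is just an unwinding of the diagonal $G$-action on $G^3/I^3$ together with the definitions of $D_v$, $D_w$, $D_u$. The only point requiring care is the bookkeeping of left versus right cosets — in particular the inversion turning the $\im(D_w)$-condition $h^{-1}\dot u\in IwI$ into the form $h\in uIw^{-1}I$ that appears in the statement.
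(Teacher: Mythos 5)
Your proof is correct and is essentially the same unwinding of the definitions as the paper's: the paper verifies the two containments by writing $h=\dot u x_0\dot w^{-1}x_1=x_2\dot v x_3$ and by unpacking the orbit equalities, which amounts exactly to your criteria $g_1^{-1}g_2\in IvI$ and $g_2^{-1}g_3\in IwI$. Your version merely isolates these relative-position criteria up front before specializing to $(1,h,\dot u)$, and your closing check that $uIw^{-1}I\cap IvI$ is a union of $(I_u,I)$-double cosets is a harmless extra precaution.
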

\begin{proof}
Let $h = \dot{u} x_0 \dot{w}^{-1} x_1 = x_2 \dot{v} x_3$ with $x_j \in I$. Then
\begin{equation*}
  D_u(h) = G(1,x_2 \dot{v} x_3,\dot{u}) I^3 = G x_2 (1,\dot{v},x_2^{-1}\dot{u})(x_2^{-1},x_3,1)I^3 = G(1,\dot{v},x_2^{-1}\dot{u})I^3 \in \im(D_v)
\end{equation*}
and
\begin{align*}
  D_u(h) & = G(1,\dot{u} x_0 \dot{w}^{-1} x_1,\dot{u})I^3 = G \dot{u} x_0 \dot{w}^{-1}(\dot{w} x_0^{-1} \dot{u}^{-1},1,\dot{w})(1,x_1,x_0^{-1})I^3 \\
  & = G(\dot{w} x_0^{-1} \dot{u}^{-1},1,\dot{w})I^3 \in \im(D_w) \ .
\end{align*}
On the other hand, let $h \in G$ be such that $D_u(h) = G(1,h,u)I^3 \in \im (D_v) \cap \im (D_w)$. Then $(1,h,\dot{u}) \in G(1,v,\tilde{h})I^3 \cap G(\bar{h},1,w)I^3$ for some $\tilde{h}, \bar{h} \in G$. Write
\begin{equation*}
  (1,h,\dot{u}) = (gx_1,g\dot{v}x_2,g\tilde{h} x_3) = (g' \bar{h} y_1,g' y_2,g'\dot{w} y_3) \qquad\text{with $g, g' \in G$ and $x_j, y_j \in I$}.
\end{equation*}
We see that $1 = gx_1$ and $h = g\dot{v}x_2 = x_1^{-1} \dot{v} x_2 \in IvI$ as well as $\dot{u} = g' \dot{w}y_3$ and $h = g' y_2 = \dot{u} y_3^{-1} \dot{w}^{-1} y_2 \in uIw^{-1}I$.
\end{proof}

We deduce that \eqref{f:Ron1} simplifies to
\begin{equation*}
  (f_\alpha \cdot f_\beta)(1,u) = \sum_{h \in I_u \backslash(uIw^{-1}I \cap IvI)/I} \cores^{I_u \cap hIh^{-1}}_{I_u} F(1,h,u) \ .
\end{equation*}
Recall that
\begin{align*}
  F(1,h,u) & = (\iota_{1,2}^* f_\alpha)(1,h,u) \cup (\iota_{2,3}^* f_\beta)(1,h,u)  \\
    & = \res^{I \cap hIh^{-1}}_{I_u \cap hIh^{-1}} f_\alpha(1,h) \cup \res^{uIu^{-1} \cap hIh^{-1}}_{I_u \cap hIh^{-1}} f_\beta(h,u) \ .
\end{align*}
We write
\begin{equation*}
  h = \dot{u}A\dot{w}^{-1}B = C_h \dot{v}D \qquad\text{with $A, B, C_h, D \in I$}
\end{equation*}
and put $x_h := \dot{u}A\dot{w}^{-1}$. Then
\begin{equation*}
  (1,h) = C_h (1,\dot{v})(C_h^{-1},D) \quad\text{and}\quad (h,\dot{u}) = x_h (1,\dot{w})(B,A^{-1})
\end{equation*}
and hence, by $G$-equivariance,
\begin{equation*}
  f_\alpha(1,h) = C_{h*} \Sh_v(\alpha)  \quad\text{and}\quad  f_\beta(h,u) = x_{h*} \Sh_w(\beta) \ .
\end{equation*}
Inserting this into the above formulas we arrive at
\begin{multline}\label{f:Ron2}
  (f_\alpha \cdot f_\beta)(1,u) =  \\
  \sum_{h \in I_u \backslash(uIw^{-1}I \cap IvI)/I} \cores^{I_u \cap hIh^{-1}}_{I_u} \big( \res^{I \cap hIh^{-1}}_{I_u \cap hIh^{-1}} C_{h*} \Sh_v(\alpha)  \cup \res^{uIu^{-1} \cap hIh^{-1}}_{I_u \cap hIh^{-1}} x_{h*} \Sh_w(\beta) \big) \ .
\end{multline}

\begin{remark}\label{complements}
\begin{enumerate}
  \item[1.] $h I h^{-1}\cap uI u^{-1} =C_h v  I  v^{-1} C_h^{-1}\cap uI u^{-1} $.
  \item[2.] $C_h I_v C_h^{-1} = I \cap hIh^{-1} = I \cap x_h I x_h^{-1}$.
  \item[3.] $uIu^{-1} \cap hIh^{-1} = x_h I_w x_h^{-1} = x_h I x_h^{-1} \cap x_h wIw^{-1} x_h^{-1} = x_h I x_h^{-1} \cap uIu^{-1}$.
  \item[4.] The coset $C_h I_v$ only depends on the coset $hI$ (for $c,c'\in I$, we have $cvI = c' vI \Longleftrightarrow c^{-1} c' \in I_v$).
  \item[5.] The coset $x_h I_w$ only depends on the coset $hI$.
  \item[6.] $x_h I x_h^{-1} uI = x_h IwI$.
\end{enumerate}
\end{remark}

\begin{lemma}\phantomsection\label{lem:bij2}
\begin{itemize}
  \item[i.] The projection map
\begin{equation*}
  I_u \backslash (uIw^{-1} \cap IvI)/I_w \xrightarrow{\; \simeq\;} I_u \backslash (uIw^{-1}I \cap IvI)/I
\end{equation*}
is bijective.
  \item[ii.] The map
\begin{align*}
  I_{v^{-1}} \backslash (v^{-1} Iu \cap IwI) / I_{u^{-1}} & \xrightarrow{\simeq\;} I_u \backslash (uIw^{-1} \cap IvI)/I_w \\
  h = cwd & \longmapsto \dot{u} h^{-1} c
\end{align*}
  is bijective.
\end{itemize}
\end{lemma}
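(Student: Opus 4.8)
The plan is to treat the two assertions separately; throughout I will use two standard facts: the \emph{clearing principle} --- if $x\dot{w}y = x'\dot{w}y'$ with $x,x',y,y'\in I$ then $(x')^{-1}x\in I_w$, which I also apply with $\dot{w}$ replaced by $\dot{w}^{-1}$, $\dot{u}$ or $\dot{v}$ --- and the conjugation identities $\dot{u}I_{u^{-1}}\dot{u}^{-1} = I_u$, $\dot{w}^{-1}I_w\dot{w} = I_{w^{-1}}$, $\dot{v}I_{v^{-1}}\dot{v}^{-1} = I_v$. For part~(i): surjectivity of the projection is immediate, since given $x\in uIw^{-1}I\cap IvI$ written as $x = \dot{u}a\dot{w}^{-1}b$ with $a,b\in I$, the element $xb^{-1} = \dot{u}a\dot{w}^{-1}\in uIw^{-1}$ still lies in $IvI$ and represents the same class in $I_u\backslash(uIw^{-1}I\cap IvI)/I$. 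For injectivity, if $h = \dot{u}a\dot{w}^{-1}$ and $h' = \dot{u}a'\dot{w}^{-1}$ ($a,a'\in I$) satisfy $h = Ph'Q$ with $P\in I_u$, $Q\in I$, then conjugating by $\dot{u}^{-1}$ gives $a\dot{w}^{-1} = (\dot{u}^{-1}P\dot{u})\,a'\dot{w}^{-1}Q$ with $\dot{u}^{-1}P\dot{u}\in I_{u^{-1}}$, and the clearing principle forces $Q\in I_w$, so $I_uhI_w = I_uh'I_w$.

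For part~(ii), the first step is to observe, using the alternative expression $h = \dot{v}^{-1}b\dot{u}$ ($b\in I$) available for $h\in v^{-1}Iu$, that for a decomposition $h = c\dot{w}d$ one has
\begin{equation*}
  \dot{u}h^{-1}c \;=\; \dot{u}d^{-1}\dot{w}^{-1} \;=\; b^{-1}\dot{v}c \ ,
\end{equation*}
so the image of $h$ lies in $uIw^{-1}$ (first expression) and in $IvI$ (last expression), hence in $uIw^{-1}\cap IvI$. I would then check that $\psi\colon [h]\mapsto[\dot{u}h^{-1}c]$ descends to triple cosets: changing the decomposition $h = c\dot{w}d$ alters $c$ on the right by a factor of $I_w$; left multiplication of $h$ by $I_{v^{-1}}$ is absorbed into the new $c$; and right multiplication of $h$ by $\delta\in I_{u^{-1}}$ changes $\dot{u}h^{-1}c$ by the factor $\dot{u}\delta^{-1}\dot{u}^{-1}\in I_u$ on the left --- in each case a change killed in $I_u\backslash(uIw^{-1}\cap IvI)/I_w$.

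Finally I would exhibit the inverse explicitly. For $h''\in uIw^{-1}\cap IvI$ write $h'' = \dot{u}e\dot{w}^{-1} = g\dot{v}g'$ with $e,g,g'\in I$ (here $e$ is uniquely determined), and set $\Phi([h'']) := [g'\dot{w}e^{-1}]$. Then $g'\dot{w}e^{-1}\in IwI$ is clear, and $\dot{v}\cdot g'\dot{w}e^{-1} = (g^{-1}\dot{u}e\dot{w}^{-1})\dot{w}e^{-1} = g^{-1}\dot{u}\in I\dot{u}$, so $g'\dot{w}e^{-1}\in v^{-1}Iu$; well-definedness of $\Phi$ on triple cosets is again the clearing principle plus the conjugation identities. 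The two composites are the identity already on representatives: $\psi\circ\Phi$ sends $h'' = \dot{u}e\dot{w}^{-1}$ (decomposed with $c = g'$, $d = e^{-1}$) to $\dot{u}(g'\dot{w}e^{-1})^{-1}g' = \dot{u}e\dot{w}^{-1} = h''$, while by the displayed identity $\psi(h) = \dot{u}d^{-1}\dot{w}^{-1} = b^{-1}\dot{v}c$ has $uIw^{-1}$-data $e = d^{-1}$ and $IvI$-data $g = b^{-1}$, $g' = c$, whence $\Phi(\psi(h)) = c\dot{w}d = h$. This proves both parts.

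The step I expect to be the main nuisance is the systematic verification that $\psi$ and $\Phi$ descend to triple cosets: no single deep point is involved, but one must track carefully which conjugate of $I$ --- among $I_w$, $I_{w^{-1}}$, $I_u$, $I_{u^{-1}}$, $I_v$, $I_{v^{-1}}$ --- each auxiliary factor lands in as factors are slid past $\dot{w}$, $\dot{w}^{-1}$, $\dot{u}$ and $\dot{v}$, everything reducing to the clearing principle and the relations $\dot{x}(I\cap\dot{x}^{-1}I\dot{x})\dot{x}^{-1} = I\cap\dot{x}I\dot{x}^{-1}$.
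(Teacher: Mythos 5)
Your proof is correct and follows essentially the same route as the paper: part (i) is the same "clearing" argument that the paper obtains by substituting $u, w^{-1}, v$ for $v^{-1}, u, w$ in Lemma \ref{lem:bij}.i, and part (ii) proceeds, as in the paper, by checking well-definedness on triple cosets and exhibiting the identical explicit inverse $h'' = g\dot{v}g' \mapsto g'(h'')^{-1}\dot{u}$. The only difference is that you verify the coset-independence computations in more detail than the paper does.
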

\begin{proof}
i. Replace in Lemma \ref{lem:bij}.i the elements $v^{-1}, u, w$ with $u, w^{-1}, v$.

ii. First of all we note that the coset $cI_w$ only depends on $w$ (and $h$). We therefore wrote $cwd = h$ by a slight abuse of notation. The map is well defined since $I_u u h^{-1} c = I_u u d^{-1} \dot{w}^{-1} \subseteq uIw^{-1}$ and, if $h = \dot{v}^{-1} a \dot{u}$, then $\dot{u} h^{-1} c = a^{-1} \dot{v} c \in IvI$. The bijectivity follows by checking that the map $h' = CvD \mapsto D h'^{-1} u$ is a well defined inverse.
\end{proof}

We note that if $h = c\dot{w}d = \dot{v}^{-1} a^{-1} \dot{u}$ then $h' := \dot{u}h^{-1}c = \dot{u} d^{-1} \dot{w}^{-1} = a\dot{v}c$ and $x_{h'} = a\dot{v}c$ and $C_{h'} = a$.
Hence, rewriting the right hand sum in \eqref{f:Ron2} by using the composite bijection in Lemma \ref{lem:bij2}, we obtain
\begin{multline*}
  (f_\alpha \cdot f_\beta)(1,u) =
  \sum_{h \in I_{v^{-1}} \backslash (v^{-1} Iu \cap IwI) / I_{u^{-1}}} \cores^{I_u \cap \dot{u}h^{-1}Ih\dot{u}^{-1}}_{I_u} \big( \res^{I \cap \dot{u}h^{-1}Ih\dot{u}^{-1}}_{I_u \cap \dot{u}h^{-1}Ih\dot{u}^{-1}} a_* \Sh_v(\alpha) \\
   \cup \res^{uIu^{-1} \cap \dot{u}h^{-1}Ih\dot{u}^{-1}}_{I_u \cap \dot{u}h^{-1}Ih\dot{u}^{-1}} (a\dot{v}c)_* \Sh_w(\beta) \big) \ ,
\end{multline*}
where $h = c \dot{w}d = \dot{v}^{-1} a^{-1} \dot{u}$ with $a, c, d \in I$. By comparing this equality with the equality in Prop.\ \ref{prop:techn-formula} we deduce the following result.

\begin{proposition}\label{prop:Ron}
Let $\alpha \in H^i(I,\mathbf{X}(v))$, $\beta \in H^j(I,\mathbf{X}(w))$, and $\alpha \cdot \beta = \sum_u \gamma_u$ with $\gamma_u \in H^{i+j}(I,\mathbf{X}(u))$ as in \ref{prop:techn-formula}; then $f_\alpha \cdot f_\beta = \sum_u f_{\gamma_u}$.
\end{proposition}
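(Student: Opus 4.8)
The plan is to exploit the fact that essentially all the substantive computation is already contained in the derivation carried out just above, namely the display for $(f_\alpha\cdot f_\beta)(1,u)$ obtained by rewriting \eqref{f:Ron2} via the composite bijection of Lemma \ref{lem:bij2}. The main step is then a direct comparison of formulas. That display expresses $(f_\alpha\cdot f_\beta)(1,u)$ as a sum over $h\in I_{v^{-1}}\backslash(v^{-1}Iu\cap IwI)/I_{u^{-1}}$ of $\cores^{I_u\cap\dot{u}h^{-1}Ih\dot{u}^{-1}}_{I_u}$ applied to the cup product $\res\big(a_*\Sh_v(\alpha)\big)\cup\res\big((a\dot{v}c)_*\Sh_w(\beta)\big)$, where $h=c\dot{w}d=\dot{v}^{-1}a^{-1}\dot{u}$ with $a,c,d\in I$, and where the restriction maps go into $H^*(I_u\cap\dot{u}h^{-1}Ih\dot{u}^{-1},k)$ from $H^*(I\cap\dot{u}h^{-1}Ih\dot{u}^{-1},k)$ and $H^*(uIu^{-1}\cap\dot{u}h^{-1}Ih\dot{u}^{-1},k)$ respectively. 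Term by term, index set for index set and subgroup for subgroup, this is precisely the expression for $\Sh_u(\gamma_u)=f_{\gamma_u}(1,u)$ in Proposition \ref{prop:techn-formula}. So I would first record the equality $(f_\alpha\cdot f_\beta)(1,u)=\Sh_u(\gamma_u)$ for every $u\in\widetilde{W}$.

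Next I would invoke the general principle that a groupoid cohomology class $f$ in the sense of \cite{Ron} \S4.1 is completely determined by the family $\big(f(1,\dot{u})\big)_{u\in\widetilde{W}}$, with $f(1,\dot{u})\in H^*(I_u,k)$: by $G$-equivariance every $G$-orbit in $(G\times G)/(I\times I)$ contains a point of the form $(1,\dot{u})$, and $(1,\dot{u})$ and $(1,\dot{u}')$ lie in the same orbit precisely when $\dot{u}$ and $\dot{u}'$ determine the same $I$-double coset, i.e.\ when $u=u'$ in $\widetilde{W}$, by the Bruhat-Tits decomposition of $G$. Now $\sum_u f_{\gamma_u}$ (the sum ranging over $u$ with $IuI\subseteq IvI\cdot IwI$, as in Proposition \ref{prop:techn-formula}) is, by definition of $f_{\gamma_u}$ as the groupoid class supported on $G(1,u)I^2$ with value $\Sh_u(\gamma_u)$ at $(1,\dot{u})$, a class whose value at $(1,\dot{u})$ is $\Sh_u(\gamma_u)$; and $(f_\alpha\cdot f_\beta)(1,u)$ vanishes unless $uIw^{-1}I\cap IvI\neq\emptyset$, equivalently unless $IuI\subseteq IvI\cdot IwI$ (Lemma \ref{intersecD}). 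Hence the two classes have the same value at every $(1,\dot{u})$, and therefore $f_\alpha\cdot f_\beta=\sum_u f_{\gamma_u}$.

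I do not expect any genuine obstacle. The only point demanding care is the bookkeeping in the first step --- checking that the relabelling $h'=\dot{u}h^{-1}c=a\dot{v}c$, $x_{h'}=a\dot{v}c$, $C_{h'}=a$ used immediately before the proposition really makes the conjugating elements and the double-coset index set of \eqref{f:Ron2} coincide with those of Proposition \ref{prop:techn-formula} --- but this is exactly what the preceding subsection was arranged to supply, so it reduces to matching two displays.
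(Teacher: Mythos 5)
Your argument is correct and is essentially the paper's own proof: the paper likewise derives the display for $(f_\alpha\cdot f_\beta)(1,u)$ via Lemma \ref{lem:bij2} and concludes by direct comparison with the formula in Proposition \ref{prop:techn-formula}. Your added remarks --- that a groupoid class is determined by its values at the points $(1,\dot{u})$ by $G$-equivariance and the Bruhat--Tits decomposition, and that the support condition $uIw^{-1}I\cap IvI\neq\emptyset$ matches $IuI\subseteq IvI\cdot IwI$ --- merely make explicit what the paper leaves implicit.
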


\section{An involutive anti-automorphism of the algebra $E^*$}\label{sec:Gamma}

For $w\in \widetilde{W}$, we have $I_{w^{-1}}=w^{-1} I_w w$ and a linear isomorphism
\begin{equation*}
  (w^{-1})_*:  H^i(I_w, k)\overset{\cong}\rightarrow  H^i(I_{w^{-1}}, k) \ ,
\end{equation*}
for all $i\geq 0$. Recall that conjugation by an element in $T^1\subset I_{w}$ is a trivial operator on $H^i(I_{w}, k)$ and therefore the conjugation above is well defined and does not depend on the chosen lift for $w^{-1}$ in $N(T)$.
 Via the Shapiro isomorphism \eqref{f:Shapiro1}, this induces the linear isomorphism  $\anti_w$:
\begin{equation}
  \xymatrix{
H^i(I, \X(w))\ar[d]_{\Sh_w} \ar[rrr]^{\anti_w}_{\cong} && & H^i(I, \X(w^{-1})) \ar[d]^{\Sh_{w^{-1}}}_\cong\\
H^i(I_w, k)\ar[rrr]^{(w^{-1})_*} _\cong& && H^i(I_{w^{-1}}, k)  }
\end{equation}
Summing over all $w\in \widetilde W$, the maps $(\anti_w)_{w\in \widetilde{W}}$ induce a linear isomorphism
\begin{equation*}
  \anti:  H^i(I,\X) \overset{\cong}\longrightarrow H^i(I,\X) \ .
\end{equation*}

\begin{proposition} \label{prop:anti+product}The map $\anti$  defines an involutive anti-automorphism of the
graded $\Ext$-algebra $E^*$, namely
$$\anti(\alpha\cdot \beta)=(-1)^{ij}\anti(\beta)\cdot \anti(\alpha)$$ where $\alpha\in H^i(I,\X)$ and $\beta\in H^j(I,\X)$ for all $i,j\geq 0$.
Restricted to $H^0(I, \X)$ it yields the anti-involution   $$\tau_g\mapsto \tau_{g^{-1}}\textrm{ for any $g\in G$}$$ of the algebra $H$. \end{proposition}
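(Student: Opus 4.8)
The plan is to prove the three assertions in the order: (a) $\anti$ is an involution, (b) $\anti$ is an anti-automorphism for the Yoneda product, (c) its restriction to $E^0=H$ is $\tau_g\mapsto\tau_{g^{-1}}$. For (a), since $(\anti_w)$ and $(\anti_{w^{-1}})$ are defined through the Shapiro isomorphisms and the conjugation maps $(w^{-1})_*$ and $(w)_*$, one has $\anti_{w^{-1}}\circ\anti_w = \Sh_w^{-1}\circ (w)_*\circ (w^{-1})_*\circ \Sh_w$; but $(w)_*\circ(w^{-1})_* = (w w^{-1})_* = \id$ on $H^i(I_w,k)$ because conjugation by an element of $I_w$ (here an element of $T^1$, after the identifications of the lifts) acts trivially on cohomology. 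Hence $\anti^2=\id$.

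For (c), recall from \S\ref{subsec:def} that under $H\xrightarrow{\cong}\X^I$, $\tau_w$ corresponds to $\chara_{IwI}$, and from the remark after Corollary \ref{coro:prod-goodlength} (or directly) that $\Sh_w(\tau_w)=1\in H^0(I_w,k)=k$. Since $(w^{-1})_*$ fixes $1\in k$, we get $\Sh_{w^{-1}}(\anti_w(\tau_w)) = 1 = \Sh_{w^{-1}}(\tau_{w^{-1}})$, so $\anti(\tau_w)=\tau_{w^{-1}}$; as $\tau_g=\tau_w$ for $g\in IwI$ and $g^{-1}\in Iw^{-1}I$, this is exactly $\tau_g\mapsto\tau_{g^{-1}}$, which is visibly an anti-involution of $H$ (it inverts the braid relations $\tau_v\tau_w=\tau_{vw}$ for $\ell(vw)=\ell(v)+\ell(w)$ and fixes the quadratic relations since $\ima$ is stable under inversion).

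The heart of the matter is (b), and the main obstacle is to match the double-coset bookkeeping of Proposition \ref{prop:techn-formula} under inversion of everything. The strategy: it suffices to check the identity on the homogeneous pieces, i.e. for $\alpha\in H^i(I,\X(v))$, $\beta\in H^j(I,\X(w))$, show that the $\X(u)$-component $\anti(\gamma_u)$ of $\anti(\alpha\cdot\beta)$, which lives in $H^{i+j}(I,\X(u^{-1}))$, equals the $\X(u^{-1})$-component of $(-1)^{ij}\anti(\beta)\cdot\anti(\alpha)$. Note $IuI\subseteq IvI\cdot IwI$ iff $Iu^{-1}I\subseteq Iw^{-1}I\cdot Iv^{-1}I$, so the index sets match. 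Then I would apply $\Sh_{u^{-1}}$ to both sides and use Proposition \ref{prop:techn-formula} to expand $\Sh_{u^{-1}}$ of the $\X(u^{-1})$-component of $\anti(\beta)\cdot\anti(\alpha)$ as a sum over $h'\in I_{w}\backslash(wIu^{-1}\cap Iv^{-1}I)/I_{u}$ of corestrictions of cup products of $\res$ of $c'_*\Sh_{w^{-1}}(\anti(\beta))$ and $(c'\dot w^{-1} a')_*\Sh_{v^{-1}}(\anti(\alpha))$ with $h' = a'\dot v^{-1} b' = \dot w c'^{-1}\dot u^{-1}$ suitably decomposed. On the other side, $\Sh_{u^{-1}}(\anti(\gamma_u)) = (u)_*\Sh_u(\gamma_u)$, which by Proposition \ref{prop:techn-formula} is $(u)_*$ applied to a sum over $h\in I_{v^{-1}}\backslash(v^{-1}Iu\cap IwI)/I_{u^{-1}}$ of corestrictions of cup products.

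To identify the two expressions I would use: first, the bijection $h\mapsto h^{-1}$ (composed with appropriate left/right translations by $\dot u, \dot v, \dot w$) between the two double-coset index sets — this is essentially the content already present in Lemma \ref{lem:bij} and Lemma \ref{lem:bij2}, transported by inversion; second, the fact from \S\ref{sec:basicprop} that restriction and corestriction commute with conjugation, so that $(u)_*\cores^{I_u\cap \dot uh^{-1}Ih\dot u^{-1}}_{I_u}$ becomes $\cores$ between the $u_*$-conjugated groups; third, that $u_*$ turns a cup product into a cup product (functoriality/multiplicativity of conjugation on cohomology). The decomposition parameters transform as follows: if $h = c\dot w d = \dot v^{-1}a^{-1}\dot u$ then taking $h^{-1}$ and reading it the other way expresses the group $u_*(\,\cdot\,)$-conjugates of $I\cap\dot uh^{-1}Ih\dot u^{-1}$ and of $uIu^{-1}\cap\dot uh^{-1}Ih\dot u^{-1}$ in exactly the form demanded by Proposition \ref{prop:techn-formula} applied to $\anti(\beta)\cdot\anti(\alpha)$, with the roles of the two factors swapped. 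Finally the sign $(-1)^{ij}$ is exactly the graded-commutativity sign of the cup product (cf. the end of \S\ref{subsec:cup-prod} and \S\ref{subsec:cup-pr2}), which is the only discrepancy between $\res(\cdots)\cup\res(\cdots)$ in the two orders. I expect the genuinely delicate point to be checking that the conjugating elements $a,c$ on one side and $a',c'$ on the other are related by the claimed inversion so that the two corestriction-of-cup-product terms literally coincide; the cleanest route is probably to feed both sides through the groupoid-cohomology reformulation of \S\ref{subsec:Ron} (Proposition \ref{prop:Ron}), where the product $(f_\alpha\cdot f_\beta)(1,u)$ is symmetric in an evident way under $(g_1,g_2,g_3)\mapsto (g_3^{-1},g_2^{-1},g_1^{-1})$, reducing (b) to the observation that $\anti$ corresponds on groupoid classes to this order-reversing conjugation, which is manifestly an anti-involution up to the cup-product sign.
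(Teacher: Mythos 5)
Your outline for parts (a) and (c) matches the paper exactly, and your primary strategy for (b) is also the paper's: apply $\Sh_{u^{-1}}$, expand both sides with Proposition \ref{prop:techn-formula}, exhibit a bijection $I_{w}\backslash(wIu^{-1}\cap Iv^{-1}I)/I_{u} \leftrightarrow I_{v^{-1}}\backslash(v^{-1}Iu\cap IwI)/I_{u^{-1}}$, and use that corestriction commutes with conjugation together with the anticommutativity of the cup product to produce the sign $(-1)^{ij}$. The point you flag as ``genuinely delicate'' --- that the decomposition parameters $a',c',d'$ of $h'$ are the inverses (up to $T^1$) of $d,c,a$ for the corresponding $h$, so that $u_*$ of one cup-product term literally equals $(-1)^{ij}$ times the other --- is not a side issue: it \emph{is} the computation that constitutes the paper's proof (the paper verifies $a^{-1}\in T^1d'$, $c^{-1}\in T^1c'$, $d^{-1}\in a'T^1$ and then conjugates term by term). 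As written, your primary route therefore stops exactly where the real work begins. Note also that the paper actually proves $\anti(\anti(\beta)\cdot\anti(\alpha))=(-1)^{ij}\alpha\cdot\beta$ and then uses involutivity, which is slightly more convenient for bookkeeping than comparing $\anti(\alpha\cdot\beta)$ with $\anti(\beta)\cdot\anti(\alpha)$ directly.

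Your fallback via \S\ref{subsec:Ron} is, however, a genuinely different and viable completion, and it does circumvent the parameter matching because Proposition \ref{prop:Ron} already packages it. One correction: the relevant symmetry on groupoid classes is the coordinate swap $(g_1,g_2)\mapsto(g_2,g_1)$, not $(g_1,g_2)\mapsto(g_2^{-1},g_1^{-1})$ (the latter is not compatible with the left $G$-action). By $G$-equivariance one checks $f_\alpha(v^{-1},1)=(v^{-1})_*f_\alpha(1,v)=\Sh_{v^{-1}}(\anti(\alpha))$, so the swap of $f_\alpha$ is $f_{\anti(\alpha)}$; then applying $u^{-1}_*$ to $(f_\alpha\cdot f_\beta)(1,u)$, reindexing the middle variable by $g\mapsto u^{-1}g$, and using anticommutativity of the cup product yields $(-1)^{ij}(f_{\anti(\beta)}\cdot f_{\anti(\alpha)})(1,u^{-1})$. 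Combined with the injectivity of $\gamma\mapsto f_\gamma$ on each $H^{*}(I,\X(u))$ this gives (b). If you carry out that short verification, your argument is complete and arguably cleaner than the paper's; without it, or without the explicit $a,c,d$ bookkeeping, the proof has a gap at its central step.
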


\begin{proof}  First note that, for $w\in\widetilde W$,  the element $\tau_w\in H^0(I, \X(w))=\X(w)^I$ corresponds to $1\in H^0(I_w,k)=k$. Therefore, $\anti(\tau_w)=\tau_{w^{-1}}$.
Now we turn to the proof of the first statement of the proposition.
Let $\alpha \in H^i(I,\mathbf{X}(v))$ and $\beta \in H^j(I,\mathbf{X}(w))$. On the one hand, recall that we have $$\alpha \cdot \beta= \sum_{u \in \widetilde{W}, IuI \subseteq IvI \cdot IwI}\gamma_u$$ with $\gamma_u\in H^{i+j}(I,\mathbf{X}(u))$  as in
 Proposition \ref{prop:techn-formula} given by
\begin{equation*}
  \Sh_u(\gamma_u) = { (-1)^{ij}} \sum_{h \in I_{v^{-1}} \backslash (v^{-1} Iu \cap IwI)/I_{u^{-1}}} \mathrm{cores}^{I_u \cap \dot{u}h^{-1} I h\dot{u}^{-1}}_{I_u}
   \big( \Gamma_{u,h}\big)\\
\end{equation*}
where
\begin{equation*}
\Gamma_{u,h}:=  \res^{u I u^{-1} \cap \dot{u}h^{-1} I h\dot{u}^{-1}}_{I_u \cap \dot{u}h^{-1} I h\dot{u}^{-1}} \big((a\dot{v}c)_*(\Sh_w(\beta))\big) \cup \res^{I \cap \dot{u}h^{-1} I h\dot{u}^{-1}}_{I_u \cap \dot{u}h^{-1} I h\dot{u}^{-1}} \big( a_*(\Sh_v(\alpha))
   \big).
\end{equation*}
where $h = c \dot{w} d = \dot{v}^{-1} a^{-1} \dot{u}$ with $a,c,d \in I$.

We compute $\anti(\beta) \cdot \anti(\alpha)$.  Recall that $\Sh_{v^{-1}}(\anti (\alpha))=(v^{-1})_* \Sh_v(\alpha)$ and $\Sh_{w^{-1}}(\anti (\beta))=(w^{-1})_* \Sh_w(\beta)$.
Note that the map $u\mapsto u^{-1}$ yields a bijection between the set of $u\in\widetilde W$ such that
 $I u I\subseteq I v I\cdot I w I$   and the set of $u'\in\widetilde W$
 such that $Iu'I \subseteq Iw^{-1}I \cdot Iv^{-1}I$. Therefore,
 by Proposition \ref{prop:techn-formula}, we have
  $\anti(\beta) \cdot \anti(\alpha)= \sum_{u \in \widetilde{W}, IuI \subseteq IvI \cdot IwI}\delta_{u^{-1}}$ with $\delta_{u^{-1}}\in H^{i+j}(I,\mathbf{X}(u^{-1}))$
given by
\begin{equation*}
  \Sh_{u^{-1}}(\delta_{u^{-1}}) = { (-1)^{ij}} \sum_{h' \in I_{w} \backslash (w Iu^{-1} \cap Iv^{-1}I)/I_{{u}}} \mathrm{cores}^{I_{u^{-1}} \cap \dot{u}^{-1}h'^{-1} I h'\dot{u}}_{I_{u^{-1}}}
   \big(\Delta_{u^{-1},h'}\big)
\end{equation*}
where
\begin{align*}
& \Delta_{u^{-1},h'}  \\
& =\res^{u^{-1} I {u} \cap \dot{u}^{-1}{h'}^{-1} I h'\dot{u}}_{I_{u^{-1}} \cap \dot{u}^{-1}{h'}^{-1} I h'\dot{u}} \big((a'\dot{w^{-1}}c')_*(\Sh_{v^{-1}}(\anti(\alpha)))\big) \cup \res^{I \cap \dot{u}^{-1}{h'}^{-1} I h'\dot{u}}_{I_{u^{-1}} \cap \dot{u}^{-1}{h'}^{-1} I h'\dot{u}} \big( {a'}_*(\Sh_{w^{-1}}(\anti(\beta)))
   \big)\cr&=\res^{u^{-1} I {u} \cap \dot{u}^{-1}{h'}^{-1} I h'\dot{u}}_{I_{u^{-1}} \cap \dot{u}^{-1}{h'}^{-1} I h'\dot{u}} \big((a'\dot{w^{-1}}c'\dot{v^{-1}})_*( \Sh_v(\alpha))\big) \cup \res^{I \cap \dot{u}^{-1}{h'}^{-1} I h'\dot{u}}_{I_{u^{-1}} \cap \dot{u}^{-1}{h'}^{-1} I h'\dot{u}} \big( {(a'\dot{w^{-1}}})_*( \Sh_w(\beta))
   \big)
\end{align*}
with $h' = c' \dot{v^{-1}} d' = (\dot{w^{-1}})^{-1} {a'}^{-1} \dot{u^{-1}}$  with $a',c',d' \in I$.

Now we compute $\anti(\anti(\beta)\cdot \anti(\alpha))$. Recall that corestriction commutes with conjugation (cf. \S\ref{sec:basicprop}). We have: \begin{equation*}
\Sh_{u}(\anti(\delta_{u^{-1}}))= u_* \Sh_{u^{-1}}(\delta_{u^{-1}})= { (-1)^{ij}}\sum_{h' \in I_{w} \backslash (w Iu^{-1} \cap Iv^{-1}I)/I_{{u}}} \mathrm{cores}^{I_{u} \cap h'^{-1} I h'}_{I_{u}} u_*
   \big(\Delta_{u^{-1},h'}\big)
\end{equation*}
To an element $h'\in w Iu^{-1} \cap Iv^{-1}I$  written in the form $h'={c'}\dot {v^{-1}} d'=(\dot{w^{-1}})^{-1} {a'}^{-1} \dot{u^{-1}}$  as above, we attach
the double coset $ I_{v^{-1}}  h  I_{u^{-1} }$ where
$h:= {c'}^{-1} h' \dot u= \dot {v^{-1}} d'\dot u \in v^{-1} I u \cap I w I$.
This is  well defined because $d$ is defined up to multiplication on the left by an element in $I_v$.
It is easy to see that this yields a map  $I_{w} \backslash (w Iu^{-1} \cap Iv^{-1}I)/I_{{u}}\rightarrow I_{v^{-1}} \backslash (v^{-1} Iu \cap IwI)/I_{u^{-1}}$.
One can check that the map in the opposite direction induced by  attaching  to $h= c\dot  w d\in
v^{-1} Iu \cap IwI$ the  double coset  $I_w  h' I_u$
where $h'=c^{-1} h \dot u^{-1}=\dot  w d\dot u^{-1} $ is well defined. Therefore,  these maps are bijective.
Note that for $h'$ and $h$ corresponding to each other as above, we have
  ${I_{u} \cap h'^{-1} I h'}= I_u\cap  \dot u h^{-1} I h {\dot u}^{-1}$
 and $h = c \dot{w} d = \dot{v}^{-1} a^{-1} \dot{u}$ with $a^{-1}\in T^1 d'$, $c^{-1}\in T^1 c'$ and $d^{-1}\in a' T^1$, therefore we compute
\begin{align*}
& u_*(\Delta_{u^{-1},h'})  \\
&= \dot u_* \res^{u^{-1} I {u} \cap  h^{-1} I h}_{I_{u^{-1}} \cap h^{-1} I h} \big((a'\dot{w^{-1}}c'\dot{v^{-1}})_*( \Sh_v(\alpha))\big) \cup \res^{I \cap {h^{-1}}I h}_{I_{u^{-1}} \cap {h}^{-1}I h} \big( {(a'\dot{w^{-1}}})_*( \Sh_w(\beta))
   \big)\cr&= \res^{ I \cap   \dot u h^{-1} I h \dot{u}^{-1}}_{I_{u^{-1}} \cap h^{-1} I h} \big(( \dot u a'\dot{w^{-1}}c'\dot{v^{-1}})_*( \Sh_v(\alpha))\big) \cup \res^{\dot u I \dot{u}^{-1}\cap {\dot u h^{-1}}I h\dot{u}^{-1}}_{I_{u} \cap \dot u {h}^{-1}I h\dot u^{-1}} \big( {( \dot u a'\dot{w^{-1}}})_*( \Sh_w(\beta))
   \big)\cr&= \res^{ I \cap   \dot u h^{-1} I h \dot{u}^{-1}}_{I_{u^{-1}} \cap h^{-1} I h} \big((d'^{-1})_*( \Sh_v(\alpha))\big) \cup \res^{\dot u I \dot{u}^{-1}\cap {\dot u h^{-1}}I h\dot{u}^{-1}}_{I_{u} \cap \dot u {h}^{-1}I h\dot u^{-1}} \big(  {( \dot u a'\dot{w^{-1}}})_*( \Sh_w(\beta))
   \big)\cr &= \res^{ I \cap   \dot u h^{-1} I h \dot{u}^{-1}}_{I_{u^{-1}} \cap h^{-1} I h} \big(a_*( \Sh_v(\alpha))\big) \cup \res^{\dot u I \dot{u}^{-1}\cap {\dot u h^{-1}}I h\dot{u}^{-1}}_{I_{u} \cap \dot u {h}^{-1}I h\dot u^{-1}} \big(  (a \dot v c)_*( \Sh_w(\beta))
   \big)\cr&= (-1)^{ij} \Gamma_{u,h}
\end{align*}
and
$\Sh_{u}(\anti(\delta_{u^{-1}}))=(-1)^{ij}\Sh_{u}(\gamma_u)$. We proved $\anti(\anti(\beta)\cdot \anti(\alpha))=(-1)^{ij} \alpha\cdot \beta.$
\end{proof}

\begin{remark}\label{rema:Jcup}
By \eqref{f:cup+Sh} our cup product commutes with the Shapiro isomorphism. It also commutes with conjugation of the group (\cite{NSW} Prop.\ 1.5.3(i)). Therefore the anti-automorphism $\anti$ respects the cup product.
\end{remark}

\begin{remark}\label{rema:anti+ss}
We want to show that the anti-involution of $H$ induced by $\anti$ preserves the ideal $\mathfrak J$ of \S\ref{subsec:supersing}. Consider the space
$ {\mathbb Z}[G/I]$ of  finitely supported functions $G/I\rightarrow \mathbb Z$. The  ring of its $G$-equivariant ${\mathbb Z}$-endomorphisms is isomorphic to the convolution ring
$ {\mathbb Z}[I\backslash  G/I]$ with product given by $f\star f'({}_-)=\sum_{x\in G/I} f(x^{-1}{}_-)f'(x)$.
The opposite ring is denoted by $H_{\mathbb Z}$.
One easily checks that the map $f\mapsto [g\mapsto f(g^{-1})]$ defines an anti-involution $j_{\mathbb Z}$ of the convolution ring  $ {\mathbb Z}[I\backslash  G/I]$. It induces an anti-involution of the $k$-algebra   $H= H_{\mathbb Z}\otimes  k$ which coincides with $\anti$.
A basis $(z_{\{\lambda\}})_{\{\lambda\}\in \widetilde \Lambda/ W_0}$ of the center of the ring $H_{\mathbb Z}$ is described in  \cite{Vigprop}. It is indexed by the set of  $W_0$-orbits in the preimage $\widetilde \Lambda= T/T^1$ of $\Lambda= T/T^0$ in $\widetilde W$ (see \S\ref{pro-p Weyl}).
From  \cite{Oll} Lemma 3.4 we deduce that $j_{\mathbb Z}(z_{\{\lambda\}})= z_{\{\lambda^{-1}\}}$  for any $\lambda\in \widetilde \Lambda$ where $\{\lambda^{-1}\}$ denotes the $W_0$-orbit of $\lambda^{-1}$.

Recall that the map $\nu$ defined in \S\ref{subsubsec:apartment} induces an isomorphism $\Lambda\cong X_*(T)$.
As in \cite{Oll} 1.2.6, notice that the map  $X_*(T) \rightarrow T/T^1$, $\xi\mapsto \xi(\pi^{-1})\bmod T^1$ composed with $\nu$ splits the exact sequence
\begin{equation*}
  0\longrightarrow T^0/T^1 \longrightarrow  \widetilde \Lambda \longrightarrow \Lambda \longrightarrow 1
\end{equation*}
and we may see $\Lambda$ as a subgroup of $\widetilde\Lambda$ which  is  preserved by the action of $W_0$.
The set  $\Lambda/W_0$  of all $W_0$-orbits of in $\Lambda$ contains the set $(\Lambda/W_0)'$ of orbits of elements with nonzero length (when seen in $W$). Note that via the map $\nu$, it is indexed by the set $X_*^{dom}(T)\setminus (-X_*^{dom}(T))$. By the above remarks, the $\mathbb Z$-linear subspace   of $H_{\mathbb Z}$ with basis $(z_{\{\lambda\}})_{\{\lambda\}\in  (\Lambda/ W_0)'}$  is preserved by $j_{\mathbb Z}$.
Therefore, its image $\mathfrak J$ in $H$ (as defined in \cite{Oll} 5.2) is preserved by $\anti$. Note in passing that the algebra  $\mathcal Z^0(H)$ (as introduced in \S\ref{subsec:supersing})  has basis $(z_{\{\lambda\}})_{\{\lambda\}\in  \Lambda/ W_0}$.

We deduce from this that if $M$ is a left (resp. right) supersingular module in the sense of \S\ref{subsec:supersing}, namely if any element of $M$ is annihilated by a power of $\mathfrak J$, then $M^\anti$ (resp. ${}^\anti M$) is a right (resp. left) supersingular module.

Alternatively, if $\mathbf{G}$ is semisimple, one can argue that $\anti$ preserves supersingular modules as follows. First of all it suffices to show this for cyclic supersingular modules  and such modules have finite length. (This is because  $H$ is finitely generated over $\mathcal Z^0(H)$  by \cite{Oll} Prop.\ 2.5ii and since  $\mathbf{G}$ is semisimple, $\mathfrak J$ has finite codimension in  $\mathcal Z^0(H)$ and therefore in $H$.) It then further suffices to do this after a suitable extension of the coefficient field. By the equivalence in the proof of Lemma \ref{lemma:critsupersing} this finally reduces us to quotients of  $H\otimes_{H_{aff}}\chi$ (or $\chi\otimes_{H_{aff}}H$) where $\chi$ is a supersingular character of $H_{aff}$. It is easy to see that the composite $\chi\circ\anti:H_{aff}\rightarrow k$  is also a supersingular character. The right (resp. left) $H$-module $(H\otimes_{H_{aff}}\chi)^\anti$
(resp. ${}^\anti(\chi\otimes_{H_{aff}} H)$) is generated as an $H$-module by $1\otimes 1$ which supports the character $\chi\circ\anti$ of $H_{aff}$.  Therefore by Lemma \ref{lemma:critsupersing} it is annihilated by $\mathfrak J$ and supersingular as an $H$-module.
\end{remark}

\section{Dualities}\label{sec:duality}

\subsection{Finite and twisted duals\label{subsec:duals}}
Given a vector space $Y$, we denote by $Y^\vee$ the dual  space $Y^\vee:=\Hom_k(Y, k)$ of $Y$.
If $Y$ is a left, resp. right, module over $H$, then $Y^\vee$ is naturally a
right, resp. left, module over $H$. Recall that $H$ is endowed with an anti-involution
respecting the product and given by the map $\anti$  (see  Proposition \ref{prop:anti+product}).
We may twist the action of $H$ on a left, resp. right, module $Y$ by $\anti$ and thus obtain the right, resp. left module $Y^\anti$, resp. ${}^\anti Y$, with the twisted action of $H$ given by  $(y, h)\mapsto  \anti(h)y$, resp. $(h,y)\mapsto y \anti(h)$. If $Y$ is an $H$-bimodule, then we may define the twisted $H$-bimodule
${}^\anti Y^\anti$ the obvious way.

\begin{remark} \label{etavee}For a left, resp right, resp. bi-, $H$-module,
the identity map yields an isomorphism of right, resp. left,  resp. bi-,  $H$-modules  $$({}^\anti Y)^\vee=(Y^\vee)^{\anti},\quad \textrm{resp.  }(Y^\anti)^\vee={}^\anti(Y^\vee), \quad \textrm{resp.  }({}^\anti Y^\anti)^\vee={}^\anti(Y^\vee)^\anti.$$
\end{remark}

Since $\X$ is a right $H$-module, the space $\X^\vee$ is naturally a left $H$-module via $(h,\varphi)\mapsto \varphi({}_-h)$.  It is also endowed with a left action of $G$ which commutes with the action of $H$ via $(g,\varphi)\mapsto \varphi(g^{-1}{}_-)$. It is however not a smooth representation of $G$.
Since $\X$ decomposes into $\oplus_{w\in \widetilde W } \X(w)$ as a vector space, $\X^\vee$  identifies with $\prod_{w\in \widetilde W } \X(\omega)^\vee$ which contains $\oplus_{w\in \widetilde W} \X(w)^\vee$. We denote by $\X^{\vee,f}$ the image of the latter in $\X^\vee$. It is  stable under the action of $G$ on $\X^\vee$, and $\X^{\vee,f}$  is a smooth representation of $G$. Moreover it follows from Cor.\ \ref{coro:known}.ii and \eqref{f:quadratic2} that $\X^{\vee,f}$ is an $H$-submodule of $\X^\vee$.

More generally, for $Y$ a vector space which decomposes into a direct sum $Y = \oplus_{w\in\widetilde W} Y_w$, we denote by $Y^{\vee, f}$ the so-called finite dual of $Y$ which is defined to be the image in $Y^\vee = \prod_{w\in \widetilde W } Y_w^\vee$ of
$\oplus_{w\in \widetilde W } Y_w^\vee$.

For $g\in G$ denote by   $\ev_{g}$ the evaluation map $ \mathbf X\rightarrow k, f\mapsto f(g)$.  This is an element  in $\mathbf X^{\vee, f}$. For  $g_0,g\in G$ and $f\in \mathbf X$ we have
$({^{g_0}\ev}_{g})(f) = \ev_{g}({^{g_0^{-1}}f}) = ({^{g_0^{-1}}f})(g)= f(g_0g) = \ev_{g_0 g}(f)$. In particular, $\ev_1\in \mathbf X^{\vee, f}$ is fixed under the action of $I$ and there is a well defined morphism of smooth representations of $G$:
\begin{align}\label{f:contra}
    \mathbf{ev} : \X & \longrightarrow \X^{\vee, f} \\
      \mathrm{ char}_{gI} = {^g \chara}_I & \longmapsto  \ev_g = {^g \ev}_1 \quad{\textrm{ for any $g\in G$}}.  \nonumber
\end{align}
It is clearly a bijection.  The basis $(\ev_g)_{g\in G/I}$ of $\X^{\vee, f}$ is dual to the basis $(\chara_{gI})_{g\in G/I}$ of $\X$.
For $w\in \widetilde W$, the space $\X(w)$ corresponds to $\X(w)^\vee$ under the  isomorphism $\mathbf{ev}$.

\begin{lemma}\label{lemma:selfcontra}
  The map $\mathbf{ev}$ induces an isomorphism of right $H$-modules $\mathbf X \xrightarrow{\cong} (\X^{\vee,f})^\anti$.
\end{lemma}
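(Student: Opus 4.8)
The plan is to reduce the statement to a one–line combinatorial identity about double cosets. We already know that $\mathbf{ev}$ is a $G$-equivariant isomorphism of $k$-vector spaces carrying the basis $(\chara_{gI})_{g\in G/I}$ of $\X$ to the dual basis $(\ev_g)_{g\in G/I}$ of $\X^{\vee,f}$; hence for $\varphi,\psi\in\X$ we have $\mathbf{ev}(\varphi)(\psi)=\langle\varphi,\psi\rangle$, where $\langle\varphi,\psi\rangle:=\sum_{g\in G/I}\varphi(g)\psi(g)$ is the obvious symmetric $k$-bilinear form on $\X$. Recalling that $\X^{\vee,f}\subseteq\X^\vee$ carries the left $H$-action $(h\cdot\varphi)(\psi)=\varphi(\psi\cdot h)$ and that, by definition, the twisted right $H$-module $(\X^{\vee,f})^\anti$ has action $\varphi\cdot_\anti h:=\anti(h)\cdot\varphi$, one sees that $\mathbf{ev}$ is a homomorphism of right $H$-modules if and only if
\[
  \langle\varphi\cdot h,\psi\rangle=\langle\varphi,\psi\cdot\anti(h)\rangle\qquad\text{for all }\varphi,\psi\in\X,\ h\in H,
\]
i.e. if and only if $\anti$ is the adjoint anti-involution of $\langle\,,\,\rangle$ for the right action of $H$ on $\X$. (This reformulation uses only that elements of $\X^{\vee,f}$ are determined by their values on $\X$.)

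By $k$-bilinearity it suffices to check this for $h=\tau_w$ with $w\in\widetilde W$ and $\varphi=\chara_{gI}$, $\psi=\chara_{g'I}$ with $g,g'\in G$. For the left-hand side I will use that the $G$-equivariant endomorphism of $\X$ attached to $\tau_w$ sends $\chara_I$ to $\chara_{IwI}$, so that $\chara_{gI}\cdot\tau_w={}^g\chara_{IwI}=\chara_{gIwI}=\sum_{x\in IwI/I}\chara_{gxI}$; therefore $\langle\chara_{gI}\cdot\tau_w,\chara_{g'I}\rangle$ equals $1$ if $g^{-1}g'\in IwI$ and $0$ otherwise. For the right-hand side, Proposition \ref{prop:anti+product} gives $\anti(\tau_w)=\tau_{w^{-1}}$, and the same computation yields $\langle\chara_{gI},\chara_{g'I}\cdot\tau_{w^{-1}}\rangle=\chara_{g'Iw^{-1}I}(g)$, which equals $1$ if $(g')^{-1}g\in Iw^{-1}I$ and $0$ otherwise. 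These two indicator values agree because $g^{-1}g'\in IwI$ is equivalent to $(g')^{-1}g\in Iw^{-1}I$. This establishes the displayed identity, hence the lemma, and since $\mathbf{ev}$ is already bijective it is an isomorphism.

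The computation itself is immediate once set up; the only point requiring care — not a genuine obstacle — is the bookkeeping of sides and of the $\anti$-twist in the first paragraph, namely checking that right $H$-linearity of $\mathbf{ev}$ into $(\X^{\vee,f})^\anti$ translates exactly into the adjointness relation $\langle\varphi\cdot h,\psi\rangle=\langle\varphi,\psi\cdot\anti(h)\rangle$. After that, everything comes down to the observation that a double coset $IwI$ is a union of cosets $xI$, so that $\chara_{gI}\cdot\tau_w$ is the characteristic function of $gIwI$, together with the identity $(IwI)^{-1}=Iw^{-1}I$.
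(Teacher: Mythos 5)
Your proof is correct and follows essentially the same route as the paper's: both reduce the right $H$-linearity of $\mathbf{ev}$ to the identity $g^{-1}g'\in IwI \iff (g')^{-1}g\in Iw^{-1}I$, using $\anti(\tau_w)=\tau_{w^{-1}}$ and $\chara_{gI}\cdot\tau_w=\chara_{gIwI}$. The only cosmetic difference is that you verify the adjointness relation $\langle\varphi\cdot h,\psi\rangle=\langle\varphi,\psi\cdot\anti(h)\rangle$ on all pairs of basis elements, whereas the paper first uses $G$-equivariance to reduce to the single generator $\chara_I$.
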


\begin{proof}
We only need to show that the composite map $\mathbf{X} \xrightarrow{\mathbf{ev}} (\X^{\vee,f})^\anti \xrightarrow{\subseteq} (\X^{\vee})^\anti$ is right $H$-equivariant.
Since  $\X$ and $(\X^\vee)^\anti$ are $(G, H)$-bimodules and  since $\X$ is generated by $\mathrm{char}_I$ under the action of $G$, it is enough to prove that  $\mathbf{ev}((\mathrm{char}_I) \tau)= \anti(\tau)\mathbf{ev}(\mathrm{char}_I)$ for any
$\tau \in H$, or equivalently, that
\begin{equation*}
  \mathbf{ev}(\mathrm{char}_{Iw I})= \tau_{w^{-1}}\mathbf{ev}(\mathrm{char}_I)
\end{equation*}
for any $w\in \widetilde W$. Decompose $IwI$ into simple cosets $I w I=\sqcup_{x}  x w I$ for
$x\in  I\cap w I w^{-1}\backslash I$. Then on the one hand, $\mathbf{ev}(\mathrm{char}_{IwI})=\sum_ x \ev_{xw}$.  For $g\in G$, it sends the function $\mathrm{char}_{gI}$ onto $1$ if and only if $g\in I w I$ and to $0$ otherwise.
On the other hand, we have $(\tau_{w^{-1}}\ev_1)(\mathrm{char}_{gI})=\ev_1(\mathrm{char}_{gI}\mathrm{char}_{Iw^{-1}I})=\ev_1(\mathrm{char}_{gIw^{-1}I})$. It is equal to $1$ if and only if $g^{-1}\in I w^{-1} I$ and to $0$ otherwise. This  proves the lemma.
\end{proof}

\subsection{\label{subsec:d-i} Duality between $E^i$ and $E^{d-i}$ when $I$ is a Poincar\'e group of dimension $d$}

In this  section we always \textbf{assume} that the pro-$p$ Iwahori group $I$ is torsion free. This forces the field $\mathfrak{F}$ to be a finite extension of $\mathbb{Q}_p$ with $p \geq 5$. Then $I$ is a Poincar\'e group of dimension $d$ where $d$ is the dimension of $G$ as a $p$-adic Lie group: According to \cite{Laz} Thm.\ V.2.2.8 and \cite{Ser} the group $I$ has finite cohomological dimension; then \cite{Laz} Thm.\ V.2.5.8 implies that $I$ is a Poincar\'e group of dimension $d$. Any open subgroup of a Poincar\'e group is a Poincar\'e group of the same dimension (cf.\ \cite{S-CG} Cor.\ I.4.5). This applies to our groups $I_w$ for any $w \in \widetilde{W}$. It follows that
\begin{equation*}
  E^* = H^*(I,\mathbf{X}) = 0 \qquad\text{for $* > d$}
\end{equation*}
and that
\begin{equation} \label{f:1dim}
  H^d(I,\mathbf{X}(w)) \cong H^d(I_w,k) \ \text{is one dimensional for any $w \in \widetilde{W}$}.
\end{equation}

\begin{remark}\label{rem:cores}
Let $L$ be a proper open subgroup of $I$. By \cite{S-CG} Chap.\ 1 Prop.\ 30(4) and Exercise 5) respectively, $\cores_I^L: H^d(L, k) \rightarrow  H^d(I,k)$ is a linear isomorphism while $\res^I_L : H^d(I,k)\rightarrow  H^d(L, k)$ is  the zero map.
\end{remark}

Let $\trace\in \mathbf X^\vee$ be the linear map given by
\begin{equation}\label{f:trace}\trace:= \sum_{g\in G/I} \ev_g\ .\end{equation}
It is easy to check that $\trace:\X\rightarrow k$ is $G$-equivariant  when $k$ is endowed with the trivial action of $G$. We denote by $\trace^i := H^i(I, \trace)$ the maps induced on cohomology.

\begin{remark}\label{rema:tracew}
We may decompose $\mathbf \trace=\sum_{w\in \widetilde W} \trace_w$ where $\trace_w=\sum_{g\in IwI/I} \ev_g$. Each summand $\trace_w: \X\rightarrow k$ is $I$-equivariant and  $\trace_w\vert_{\X(v)}=0$ if $v\neq w \in \widetilde W$ and the following diagram is commutative:
\begin{equation*}
  \xymatrix@R=0.5cm{
                &         H^i(I,\X(w)) \ar[dd]^{H^i(I,\trace_w)} \ar[dl] _{\Sh_w}    \\
  H^i(I_w, k)     \ar[dr] _{\cores_{I}^{I_w}}              \\
                &         H^i(I,k)                  }
\end{equation*}
\end{remark}
\begin{proof}
 We contemplate the larger diagram
\begin{equation*}
  \xymatrix{
    H^i(I,\mathbf{X}(w)) \ar[dd]_{\Sh_w} \ar[rr]^{H^i(I,\trace_w)} && H^i(I,k) &  \\
 & H^i(I_w,\mathbf{X}(w)) \ar[lu]_-{\cores^{I_w}_I} \ar[rr]^{H^i(I_w,\trace_w)} & & H^i(I_w,k) \ar[lu]_{\cores^{I_w}_I} \\
    H^i(I_w,k). \ar[ru]^{?} \ar[rrru]_-{=} &  &
     }
\end{equation*}
Here the map $?$ is induced by the map between coefficients which sends $a \in k$ to $a \chara_{wI}$.
The parallelogram is commutative since the corestriction is functorial in the coefficients. The right lower triangle is commutative since $\trace_w (a \chara_{wI}) = a$. The left triangle is commutative since the composite of the upwards pointing arrows is the inverse of the Shapiro isomorphism by \eqref{f:Shapiro-inverse}.
\end{proof}

\begin{lemma}\label{lemma:nondegenerate}
For $0\leq i\leq d$, the  bilinear map defined by the composite
\begin{equation*}
  H^i(I, \mathbf{X}) \otimes_k H^{d-i}(I,\mathbf{X})  \xrightarrow{\cup} H^{d}(I,\mathbf{X}) \xrightarrow{\trace^d
} H^{d}(I,k)\cong k
\end{equation*}
is nondegenerate.
\end{lemma}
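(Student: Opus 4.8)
The plan is to reduce the nondegeneracy to the classical Poincaré duality for the profinite groups $I_w$, using the orthogonal decomposition of $H^*(I,\mathbf{X})$ along $\widetilde W$. First I would record that, by \eqref{f:orth}, for $v \neq w$ we have $H^i(I,\mathbf{X}(v)) \cup H^{d-i}(I,\mathbf{X}(w)) = 0$, and moreover the cup product lands in $H^d(I,\mathbf{X}(w))$ only when $v=w$; combined with the fact that $\trace_w$ kills $\mathbf{X}(v)$ for $v\neq w$ (Remark \ref{rema:tracew}), the pairing in the lemma decomposes as an orthogonal direct sum over $w \in \widetilde W$ of the pairings
\begin{equation*}
  H^i(I,\mathbf{X}(w)) \otimes_k H^{d-i}(I,\mathbf{X}(w)) \xrightarrow{\cup} H^d(I,\mathbf{X}(w)) \xrightarrow{H^d(I,\trace_w)} H^d(I,k) \cong k \ .
\end{equation*}
So it suffices to show each of these summand pairings is nondegenerate; since both spaces are then finite-dimensional (each $H^j(I_w,k)$ is finite-dimensional over $k$ for a $p$-adic Poincaré group) and the target is one-dimensional, nondegeneracy on one side suffices.

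Next I would transport everything through the Shapiro isomorphism. By \eqref{f:cup+Sh} the diagram
\begin{equation*}
  \xymatrix{
    H^i(I,\mathbf{X}(w)) \otimes H^{d-i}(I,\mathbf{X}(w)) \ar[d]_{\Sh_w\otimes\Sh_w} \ar[r]^-{\cup} & H^d(I,\mathbf{X}(w)) \ar[d]^{\Sh_w} \\
    H^i(I_w,k) \otimes H^{d-i}(I_w,k) \ar[r]^-{\cup} & H^d(I_w,k) }
\end{equation*}
commutes, and by Remark \ref{rema:tracew} the composite $H^d(I,\trace_w)$ equals $\cores^{I_w}_I \circ \Sh_w$. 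Since $\cores^{I_w}_I : H^d(I_w,k) \xrightarrow{\cong} H^d(I,k)$ is an isomorphism (Remark \ref{rem:cores}), the summand pairing is identified, via the $\Sh_w$'s, with the ordinary cup product pairing
\begin{equation*}
  H^i(I_w,k) \otimes_k H^{d-i}(I_w,k) \xrightarrow{\cup} H^d(I_w,k) \cong k \ .
\end{equation*}
This last pairing is nondegenerate precisely because $I_w$ is a Poincaré group of dimension $d$ (the orientation module is trivial here since $k$ has characteristic $p$ and the dualizing module of a compact $p$-adic analytic Poincaré group is $k$ with trivial action when one works with $\mathbb{F}_p$-coefficients, or more simply because $H^d(I_w,k)$ is one-dimensional and Poincaré duality \cite{S-CG} I.4.5 applies): this is the defining property of a Poincaré group, cf.\ \cite{Ser} / \cite{S-CG}.

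The only genuine point requiring care — and the step I expect to be the main obstacle to state cleanly — is the compatibility of the cup product and the trace in the form $H^d(I,\trace_w) = \cores^{I_w}_I \circ \Sh_w$, i.e.\ Remark \ref{rema:tracew}, together with checking that under $\Sh_w$ the relevant dualizing/orientation data on $I_w$ is indeed trivial so that the abstract Poincaré duality pairing is the unadorned cup product into $H^d(I_w,k)\cong k$. Once Remark \ref{rema:tracew} and Remark \ref{rem:cores} are in hand this is immediate, so in the write-up I would simply assemble the two commuting diagrams above, invoke \eqref{f:orth}, and quote Poincaré duality for each $I_w$; no computation beyond bookkeeping of the $\widetilde W$-grading is needed.
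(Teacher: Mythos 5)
Your proposal is correct and follows essentially the same route as the paper: decompose orthogonally over $w\in\widetilde W$ via \eqref{f:orth}, transport each summand through $\Sh_w$ using \eqref{f:cup+Sh} and Remark \ref{rema:tracew}, identify $H^d(I,\trace_w)$ with $\cores^{I_w}_I\circ\Sh_w$ (an isomorphism in degree $d$ by Remark \ref{rem:cores}), and invoke Poincar\'e duality for the dimension-$d$ Poincar\'e group $I_w$. The extra observations you add (finite-dimensionality, triviality of the dualizing module) are harmless refinements of the same argument.
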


\begin{proof}
Let $w\in \widetilde{W}$. We consider the diagram
\begin{equation*}
  \xymatrix{
     H^i(I, \mathbf{X}(w)) \otimes_k H^{d-i}(I,\mathbf{X}(w)) \ar[d]_\cong^{\Sh_w \otimes \Sh_w} \ar[r]^-{\cup} & H^{d}(I,\mathbf{X}(w)) \ar[d]_\cong^{\Sh_w} \ar[rr]^{H^d(I,\trace_w)} && H^{d}(I,k)  \ar@{=}[d]\\     H^i(I_w,k) \otimes_k H^{d-i}(I_w,k) \ar[r]^-{\cup} & H^d(I_w,k)  \ar[rr]_\cong^-{\cores^{I_w}_I} && H^{d}(I,k),
     }
\end{equation*} where the lower right corestriction map is an isomorphism by Remark \ref{rem:cores}.
The left square is commutative by \eqref{f:cup+Sh} and the right one by Remark \ref{rema:tracew}. The lower pairing is nondegenerate since $I_w$ is a Poincar\'e group of dimension $d$. Therefore,  the top horizontal composite induces a perfect pairing. Using \eqref{f:orth}, this proves the lemma.
\end{proof}

\subsubsection{Congruence subgroups}\label{subsubsec:congruence}

We consider a smooth affine group scheme $\mathcal{G} = \Spec(A)$ over $\mathfrak{O}$ of dimension $\delta$. In particular, $A$ is an $\mathfrak{O}$-algebra via a homomorphism $\alpha : \mathfrak{O} \rightarrow A$. A point $s \in \mathcal{G}(\mathfrak{O})$ is an $\mathfrak{O}$-algebra homomorphism $s : A \rightarrow \mathfrak{O}$; it necessarily  satisfies $s \circ \alpha = \id$. The reduction map is
\begin{align*}
  \mathcal{G}(\mathfrak{O}) & \longrightarrow \mathcal{G}(\mathfrak{O}/\pi \mathfrak{O}) \\
  s & \longmapsto \bar{s} := [A \xrightarrow{s} \mathfrak{O} \xrightarrow{\pr} \mathfrak{O}/\pi \mathfrak{O}] \ .
\end{align*}
Let $\epsilon : A \rightarrow \mathfrak{O}$ denote the unit element in $\mathcal{G}(\mathfrak{O})$; then $\bar{\epsilon}$ is the unit element in $\mathcal{G}(\mathfrak{O}/\pi \mathfrak{O}$.

Let $\mathfrak{p} := \ker(\epsilon)$. The formal completion $\widehat{\mathcal{G}}$ of $\mathcal{G}$ in the unit section is the formal group scheme $\widehat{\mathcal{G}} := \mathrm{Spf}(\widehat{A}^\mathfrak{p})$ where $\widehat{A}^\mathfrak{p}$ is the $\mathfrak{p}$-adic completion of $A$. By our smoothness assumption $\widehat{A}^\mathfrak{p} = \mathfrak{O}[[X_1,\ldots,X_\delta]]$ is a formal power series ring in $\delta$ many variables $X_1,\ldots,X_\delta$. A point in $\widehat{\mathcal{G}}(\mathfrak{O})$ is a point $s : A \rightarrow \mathfrak{O}$ in $\mathcal{G}(\mathfrak{O})$ which extends to a continuous homomorphism $s : \widehat{A}^\mathfrak{p} \rightarrow \mathfrak{O}$, i.e., which satisfies $s(\mathfrak{p}) \subseteq \mathfrak{M}$, or equivalently, $\bar{s}(\mathfrak{p}) = 0$. One checks that $\bar{s}(\mathfrak{p}) = 0$ if and only if $\bar{s} = \bar{\epsilon}$. This shows that
\begin{equation*}
  \widehat{\mathcal{G}}(\mathfrak{O}) = \ker \Big(\mathcal{G}(\mathfrak{O}) \xrightarrow{\mathrm{reduction}} \mathcal{G}(\mathfrak{O}/\pi \mathfrak{O}) \Big) \ .
\end{equation*}
On the other hand we have the bijection
\begin{align*}
  \xi : \widehat{\mathcal{G}}(\mathfrak{O}) & \xrightarrow{\;\simeq\;} \mathfrak{M}^\delta  \\
        s & \longmapsto (s(X_1),\ldots s(X_\delta)) \ .
\end{align*}
We see that $\widehat{\mathcal{G}}(\mathfrak{O})$ is a standard formal group in the sense of \cite{S-LL} II Chap.\ IV \S8. We then have in $\widehat{\mathcal{G}}(\mathfrak{O})$ the descending sequence of normal subgroups
\begin{equation*}
  \widehat{\mathcal{G}}_m(\mathfrak{O}) := \xi^{-1}((\pi^m\mathfrak{O})^\delta)  \qquad\text{for $m \geq 1$}
\end{equation*}
(loc.\ cit.\ II Chap.\ IV \S9). It is clear that
\begin{equation*}
  \widehat{\mathcal{G}}_m(\mathfrak{O})  = \ker \Big(\mathcal{G}(\mathfrak{O}) \xrightarrow{\mathrm{reduction}} \mathcal{G}(\mathfrak{O}/\pi^m \mathfrak{O}) \Big) \ .
\end{equation*}

\begin{proposition}\label{uniform}
  Suppose that $\mathfrak{O} = \mathbb{Z}_p$; then $\widehat{\mathcal{G}}_m(\mathbb{Z}_p)$, for any $m \geq 1$ if $p \neq 2$, resp.\ $m \geq 2$ if $p=2$, is a uniform pro-$p$ group.
\end{proposition}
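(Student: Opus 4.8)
The plan is to recognize $\widehat{\mathcal{G}}_m(\mathbb{Z}_p)$ as the standard filtration subgroup of a standard formal group and to apply the structure theory of uniform (equivalently $p$-saturated, or powerful torsionfree) pro-$p$ groups. The key point is that, as established just above the statement, $\widehat{\mathcal{G}}(\mathbb{Z}_p)$ is a standard formal group in the sense of Serre \emph{Lie Algebras and Lie Groups} II.IV.\S8, so it carries the usual $p$-adic analytic structure, and the subgroups $\widehat{\mathcal{G}}_m(\mathbb{Z}_p) = \xi^{-1}((p^m\mathbb{Z}_p)^\delta)$ form the standard descending filtration by normal subgroups (loc.\ cit.\ \S9).

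First I would recall the two equivalent ways of detecting uniformity. One may either invoke Lazard's notion of a $p$-saturated (``$p$-valuable with valuation $\geq$ the critical bound'') filtered group: the formal group $\widehat{\mathcal{G}}(\mathbb{Z}_p)$ has a canonical $\mathbb{Z}_p$-valued valuation $\omega$ with $\omega(x) = \min_i \mathrm{val}_p(\xi(x)_i)$, and the subgroups $\widehat{\mathcal{G}}_m(\mathbb{Z}_p) = \{x : \omega(x) \geq m\}$ are precisely the subgroups $G_{\geq m}$ of this valuation. Alternatively, and perhaps more transparently, I would verify the defining conditions of a uniform pro-$p$ group in the sense of Dixon--du Sautoy--Mann--Segal: a finitely generated pro-$p$ group $U$ is uniform iff it is torsionfree and powerful, i.e.\ $[U,U] \subseteq U^p$ for $p$ odd (resp.\ $[U,U]\subseteq U^4$ for $p=2$). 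Torsionfreeness is immediate because $\widehat{\mathcal{G}}(\mathbb{Z}_p)$, being a standard formal group over $\mathbb{Z}_p$, is torsionfree (Serre, loc.\ cit.); equivalently it embeds into its associated $\mathbb{Q}_p$-analytic group which is torsionfree.

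The substantive step is the powerfulness. Here I would use the standard estimates for commutators and $p$-th powers in a standard formal group: if $x,y \in \widehat{\mathcal{G}}_m(\mathbb{Z}_p)$ then the group-law power series give $[x,y] \in \widehat{\mathcal{G}}_{2m}(\mathbb{Z}_p)$, while the $p$-power map sends $\widehat{\mathcal{G}}_m(\mathbb{Z}_p)$ \emph{onto} $\widehat{\mathcal{G}}_{m+1}(\mathbb{Z}_p)$ when $p \neq 2$ and $m \geq 1$ (resp.\ onto $\widehat{\mathcal{G}}_{m+2}(\mathbb{Z}_p)$ when $p=2$, which is why one needs $m \geq 2$ there, as the $2$-adic logarithm/exponential only converges on $\widehat{\mathcal{G}}_m$ for $m\geq 2$). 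Comparing, for $p$ odd and $m\geq 1$ we get $[\widehat{\mathcal{G}}_m,\widehat{\mathcal{G}}_m] \subseteq \widehat{\mathcal{G}}_{2m} \subseteq \widehat{\mathcal{G}}_{m+1} = (\widehat{\mathcal{G}}_m)^p$, which is exactly powerfulness; for $p=2$ and $m\geq 2$ one has $\widehat{\mathcal{G}}_{2m}\subseteq \widehat{\mathcal{G}}_{m+2} = (\widehat{\mathcal{G}}_m)^4$. Finite generation is clear since $\widehat{\mathcal{G}}_m(\mathbb{Z}_p)$ is topologically generated by the $\delta$ coordinate one-parameter subgroups $\xi^{-1}(p^m\mathbb{Z}_p e_i)$.

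The main obstacle — and the only place where real care is required — is the $p=2$ bookkeeping: one must track precisely that the logarithm converges and the exponential lands back in the group only from level $m\geq 2$ onwards, so that the identities $(\widehat{\mathcal{G}}_m)^2 = \widehat{\mathcal{G}}_{m+1}$ and $(\widehat{\mathcal{G}}_m)^4 = \widehat{\mathcal{G}}_{m+2}$ used above are valid exactly in the stated ranges. I would handle this by citing the relevant $2$-adic convergence bounds (e.g.\ from Serre loc.\ cit.\ or DDMS \S4--5) rather than reproving them, and by noting that the commutator estimate $[\widehat{\mathcal{G}}_m,\widehat{\mathcal{G}}_m]\subseteq\widehat{\mathcal{G}}_{2m}$ holds at all levels $m\geq 1$ regardless of $p$, so the only constraint comes from the $p$-power map. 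Everything else is the standard translation between standard formal groups over $\mathbb{Z}_p$ and uniform pro-$p$ groups.
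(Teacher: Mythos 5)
Your proposal is correct and follows essentially the same route as the paper: the paper simply observes that $\widehat{\mathcal{G}}_m(\mathbb{Z}_p)$ is a \emph{standard} group in the sense of \cite{DDMS} Def.\ 8.22 (via the discussion in loc.\ cit.\ \S 13.2) and then invokes Thm.\ 8.31 there, which says that standard groups are uniform; what you write out (finite generation, torsionfreeness, and powerfulness via the estimates $[\widehat{\mathcal{G}}_m,\widehat{\mathcal{G}}_m]\subseteq\widehat{\mathcal{G}}_{2m}$ and the behaviour of the $p$-power map) is precisely the content of that cited theorem. One small caveat: your claim that $\widehat{\mathcal{G}}(\mathbb{Z}_p)$ itself is torsionfree fails for $p=2$ (e.g.\ $-1\in\widehat{\mathbb{G}}_m(\mathbb{Z}_2)=1+2\mathbb{Z}_2$), so for $p=2$ torsionfreeness of $\widehat{\mathcal{G}}_m(\mathbb{Z}_2)$ must also be derived from the level-$m\geq 2$ valuation estimates, exactly as you do for the power map.
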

\begin{proof}
By \cite{DDMS} \S13.2 (the discussion before Lemma 13.21) and Exercise 5 the group $\widehat{\mathcal{G}}_m(\mathfrak{O})$ is standard in the sense of loc.\ cit.\ Def.\ 8.22. Hence it is uniform by loc.\ cit.\ Thm.\ 8.31.
\end{proof}

To treat the general case we observe that the Weil restriction $\mathcal{G}_0 := \Res_{\mathfrak{O}/\mathbb{Z}_p}(\mathcal{G})$ is a smooth affine group scheme over $\mathbb{Z}_p$ (cf.\ \cite{BLR} \S7.6 Thm.\ 4 and Prop.\ 5). Let $e(\mathfrak{F}/\mathbb{Q}_p)$ denote the ramification index of the extension $\mathfrak{F}/\mathbb{Q}_p$. By the definition of the Weil restriction we have
\begin{equation*}
  \mathcal{G}_0(\mathbb{Z}_p) = \mathcal{G}(\mathfrak{O})  \quad\text{and}\quad  \mathcal{G}_0(\mathbb{Z}_p/p^m \mathbb{Z}_p) = \mathcal{G}(\mathfrak{O}/p^m \mathfrak{O}) = \mathcal{G}(\mathfrak{O}/\pi^{me(\mathfrak{F}/\mathbb{Q}_p)} \mathfrak{O})  \quad\text{for $m \geq 1$}.
\end{equation*}
We therefore obtain the following consequence of the above proposition.

\begin{corollary}\label{uniform2}
  Let $m = j e(\mathfrak{F}/\mathbb{Q}_p)$ with $j \geq 1$ if $p \neq 2$, resp.\ $j \geq 2$ if $p = 2$. Then $\widehat{\mathcal{G}}_m(\mathfrak{O})$ is a uniform pro-$p$ group.
\end{corollary}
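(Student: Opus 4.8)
The plan is to deduce the corollary from Proposition \ref{uniform} by base change along the Weil restriction functor. Write $\mathcal{G}_0 := \Res_{\mathfrak{O}/\mathbb{Z}_p}(\mathcal{G})$, which, as recalled above, is a smooth affine group scheme over $\mathbb{Z}_p$ (\cite{BLR} \S7.6 Thm.\ 4 and Prop.\ 5), so that Proposition \ref{uniform} applies to it. The universal property of the Weil restriction gives, for every $\mathbb{Z}_p$-algebra $R$, a natural isomorphism of groups $\mathcal{G}_0(R) \cong \mathcal{G}(R \otimes_{\mathbb{Z}_p} \mathfrak{O})$; in particular one has $\mathcal{G}_0(\mathbb{Z}_p) \cong \mathcal{G}(\mathfrak{O})$ and $\mathcal{G}_0(\mathbb{Z}_p/p^j\mathbb{Z}_p) \cong \mathcal{G}(\mathfrak{O}/p^j\mathfrak{O}) = \mathcal{G}(\mathfrak{O}/\pi^{je(\mathfrak{F}/\mathbb{Q}_p)}\mathfrak{O})$, and by naturality these isomorphisms intertwine the reduction map of $\mathcal{G}_0$ modulo $p^j$ with the reduction map of $\mathcal{G}$ modulo $\pi^{je(\mathfrak{F}/\mathbb{Q}_p)}$.

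With $m = je(\mathfrak{F}/\mathbb{Q}_p)$, I would then compare kernels. By the description of the congruence filtration recalled before Proposition \ref{uniform}, $\widehat{(\mathcal{G}_0)}_j(\mathbb{Z}_p) = \ker(\mathcal{G}_0(\mathbb{Z}_p) \to \mathcal{G}_0(\mathbb{Z}_p/p^j\mathbb{Z}_p))$ and $\widehat{\mathcal{G}}_m(\mathfrak{O}) = \ker(\mathcal{G}(\mathfrak{O}) \to \mathcal{G}(\mathfrak{O}/\pi^m\mathfrak{O}))$; the isomorphisms of the previous step identify these two kernels as the same subgroup. Since $j \geq 1$ (resp.\ $j \geq 2$ if $p = 2$), Proposition \ref{uniform} says the former is a uniform pro-$p$ group, hence so is $\widehat{\mathcal{G}}_m(\mathfrak{O})$.

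There is no serious obstacle here; the argument is a bookkeeping reduction. The only point that deserves a line of care is to check that the identification $\mathcal{G}_0(\mathbb{Z}_p) \cong \mathcal{G}(\mathfrak{O})$ really is an isomorphism of \emph{groups} compatible with the reduction homomorphisms on both sides — this is exactly the functoriality of the Weil restriction adjunction in the coefficient ring, applied to the quotient map $\mathbb{Z}_p \to \mathbb{Z}_p/p^j\mathbb{Z}_p$.
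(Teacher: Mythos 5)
Your proposal is correct and follows essentially the same route as the paper: pass to the Weil restriction $\mathcal{G}_0 = \Res_{\mathfrak{O}/\mathbb{Z}_p}(\mathcal{G})$, identify $\mathcal{G}_0(\mathbb{Z}_p)=\mathcal{G}(\mathfrak{O})$ and $\mathcal{G}_0(\mathbb{Z}_p/p^j\mathbb{Z}_p)=\mathcal{G}(\mathfrak{O}/\pi^{je(\mathfrak{F}/\mathbb{Q}_p)}\mathfrak{O})$ compatibly with the reduction maps, and apply Proposition \ref{uniform} to $\mathcal{G}_0$. Your extra remark about checking compatibility of the adjunction with the coefficient-ring quotient is exactly the (implicit) content of the paper's phrase ``by the definition of the Weil restriction.''
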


\subsubsection{Bruhat-Tits group schemes}\label{subsubsec:BT}

We fix a facet $F$ in the standard apartment $\mathscr{A}$. Let $\mathbf{G}_F$ denote the Bruhat-Tits group scheme over $\mathfrak{O}$ corresponding to $F$ (cf. \cite{Tit}). It is affine smooth with general fiber $\mathbf{G}$, and $\mathbf{G}_F(\mathfrak{O})$ is the pointwise stabilizer in $G$ of of the preimage of $F$ in the extended building (denoted by $\mathscr{G}_{\pr^{-1}(F)}$ in \cite{Tit} 3.4.1). Its neutral component is denoted by $\mathbf{G}_F^\circ$. The group of points $K_F := \mathbf{G}_F^\circ(\mathfrak{O)}$ is the parahoric subgroup associated with the facet $F$. We introduce the descending sequence of normal congruence subgroups
\begin{equation*}
  K_{F,m} := \ker \Big( \mathbf{G}_F^\circ(\mathfrak{O}) \xrightarrow{\mathrm{reduction}} \mathbf{G}_F^\circ(\mathfrak{O}/\pi^m \mathfrak{O}) \Big) \quad\text{for $m \geq 1$}
\end{equation*}
in $K_F$. Let $\EuScript{P}_F^\dagger$ denote the stabilizer of $F$ in $G$. It follows from \cite{Tit} 3.4.3 (or \cite{BT2} 4.6.17) that each $K_{F,m}$, in fact, is a normal subgroup of $\EuScript{P}_F^\dagger$. Note that, given $F$, any open subgroup of $G$ contains some $K_{F,m}$.

\begin{corollary}\label{uniform3}
  For any $m = j e(\mathfrak{F}/\mathbb{Q}_p)$ with $j \geq 2$ the group $K_{F,m}$ is a uniform pro-$p$ group.
\end{corollary}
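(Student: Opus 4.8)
The plan is to reduce this statement directly to Corollary \ref{uniform2}, applied to the smooth affine $\mathfrak{O}$-group scheme $\mathbf{G}_F^\circ$. First I would observe that $\mathbf{G}_F^\circ$ is, by construction, a smooth affine group scheme over $\mathfrak{O}$; write $\delta := \dim \mathbf{G}_F^\circ = \dim \mathbf{G}$ for its relative dimension. Thus the entire formalism of \S\ref{subsubsec:congruence} applies to $\mathcal{G} := \mathbf{G}_F^\circ$: we have the formal completion $\widehat{\mathcal{G}}$ in the unit section, the identification $\widehat{\mathcal{G}}(\mathfrak{O}) = \ker\big(\mathbf{G}_F^\circ(\mathfrak{O}) \to \mathbf{G}_F^\circ(\mathfrak{O}/\pi\mathfrak{O})\big)$, and the descending chain of normal subgroups $\widehat{\mathcal{G}}_m(\mathfrak{O}) = \ker\big(\mathbf{G}_F^\circ(\mathfrak{O}) \to \mathbf{G}_F^\circ(\mathfrak{O}/\pi^m\mathfrak{O})\big)$.

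The key identification is then that $K_{F,m}$, as defined in \S\ref{subsubsec:BT}, is literally equal to $\widehat{\mathcal{G}}_m(\mathfrak{O})$ for this choice of $\mathcal{G}$: both are by definition the kernel of the reduction map $\mathbf{G}_F^\circ(\mathfrak{O}) \to \mathbf{G}_F^\circ(\mathfrak{O}/\pi^m\mathfrak{O})$. With this, Corollary \ref{uniform2} applies verbatim: for $m = j\, e(\mathfrak{F}/\mathbb{Q}_p)$ with $j \geq 1$ when $p \neq 2$ and $j \geq 2$ when $p = 2$, the group $\widehat{\mathcal{G}}_m(\mathfrak{O}) = K_{F,m}$ is a uniform pro-$p$ group. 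Since the hypothesis $j \geq 2$ of the present corollary is (strictly) stronger than the hypothesis needed in both cases $p = 2$ and $p \neq 2$, the conclusion follows in all cases without any case distinction on the residue characteristic.

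There is essentially no obstacle here; the only point requiring a word of care is to make sure the group scheme being fed into the machinery of \S\ref{subsubsec:congruence} is the \emph{neutral component} $\mathbf{G}_F^\circ$ rather than the full $\mathbf{G}_F$, so that $\mathcal{G}$ is genuinely smooth of the expected dimension and the parahoric subgroup $K_F = \mathbf{G}_F^\circ(\mathfrak{O})$ with its congruence filtration matches the notation of \S\ref{subsubsec:congruence}. Once that bookkeeping is in place the statement is an immediate specialization of Corollary \ref{uniform2}.

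\begin{proof}
Apply the constructions of \S\ref{subsubsec:congruence} and Corollary \ref{uniform2} to the smooth affine $\mathfrak{O}$-group scheme $\mathcal{G} := \mathbf{G}_F^\circ$, which has relative dimension $\delta = \dim \mathbf{G}$. By definition $K_{F,m} = \ker\big(\mathbf{G}_F^\circ(\mathfrak{O}) \to \mathbf{G}_F^\circ(\mathfrak{O}/\pi^m\mathfrak{O})\big) = \widehat{\mathcal{G}}_m(\mathfrak{O})$. For $m = j\, e(\mathfrak{F}/\mathbb{Q}_p)$ with $j \geq 2$, Corollary \ref{uniform2} shows that $\widehat{\mathcal{G}}_m(\mathfrak{O})$ is a uniform pro-$p$ group, whence so is $K_{F,m}$.
\end{proof}
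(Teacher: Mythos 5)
Your proof is correct and is exactly the paper's argument: the paper's proof of this corollary is the one-line "Apply Cor.\ \ref{uniform2} with $\mathcal{G} := \mathbf{G}_F^\circ$." Your additional remarks (identifying $K_{F,m}$ with $\widehat{\mathcal{G}}_m(\mathfrak{O})$ and noting that $j \geq 2$ covers both parities of $p$) just make explicit what the paper leaves implicit.
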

\begin{proof}
Apply Cor.\ \ref{uniform2} with $\mathcal{G} := \mathbf{G}_F^\circ$.
\end{proof}

In the following we will determine the groups $K_{F,m}$ in terms of the root subgroups $\EuScript{U}_\alpha$ and the torus $\mathbf{T}$. Let $\mathscr{T}$ over $\mathfrak{O}$ denote the neutral component of the Neron model of $\mathbf{T}$. We have $\mathscr{T}(\mathfrak{O}) = T^0$, and we put
\begin{equation*}
  T^m :=  \ker \Big( \mathscr{T}(\mathfrak{O}) \xrightarrow{\mathrm{reduction}} \mathscr{T}(\mathfrak{O}/\pi^m \mathfrak{O}) \Big) \quad\text{for $m \geq 1$.}
\end{equation*}
By \cite{BT2} 5.2.2-4 the group scheme $\mathbf{G}_F^\circ$ possesses, for each root $\alpha \in \Phi$, a smooth closed $\mathfrak{O}$-subgroup scheme $\mathscr{U}_{\alpha,F}$ such that
\begin{equation}\label{f:unipotent-points}
  \mathscr{U}_{\alpha,F}(\mathfrak{O}) = \EuScript{U}_{\alpha,f_F(\alpha)} \ .
\end{equation}

Moreover the product map induces an open immersion of $\mathfrak{O}$-schemes
\begin{equation}\label{f:open-imm}
  \prod_{\alpha \in \Phi^-} \mathscr{U}_{\alpha,F} \times \mathscr{T} \times \prod_{\alpha \in \Phi^+} \mathscr{U}_{\alpha,F} \hookrightarrow \mathbf{G}_F^\circ \ .
\end{equation}

\begin{proposition}\label{product}
  For any $m \geq 1$ the map \eqref{f:open-imm} induces the equality
\begin{equation*}
  \prod_{\alpha \in \Phi^-} \EuScript{U}_{\alpha,f_F(\alpha) + m} \times T^m \times \prod_{\alpha \in \Phi^+} \EuScript{U}_{\alpha,f_F(\alpha) + m} = K_{F,m} \ .
\end{equation*}
\end{proposition}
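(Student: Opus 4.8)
The strategy is to reduce the statement to the known product decomposition \eqref{f:open-imm} together with the identification \eqref{f:unipotent-points} of $\mathfrak{O}$-points, and then to check that the congruence filtration is compatible with the open immersion. First I would note that \eqref{f:open-imm}, being an open immersion of smooth $\mathfrak{O}$-schemes whose source is an affine space bundle (a product of copies of $\mathbb{A}^1_{\mathfrak{O}}$ for the root subgroups and of $\mathscr{T}$), identifies the source with an open formal neighbourhood of the unit section of $\mathbf{G}_F^\circ$; concretely, on $\mathfrak{p}$-adic completions it induces an isomorphism of formal group schemes $\prod_\alpha \widehat{\mathscr{U}}_{\alpha,F} \times \widehat{\mathscr{T}} \xrightarrow{\cong} \widehat{\mathbf{G}}_F^\circ$ in the notation of \S\ref{subsubsec:congruence} (because an open immersion is an isomorphism on completions at a point lying in the open subscheme, here the unit section). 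Passing to $\mathfrak{O}$-points and using $\widehat{\mathcal{G}}(\mathfrak{O}) = \ker(\mathcal{G}(\mathfrak{O}) \to \mathcal{G}(\mathfrak{O}/\pi\mathfrak{O}))$ from \S\ref{subsubsec:congruence}, this already gives the case $m=1$ once one knows that the coordinate $\xi$ on each factor matches the congruence filtration — which is exactly what \eqref{f:unipotent-points} and the definition of $T^m$ encode.

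The heart of the argument is then the compatibility of the filtrations $\widehat{\mathcal{G}}_m$ with products. Since \eqref{f:open-imm} is an open immersion, after choosing coordinates $X_1,\dots,X_\delta$ on $\widehat{\mathbf{G}}_F^\circ$ adapted to the factors (the coordinates on the $\mathscr{U}_{\alpha,F}$ via $x_\alpha$, composed with the valuation-normalization built into $f_F(\alpha)$, and a set of coordinates on $\widehat{\mathscr{T}}$), the bijection $\xi : \widehat{\mathbf{G}}_F^\circ(\mathfrak{O}) \xrightarrow{\simeq} \mathfrak{M}^\delta$ is simply the product of the corresponding bijections for the factors. Therefore $\xi^{-1}((\pi^m\mathfrak{O})^\delta)$ is the product of the preimages in each factor. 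For the root-subgroup factors, the coordinate on $\mathscr{U}_{\alpha,F}(\mathfrak{O}) = \EuScript{U}_{\alpha,f_F(\alpha)}$ is $u \mapsto \pi^{-\lceil f_F(\alpha)\rceil} x_\alpha^{-1}(\cdot)$ (up to a unit), so the $\pi^m$-part of the coordinate corresponds precisely to $\EuScript{U}_{\alpha,f_F(\alpha)+m}$ — here one uses that $\mathfrak{h}_\alpha$ takes integer values since $\mathbf{G}$ is $\mathfrak{F}$-split, and that $\EuScript{U}_{\alpha,r}$ depends only on $\lceil r \rceil$. For the torus factor one gets $T^m$ by the very definition of $T^m$ as a congruence subgroup of $\mathscr{T}(\mathfrak{O})$. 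Finally, the identity $\widehat{\mathcal{G}}_m(\mathfrak{O}) = \ker(\mathcal{G}(\mathfrak{O}) \to \mathcal{G}(\mathfrak{O}/\pi^m\mathfrak{O}))$ from \S\ref{subsubsec:congruence}, applied with $\mathcal{G} = \mathbf{G}_F^\circ$, identifies $\xi^{-1}((\pi^m\mathfrak{O})^\delta)$ with $K_{F,m}$, completing the proof.

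The main obstacle I expect is the bookkeeping in matching the $\xi$-coordinate on $\widehat{\mathbf{G}}_F^\circ$ with the coordinates on the individual factors: one must be careful that the open immersion \eqref{f:open-imm} can be arranged so that its coordinate rings are genuinely polynomial/power-series in the factor coordinates with no mixing that would spoil the "product of congruence subgroups = congruence subgroup of the product" step. This is a standard consequence of \cite{BT2} 5.2.2--5.2.4 (which is precisely what produces \eqref{f:open-imm} and the schemes $\mathscr{U}_{\alpha,F}$ with the stated points), so the resolution is to cite those results for the scheme-theoretic statement and then only do the elementary filtration-matching over $\mathfrak{O}$. A secondary point to handle cleanly is that the ordering of the product on the left-hand side is immaterial, which follows because all the statements are about the underlying set of $K_{F,m}$ and the map \eqref{f:open-imm} is an isomorphism onto its image regardless of the chosen order of factors — exactly as in the proof of Lemma \ref{lemma:CwC}.
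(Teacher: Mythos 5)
Your proof is correct, and its second half --- matching the congruence filtration factor by factor, via \eqref{f:unipotent-points} and the $\mathbb{A}^1$-coordinate $x_\alpha$ for the root subgroups and the very definition of $T^m$ for the torus --- is the same bookkeeping the paper carries out. Where you genuinely diverge is in establishing the inclusion of $K_{F,m}$ into $\mathscr{Y}(\mathfrak{O})$, where $\mathscr{Y}$ denotes the source of \eqref{f:open-imm}: the paper quotes \cite{SchSt} Prop.\ I.2.2, which says that any point of $\mathbf{G}_F^\circ(\mathfrak{O})$ reducing into the unipotent radical of the special fiber already lies in the big cell, and then uses that the open immersion is a monomorphism (so reduction of the product is the unit mod $\pi^m$ if and only if each factor reduces to the unit). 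You instead observe that an open immersion whose image contains the unit section induces an isomorphism on formal completions along that section, so every $\mathfrak{O}$-point reducing to the identity mod $\pi$ factors uniquely through $\mathscr{Y}$, and that in coordinates adapted to the product the set $\xi^{-1}((\pi^m\mathfrak{O})^\delta)$ is visibly the product of the factorwise congruence subgroups. Your route avoids the extra citation and is self-contained modulo \cite{BT2} 5.2.2--4, at the cost of the coordinate bookkeeping you rightly flag; this is harmless because the identity $\widehat{\mathcal{G}}_m(\mathfrak{O})=\ker\bigl(\mathcal{G}(\mathfrak{O})\to\mathcal{G}(\mathfrak{O}/\pi^m\mathfrak{O})\bigr)$ is independent of the chosen coordinates, so you are free to pick adapted ones. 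The paper's route gets the hard containment in one line from a quoted structural fact but leaves the ``product reduces to the unit iff each factor does'' step implicit. Both arguments are sound.
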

\begin{proof}
Let $\mathscr{Y}$ denote the left hand side of the open immersion \eqref{f:open-imm}. Because of \eqref{f:unipotent-points} the left hand side of our assertion is equal to the subset of all points in $\mathscr{Y}(\mathfrak{O})$ which reduce to the unit element modulo $\pi^m$ and hence is contained in $K_{F,m}$. On the other hand it follows from \cite{SchSt} Prop.\ I.2.2 and \eqref{f:unipotent-points} that any point in $\mathbf{G}_F^\circ(\mathfrak{O})$, which reduces to a point of the unipotent radical of its special fiber, already lies in $\mathscr{Y}(\mathfrak{O})$. It follows that $K_{F,m}$ corresponds to points in $\mathscr{Y}(\mathfrak{O})$ which reduce to the unit element modulo $\pi^m$ and hence is contained in the left hand side of the assertion.
\end{proof}

\begin{remark}
  In Chap.\ I of \cite{SchSt} certain pro-$p$ subgroups $U_F^{(e)} \subseteq G$ for $e \geq 0$ were introduced and studied. If $F = x$ is a hyperspecial vertex then $K_{x,m} = U_x^{(m+1)}$. On the other hand, if $F = D$ is a chamber then $K_{D,m} = U_D^{(m)} T^m$.
\end{remark}

\begin{corollary}\label{Frattini}
  Suppose that $m$ is large enough so that $K_{F,m}$ is uniform. Then the Frattini quotient $(K_{F,m})_\Phi$ of $K_{F,m}$ satisfies
\begin{equation*}
     \prod_{\alpha\in \Phi^-} \frac{{\EuScript U}_{\alpha, f_F(\alpha)+m}}{{\EuScript U}_{\alpha,f_F(\alpha)+m}^p} \times  \frac{T^m}{(T^m)^p} \times \prod_{\alpha\in \Phi^+} \frac{{\EuScript U}_{\alpha, f_F(\alpha)+m}}{{\EuScript U}_{\alpha, f_F(\alpha)+m}^p} \overset{\sim}\longrightarrow (K_{F,m})_\Phi \ .
\end{equation*}
\end{corollary}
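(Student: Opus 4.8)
The statement is that for $K_{F,m}$ uniform (hence a finitely generated powerful pro-$p$ group), the Frattini quotient $(K_{F,m})_\Phi = K_{F,m}/\overline{K_{F,m}^p[K_{F,m},K_{F,m}]}$ is computed by the obvious product decomposition of Proposition \ref{product}, reduced modulo $p$-th powers in each factor. The key input is Proposition \ref{product}, which presents $K_{F,m}$ set-theoretically as an ordered product of the groups $\EuScript{U}_{\alpha,f_F(\alpha)+m}$ (for $\alpha\in\Phi$) and $T^m$; together with the fact that, for a uniform pro-$p$ group $P$, the Frattini subgroup is exactly $P^p$, which is itself a subgroup, and the Frattini quotient $P_\Phi=P/P^p$ is an elementary abelian $p$-group of rank equal to the minimal number of generators (cf.\ \cite{DDMS}).

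\emph{Step 1: Reduce to showing the product map is surjective with the right kernel on Frattini quotients.} Each factor group $\EuScript{U}_{\alpha,f_F(\alpha)+m}\cong \mathfrak{M}^{m}$ (via $x_\alpha$, using that $\mathbf G$ is $\mathfrak F$-split so the jumps are at integers) and $T^m$ are themselves uniform pro-$p$ groups, being congruence subgroups of $\mathbb G_a$ and of the torus; so each has a well-defined Frattini quotient $\EuScript{U}_{\alpha,f_F(\alpha)+m}/\EuScript{U}_{\alpha,f_F(\alpha)+m}^p$ and $T^m/(T^m)^p$. Since the $p$-th power map on a uniform group is a bijection onto a subgroup, these quotients are finite elementary abelian. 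The product map of \eqref{f:open-imm} restricted to the $m$-th congruence level is, by Proposition \ref{product}, a bijection of \emph{sets} onto $K_{F,m}$; composing with the projection $K_{F,m}\twoheadrightarrow (K_{F,m})_\Phi$ gives a map of sets from the product of the factors. I would first check this map factors through the product of the individual Frattini quotients: this amounts to showing that $\EuScript{U}_{\alpha,f_F(\alpha)+m}^p$ and $(T^m)^p$ map into $\overline{K_{F,m}^p[K_{F,m},K_{F,m}]}$, which is clear since $p$-th powers computed inside a subgroup land in $p$-th powers of the big group (the $p$-th power map on a uniform group is compatible with the one on any uniform closed subgroup, by \cite{DDMS} — or more elementarily because $u^p$ is literally a $p$-th power in $K_{F,m}$).

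\emph{Step 2: Surjectivity.} The images of the factors generate $(K_{F,m})_\Phi$ because, by Proposition \ref{product}, the union of the factor subgroups generates $K_{F,m}$ topologically (every element is an ordered product of elements from the factors), and a generating set of a pro-$p$ group maps onto a generating set of its Frattini quotient. So the product-of-Frattini-quotients map is surjective onto $(K_{F,m})_\Phi$.

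\emph{Step 3: Injectivity, i.e.\ counting.} This is the crux. Both sides are finite-dimensional $\mathbb F_p$-vector spaces, so it suffices to match dimensions: $\dim_{\mathbb F_p}(K_{F,m})_\Phi = d(K_{F,m})$ equals the dimension of $K_{F,m}$ as a $p$-adic Lie group, which is $\delta=\dim\mathbf G = |\Phi| + \dim\mathbf T$; and the left-hand product has $\mathbb F_p$-dimension $\sum_{\alpha\in\Phi} d(\EuScript{U}_{\alpha,f_F(\alpha)+m}) + d(T^m) = |\Phi|\cdot[\mathfrak F:\mathbb Q_p] + \dim(\mathbf T)\cdot[\mathfrak F:\mathbb Q_p]$ once one notes each $\EuScript{U}_{\alpha,\ast}\cong\mathfrak M^m\cong\mathbb Z_p^{[\mathfrak F:\mathbb Q_p]}$ and $T^m\cong\mathbb Z_p^{\dim\mathbf T\cdot[\mathfrak F:\mathbb Q_p]}$ as uniform groups of the indicated dimension. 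Since $\dim_{\mathbb Q_p}\mathbf G(\mathfrak F) = [\mathfrak F:\mathbb Q_p]\cdot\dim_{\mathfrak F}\mathbf G = [\mathfrak F:\mathbb Q_p]\cdot\delta$, the two counts agree, so the surjection of Step 2 is an isomorphism. Alternatively, and perhaps more robustly, injectivity can be obtained directly: an element of the product in the kernel of the map to $(K_{F,m})_\Phi$ lies in $K_{F,m}^p$; pulling back through the bijection of Proposition \ref{product} applied at a deeper congruence level $2m$ (or $m+1$), and using that $K_{F,m}^p = K_{F, m + e(\mathfrak F/\mathbb Q_p)\cdot v_p}$ -- more precisely that $p$-th powers on each uniform factor correspond under $\xi$ to multiplication by $p$ on the coordinate lattices -- one sees the coordinates of such an element are all divisible by $p$ factor by factor, hence it is trivial in the product of Frattini quotients.

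\emph{Main obstacle.} The subtle point is the interaction between taking $p$-th powers and the ordered product decomposition: in a nonabelian pro-$p$ group $(xy)^p\neq x^py^p$, so one cannot naively say that $K_{F,m}^p$ is the product of the $p$-th powers of the factors. The clean way around this is to invoke that $K_{F,m}$ is uniform, so $K_{F,m}^p = \Phi(K_{F,m})$ is genuinely a subgroup and the $p$-th power map is a homeomorphism $K_{F,m}\xrightarrow{\sim}K_{F,m}^p$; combined with the fact that each factor $\EuScript{U}_{\alpha,\ast}, T^m$ is uniform and closed, and that a closed subgroup $S$ of a uniform group $P$ with $[P:S]$ prime to... no --- rather, one uses the explicit coordinates: via $\xi$ of \S\ref{subsubsec:congruence}, $K_{F,m}$ corresponds to $(\pi^m\mathfrak O)^{\text{stuff}}$ and the group law is given by power series whose linear term is addition, so modulo $\pi^{2m}$ (i.e.\ modulo $[K_{F,m},K_{F,m}]$ and modulo $K_{F,m}^p$ when $2m\geq m+e\cdot(\text{something})$) the group is abelian and the product decomposition becomes literally a direct sum of lattices; reducing that direct sum mod $p$ gives the claim. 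So the real content is bookkeeping with congruence levels and the coordinate description from \S\ref{subsubsec:congruence}; I expect no conceptual difficulty beyond making that bookkeeping precise, and I would present it as: (1) each factor is uniform of the stated dimension, (2) $\sum$ of dimensions $= \dim_{\mathbb Q_p}G = d(K_{F,m})$, (3) the product map induces a surjection on Frattini quotients by Proposition \ref{product} and Step 2, (4) surjection between $\mathbb F_p$-spaces of equal finite dimension is an isomorphism.
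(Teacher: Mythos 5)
Your proof is correct and follows essentially the same route as the paper: Proposition \ref{product} gives that the multiplication map induces a surjection of $\mathbb{F}_p$-vector spaces onto $(K_{F,m})_\Phi$, and both sides have the same dimension $d$, so the surjection is an isomorphism. The paper states this in three lines; your additional verifications (that the map descends to the factor-wise Frattini quotients, and the explicit dimension count $d=\delta\,[\mathfrak{F}:\mathbb{Q}_p]$) are exactly the details being suppressed there.
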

\begin{proof}
As a consequence of Prop.\ \ref{product} the map in the assertion, which is given by multiplication, exists and is a surjection of $\mathbb{F}_p$-vector spaces. But both sides have the same dimension $d$. Hence the map is an isomorphism.
\end{proof}

In the case where the facet is a vertex $x$ in the closure of our fixed chamber $C$ we also introduce the notation
\begin{align*}
  \Phi_x := &\ \{(\alpha,\mathfrak{h}) \in \Phi_{aff} : (\alpha,\mathfrak{h})(x) = 0\},\ \Phi_x^\pm := \Phi_x \cap \Phi_{aff}^\pm, \\
  \Pi_x := &\ \Phi_x \cap \Pi_{aff},\ S_x := \{s \in S_{aff} : s(x) = x\}, \\
  W_x := &\ \text{subgroup of $W_{aff}$ generated by all $s_{(\alpha;\mathfrak{h})}$ such that $(\alpha,\mathfrak{h}) \in \Phi_x\}$}.
\end{align*}
The pair $(W_x,S_x)$ is a Coxeter system with finite group $W_x$ (cf. \cite{OS1} \S4.3 and the references therein).

For any such vertex we have the inclusions $K_{x,1} \subseteq I \subseteq J \subseteq K_x$.

\begin{lemma}\label{lemma:JwJ}
The parahoric subgroup $K_x$ is the disjoint union of the double cosets $JwJ$ for all $w \in W_x$.
\end{lemma}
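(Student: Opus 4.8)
\section*{Proof proposal}

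The plan is to reduce the assertion modulo the congruence subgroup $K_{x,1}$ and appeal to the ordinary Bruhat decomposition of the finite group of Lie type obtained as the reduction of the parahoric group scheme. First I would note that, since $J\subseteq K_x$, the Bruhat--Tits decomposition $G=\bigsqcup_{w\in W}JwJ$ restricts to $K_x=\bigsqcup_{w\in W,\ \dot w\in K_x}JwJ$ for fixed lifts $\dot w\in N(T)$; so the statement is equivalent to the identity $\{w\in W:\dot w\in K_x\}=W_x$, together with the (automatic) disjointness of the double cosets $JwJ$, $w\in W$.

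For the inclusion $W_x\subseteq\{w:\dot w\in K_x\}$ I would argue as follows. Let $s=s_{(\alpha,\mathfrak h)}\in S_x$, with $(\alpha,\mathfrak h)\in\Pi_{aff}$ and $(\alpha,\mathfrak h)(x)=0$, i.e.\ $\alpha(x)=-\mathfrak h$. Then $f_x(\alpha)=-\alpha(x)=\mathfrak h$ and $f_x(-\alpha)=\alpha(x)=-\mathfrak h$, so by \eqref{f:unipotent-points} we have $\mathscr U_{\alpha,x}(\mathfrak O)=\EuScript U_{\alpha,\mathfrak h}\ni x_\alpha(\pi^{\mathfrak h}u)$ and $\mathscr U_{-\alpha,x}(\mathfrak O)=\EuScript U_{-\alpha,-\mathfrak h}\ni\varphi_\alpha\bigl(\begin{smallmatrix}1&0\\ \pi^{-\mathfrak h}u&1\end{smallmatrix}\bigr)$ for all $u\in\mathfrak O$. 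The elementary factorization $\bigl(\begin{smallmatrix}0&\pi^{\mathfrak h}\\-\pi^{-\mathfrak h}&0\end{smallmatrix}\bigr)=\bigl(\begin{smallmatrix}1&\pi^{\mathfrak h}\\0&1\end{smallmatrix}\bigr)\bigl(\begin{smallmatrix}1&0\\-\pi^{-\mathfrak h}&1\end{smallmatrix}\bigr)\bigl(\begin{smallmatrix}1&\pi^{\mathfrak h}\\0&1\end{smallmatrix}\bigr)$ then exhibits $n_s$ as an element of $\mathscr U_{\alpha,x}(\mathfrak O)\,\mathscr U_{-\alpha,x}(\mathfrak O)\,\mathscr U_{\alpha,x}(\mathfrak O)\subseteq\mathbf G_x^\circ(\mathfrak O)=K_x$. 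Since $W_x=\langle S_x\rangle$ and each $n_s$ lifts $s\in S_x$, every $w\in W_x$ admits a lift $\dot w=n_{s_1}\cdots n_{s_r}\in K_x\cap N(T)$.

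For the reverse inclusion I would pass to the reduction map $\rho\colon K_x\twoheadrightarrow\mathbf G_x^\circ(\mathbb F_q)$, whose kernel is $K_{x,1}\subseteq I\subseteq J$. By Bruhat--Tits theory the reduction $\overline B:=\rho(J)$ is a Borel subgroup of the connected $\mathbb F_q$-group $\mathbf G_x^\circ\otimes\mathbb F_q$ (equivalently, $I$ is the inverse image of the unipotent radical of $\overline B$ and $J$ of $\overline B$; cf.\ \cite{SchSt} I.2 and \cite{OS1} \S4), and the associated Weyl group is exactly $W_x$ --- this is the same fact that makes $W_x$ finite. Hence $\mathbf G_x^\circ(\mathbb F_q)=\bigsqcup_{w\in W_x}\overline B\,\bar w\,\overline B$ by the Bruhat decomposition of a finite group with a split $BN$-pair. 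Because $\ker\rho\subseteq J$, this pulls back to $K_x=\bigsqcup_{w\in W_x}\rho^{-1}(\overline B\bar w\overline B)$, and one checks $\rho^{-1}(\overline B\bar w\overline B)=J\dot wJ$: given $g\in K_x$ with $\rho(g)=\bar b_1\bar w\bar b_2$, lift $\bar b_i$ to $b_i\in J$ and use that $\rho(\dot w)=\bar w$ for the lift $\dot w$ built in the previous paragraph, so $g^{-1}b_1\dot wb_2\in\ker\rho\subseteq J$ and therefore $g\in J\dot wJ$; the reverse containment is clear. The union is disjoint because distinct $w\in W_x\subseteq W$ give distinct double cosets in $G$.

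The main obstacle is precisely the reverse inclusion. A soft argument using only the action on the building shows that $\dot w\in K_x$ forces $w$ to stabilize $x$, but $\mathrm{Stab}_W(x)$ can be strictly larger than $W_x$ (the extra symmetries correspond to components of $\mathbf G_x$ not contained in the neutral component $\mathbf G_x^\circ$, or to length-zero elements of $\Omega$ fixing $x$). Eliminating these genuinely requires the structural input that $\rho(J)$ is a \emph{full} Borel of the connected reductive quotient and that its Weyl group is $W_x$, which is where one must invoke Bruhat--Tits theory rather than a formal building-theoretic argument.
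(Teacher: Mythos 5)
Your argument is correct in substance. The paper itself does not prove this lemma --- it simply cites \cite{OS1} Lemma 4.9 --- and what you have written is essentially the standard proof of that cited result: the explicit Chevalley-basis computation showing $n_s\in K_x$ for $s\in S_x$ (your matrix factorization and the identification $\mathscr U_{\pm\alpha,x}(\mathfrak O)=\EuScript U_{\pm\alpha,\mp\mathfrak h}$ via \eqref{f:unipotent-points} both check out), followed by pulling back the Bruhat decomposition of the reduction through the kernel $K_{x,1}\subseteq I\subseteq J$. You also correctly isolate the crux: a purely building-theoretic argument only bounds the index set by $\mathrm{Stab}_W(x)$, and cutting it down to $W_x$ requires the structure theory of the special fiber of the \emph{neutral component} $\mathbf G_x^\circ$.

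One imprecision worth fixing: the special fiber $\mathbf G_x^\circ\otimes\mathbb F_q$ need not be reductive (it is only so when $x$ is hyperspecial), so $\rho(J)$ is not literally a Borel subgroup of it. Rather, $\rho(J)$ is the preimage of a Borel subgroup $\overline B$ of the maximal reductive quotient $\overline{\mathbf G}{}^{\,\mathrm{red}}_x$, whose Weyl group is $W_x$, and $\rho(I)$ is the preimage of the unipotent radical of $\overline B$ (this is the content of \cite{SchSt} Prop.\ I.2.2 and its surroundings). The Bruhat decomposition should therefore be taken in $\overline{\mathbf G}{}^{\,\mathrm{red}}_x(\mathbb F_q)$ and pulled back once more through the (split, hence surjective on $\mathbb F_q$-points) unipotent radical of the special fiber; since that unipotent radical is contained in the preimage of every Borel, the double cosets are unaffected and your conclusion $K_x=\bigsqcup_{w\in W_x}J\dot wJ$ stands. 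With that adjustment the proof is complete.
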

\begin{proof}
See \cite{OS1} Lemma 4.9.
\end{proof}

\subsubsection{Triviality of actions on the top cohomology}\label{subsubsec:triviality}

We recall from section \ref{sec:posiroots} that $I$ and $J$ are normal subgroups of $\EuScript{P}_C^\dagger$ and that $\EuScript{P}_C^\dagger = \bigcup_{\omega \in \Omega} \omega J$. In the case where the root system is irreducible the following result was shown in \cite{Koziol} Thm.\ 7.1. The first part of our proof is essentially a repetition of his arguments.

\begin{lemma}\label{lemma:trivOmega}
For $g \in \EuScript{P}_C^\dagger$, the endomorphism $g_*$ on  the one dimensional $k$-vector space $H^d(I,k)$ is the identity.
\end{lemma}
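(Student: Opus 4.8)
The statement asserts that for every $g \in \EuScript{P}_C^\dagger$ the conjugation map $g_*$ acts as the identity on the one-dimensional space $H^d(I,k)$. Since $\EuScript{P}_C^\dagger = \bigcup_{\omega \in \Omega} \omega J$ and $J$ is generated by $T^0$ and $I$, while $I$ is normal in $\EuScript{P}_C^\dagger$, it suffices to check the claim separately for $g \in I$, for $g \in T^0$, and for a set of representatives $g$ of $\Omega$. For $g \in I$ the map $g_*$ is an inner automorphism of $I$, hence acts trivially on $H^*(I,k)$ by the standard fact that inner automorphisms induce the identity on group cohomology. So the content is in the two remaining cases.

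For $g \in T^0$: the plan is to exploit a uniform congruence subgroup. By Corollary~\ref{uniform3} we may choose $m = je(\mathfrak{F}/\mathbb{Q}_p)$ large enough that $L := K_{C,m}$ is a uniform pro-$p$ group contained in $I$, and normal in $\EuScript{P}_C^\dagger$. By Remark~\ref{rem:cores}, $\cores_I^L : H^d(L,k) \xrightarrow{\cong} H^d(I,k)$ is an isomorphism, and this isomorphism is equivariant for the conjugation action of $\EuScript{P}_C^\dagger$ (corestriction commutes with conjugation, \S\ref{sec:basicprop}). Hence it is enough to show $g_*$ is the identity on $H^d(L,k)$. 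For a uniform group $L$ of dimension $d$, the top cohomology $H^d(L,k)$ is canonically $\bigwedge^d (L_\Phi \otimes_{\mathbb{F}_p} k)^\vee$ where $L_\Phi = L/L^p[L,L]$ is the Frattini quotient, so $g_*$ acts on $H^d(L,k)$ by the determinant of the induced automorphism of $L_\Phi \otimes k$. Using the product decomposition of the Frattini quotient from Corollary~\ref{Frattini}, conjugation by $t \in T^0$ preserves each factor $\EuScript{U}_{\alpha,f_C(\alpha)+m}/\EuScript{U}_{\alpha,f_C(\alpha)+m}^p$ (it scales $x_\alpha(u) \mapsto x_\alpha(\alpha(t)u)$ via the Chevalley datum) and acts trivially on the torus factor $T^m/(T^m)^p$. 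The determinant is then $\prod_{\alpha \in \Phi} \overline{\alpha(t)}$, where the bar denotes reduction to $\mathbb{F}_p$ — and since $\Phi = -\Phi$, the roots pair off as $\alpha, -\alpha$, so this product is $\prod_{\alpha \in \Phi^+} \overline{\alpha(t)\alpha(t)^{-1}} = 1$. Thus $t_* = \id$ on $H^d(L,k)$, hence on $H^d(I,k)$.

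For $g$ representing $\omega \in \Omega$: here the determinant computation on the Frattini quotient of $L = K_{C,m}$ is more delicate because $\omega$ permutes the root subgroups (it sends $\EuScript{U}_A$ to $\EuScript{U}_{\omega(A)}$ for $A \in \Phi_{aff}^+$, permuting $\Pi_{aff}$), and one must also track its effect on the torus factor and a possible sign from the permutation. The plan is to argue that $\omega$, acting on the full set $\Phi$ of roots (together with the torus directions), induces a permutation-plus-linear automorphism whose determinant on $\Phi$ is $+1$: the permutation of $\Phi^+$ induced by $\omega$ composed with the sign changes $\alpha \mapsto -\alpha$ forced when $\omega$ sends a positive root to a negative one contributes a sign, but $\omega$ has length zero, so it sends $\Phi_{aff}^+$ to $\Phi_{aff}^+$ bijectively, hence sends $\Phi^+$-directions to $\Phi_{aff}^+$-directions with no net negative roots appearing at the relevant congruence level; combined with the fact that $\omega$ acts on $T^m/(T^m)^p$ through a lattice automorphism of determinant $\pm 1$ that is compensated, one gets determinant $1$. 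I expect \textbf{this last step (the case of $\Omega$) to be the main obstacle}: one has to be careful about whether the permutation of root subgroups induced by a length-zero element contributes a sign to the determinant, and about the interaction with the torus factor. A cleaner route, which I would try first, is to observe that $\Omega$ is a finitely generated abelian group, so $\omega_* $ has finite order on the one-dimensional space $H^d(I,k) \cong k$; since $k$ has characteristic $p$ and $\omega_*$ is given by an element of $\mathbb{F}_p^\times$ of order prime to... no — that fails if the order is not prime to $p$. So instead I would fall back on the explicit determinant computation over $\mathbb{F}_p$, using Corollary~\ref{Frattini} and the explicit description of the $\Omega$-action on roots from \S\ref{sec:posiroots}, and show the determinant is $1$ by pairing $\alpha$ with $-\alpha$ exactly as in the torus case, since $\omega$ commutes with $\alpha \mapsto -\alpha$ up to the permutation it induces on $\Phi$.
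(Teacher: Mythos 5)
Your reduction steps match the paper's: the paper also reduces to a uniform congruence subgroup $K_{C,m}$ via Corollary \ref{uniform3} and Remark \ref{rem:cores}, identifies $H^d(K_{C,m},k)$ with the top exterior power of $H^1(K_{C,m},k)$ (Lazard), and computes the determinant of $g_*$ on the Frattini quotient of Corollary \ref{Frattini}. Your treatment of $g\in I$ and $g\in T^0$ is correct and is exactly what the paper does, so the assertion holds for all $g\in J$.

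The case $g\in\omega J$ with $\omega\in\Omega$ nontrivial, however, contains a genuine gap, and it is precisely the step you flagged as the main obstacle. The proposed resolution --- ``pair $\alpha$ with $-\alpha$ exactly as in the torus case'' --- does not work, for three reasons. First, $\omega$ permutes the root subgroups, and after choosing a representative $\ddot\omega=t\,n_{\alpha_1}\cdots n_{\alpha_\ell}$ built from the Chevalley elements $n_\alpha$, the determinant on each $\omega$-orbit $\Phi_i\subseteq\Phi$ involves the structure constants $\epsilon_{\alpha,\beta}\in\{\pm1\}$ of \eqref{defiepsi}; closing the computation requires the nontrivial identity $\epsilon_{-\beta,\alpha}=\epsilon_{\beta,\alpha}$ (\cite{Spr} Lemma 9.2.2(ii)). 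Second, the orbits with $-\Phi_i=\Phi_i$ do \emph{not} contribute determinant $1$: because the element of $T$ arising there satisfies $\beta(t_i)\in\pi\mathfrak{O}^\times$ and because of the shift between $\EuScript{U}_{\beta,m}$ and $\EuScript{U}_{-\beta,m+1}$, each such orbit contributes $(-1)^{[\mathfrak{F}:\mathbb{Q}_p]}$. Third, the torus factor $T^m/(T^m)^p$ is not acted on trivially: $\omega$ acts through its image $s_{\alpha_1}\cdots s_{\alpha_\ell}$ in $W_0$ on $X_*(T)/pX_*(T)$, and a computation with the coroot lattice shows the determinant there is $(-1)^{\ell[\mathfrak{F}:\mathbb{Q}_p]}$. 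So neither factor is individually trivial; the total determinant is $(-1)^{(N+\ell)[\mathfrak{F}:\mathbb{Q}_p]}$, where $N$ is the number of $\omega$-orbits with $\Phi_i=-\Phi_i$, and one still needs the purely combinatorial fact (Lemma \ref{even} in the paper) that $N(w)+\ell(w)$ is always even --- proved by showing that $w\mapsto(-1)^{N(w)}$ and $w\mapsto(-1)^{\ell(w)}$ are both characters of $W$ agreeing on reflections. This cancellation mechanism, rather than any root-by-root pairing, is the actual content of the $\Omega$ case, and it is missing from your argument. (Your aside about $\omega_*$ having finite order is rightly discarded: $\Omega$ need not be finite, and even when it is, an element of $\mathbb{F}_p^\times$ of finite order need not be $1$.)
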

\begin{proof}
As noted above, $g$ normalizes each subgroup $K_{C,m}$. Moreover, by Lemma \ref{lemma:UC}.ii and Prop.\ \ref{product}, $K_{C,m}$ is contained in $I$. Hence the same argument as at the beginning of the proof of Lemma \ref{lemma:trivK} reduces us to showing that the endomorphism $g_*$ on  the one dimensional $k$-vector space $H^d(K_{C,m},k)$ is the identity. Using Cor.\ \ref{uniform3} we may, by choosing $m$ large enough, assume that $K_{C,m}$ is a uniform pro-$p$ group.

Then, by \cite{Laz} V.2.2.6.3 and V.2.2.7.2, the one dimensional $k$-vector space $H^d(K_{C,m},k)$ is the maximal exterior power (via the cup product) of the $d$-dimensional $k$-vector space $H^1(K_{C,m},k)$. Conjugation commuting with the cup product, we see that the endomorphism $g_*$ on $H^d(K_{C,m},k)$ is the determinant of $g_*$  on $H^1(K_{C,m},k)$. We have
\begin{equation*}
  H^1(K_{C,m},k) = \Hom_{\mathbb{F}_p}((K_{C,m})_\Phi,k)
\end{equation*}
where $(K_{C,m})_\Phi$ is the Frattini quotient of the group $K_{C,m}$. This further reduces us to showing that the conjugation by $g$ on $(K_{C,m})_\Phi$ has trivial determinant. In Cor.\ \ref{Frattini} we computed this Frattini quotient to be
\begin{equation*}
     (K_{C,m})_\Phi = \prod_{\alpha\in \Phi^-} \frac{{\EuScript U}_{\alpha, f_C(\alpha)+m}}{{\EuScript U}_{\alpha,f_C(\alpha)+m}^p} \times  \frac{T^m}{(T^m)^p} \times \prod_{\alpha\in \Phi^+} \frac{{\EuScript U}_{\alpha, f_C(\alpha)+m}}{{\EuScript U}_{\alpha, f_C(\alpha)+m}^p}  \ .
\end{equation*}
using Lemma \ref{lemma:UC}.i this simplifies to
\begin{equation}\label{f:Frattini}
     (K_{C,m})_\Phi = \prod_{\alpha\in \Phi^-} \frac{{\EuScript U}_{\alpha, m+1}}{{\EuScript U}_{\alpha,m+1}^p} \times  \frac{T^m}{(T^m)^p} \times \prod_{\alpha\in \Phi^+} \frac{{\EuScript U}_{\alpha, m}}{{\EuScript U}_{\alpha,m}^p} \ .
\end{equation}
Recall that, for any $\alpha \in \Phi$, we have the additive isomorphism $x_\alpha : \mathfrak{F} \xrightarrow{\cong} \EuScript{U}_\alpha$ defined  in \eqref{f:xalpha} by
$  x_\alpha (u) := \varphi_\alpha( \left(
  \begin{smallmatrix}
    1 & u \\ 0 & 1
  \end{smallmatrix}
  \right) )$.
Put $s_0 :=
\left(
  \begin{smallmatrix}
    0 & 1 \\ -1 & 0
  \end{smallmatrix}
  \right) \in \mathrm{SL}_2(\mathfrak{F})
$ and $n_\alpha := \varphi_\alpha(s_0)$. We observe that
\begin{itemize}
  \item[--] $n_\alpha = n_{s_{(\alpha,0)}}$, and
  \item[--] $x_{-\alpha}(u) = n_\alpha x_\alpha(u) n_\alpha^{-1} = \varphi_\alpha( \left(
  \begin{smallmatrix}
    1 & 0 \\ -u & 1
  \end{smallmatrix}
  \right) )$ for any $u \in \mathfrak{F}$.
\end{itemize}
By \cite{Tit} 1.1 and 1.4 the map $x_\alpha$ restricts, for any $r \in \mathbb{Z}$, to an isomorphism $\pi^r \mathfrak{O} \xrightarrow{\cong} {\EuScript U}_{\alpha,r}$. This implies that all the $\mathbb{F}_p$-vector spaces $\frac{{\EuScript U}_{\alpha, m}}{{\EuScript U}_{\alpha,m}^p}$ and $\frac{{\EuScript U}_{-\alpha, m+1}}{{\EuScript U}_{-\alpha,m+1}^p}$, for $\alpha \in \Phi^+$, have the same dimension equal to $[\mathfrak{F}:\mathbb{Q}_p]$.

First let $g \in T^0$. Obviously $g$ centralizes $T^m$. It acts on $\EuScript{U}_\alpha$, resp.\ $\EuScript{U}_{-\alpha}$, via $\alpha$, resp.\ $-\alpha$. Therefore on the right hand side of \eqref{f:Frattini} the conjugation by $g$ visibly has trivial determinant. Since the conjugation action of $I$ on $H^d(I,k)$ is trivial we obtain our assertion for any $g \in J$.

For the rest of the proof we fix an $\omega \in \Omega$. It remains to establish our assertion for the elements $g \in \omega J$. In fact, by the above observation, it suffices to do this for one specific $\ddot{\omega} \in \omega J$, which we choose as follows. We write the image of $\omega$ in $W$ as a reduced product $s_{\alpha_1} \cdots s_{\alpha_\ell}$ of simple reflections and put
\begin{equation*}
  w_\omega := n_{\alpha_1} \cdots n_{\alpha_\ell} \in K_{x_0} \ .
\end{equation*}
Then $t := \omega w_\omega^{-1} \in T$, and we now define
\begin{equation*}
  \ddot{\omega} := t w_\omega \ .
\end{equation*}
Let $\Phi = \Phi_1 \dot{\cup} \ldots \dot{\cup} \Phi_r$ be the decomposition into orbits of (the image in $W$ of) $\omega$ and put
\begin{equation*}
      \Theta_i := \Big(\prod_{\alpha\in \Phi^- \cap \Phi_i} \frac{{\EuScript U}_{\alpha, m+1}}{{\EuScript U}_{\alpha,m+1}^p} \Big) \times \Big(\prod_{\alpha\in \Phi^+ \cap \Phi_i} \frac{{\EuScript U}_{\alpha, m}}{{\EuScript U}_{\alpha,m}^p} \Big)  \ .
\end{equation*}
The Chevalley basis $(x_\alpha)_{\alpha \in \Phi}$ has the following property (cf.\ \cite{BT2} 3.2):
\begin{align}\label{defiepsi}
   & \textrm{For any $\alpha \in \Phi$ there exists $\epsilon_{\alpha,\beta} \in \{\pm 1\}$ such that}\\
    & x_{s_\beta(\alpha)}(u) = n_\beta x_\alpha(\epsilon_{\alpha,\beta} u) n_\beta^{-1}  \quad\text{for any $u \in \mathfrak{F}$}. \nonumber
\end{align}
This implies that the conjugation by $w_\omega$ preserves each $\Theta_i$. The conjugation $t_*$ preserves each root subgroup. Since $\ddot{\omega}_*$ preserves $(K_{C,m})_\Phi$ it follows that $\ddot{\omega}_*$ preserves each $\Theta_i$. Setting $\Theta_0 := \frac{T^m}{(T^m)^p}$ we conclude that
\begin{equation*}
  (K_{C,m})_\Phi = \Theta_0 \times \Theta_1 \times \cdots \times \Theta_r
\end{equation*}
is an $\ddot{\omega}_*$-invariant decomposition. We will determine the determinant of $\ddot{\omega}_*$ on each factor.

The $\ddot{\omega}_*$-action on
\begin{equation*}
  \Theta_0 = \frac{X_*(T)}{p X_*(T)} \otimes_{\mathbb{F}_p} \frac{1+ \pi^m \mathfrak{O}}{(1+ \pi^m \mathfrak{O})^p}
\end{equation*}
is through the product $s_{\alpha_1} \cdots s_{\alpha_\ell} \in W$ acting on the left factor. Note that the dimension of the $\mathbb{F}_p$-vector space $\tfrac{1+ \pi^m \mathfrak{O}}{(1+ \pi^m \mathfrak{O})^p}$ is equal to $[\mathfrak{F}:\mathbb{Q}_p]$. Let $Q^\vee \subseteq X_*(T)$ denote the coroot lattice. By \cite{B-LL} VI.1.9 Prop.\ 27 the action of a simple reflection $s_\beta$ on the quotient $X_*(T)/Q^\vee$ is trivial. Therefore in the exact sequence
\begin{equation*}
  0 \longrightarrow \Tor_1^{\mathbb{Z}}(X_*(T)/Q^\vee,\mathbb{F}_p) \longrightarrow Q^\vee \otimes_{\mathbb{Z}} \mathbb{F}_p \longrightarrow X_*(T) \otimes_{\mathbb{Z}} \mathbb{F}_p \longrightarrow (X_*(T)/Q^\vee) \otimes_{\mathbb{Z}} \mathbb{F}_p \longrightarrow 0
\end{equation*}
the action of $s_\beta$ on the two outer terms is trivial. Hence the determinant of $s_\beta$ on $X_*(T)/pX_*(T)$ is equal to its determinant on $Q^\vee/pQ^\vee$. If $p \neq 2$ then $s_\beta$ is a reflection on the $\mathbb{F}_p$-vector space $Q^\vee/pQ^\vee$ and therefore has determinant $-1$. For $p=2$, as $s_\beta^2 = \id$, the determinant is $-1 = 1$ as well. We deduce that
\begin{equation}\label{f:det-0}
  \text{the determinant of $\ddot{\omega}_*$ on $\Theta_0$ is equal to $(-1)^{\ell[\mathfrak{F}:\mathbb{Q}_p]}$}.
\end{equation}

Next we consider $\Theta_i$ for some $1 \leq i \leq r$.
\def\cc{c}
Let $\cc_i$ denote its cardinality, and fix some root $\beta \in \Phi_i$. We distinguish two cases.

\textit{Case 1:} $-\Phi_i \cap \Phi_i = \emptyset$. Note that $\Phi_{i'} := -\Phi_i$ is the orbit of $-\beta$. By interchanging $i$ and $i'$ we may assume that $\beta \in \Phi^+$. We then have for completely formal reasons that
\begin{equation*}
  \text{determinant of $\ddot{\omega}_*$ on $\Theta_i$} = (-1)^{(\cc_i   - 1)[\mathfrak{F}:\mathbb{Q}_p]} \cdot \text{determinant of $\ddot{\omega}^{\cc_i  }_*$ on $\frac{{\EuScript U}_{\beta, m}}{{\EuScript U}_{\beta,m}^p}$ }
\end{equation*}
and correspondingly that
\begin{equation*}
  \text{determinant of $\ddot{\omega}_*$ on $\Theta_{i'}$} = (-1)^{(\cc_i   - 1)[\mathfrak{F}:\mathbb{Q}_p]} \cdot \text{determinant of $\ddot{\omega}^{\cc_i  }_*$ on $\frac{{\EuScript U}_{-\beta, m+1}}{{\EuScript U}_{-\beta,m+1}^p}$. }
\end{equation*}
We have $\ddot{\omega}^{\cc_i  } = w_\omega^{\cc_i  } t_i$ for some $t_i \in T$. The determinants of $t_{i*}$ on $\frac{{\EuScript U}_{\beta, m}}{{\EuScript U}_{\beta,m}^p}$ and $\frac{{\EuScript U}_{-\beta, m+1}}{{\EuScript U}_{-\beta,m+1}^p}$ (we must have $\beta(t_i) \in \mathfrak{O}^\times$) are inverse to each other. On the other hand let $w_i$ denote the image of $w_\omega^{\cc_i  }$ in $W$, which fixes $\beta$. As a consequence of the property \eqref{defiepsi} of the Chevalley basis, which we have recalled above, there are signs $\epsilon_{\pm \beta,w_i} \in \{\pm 1\}$ such that
\begin{equation*}
  x_{\pm\beta}(u) = w_\omega^{\cc_i  } x_{\pm \beta}(\epsilon_{\pm\beta,w_i} u) w_\omega^{-\cc_i  }  \qquad\text{for any $u \in \mathfrak{F}$}.
\end{equation*}
Altogether we obtain that
\begin{equation*}
  \text{determinant of $\ddot{\omega}_*$ on $\Theta_i \times \Theta_{i'}$} = (\epsilon_{\beta,w_i} \epsilon_{-\beta,w_i})^{[\mathfrak{F}:\mathbb{Q}_p]} \ .
\end{equation*}
But we have $\epsilon_{\beta,w_i} = \epsilon_{-\beta,w_i}$. This follows by a straightforward induction from the fact that $\epsilon_{-\beta,\alpha} = \epsilon_{\beta,\alpha}$ (cf.\ \cite{Spr} Lemma 9.2.2(ii)). Hence in the present case we deduce that
\begin{equation}\label{f:det-i,i'}
  \text{the determinant of $\ddot{\omega}_*$ on $\Theta_i \times \Theta_{i'}$ is equal to $1$}.
\end{equation}

\textit{Case 2:} $-\Phi_i = \Phi_i$. Again we may assume that $\beta \in \Phi^+$. Then $\cc_i  $ is even, and $\ddot{\omega}_*^{\cc_i  /2}$ preserves the product $\frac{{\EuScript U}_{\beta, m}}{{\EuScript U}_{\beta,m}^p} \times \frac{{\EuScript U}_{-\beta, m+1}}{{\EuScript U}_{-\beta,m+1}^p}$. The formal argument now says that
\begin{equation*}
  \text{determinant of $\ddot{\omega}_*$ on $\Theta_i$} = \text{determinant of $\ddot{\omega}^{\cc_i  /2}_*$ on $\frac{{\EuScript U}_{\beta, m}}{{\EuScript U}_{\beta,m}^p} \times \frac{{\EuScript U}_{-\beta, m+1}}{{\EuScript U}_{-\beta,m+1}^p}$. }
\end{equation*}
This time we write $\ddot{\omega}^{\cc_i  /2} = w_\omega^{\cc_i  /2} t_i$ for some $t_i \in T$ and we let $w_i$ denote the image of $w_\omega^{\cc_i  /2}$ in $W$, which maps $\beta$ to $-\beta$. We have $\beta(t_i) \in \pi \mathfrak{O}^\times$. Using the Chevalley basis we compute the above right hand determinant as being equal to
\begin{equation*}
  (-1)^{[\mathfrak{F}:\mathbb{Q}_p]} (\beta(t_i)\epsilon_{\beta,w_i})^{[\mathfrak{F}:\mathbb{Q}_p]} (\beta(t_i)^{-1}\epsilon_{-\beta,w_i})^{[\mathfrak{F}:\mathbb{Q}_p]} = (-1)^{[\mathfrak{F}:\mathbb{Q}_p]} \ .
\end{equation*}
Hence in this case we deduce that
\begin{equation}\label{f:det-i}
  \text{the determinant of $\ddot{\omega}_*$ on $\Theta_i$ is equal to $(-1)^{[\mathfrak{F}:\mathbb{Q}_p]}$}.
\end{equation}

Combining \eqref{f:det-0}, \eqref{f:det-i,i'}, and \eqref{f:det-i} we have established at this point that
\begin{equation}
  \text{the determinant of $\ddot{\omega}_*$ on $(K_{C,m})_\Phi$ is equal to $(-1)^{(N + \ell)[\mathfrak{F}:\mathbb{Q}_p]}$}
\end{equation}
where $N$ is the number of $\omega$-orbits $\Phi_i = - \Phi_i$ and, we repeat, $\ell$ is the length of $w_\omega$. It remains to show that the sum $N + \ell$ always is even. This will be done in the subsequent lemma in a more general situation.
\end{proof}

\begin{lemma}\label{even}
  Let $w \in W$ be any element and let $N(w)$ be the number of $w^{\mathbb{Z}}$-orbits $\Psi \subseteq \Phi$ with the property that $-\Psi = \Psi$; then the number $N(w) + \ell(w)$ is even.
\end{lemma}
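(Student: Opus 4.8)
The plan is to compare two formulas for the sign of the permutation that $w$ induces on the finite set $\Phi$. Since $W=W_0\ltimes\Lambda$ acts on $\Phi$ through its projection onto $W_0$, the number $N(w)$ depends only on the image $\bar w\in W_0$ of $w$, and $\ell(w)$ is to be read here as the length of $\bar w$ in the Coxeter system $(W_0,S)$ (this is the quantity $\ell$ that occurs in the application in the proof of Lemma \ref{lemma:trivOmega}); so we may assume $w\in W_0$. The claim then unwinds to the chain $(-1)^{\ell(w)}=\operatorname{sgn}(w|_\Phi)=(-1)^{N(w)}$, which I would establish as two separate equalities.

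\textbf{Step 1: $\operatorname{sgn}(w|_\Phi)=(-1)^{\ell(w)}$.} The map $w\mapsto\operatorname{sgn}(w|_\Phi)$ is a homomorphism $W_0\to\{\pm1\}$, so it suffices to prove that every reflection $s_\alpha$ (for $\alpha\in\Phi$) acts on $\Phi$ with an odd number of transpositions. Put $\Phi':=\{\beta\in\Phi:s_\alpha(\beta)\neq\beta\}$; on $\Phi'$ the involution $s_\alpha$ has no fixed points, hence is a product of $\tfrac12|\Phi'|$ transpositions. Let the Klein four-group $\langle -\mathrm{id}, s_\alpha\rangle$ act on $\Phi'$: the orbit of $\alpha$ is $\{\alpha,-\alpha\}$, while the orbit of any $\beta\in\Phi'$ with $\beta\neq\pm\alpha$ is the four-element set $\{\beta,-\beta,s_\alpha(\beta),-s_\alpha(\beta)\}$, whose members are pairwise distinct (a coincidence among them would entail $s_\alpha(\beta)=-\beta$, forcing $\beta$ to be a scalar multiple of $\alpha$, hence $\beta=\pm\alpha$ since $\Phi$ is reduced, contrary to assumption). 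On $\{\alpha,-\alpha\}$ the reflection $s_\alpha$ is one transposition, and on each four-element orbit it is exactly two transpositions ($\beta\leftrightarrow s_\alpha(\beta)$ and $-\beta\leftrightarrow -s_\alpha(\beta)$). Hence $s_\alpha|_\Phi$ has $1+2k$ transpositions, where $k$ is the number of four-element orbits — an odd number — so $\operatorname{sgn}(s_\alpha|_\Phi)=-1$, and therefore $\operatorname{sgn}(w|_\Phi)=(-1)^{\ell(w)}$.

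\textbf{Step 2: $\operatorname{sgn}(w|_\Phi)=(-1)^{N(w)}$.} Decompose $\Phi$ into $w^{\mathbb Z}$-orbits. If an orbit $\Psi$ contains $\alpha$ and also $-\alpha=w^j(\alpha)$ for some $j$, then $-w^i(\alpha)=w^{i+j}(\alpha)\in\Psi$ for all $i$, so $-\Psi=\Psi$; thus each orbit either satisfies $-\Psi=\Psi$ or satisfies $\Psi\cap(-\Psi)=\emptyset$, and in the latter case $-\Psi$ is a distinct orbit of the same cardinality. An orbit with $-\Psi=\Psi$ has even cardinality, because the fixed-point-free involution $-\mathrm{id}$ acts on it; since $w$ restricted to an orbit of size $n$ is an $n$-cycle, such an orbit contributes $(-1)^{n-1}=-1$ to $\operatorname{sgn}(w|_\Phi)$, whereas a pair $\{\Psi,-\Psi\}$ with $\Psi\cap(-\Psi)=\emptyset$ contributes $(-1)^{|\Psi|-1}(-1)^{|\Psi|-1}=+1$. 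Multiplying over all orbits gives $\operatorname{sgn}(w|_\Phi)=(-1)^{N(w)}$, and comparing with Step 1 yields $(-1)^{\ell(w)}=(-1)^{N(w)}$, i.e.\ $N(w)+\ell(w)$ is even.

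The only genuinely computational point is the transposition count in Step 1 — that a reflection moves an odd number of disjoint, $-\mathrm{id}$-symmetric pairs of roots — and I expect that to be the main thing to pin down carefully; everything else reduces to multiplicativity of the sign character of $(W_0,S)$ and to the elementary observation that $w^{\mathbb Z}$-orbits not stable under $-\mathrm{id}$ occur in cardinality-matched pairs.
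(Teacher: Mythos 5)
Your proof is correct and follows essentially the same route as the paper's: both arguments identify $(-1)^{N(w)}$ with the sign (determinant) of the permutation induced by $w$ on $\Phi$ via the orbit decomposition, and match this with $(-1)^{\ell(w)}$ by multiplicativity together with a check on reflections — your direct transposition count for $s_\alpha$ simply replaces the paper's use of $N(s_\alpha)=1$ and of $(-1)^{\ell}$ being the determinant of the reflection representation. Your opening reduction to the image in $W_0$ is a sensible reading of the statement and matches how the lemma is applied in the proof of Lemma \ref{lemma:trivOmega}.
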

\begin{proof}
We have to show that $(-1)^{N(w)} = (-1)^{\ell(w)}$ holds true for any $w \in W$.

\textit{Step 1:} The map $w \mapsto (-1)^{\ell(w)}$ is a homomorphism. This is immediate from the fact that $(-1)^{\ell(w)}$ is equal to the determinant of $w$ in the reflection representation of $W$ (cf. \cite{Hum}).

\textit{Step 2:} The map $w \mapsto (-1)^{N(w)}$ is a homomorphism. For this let $\mathbb{Z}[S]$ denote the free abelian group on a set $S$. We consider the obvious action of $W$ on $\mathbb{Z}[\Phi]$. If $\Phi = \Phi_1 \dot{\cup} \ldots \dot{\cup} \Phi_r$ is the decomposition into $w^{\mathbb{Z}}$-orbits, then
\begin{equation*}
  \mathbb{Z}[\Phi] = \mathbb{Z}[\Phi_1] \oplus \ldots \oplus \mathbb{Z}[\Phi_r]
\end{equation*}
is a $w$-invariant decomposition. Obviously $\det(w|\mathbb{Z}[\Phi_i]) = (-1)^{|\Phi_i| - 1}$. If $-\Phi_i \neq \Phi_i$ then $-\Phi_i = \Phi_j$ for some $j \neq i$. If $-\Phi_i = \Phi_i$ then $|\Phi|$ is even. It follows that $\det(w|\mathbb{Z}[\Phi]) = (-1)^{N(w)}$.

\textit{Step 3:} If $w = s$ is a reflection at some $\alpha \in \Phi$ then $N(s) = 1$ and $(-1)^{\ell(s)} = -1$.
\end{proof}

\begin{lemma}\label{lemma:trivK}
Let $L$  be an open subgroup of $I$. For $g\in K_x$  normalizing $L$, the endomorphism $g_*$ on the one dimensional $k$-vector space $H^d(L,k)$ is the identity.
\end{lemma}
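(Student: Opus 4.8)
The plan is to reduce the statement, via corestriction, to a single sufficiently small \emph{uniform} congruence subgroup, then to reduce that to a determinant computation on a Frattini quotient, and finally to settle the computation by recycling the sign bookkeeping from the proof of Lemma~\ref{lemma:trivOmega}.

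\emph{Step 1: reduction to a congruence subgroup.} First I would fix $m = j\,e(\mathfrak F/\mathbb Q_p)$ with $j$ large enough that $K_{x,m}\subseteq L$ (possible since $L$ is open in $G$) and that $K_{x,m}$ is a uniform pro-$p$ group (Corollary~\ref{uniform3}). Since $K_{x,m}$ is normal in $\EuScript P_x^\dagger$ (see \S\ref{subsubsec:BT}) and $K_x\subseteq \EuScript P_x^\dagger$, the element $g$ normalizes $K_{x,m}$; by hypothesis it also normalizes $L$. As $L$ is open in $I$ it is itself a Poincaré group of dimension $d$, so Remark~\ref{rem:cores} (applied with $L$ in place of $I$ and the proper open subgroup $K_{x,m}$) gives that $\cores^{K_{x,m}}_L\colon H^d(K_{x,m},k)\to H^d(L,k)$ is an isomorphism, which is the identity in the degenerate case $K_{x,m}=L$; by \S\ref{sec:basicprop} it is compatible with the conjugation operator $g_*$ on both sides. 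Hence it suffices to show that $g_*$ is the identity on $H^d(K_{x,m},k)$.

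\emph{Step 2: reduction to a determinant, and splitting over generators.} As in the proof of Lemma~\ref{lemma:trivOmega}, uniformity of $K_{x,m}$ gives $H^d(K_{x,m},k)=\bigwedge^d H^1(K_{x,m},k)$ and $H^1(K_{x,m},k)=\Hom_{\mathbb F_p}((K_{x,m})_\Phi,k)$; since conjugation commutes with the cup product, $g_*$ on $H^d(K_{x,m},k)$ is the determinant of $g_*$ on $H^1(K_{x,m},k)$, hence the inverse of the determinant $\chi(g)\in\mathbb F_p^\times$ of the conjugation action of $g$ on the $\mathbb F_p$-vector space $(K_{x,m})_\Phi$. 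Because $K_{x,m}$ is normal in $\EuScript P_x^\dagger$, the map $\chi\colon\EuScript P_x^\dagger\to\mathbb F_p^\times$ is a group homomorphism, and the claim becomes $\chi|_{K_x}=1$. By Lemma~\ref{lemma:JwJ} we have $K_x=\bigcup_{w\in W_x}JwJ$, so $K_x$ lies in the subgroup of $\EuScript P_x^\dagger$ generated by $J$ and by lifts $n_s$ ($s\in S_x$) of the Coxeter generators of $W_x$ (these lie in $\EuScript P_x^\dagger$ because their images in $W$ fix $x$). It therefore suffices to check $\chi(J)=1$ and $\chi(n_s)=1$ for each $s\in S_x$. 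Here $J=T^0I$; one has $\chi(I)=1$ since $I$ is pro-$p$ while $\mathbb F_p^\times$ has order prime to $p$, and $\chi(T^0)=1$ exactly as in Lemma~\ref{lemma:trivOmega}: by Corollary~\ref{Frattini} conjugation by $t\in T^0$ preserves the decomposition of $(K_{x,m})_\Phi$ into $\prod_{\alpha\in\Phi^-}\tfrac{{\EuScript U}_{\alpha,f_x(\alpha)+m}}{{\EuScript U}_{\alpha,f_x(\alpha)+m}^p}$, $\tfrac{T^m}{(T^m)^p}$ and $\prod_{\alpha\in\Phi^+}(\cdots)$, is trivial on the middle factor, and on each ${\EuScript U}_\alpha$-factor is multiplication by $\alpha(t)\in\mathfrak O^\times$, so the contributions of $\alpha$ and $-\alpha$ are reciprocal and the determinant is $1$.

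\emph{Step 3 (the main obstacle): the elements $n_s$.} Fix $s=s_{(\alpha_0,\mathfrak h)}\in S_x$. Using \eqref{f:conjU} together with $s(x)=x$ one gets $n_s\,{\EuScript U}_{\alpha,f_x(\alpha)+m}\,n_s^{-1}={\EuScript U}_{s_{\alpha_0}(\alpha),\,f_x(s_{\alpha_0}(\alpha))+m}$, so conjugation by $n_s$ permutes the root-subgroup factors of $(K_{x,m})_\Phi$ according to the reflection $s_{\alpha_0}\in W_0$ and acts on $\tfrac{T^m}{(T^m)^p}$ through $s_{\alpha_0}$ acting on $X_*(T)$. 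This is verbatim the situation analysed in the proof of Lemma~\ref{lemma:trivOmega}, with $\ddot\omega$ replaced by $n_s$, so that the relevant finite-Weyl-group element is the single reflection $s_{\alpha_0}$. Decomposing $(K_{x,m})_\Phi=\Theta_0\times\prod_i\Theta_i$ with $\Theta_0=\tfrac{T^m}{(T^m)^p}$ and $\Theta_i$ running over the $\langle s_{\alpha_0}\rangle$-orbits $\Phi_i$ of $\Phi$, one finds that the determinant of $(n_s)_*$ on $\Theta_0$ is $(-1)^{[\mathfrak F:\mathbb Q_p]}$ (as $p\neq 2$ and $s_{\alpha_0}$ acts trivially on $X_*(T)/Q^\vee$ and as a reflection on $Q^\vee/pQ^\vee$), that the blocks with $-\Phi_i\neq\Phi_i$ pair off to contribute $1$ by the Chevalley-basis sign identity $\epsilon_{-\beta,w}=\epsilon_{\beta,w}$, and that the unique block with $-\Phi_i=\Phi_i$ — the one attached to $\{\pm\alpha_0\}$ — contributes $(-1)^{[\mathfrak F:\mathbb Q_p]}$; hence $\chi(n_s)=(-1)^{2[\mathfrak F:\mathbb Q_p]}=1$. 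Equivalently this reads $\chi(n_s)=(-1)^{(N(s_{\alpha_0})+\ell(s_{\alpha_0}))[\mathfrak F:\mathbb Q_p]}=1$, which is Lemma~\ref{even} for the reflection $s_{\alpha_0}$. The only genuine work is this transfer of the orbit-by-orbit sign computation of Lemma~\ref{lemma:trivOmega} to the present single-reflection, vertex-$x$ setting; the point to observe is that $f_x$ enters only through the $\mathbb F_p$-dimension $[\mathfrak F:\mathbb Q_p]$ of each $\tfrac{{\EuScript U}_{\alpha,r}}{{\EuScript U}_{\alpha,r}^p}$, which is independent of $r$, so all the sign estimates from the chamber case apply unchanged.
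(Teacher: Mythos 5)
Your argument is correct, but the way you handle the generators $n_s$, $s\in S_x$, is genuinely different from the paper's. After the common reduction (via $\cores^{K_{x,m}}_L$ and Remark \ref{rem:cores}) to the conjugation character $\xi$ of $K_x$ on the line $H^d(K_{x,m},k)$, the paper never computes $\xi(n_s)$: it observes that $\Xi=\ker(\xi)$ is a normal subgroup of $K_x$ containing $J$ (using Lemma \ref{lemma:trivOmega} and the triviality of inner conjugation on $H^d(I,k)$), hence contains $Jw_xJw_x^{-1}J$ for the longest element $w_x$ of the finite Coxeter group $W_x$; the relation $JsJ\cdot Jw_xJ=Jsw_xJ\:\dot\cup\:Jw_xJ$ from \eqref{f:quadratic2} then forces $\dot s\in Jw_xJw_x^{-1}J\subseteq\Xi$ for every $s\in S_x$, and Lemma \ref{lemma:JwJ} gives $\Xi=K_x$. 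You instead evaluate the character directly on each $n_s$ by rerunning the determinant computation of Lemma \ref{lemma:trivOmega} on the Frattini quotient of the uniform group $K_{x,m}$, with the single reflection $s_{\alpha_0}$ in place of $w_\omega$; this does work: the unique symmetric orbit $\{\pm\alpha_0\}$ (unique because the root system is reduced) and the torus block $T^m/(T^m)^p$ each contribute $(-1)^{[\mathfrak F:\mathbb Q_p]}$, all other orbits cancel in $\pm$-pairs via $\epsilon_{\beta,w}=\epsilon_{-\beta,w}$, consistent with Lemma \ref{even} for $N(s)=\ell(s)=1$. Your observation that $f_x$ enters only through the $r$-independent dimension of $\EuScript{U}_{\alpha,r}/\EuScript{U}_{\alpha,r}^p$ is precisely what is needed to transfer the chamber computation to the vertex, and the identity $n_s\,\EuScript{U}_{\alpha,f_x(\alpha)+m}\,n_s^{-1}=\EuScript{U}_{s_{\alpha_0}(\alpha),f_x(s_{\alpha_0}(\alpha))+m}$ indeed follows from \eqref{f:conjU} together with $s(x)=x$. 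The trade-off: the paper's route needs no Chevalley-basis bookkeeping beyond what Lemma \ref{lemma:trivOmega} already established, at the price of the short Bruhat--Coxeter argument with $w_x$; yours is more computational but verifies the character on each generator individually and is self-contained once the sign analysis is in place.
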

\begin{proof}
We choose $m\geq 1$ large enough so that $K_{x,m}$ is contained in $L$.  Recall that $\cores^{K_{x,m}}_L : H^d(K_{x,m},k) \xrightarrow{\cong} H^d(L,k)$ is an isomorphism by Remark \ref{rem:cores}. Since  corestriction commutes with conjugation (\S\ref{sec:basicprop}), the following diagram commutes:
\begin{equation*}
  \xymatrix{
    H^d(L,k) \ar[r]^{g_*} & H^d(L,k)  \\
    H^d(K_{x,m},k)  \ar[u]_{\cores^{K_{x,m}}_{L}}^{\cong} \ar[r]^{g_*} & H^d(K_{x,m},k) \ar[u]^{\cores^{K_{x,m}}_{L}}_{\cong}  \ . }
\end{equation*}
Therefore it is enough to prove the assertion for $L=K_{x,m}$. In this case the group $K_x$ acts by conjugation on the one dimensional space $H^d(K_{x,m},k)$. This action is given by a character $\xi : K_x \rightarrow k^\times$. Its kernel $\Xi := \ker(\xi)$ is a normal subgroup of $K_x$.

First of all we recall again that the corestriction map commutes with conjugation, that $\cores^{K_{x,m}}_I: H^d(K_{x,m},k) \xrightarrow{\cong} H^d(I,k)$ is an isomorphism, and that conjugation by $g_*$, for $g \in I$, induces the identity on the cohomology $H^*(I, k)$. Therefore we have the commutative diagram
 \begin{equation*}
  \xymatrix@R=0.5cm{
                &         H^d(K_{x,m},k)  \ar[dl]_{\cores_I^{K_{x,m}}}^{\cong}   \\
  H^d(I, k)                 \\
                &         H^d(K_{x,m},k)    \ar[uu]_{g_*} \ar[ul]^{\cores_I^{K_{x,m}}}_{\cong}\ .              }
\end{equation*}
This shows that $I \subseteq \Xi$. Since $J = I T^0$ we deduce from Lemma \ref{lemma:trivOmega} that even $J \subseteq \Xi$. Since $(W_x, S_x)$ is a Coxeter system with  finite group $W_x$, we may consider its (unique) longest element $w_x$. The  normal subgroup   $\Xi$ of $K_x$  then must contain $J w_x J w_x^{-1} J$. For any $s\in S_x$, we have $\ell(sw_x)=\ell(w)-1$. By \eqref{f:quadratic2} we have
\begin{equation*}
  JsJ \cdot Jw_xJ = Jsw_xJ \; \dot{\cup} \; Jw_xJ
\end{equation*}
and hence $\dot{s}J\dot{w}_x \cap Jw_x J \neq \emptyset$. It follows that
\begin{equation*}
  \dot{s} \in Jw_xJw_x^{-1}J  \subseteq \Xi
\end{equation*} and therefore $\Xi= K_x$ by Lemma \ref{lemma:JwJ}.
\end{proof}

\begin{proposition}\label{prop:triviality}
For $g\in G$, and   $v,w\in \widetilde W$ such that $\ell(vw)=\ell(v)+\ell(w)$, the following diagrams of one dimensional $k$-vector spaces are commutative:
\begin{equation*}
\begin{array}{cc}
 \xymatrix@R=0.5cm{
                &         H^d(I_g,k)  \ar[dl]_{\cores_I^{I_g}}^{\cong}   \\
  H^d(I, k)                 \\
                &         H^d(I_{g^{-1}},k)    \ar[uu]_{g_*} \ar[ul]^{\cores_I^{I_{g^{-1}}}}_{\cong}\,             }& \quad\textrm{ \textrm{ and } }\quad
  \xymatrix@R=0.5cm{
                &         H^d(I_{vw},k) \ .  \ar[dl]_{\cores_I^{I_{vw}}}^{\cong}   \\
  H^d(I, k)                 \\
                &         H^d(v^{-1}I_{vw}v,k).    \ar[uu]_{v_*} \ar[ul]^{\cores_I^{v^{-1}I_{vw} v} }_{\cong}              }\cr
                \end{array}
\end{equation*}
\end{proposition}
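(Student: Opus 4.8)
The plan is to reduce both commutative diagrams to the single identity
\[
(\star)\qquad \cores_I^{I_w}\circ \dot w_* = \cores_I^{I_{w^{-1}}}\colon H^d(I_{w^{-1}},k)\longrightarrow H^d(I,k)\qquad\text{for all }w\in\widetilde W,
\]
where $\dot w_*$ denotes conjugation by a lift $\dot w\in N(T)$ (this is independent of the lift since conjugation by $T^1$ acts trivially on the cohomology). For the \emph{first} diagram, given $g\in G$ write $g=a\dot w b$ with $a,b\in I$ and $w\in\widetilde W$ by the Bruhat--Tits decomposition. One computes $I_g=aI_wa^{-1}$, $I_{g^{-1}}=b^{-1}I_{w^{-1}}b$ and $g_*=a_*\dot w_*b_*$; since corestriction commutes with conjugation (\S\ref{sec:basicprop}) and conjugation by elements of $I$ is the identity on $H^*(I,k)$, the first diagram for $g$ becomes precisely $(\star)$ for $w$. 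For the \emph{second} diagram, $I_{vw}\subseteq I_v$ by \eqref{f:length2} and hence $v^{-1}I_{vw}v\subseteq I_{v^{-1}}$; splitting $\cores_I^{I_{vw}}=\cores_I^{I_v}\circ\cores_{I_v}^{I_{vw}}$ and $\cores_I^{v^{-1}I_{vw}v}=\cores_I^{I_{v^{-1}}}\circ\cores_{I_{v^{-1}}}^{v^{-1}I_{vw}v}$, and using that conjugation by $v$ carries $v^{-1}I_{vw}v\subseteq I_{v^{-1}}$ onto $I_{vw}\subseteq I_v$ (so that $\cores_{I_v}^{I_{vw}}\circ v_*=v_*\circ\cores_{I_{v^{-1}}}^{v^{-1}I_{vw}v}$), the second diagram collapses to $(\star)$ with $w$ replaced by $v$.

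It remains to prove $(\star)$, which I would do by induction on $\ell(w)$. For $\ell(w)=0$ one has $w\in\widetilde\Omega$, so $I_w=I_{w^{-1}}=I$, the corestriction maps are the identity, and $\dot w_*$ is the identity on $H^d(I,k)$ by Lemma \ref{lemma:trivOmega}. For the inductive step, write $w=\tilde s\,v$ with $s\in S_{aff}$, $\tilde s=n_sT^1$ the lift from \eqref{f:ns}, and $\ell(w)=1+\ell(v)$. From \eqref{f:length2} applied twice one gets $I_w\subseteq I_s$ and $I_{w^{-1}}\subseteq I_{v^{-1}}$; a direct computation then shows $J:=\dot v I_{w^{-1}}\dot v^{-1}=\dot v I\dot v^{-1}\cap n_sIn_s^{-1}$, whence $J\subseteq I_v\cap I_s$. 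Moreover $n_s$ normalizes $I_s$ (because $I_{s^{-1}}=I_s$), and $n_s$ lies in the parahoric subgroup $K_x$ attached to a vertex $x$ of $\overline{C}$ with $(\alpha,\mathfrak h)(x)=0$: indeed $s=s_{(\alpha,\mathfrak h)}$ and, by the Chevalley homomorphism $\varphi_\alpha$, $n_s$ is a product of elements of $\EuScript{U}_{\alpha,\mathfrak h}=\EuScript{U}_{\alpha,f_x(\alpha)}$ and $\EuScript{U}_{-\alpha,-\mathfrak h}=\EuScript{U}_{-\alpha,f_x(-\alpha)}$, all contained in $K_x$. Hence Lemma \ref{lemma:trivK} (with $L=I_s$) tells us that conjugation by $n_s$ is the identity on $H^d(I_s,k)$.

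The conclusion is then a diagram chase. Since $n_s$ carries $J\subseteq I_s$ onto $I_w\subseteq I_s$ and acts trivially on $H^d(I_s,k)$, corestriction commuting with conjugation gives $\cores_{I_s}^{I_w}\circ n_{s*}=\cores_{I_s}^{J}$, hence $\cores_I^{I_w}\circ n_{s*}=\cores_I^{J}$ by transitivity, and therefore $\cores_I^{I_w}\circ\dot w_*=\cores_I^{I_w}\circ n_{s*}\circ\dot v_*=\cores_I^{J}\circ\dot v_*$. Now $J\subseteq I_v$ and $\dot v$ carries $I_{w^{-1}}\subseteq I_{v^{-1}}$ onto $J\subseteq I_v$, so $\cores_{I_v}^{J}\circ\dot v_*=\dot v_*\circ\cores_{I_{v^{-1}}}^{I_{w^{-1}}}$ and thus $\cores_I^{J}\circ\dot v_*=\cores_I^{I_v}\circ\dot v_*\circ\cores_{I_{v^{-1}}}^{I_{w^{-1}}}$. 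Applying the induction hypothesis $\cores_I^{I_v}\circ\dot v_*=\cores_I^{I_{v^{-1}}}$ and transitivity (via $I_{w^{-1}}\subseteq I_{v^{-1}}$) yields $\cores_I^{I_w}\circ\dot w_*=\cores_I^{I_{w^{-1}}}$, completing the induction.

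The main obstacle is exactly the inductive step: one must keep careful track of the nested family of open subgroups $I_w,I_{w^{-1}},I_s,I_v,I_{v^{-1}},J$ and verify all the inclusions among them (which come from Lemma \ref{lemma:vw}), as well as the membership $n_s\in K_x$; granting these, the rest is a formal combination of transitivity of corestriction, the compatibility of corestriction with conjugation, and the triviality statements of Lemmas \ref{lemma:trivOmega} and \ref{lemma:trivK}.
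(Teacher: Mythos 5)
Your proof is correct and follows essentially the same route as the paper: reduce via the Bruhat decomposition and length-additivity (Lemma \ref{lemma:vw}) to the cases of $\widetilde\Omega$ and of a simple affine reflection, then conclude with Lemmas \ref{lemma:trivOmega} and \ref{lemma:trivK} together with the compatibility of corestriction and conjugation. The only cosmetic difference is that you apply Lemma \ref{lemma:trivK} with $L=I_s$ inside an explicit induction on $\ell(w)$, whereas the paper's Step 3 applies it with $L=K_{x,1}$ after sandwiching $K_{x,1}\subseteq I_{n_s}\subseteq I$.
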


\begin{proof}
We prove the commutativity of the left diagram. We will see along the way that the commutativity of the right one follows.

\textit{Step 1:} We claim that it suffices to establish the commutativity of the left diagram for elements $w \in \widetilde{W}$.

Let $g \in G$ and $h_1, h_2 \in I$. We have the commutative diagram
\begin{equation*}
  \xymatrix{
    H^d(I_{(h_1 gh_2)^{-1}},k) \ar[d]_{\cores_I^{I_{(h_1 gh_2)^{-1}}}}^{\cong} \ar[r]^-{h_{2*}} & H^d(I_{g^{-1}},k) \ar[d]_{\cores_I^{I_{g^{-1}}}}^{\cong} \ar[r]^-{g_*} & H^d(I_g,k) \ar[d]_{\cores_I^{I_g}}^{\cong} \ar[r]^-{h_{1*}} & H^d(I_{h_1 gh_2},k) \ar[d]_{\cores_I^{I_{h_1 gh_2}}}^{\cong} \\
    H^d(I,k) \ar[r]^{=} & H^d(I,k) \ar[r]^{?} & H^d(I,k) \ar[r]^{=} & H^d(I,k) ,  }
\end{equation*}
where the equality signs in the lower row use the facts that corestriction commutes with conjugation (cf.\ \S\ref{sec:basicprop}) and that the conjugation
by an element in $I$ is trivial on $H^d(I,k)$. This shows that the commutativity of the left diagram in the assertion only depends on the double coset $IgI$.

\textit{Step 2:} Let $v , w \in \widetilde{W}$ be two elements for which the left diagram commutes and such that $\ell(vw) = \ell(v) + \ell(w)$; we claim that then the left diagram for $vw$ as well as the right diagram commute.

This is straightforward from the following commutative diagram (cf.\ Lemma \ref{lemma:vw}):
\begin{equation*}
  \xymatrix{
     & H^d(I_v,k) \ar[ddl]_{\cores}^{\cong}  & H^d(I_{vw},k) \ar[l]_{\cores}^{\cong} \\
     & H^d(I_{v^{-1}},k) \ar[dl]^{\cores}_{\cong} \ar[u]_{v_*}^{\cong} &  \\
    H^d(I,k) &   & H^d(v^{-1}Iv \cap wIw^{-1},k) \ar[uu]_{v_*}^{\cong} \ar[ul]^{\cores}_{\cong}  \ar[dl]_{\cores}^{\cong} \\
     & H^d(I_w,k) \ar[ul]_{\cores}^{\cong}  & \\
     & H^d(I_{w^{-1}},k) \ar[uul]^{\cores}_{\cong} \ar[u]_{w_*}^{\cong} & H^d(I_{(vw)^{-1}},k) \ar[uu]_{w_*}^{\cong} \ar[l]^{\cores}_{\cong}  \ar@/_15ex/[uuuu]_{(vw)_*}^{\cong} }
\end{equation*}
At this point we are reduced to establishing the commutativity of the left diagram in our assertion in the following two cases:
\begin{itemize}
  \item[(A)] $g=\dot \omega$ for $\omega \in \widetilde \Omega$. In that case, the claim is given by Lemma \ref{lemma:trivOmega}.
  \item[(B)] $g = n_s$ for $s = s_{(\beta,\mathfrak{h})}$ where $(\beta,\mathfrak{h}) \in \Pi_{aff}$ is any simple affine root.
\end{itemize}

\textit{Step 3:} It remains to treat case (B). We pick a vertex $x$ in the closure of the chamber $C$ such that $s(x) = x$. Then $g$ lies in $K_x$ and therefore normalizes $K_{x,1}$. Hence $K_{x,1} \subseteq I_g \subseteq I$, and we have the commutative diagram
\begin{equation*}
  \xymatrix{
    H^d(I,k)  \ar[r]^{?} & H^d(I,k)  \\
    H^d(I_{g^{-1}},k) \ar[u]_{\cores_I^{I_{g^{-1}}}}^{\cong} \ar[r]^{g_*} & H^d(I_g,k) \ar[u]^{\cores_I^{I_g}}_{\cong} \\
    H^d(K_{x,1},k) \ar@/^13ex/[uu]^{\cores^{K_{x,1}}_I}_{\cong} \ar[u]_{\cores^{K_{x,1}}_{I_{g^{-1}}}}^{\cong} \ar[r]^{g_*} & H^d(K_{x,1},k) \ar[u]^{\cores^{K_{x,1}}_{I_g}}_{\cong} \ar@/_13ex/[uu]_{\cores^{K_{x,1}}_I}^{\cong} , }
\end{equation*}
using again that corestriction commutes with conjugation. This reduces us to showing that the endomorphism $g_*$ on $H^d(K_{x,1},k)$ is the identity. This claim is given by Lemma \ref{lemma:trivK}.
\end{proof}

\begin{corollary}\label{coro:SJ=S}
We have $\trace^d \circ \anti = \trace^d$ on  $E^d = H^d(I, \X)$.
\end{corollary}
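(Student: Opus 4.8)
The plan is to reduce the identity $\trace^d\circ\anti=\trace^d$ on $E^d=H^d(I,\X)$ to the commutativity of the left-hand diagram in Proposition~\ref{prop:triviality}, by passing through the Shapiro isomorphisms and the ``block'' decomposition of $\trace$.

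First I would use the decompositions $E^d=\bigoplus_{w\in\widetilde W}H^d(I,\X(w))$ and $\trace=\sum_{w\in\widetilde W}\trace_w$ of Remark~\ref{rema:tracew} to reduce to a single class $\gamma_w\in H^d(I,\X(w))$ for each $w$. Since $\anti(\gamma_w)=\anti_w(\gamma_w)$ lies in the component $H^d(I,\X(w^{-1}))$, and since $\trace_v$ kills $\X(w')$ for $v\neq w'$, the compatibility diagram of Remark~\ref{rema:tracew} (applied with $w^{-1}$, resp.\ with $w$) gives
\begin{equation*}
  \trace^d(\anti(\gamma_w))=\cores_I^{I_{w^{-1}}}\big(\Sh_{w^{-1}}(\anti_w(\gamma_w))\big),\qquad
  \trace^d(\gamma_w)=\cores_I^{I_w}\big(\Sh_w(\gamma_w)\big).
\end{equation*}

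Next I would invoke the very definition of $\anti_w$ from Section~\ref{sec:Gamma}, namely $\Sh_{w^{-1}}(\anti_w(\gamma_w))=(w^{-1})_*\Sh_w(\gamma_w)$. Writing $\eta:=\Sh_w(\gamma_w)\in H^d(I_w,k)$, the sought equality becomes $\cores_I^{I_{w^{-1}}}\big((w^{-1})_*\eta\big)=\cores_I^{I_w}(\eta)$. This is exactly the commutativity of the left-hand diagram of Proposition~\ref{prop:triviality} taken with $g=\dot w^{-1}$ a lift of $w^{-1}$ in $N(T)$, because then $I_g=I_{w^{-1}}$, $I_{g^{-1}}=I_w$, and the operator $g_*$ of that proposition coincides with the conjugation $(w^{-1})_*$ entering the definition of $\anti$ (it is independent of the chosen lift since conjugation by $T^1\subseteq I_w$ is trivial on $H^d(I_w,k)$). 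Summing back over $w\in\widetilde W$ then yields $\trace^d\circ\anti=\trace^d$ on all of $E^d$.

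I do not expect any genuine obstacle here: all the substance — that the relevant conjugation operators are trivial on top cohomology, which is what forces $\cores\circ g_*=\cores$ — has already been isolated in Proposition~\ref{prop:triviality} (via Lemmas~\ref{lemma:trivOmega}, \ref{lemma:trivK} and \ref{even}). The only point requiring a little care is bookkeeping with conventions: matching the map $(w^{-1})_*$ in the construction of $\anti$ with the instance $g=\dot w^{-1}$ of the operator $g_*$ of Proposition~\ref{prop:triviality}, and confirming that $\trace^d$ restricted to the $w$-component is indeed $\cores_I^{I_w}\circ\Sh_w$ as recorded in Remark~\ref{rema:tracew}.
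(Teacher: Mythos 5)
Your proposal is correct and follows essentially the same route as the paper: reduce componentwise via Remark \ref{rema:tracew} to the identity $\cores_I^{I_{w^{-1}}}\circ (w^{-1})_* = \cores_I^{I_w}$ on $H^d(I_w,k)$, which is precisely the left-hand diagram of Proposition \ref{prop:triviality} applied to a lift of $w^{-1}$. The bookkeeping you flag (matching $(w^{-1})_*$ with $g_*$ for $g=\dot w^{-1}$, so that $I_g=I_{w^{-1}}$ and $I_{g^{-1}}=I_w$) works out exactly as you describe.
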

\begin{proof}
Let $w\in \widetilde W$ and $\alpha\in H^d(I, \X(w))$. Recalling that $\anti(\alpha)\in
 H^d(I, \X(w^{-1}))$ satisfies $\Sh_{w^{-1}}(\anti(\alpha))=(w^{-1})_* \Sh_{w}(\alpha)$, we have
\begin{align*}
   \trace^d \circ \anti (\alpha) & = H^d(I,\trace_{w^{-1}}) \circ \anti (\alpha)=
      H^d(I,\trace_{w^{-1}}) \circ \Sh_{w^{-1}}^{-1}\circ (w^{-1})_* (\Sh_w (\alpha))  \\
      & = \cores_I^{I_{w^{-1}}}\circ (w^{-1})_* (\Sh_w (\alpha))  \quad \text{ by Remark \ref{rema:tracew}}  \\
      & = \cores_I^{I_{w}} (\Sh_w (\alpha)) \qquad\qquad\ \ \ \ \;\text{ by Prop.\ \ref{prop:triviality}}  \\
      & = H^d(I,\trace_w)(\alpha) = \trace^d(\alpha) \qquad\ \ \ \,\text{ by Remark \ref{rema:tracew}}.
\end{align*}
\end{proof}

\subsubsection{The duality theorem}\label{subsubsec:duality}

We fix an isomorphism $\eta:H^d(I,k)\longrightarrow k$. By Lemma \ref{lemma:nondegenerate} the map
\begin{align}\label{f:dual}
\Delta^i : E^i = H^i(I,\mathbf X) &\longrightarrow H^{d-i}(I, \X)^\vee = (E^{d-i})^\vee \\
\alpha & \longmapsto  l_\alpha(\beta):=\eta\circ \trace^d (\alpha\cup \beta)   \nonumber
\end{align}
is a linear injectionwith image $(E^{d-i})^{\vee,f}$. The space  $E^{d-i}$ is naturally a bimodule under $H$.
As in \S\ref{subsec:duals}, we consider the  twisted $H$-bimodule ${}^\anti (E^{d-i})^\anti$  namely the space $E^{d-i}$
with the action of $H$ on $\beta \in E^{d-i}$ given by
\begin{equation*}
  (\tau, \beta, \tau')\mapsto \anti(\tau')\cdot \beta  \cdot \anti(\tau) \quad\textrm{ for $\tau, \tau'\in H$.}
\end{equation*}

\begin{proposition}\label{prop:bimodule}
The map \eqref{f:dual} induces an injective morphism of $H$-bimodules
\begin{equation*}
  E^i \longrightarrow ({}^\anti (E^{d-i})^\anti)^\vee\
\end{equation*}
with image $({}^\anti (E^{d-i})^\anti)^{\vee,f}$.
\end{proposition}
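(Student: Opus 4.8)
The plan is to reduce everything to the assertion that $\Delta^i$ intertwines the two $H$-bimodule structures and then to verify this on a set of algebra generators of $H$. Since it has already been recorded (from Lemma~\ref{lemma:nondegenerate}) that $\Delta^i$ is a $k$-linear injection with image the finite dual $(E^{d-i})^{\vee,f}$, since twisting a module by the anti-automorphism $\anti$ does not change the underlying $\widetilde W$-indexed direct sum decomposition of the vector space — so that $({}^\anti(E^{d-i})^\anti)^{\vee,f}$ is literally the same subspace of $(E^{d-i})^\vee=({}^\anti(E^{d-i})^\anti)^\vee$ as $(E^{d-i})^{\vee,f}$ — and since the image of a homomorphism of $H$-bimodules is automatically a sub-bimodule, the only thing left to prove is that $\Delta^i$ is a homomorphism of $H$-bimodules for the twisted-dual structure on the target. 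Unwinding that structure (cf.\ Remark~\ref{etavee}), this is the requirement that
\begin{align*}
  \trace^d\big((\tau\cdot\alpha)\cup\beta\big) &= \trace^d\big(\alpha\cup(\anti(\tau)\cdot\beta)\big),\\
  \trace^d\big((\alpha\cdot\tau)\cup\beta\big) &= \trace^d\big(\alpha\cup(\beta\cdot\anti(\tau))\big)
\end{align*}
hold for all $\tau\in H$, $\alpha\in E^i$, $\beta\in E^{d-i}$, where $\cdot$ is the Yoneda product and $\cup$ the cup product of \S\ref{subsec:cup-prod}.

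I would next observe that these two identities are equivalent and that it is enough to check the first one on generators of $H$. For the equivalence: apply $\trace^d\circ\anti=\trace^d$ (Corollary~\ref{coro:SJ=S}), then that $\anti$ is multiplicative for $\cup$ (Remark~\ref{rema:Jcup}) and anti-multiplicative for $\cdot$ with the sign $(-1)^{ij}$ (Proposition~\ref{prop:anti+product}, trivially since one of the two factors has degree $0$), to both sides of the first identity; replacing then $(\alpha,\beta,\tau)$ by $(\anti(\alpha),\anti(\beta),\anti(\tau))$ turns it into the second. The first identity is $k$-linear in $\tau$ and moreover multiplicative in $\tau$ — if it holds for $\tau_1$ and for $\tau_2$ (and all $\alpha,\beta$) then, using associativity of the Yoneda action and $\anti(\tau_1\tau_2)=\anti(\tau_2)\anti(\tau_1)$, it holds for $\tau_1\tau_2$ — so it suffices to treat $\tau=\tau_\omega$ ($\omega\in\widetilde\Omega$) and $\tau=\tau_{\tilde s}$ ($s\in S_{aff}$, $\tilde s$ the lift \eqref{f:ns}), which generate $H$. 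By the decomposition $E^*=\oplus_w H^*(I,\X(w))$ we may also take $\alpha\in H^i(I,\X(v))$ and $\beta\in H^{d-i}(I,\X(v'))$; by \eqref{f:orth} and the support statements of Proposition~\ref{prop:explicitleftaction} both sides then vanish unless $v'$ is one of the cosets occurring in the support of $\tau\cdot\alpha$, and one checks that this matches the condition that $v$ occur in the support of $\anti(\tau)\cdot\beta$.

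For each surviving case I would compute the relevant component of $\tau\cdot\alpha$, respectively of $\anti(\tau)\cdot\beta$, through the Shapiro isomorphism by Proposition~\ref{prop:explicitleftaction}; transport both $\trace^d$-values into $H^d(I,k)$ via Remark~\ref{rema:tracew} (the identity $H^d(I,\trace_w)=\cores^{I_w}_I\circ\Sh_w$) and \eqref{f:cup+Sh} (cup commutes with Shapiro); and reconcile the resulting expressions using the projection formula of \S\ref{sec:basicprop}, the fact that corestriction and cup commute with conjugation, and the triviality results of \S\ref{subsubsec:triviality}: that $g_*$ is the identity on $H^d(I,k)$ for $g\in\EuScript{P}_C^\dagger$ (Lemma~\ref{lemma:trivOmega}), the analogous triviality for congruence subgroups (Lemma~\ref{lemma:trivK}) packaged as the commutative diagrams of Proposition~\ref{prop:triviality}, and the top-degree facts that $\res^I_L=0$ and $\cores^L_I$ is an isomorphism for $L\subsetneq I$ (Remark~\ref{rem:cores}). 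In the $\tau_\omega$ case this collapses the identity to $\eta\circ(\omega^{-1})_*=\eta$ on $H^d(I,k)$; in the $\tau_{\tilde s}$ case with $\ell(\tilde sv)=\ell(v)+1$ it collapses, after an appropriate restriction, to the projection formula.

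The main obstacle will be the remaining case $\ell(\tilde sv)=\ell(v)-1$. Here $\tau_{\tilde s}\cdot\alpha$ has a component in $\X(\tilde sv)$ given by a corestriction \eqref{f:leftsbad1}, and further components in each $\X(\bar tv)$, $t\in\ima$, given by the sums over $z\in\mathbb F_q^\times$ of \eqref{f:leftsbad2}; and $\anti(\tau_{\tilde s})\cdot\beta=\tau_{\overline{n_s^2}}\cdot(\tau_{\tilde s}\cdot\beta)$ has the same shape after a length-$0$ twist. One must match these contributions component by component: the $\X(\tilde sv)$-contribution calls for absorbing the corestriction of \eqref{f:leftsbad1} against $\beta$ via the projection formula, while for the $\X(\bar tv)$-contributions one must show that in degree $d$ the sums over $z$, once corestricted up to $I$, either collapse via the triviality lemmas or are killed because a restriction $\res^I_L$ with $L\subsetneq I$ intervenes. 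The delicate point throughout is to keep track of which of the conjugating group elements appearing in Propositions~\ref{prop:techn-formula} and~\ref{prop:explicitleftaction} — the $a,c$ there, and the $n_s t^{-1}x_\alpha(\pi^{\mathfrak h}[z])n_s^{-1}$ of \eqref{f:leftsbad2} — lie in which parahoric or congruence subgroup, so that Lemma~\ref{lemma:trivK} and Proposition~\ref{prop:triviality} apply; once that is arranged the rest is formal manipulation of Shapiro isomorphisms, corestrictions and the projection formula.
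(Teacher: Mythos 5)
Your reductions are correct as far as they go: the identification of the image with the finite dual, the fact that twisting by $\anti$ does not change the underlying $\widetilde W$-graded vector space, the equivalence of the left- and right-compatibility identities via $\trace^d\circ\anti=\trace^d$ (Cor.~\ref{coro:SJ=S}), Remark~\ref{rema:Jcup} and Prop.~\ref{prop:anti+product}, and the reduction of one of them to the generators $\tau_\omega$, $\tau_{\tilde s}$ of $H$. The gap is in what you then choose to compute: you attack the \emph{left} identity $\trace^d((\tau\cdot\alpha)\cup\beta)=\trace^d(\alpha\cup(\anti(\tau)\cdot\beta))$ head-on via Prop.~\ref{prop:explicitleftaction}, and the decisive case $\ell(\tilde s v)=\ell(v)-1$ is only described, not proved. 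There you must match the $\X(\bar t v)$-components coming from \eqref{f:leftsbad2} for $\alpha$ against the corresponding components of $\anti(\tau_{\tilde s})\cdot\beta$, and justify discarding the conjugators $(n_s t^{-1}x_{\alpha}(\pi^{\mathfrak h}[z])n_s^{-1})_*$: these act on factors of degree $i$ and $d-i$ \emph{before} cupping, so Lemma~\ref{lemma:trivK} and Prop.~\ref{prop:triviality} do not apply directly; one first has to pull the conjugation outside the cup product using that these elements normalize $I_v$ and lie in a suitable $K_x$, and only then invoke triviality on the resulting top-degree class. Your text stops at ``one must show'', so the proposal is a plan whose hardest step -- which is comparable in length to the rest of the argument -- is missing.

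The paper avoids this computation entirely by running your equivalence in the opposite direction: it proves the \emph{right} identity $\trace^d((\alpha\cdot\tau)\cup\beta)=\trace^d(\alpha\cup(\beta\cdot\anti(\tau)))$ first, and this is elementary because both the right $H$-action and the cup product on $H^*(I,\X)$ are induced by operations on the coefficient module $\X$; the identity therefore reduces to $\trace\bigl((a\cdot\tau_g)\,b\bigr)=\trace\bigl(a\,(b\cdot\tau_{g^{-1}})\bigr)$ for $a,b\in\X$, which is checked in one line on $a=\chara_{xI}$, $b=\chara_{yI}$ (both sides are $1$ exactly when $x^{-1}y\in IgI$, and $0$ otherwise). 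The left identity then follows by the purely formal $\anti$-conjugation chain you already wrote down. So either carry out your $\ell$-decreasing case in full, or, better, switch the roles: prove the right identity at the level of coefficients and deduce the left one formally, in which case neither Prop.~\ref{prop:explicitleftaction} nor the triviality results of \S\ref{subsubsec:triviality} beyond Cor.~\ref{coro:SJ=S}, nor the projection formula, are needed.
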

\begin{proof}
The fact that the map is injective with image  $(E^{d-i})^{\vee,f}$ comes directly from Lemma \ref{lemma:nondegenerate} and its proof.

We prove that for all $\alpha\in H^i (I,\mathbf X)$ and $\beta\in H^{d-i}(I, \mathbf X)$ and all $\tau, \tau'\in H$, we have $l_{\tau\cdot \alpha \cdot \tau'}(\beta)= l_\alpha(\anti(\tau)\cdot \beta\cdot \anti(\tau'))$ namely
\begin{equation}\label{f:toprove-anti}
\trace^d(\tau\cdot \alpha \cdot \tau'\cup \beta)=\trace^d(\alpha\cup \anti(\tau)\cdot \beta\cdot \anti(\tau')) \ .
\end{equation}
We first show that
\begin{equation}\label{f:rightaction}
  \trace^d( \alpha \cdot \tau'\cup \beta) = \trace^d(\alpha\cup \beta\cdot \anti(\tau')) \ .
\end{equation}
The right action of $H$  on $H^*(I, \X)$ being through the coefficients it is enough to prove that for any $a,b\in \X$ and $\tau'\in H$ we have
\begin{equation*}
  \trace((a \cdot \tau')b - a (b \cdot \anti(\tau'))) = 0 \ .
\end{equation*}
We check this equality for $a=\chara_{xI}$ and $b=\chara_{yI}$ with $x,y\in G$ and  $\tau'=\tau_g$ with $g\in G$. We then have
\begin{equation*}
   (a \cdot \tau')b - a (b \cdot \anti(\tau')) = \chara_{xI g I\cap y I}- \chara_{xI\cap yIg^{-1}I}.
\end{equation*}
It lies in the kernel of $\trace$ if and only if $xI g I\cap y I/I$ and  $xI\cap yIg^{-1}I/I$ have the same cardinality. But observe that $xI g I\cap y I$ is equal to
$yI$ if $x^{-1}y\in IgI$ and is empty otherwise, while $xI\cap yIg^{-1}I$ is equal to  $x I$ if $y^{-1}x\in I g^{-1} I$ and is empty otherwise. This proves the equality and \eqref{f:rightaction} follows.

That
\begin{equation}\label{f:leftaction}
  \trace^d(\tau\cdot \alpha \cup \beta)=\trace^d(\alpha\cup \anti(\tau)\cdot \beta)\ .
\end{equation}
holds true as well follows now from the following computation:
We may assume that $\alpha \in E^i$, $\beta \in E^j$, and $\gamma \in E^m$. We then compute
\begin{align*}
  \trace^d(\tau\cdot \alpha \cup \beta) & = \trace^d(\anti(\tau\cdot \alpha \cup \beta)) & \text{by Cor.\ \ref{coro:SJ=S}}  \\
   & = \trace^d(\anti(\tau\cdot \alpha) \cup \anti(\beta)) & \text{by Remark \ref{rema:Jcup}}  \\
   & = \trace^d(\anti(\alpha) \cdot \anti(\tau) \cup \anti(\beta)) & \text{by Prop.\ \ref{prop:anti+product}}  \\
   & = \trace^d(\anti(\alpha) \cup \anti(\beta)\cdot \tau) & \text{by \eqref{f:rightaction}}  \\
   & = \trace^d(\anti(\alpha) \cup \anti(\anti(\tau) \cdot \beta)) & \text{by Prop.\ \ref{prop:anti+product}} \\
   & = \trace^d(\anti(\alpha \cup \anti(\tau) \cdot \beta)) & \text{by Remark \ref{rema:Jcup}} \\
   & = \trace^d(\alpha \cup \anti(\tau) \cdot \beta) & \text{by Cor.\ \ref{coro:SJ=S}}.
\end{align*}
\end{proof}

\begin{corollary}
   For any $i\in\{0, \cdots , d\}$ the space $(E^i)^{\vee,f}$ is a sub-$H$-bimodule  of $(E^i)^\vee$.
\end{corollary}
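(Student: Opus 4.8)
The plan is to deduce the corollary formally from Proposition \ref{prop:bimodule} together with Remark \ref{etavee}. Fix $i\in\{0,\dots,d\}$. Since $d-i$ again lies in $\{0,\dots,d\}$, Proposition \ref{prop:bimodule} applied with $d-i$ in place of $i$ produces an injective morphism of $H$-bimodules
\begin{equation*}
  \Delta^{d-i}\colon E^{d-i}\longrightarrow \big({}^\anti (E^{i})^\anti\big)^\vee
\end{equation*}
whose image is precisely $\big({}^\anti (E^{i})^\anti\big)^{\vee,f}$. In particular that image is a sub-$H$-bimodule of $\big({}^\anti (E^{i})^\anti\big)^\vee$.

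The next step is to strip off the $\anti$-twists. The underlying $k$-vector space of ${}^\anti (E^{i})^\anti$ is $E^i=\bigoplus_{w\in\widetilde W}H^i(I,\X(w))$ carrying exactly this direct sum decomposition; only the left and right $H$-actions have been twisted by $\anti$. Hence, by the definition of the finite dual, $\big({}^\anti (E^{i})^\anti\big)^{\vee,f}$ is, as a subspace of $(E^i)^\vee$, literally $(E^i)^{\vee,f}$. On the other hand Remark \ref{etavee} identifies, through the identity map, $\big({}^\anti (E^{i})^\anti\big)^\vee$ with ${}^\anti\big((E^i)^\vee\big)^\anti$ as $H$-bimodules. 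Combining the two identifications, the first paragraph says that $(E^i)^{\vee,f}$ is a sub-$H$-bimodule of ${}^\anti\big((E^i)^\vee\big)^\anti$.

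To conclude, one observes that twisting by $\anti$ does not change which subspaces are sub-bimodules: for a $k$-subspace $Z$ of an $H$-bimodule $Y$, the inclusions $HZ\subseteq Z$ and $ZH\subseteq Z$ are equivalent to $\anti(H)Z\subseteq Z$ and $Z\anti(H)\subseteq Z$ because $\anti$ is a bijection of $H$; that is, $Z$ is a sub-bimodule of $Y$ if and only if it is one of ${}^\anti Y^\anti$. Taking $Y=(E^i)^\vee$ and $Z=(E^i)^{\vee,f}$ gives the assertion. The argument is pure bookkeeping, and the only point I would take care to state precisely — the place where a careless reading could go wrong — is exactly this last interplay: Proposition \ref{prop:bimodule} is phrased in terms of the twisted bimodules ${}^\anti(-)^\anti$, so one must check that neither ``finite dual'' (as a subspace) nor ``sub-bimodule'' is sensitive to that twist before feeding the statement back into the untwisted setting.

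For completeness I might also record the direct verification. For $\varphi\in(E^i)^{\vee,f}$ with finite support $S\subseteq\widetilde W$ and a basis element $\tau_v\in H$, formula \eqref{f:support} shows that $\tau_v\cdot\varphi$ and $\varphi\cdot\tau_v$ are supported on those $w$ for which $IuI\subseteq IwI\cdot IvI$, respectively $IuI\subseteq IvI\cdot IwI$, for some $u\in S$; for each fixed such $u$ this is a finite set, since $\dot u\in IwI\cdot IvI$ forces $IwI\subseteq IuI\cdot Iv^{-1}I$, a finite union of double cosets, each of which is a single $IwI$ with $w\in\widetilde W$ (the map $w\mapsto IwI$ being injective on $\widetilde W$). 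Extending by $k$-linearity in $\tau_v$ then gives the stability of $(E^i)^{\vee,f}$ directly; but this route is longer than the one above.
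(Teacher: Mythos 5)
Your argument is correct and is precisely the deduction the paper intends: the corollary is stated as an immediate consequence of Proposition \ref{prop:bimodule} (applied with $d-i$ in place of $i$), together with the bookkeeping you spell out, namely that the underlying subspace $\big({}^\anti (E^{i})^\anti\big)^{\vee,f}$ coincides with $(E^i)^{\vee,f}$ and that twisting the $H$-actions by the bijection $\anti$ does not change which subspaces are sub-bimodules. The supplementary direct verification via \eqref{f:support} is sound but not needed.
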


\section{The structure of $E^d$}\label{sec:top}

In this section we still \textbf{assume}, as in \S\ref{subsec:d-i}, that the pro-$p$ Iwahori group $I$ is torsion free. In this section we describe the top cohomology space  $E^d = H^d (I,\X)$ as an $H$-bimodule.

\begin{remark}
The  space $E^d$ as a right $H$-module had already been computed in \cite{SDGA} \S5.1.
\end{remark}

Recall  that the pairing $E^d\times E^0\rightarrow k, \: (\alpha, \beta)\mapsto \eta\circ \trace^d(\alpha\cup \beta)$ induces the isomorphism of $H$-bimodules
\begin{equation}\label{f:Deltad}
\Delta^d:\quad E^d\xrightarrow{\cong}\: ({}^\anti E^0\,^\anti)^{\vee, f}
\end{equation}
of Proposition \ref{prop:bimodule} when $i=d$. We denote by $(\phi_w)_{w\in\widetilde W}$ the basis of $E^d$ obtained by dualizing the basis of $(\tau_w)_{w\in \widetilde W}$ of $E^0$. For each $w\in \widetilde W$, the element $\phi_w$  is  the only one  in $H^d(I, \X(w))$ satisfying $\eta(\trace^d(\phi_w))=1$ (see \eqref{f:orth}). Now $\anti(\phi_w)\in  H^d(I, \X(w^{-1}))$ and
$\eta(\trace^d(\anti(\phi_w)))$ and $\eta(\trace^d(\phi_{w^{-1}}))$ are both equal to $1$ by Corollary \ref{coro:SJ=S}. Therefore,
\begin{equation}\label{f:Jphi}
  \anti(\phi_w)=\phi_{w^{-1}} \ .
\end{equation}

We now describe the explicit  action of $H$ on the elements $(\phi_w)_{w\in\widetilde W}$ in $E^d$. For any $s\in S_{aff}$ recall that we introduced  the idempotent element $\theta_s$ in \eqref{f:thetas}:
\begin{equation*}
    \theta_s := -\vert\kera\vert \sum_{t \in \ima} \tau_t\ \in H\ .
\end{equation*}

\begin{proposition}\label{prop:Hdformulas}
Let  $w\in {\widetilde W}$, $\omega\in \widetilde \Omega$ and $s\in S_{aff}$. We have the formulas:
\begin{equation}\label{f:Hdomega}
 \phi_w\cdot \tau_\omega= \phi_{w\omega} \ ,\ \tau_\omega \cdot \phi_w= \phi_{\omega w},
\end{equation}
\begin{equation}\label{f:rightHd}
 \phi_w\cdot \tau_{n_s}= \begin{cases} \phi_{w\tilde s}+\vert\kera\vert\sum_{t\in \ima} \phi_{w\bar{t}} = \phi_{w\tilde s}-\phi_w\cdot \theta_s & \text{ if $\ell(w\tilde s)=\ell(w)-1$,}\cr 0& \text{ if $\ell(w\tilde s)=\ell(w)+1$,}\end{cases}
\end{equation}
\begin{equation}\label{f:leftHd}
 \tau_{n_s}\cdot \phi_w= \begin{cases} \phi_{\tilde s w}+\vert\kera\vert\sum_{t\in \ima} \phi_{\bar{t}w} = \phi_{\tilde s w}- \theta_s\cdot \phi_w& \text{ if $\ell(\tilde sw)=\ell(w)-1$,}\cr 0& \text{ if $\ell(\tilde sw)=\ell(w)+1$.}\end{cases}
\end{equation}
\end{proposition}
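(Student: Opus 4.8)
The strategy is to derive the four formulas of Proposition~\ref{prop:Hdformulas} from the duality isomorphism $\Delta^d$ of \eqref{f:Deltad}, transporting the already-known $H$-bimodule structure on $E^0 = H$ to $E^d$ via the twisted-dual pairing. Concretely, by Proposition~\ref{prop:bimodule} (case $i=d$) we have, for $\tau,\tau'\in H$ and $w\in\widetilde W$,
\begin{equation*}
  \eta\bigl(\trace^d((\tau\cdot\phi_w\cdot\tau')\cup\tau_y)\bigr) = \eta\bigl(\trace^d(\phi_w\cup(\anti(\tau)\cdot\tau_y\cdot\anti(\tau')))\bigr)
\end{equation*}
for every $y\in\widetilde W$; since $(\phi_w)_w$ is dual to $(\tau_w)_w$ and the cup product satisfies the orthogonality \eqref{f:orth}, the left-hand side is the coefficient of $\phi_y$ in $\tau\cdot\phi_w\cdot\tau'$, while the right-hand side extracts the coefficient of $\tau_{w}$ in $\anti(\tau)\cdot\tau_y\cdot\anti(\tau')$ inside $H$. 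Thus each structure constant for the $H$-action on $(\phi_w)_w$ is read off from a structure constant of $H$ acting on its own basis, with the roles of source and target indices swapped and $\anti$ inserted on the acting elements. This reduces everything to the braid and quadratic relations in $H$ together with $\anti(\tau_{n_s}) = \tau_{n_s^{-1}} = \tau_{n_s}$ (note $n_s^{-1}\in n_s T^0$, but we must be careful: $\anti(\tau_{n_s})$ is the characteristic function of $(In_sI)^{-1}=In_s^{-1}I$, and since $n_s^{-1} = n_s\cdot n_s^{-2} = n_s\check\alpha(-1)^{-1}$ lies in the same $I$-double coset only up to the torus element $\check\alpha(-1)$; one checks $In_s^{-1}I = In_sI$ because $\check\alpha(-1)\in T^0$ — more precisely one works with $\tilde s$ and uses $\anti(\tau_{\tilde s})$, tracking the $T^0/T^1$-component carefully).

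The computation proceeds case by case. For \eqref{f:Hdomega}: since $\anti(\tau_\omega)=\tau_{\omega^{-1}}$ and $\ell(\omega)=0$, the braid relation \eqref{braid} gives $\tau_{\omega^{-1}}\cdot\tau_y = \tau_{\omega^{-1}y}$ and $\tau_y\cdot\tau_{\omega^{-1}} = \tau_{y\omega^{-1}}$, so the coefficient of $\tau_w$ in $\tau_{\omega^{-1}}\cdot\tau_y$ is $\delta_{y,\omega w}$, giving $\tau_\omega\cdot\phi_w = \phi_{\omega w}$, and symmetrically on the right. For \eqref{f:rightHd} and \eqref{f:leftHd}: the point is to expand, for $s=s_{(\alpha,\mathfrak h)}$ and a lift $\tilde s$ with $\ell(\tilde s w)=\ell(w)-1$, the product $\tau_{\tilde s}\cdot\tau_y$ in $H$ using the quadratic relation $\tau_{\tilde s}^2 = -\theta_s\tau_{\tilde s} = -\tau_{\tilde s}\theta_s$ together with \eqref{f:quadratic2}; this expresses $\tau_{\tilde s}\cdot\tau_y$ as $\tau_{\tilde s y}$ plus a $\ima$-indexed sum of terms $\tau_{\bar t \tilde s y}$ (up to the normalization $\vert\kera\vert$ packaged in $\theta_s$). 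Dualizing — i.e.\ asking for which $y$ the element $\tau_w$ appears — yields exactly the two-case formula \eqref{f:leftHd}, with the length condition $\ell(\tilde s w)=\ell(w)-1$ being the dual of $\ell(\tilde s y)=\ell(y)+1$ for the relevant $y$'s, and the vanishing case $\ell(\tilde s w)=\ell(w)+1$ corresponding to $\tau_w$ never appearing in $\tau_{\tilde s}\cdot\tau_y$ (since then $\tau_{\tilde s}\cdot\tau_y = \tau_{\tilde s y}$ with $\tilde s y \neq w$, as one sees from length considerations via Lemma~\ref{lemma:IuI}). The right-module formula \eqref{f:rightHd} is obtained either symmetrically or, more economically, by applying $\anti$ to \eqref{f:leftHd} and using $\anti(\phi_w)=\phi_{w^{-1}}$ from \eqref{f:Jphi} together with $\anti(\theta_s)=\theta_s$ (which holds since $\anti$ fixes $\tau_t$ for $t\in T^0/T^1$, by Proposition~\ref{prop:anti+product}).

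Alternatively — and this may be cleaner — one can prove the formulas directly from Proposition~\ref{prop:explicitleftaction}, which already gives the left action of $\tau_{n_s}$ on an arbitrary class in $H^j(I,\X(w))$ in terms of Shapiro images, corestriction, and conjugation. Applied in top degree $j=d$, the corestriction maps $\cores_{I_{\tilde sw}}^{\tilde s I_w\tilde s^{-1}}$ appearing in \eqref{f:leftsbad1} are isomorphisms by Remark~\ref{rem:cores}, and the conjugation maps in \eqref{f:leftsbad2} act trivially on $H^d(\cdot,k)$ by Proposition~\ref{prop:triviality}; evaluating $\eta\circ\trace^d$ on the resulting classes and using Remark~\ref{rema:tracew} collapses everything to counting, which produces precisely the coefficients $1$ and $\vert\kera\vert$ in \eqref{f:leftHd}. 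The main obstacle in either route is bookkeeping the $T^0/T^1$-components: in \eqref{f:leftsbad2} the indices $\bar t$ run over $\ima$ and each such index is hit by $\vert\kera\vert$ many $z\in\mathbb F_q^\times$ (Remark~\ref{rema:kernelalpha}), and one must check that the conjugation operators $(n_s t^{-1}x_\alpha(\pi^{\mathfrak h}[z])n_s^{-1})_*$ really are trivial on the one-dimensional top cohomology — which is exactly the content of Proposition~\ref{prop:triviality} applied to the element $n_s t^{-1}x_\alpha(\pi^{\mathfrak h}[z])n_s^{-1}\in G$, after checking it conjugates $I_w$ into the relevant subgroup. Getting the multiplicities and the sign/normalization to match $\phi_{w\tilde s}-\phi_w\cdot\theta_s$ exactly is the one genuinely delicate point; everything else is the translation machinery above.
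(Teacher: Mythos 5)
Your proposal is correct and follows essentially the same route as the paper: the paper likewise proves the formulas by viewing $\phi_w$ through the duality $\Delta^d$ as the dual basis for the twisted pairing, reading off the structure constants from products $\tau_{w'}\,\anti(\tau)$ in $H$ via the braid and quadratic relations (it computes the right action first, evaluating $\phi_w({}_-\tau_{\omega^{-1}})$ and $\phi_w({}_-\tau_{n_s^{-1}})$), and then obtains the other side by applying $\anti$ together with $\anti(\phi_w)=\phi_{w^{-1}}$. The only blemish is the parenthetical claim that $In_s^{-1}I=In_sI$, i.e.\ $\anti(\tau_{n_s})=\tau_{n_s}$: this is false here, since $n_s^2=\check\alpha(-1)$ lies in $T^0$ but not in $T^1$ when $p$ is odd, so $\anti(\tau_{n_s})=\tau_{n_s^{-1}}=\tau_{\check\alpha(-1)}\tau_{n_s}$; but as you yourself note one must track the $T^0/T^1$-component, and carrying $\tau_{n_s^{-1}}$ through the computation (as the paper does) yields exactly the stated leading term $\phi_{\tilde s w}$, the torus discrepancy being absorbed in the $\theta_s$-sum over $\ima$.
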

\begin{proof}
Before proving the proposition, we recall that the quadratic relations in $H$ are
given by $\tau_{n_s}^2= -\theta_s \tau_{n_s}=\tau_{n_s} \theta_s$ for any $s\in S_{aff}$. Let $w\in {\widetilde W}$, $s\in  S_{aff}$ and $\omega \in \Omega$. We study the right action of $\tau_\omega $ and of $\tau_{n_s}$ on $\phi_w\in E^d $. Recall that ${\phi_w}\cdot\tau_\omega ({}_-)={\phi_w}({}_-\tau_{\omega^{-1}})$ and
${\phi_w}\cdot \tau_{n_s} ({}_-)={\phi_w}({}_-\tau_{{n_s}^{-1}})$ (see \eqref{f:toprove-anti}).
Let $w'\in {\widetilde W}$.  Below we use the braid relation \eqref{braid} repeatedly.
\begin{itemize}
\item We have ${\phi_w}\cdot \tau_\omega \, (\tau_{w'})={\phi_w}(\tau_{w'\omega^{-1}})$. It is nonzero if and only if $w'=w\omega$ in which case ${\phi_w}\cdot \tau_\omega (\tau_{w'})=1$.

\item If $\ell(w'{\tilde s})=\ell(w')+1$ then ${\phi_w}\cdot \tau_{n_ s}\, (\tau_{w'})={\phi_w}(\tau_{w'}\tau_{{n_s}^{-1}})={\phi_w}(\tau_{w'{\tilde s}^{-1}})$ and it is nonzero if and only if $I w I$ is contained in $ I w'{\tilde s}^{-1}I$ which is equivalent  to $w{\tilde s}=w'$  and in which case ${\phi_w}.\tau_{n_ s}\,(\tau_{w'})=1$.

\item If $\ell(w'{\tilde s})=\ell(w')-1$ then $\tau_{w'}=\tau_{w'\tilde s}\tau_{\tilde s^{-1}}$  and  ${\phi_w}\cdot \tau_{n_s }(\tau_{w'})={\phi_w}(\tau_{w'{\tilde s}}\tau_{n_s^{-1}}\tau_{n_s^{-1}})=
{\phi_w}(\tau_{w'{\tilde s}}\tau_{n_s}^2\tau_{n_s^2})=-{\phi_w}(\tau_{w'}\theta_{s})$  which is nonzero if and only if there is $t\in \ima$ such that $w' \bar{t}=w$ and in which case ${\phi_w}\cdot \tau_{n_s}(\tau_{w'})=\vert\kera\vert$.
\end{itemize}

From the discussion above, we immediately deduce that ${\phi_w}\cdot \tau_\omega=\phi_{w\omega}$. Now we compute ${\phi_w}\cdot \tau_{n_s}$. Suppose that $\ell(w{\tilde s})=\ell(w)+1$, then for any $w'\in {\widetilde W}$ we have ${\phi_w}\cdot \tau_{n_s}(\tau_{w'})=0$,  so ${\phi_w}\cdot \tau_{n_ s}=0$.
Suppose that $\ell(w{\tilde s})=\ell(w)-1$, then for any $w'\in {\widetilde W}$ we have
${\phi_w}\cdot \tau_{n_s}(\tau_{w'})=0$ except if $w'=w{\tilde s}$ in which cases
${\phi_w}\cdot \tau_{n_s}(\tau_{w'})=1$, or if there is $t\in \ima$ such that
$w'= w \bar{t}^{-1}$ in which case ${\phi_w}\cdot \tau_{\tilde s}(\tau_{w'})=\vert\kera\vert$, so
${\phi_w}\cdot \tau_{n_s}=\phi_{w{\tilde s}}+\vert\kera\vert\sum_{t\in\ima}\phi_{w \bar{t}}$.\\

Since \eqref{f:Deltad} is an isomorphism of right $H$-modules, the calculation above gives the right action of $H$ on the $\phi$'s. The formulas for the left action follow using \eqref{f:Jphi}, Proposition \ref{prop:anti+product} and
\begin{align*}
  \tau_x\cdot\phi_w&=\anti(\anti(\tau_x\cdot\phi_w))=\anti(\anti(\phi_w)\cdot  \anti(\tau_x)) \\
  & = \anti(\phi_{w^{-1}}\cdot  \tau_{x^{-1}}).
\end{align*}
\end{proof}

\begin{remark}
\begin{itemize}
\item[i.] The formula for the right action coincides with the one given in \cite{SDGA} \S 5.1 p.9.
\item[ii.]  Recall that $\upiota(\tau_{n_s})= -\tau_{n_s}-\theta_s$  where $\upiota$ is the involutive automorphism of $H$ defined in \eqref{f:upiota}. Formulas \eqref{f:rightHd} and \eqref{f:leftHd} can be given in the form:
\begin{equation*}
 \phi_w\cdot \upiota(\tau_{n_s})= \begin{cases}- \phi_{w\tilde s}& \text{ if $\ell(w\tilde s)=\ell(w)-1$}\cr - \phi_w\cdot \theta_s & \text{ if $\ell(w\tilde s)=\ell(w)+1$}\end{cases}, \
\upiota( \tau_{n_s})\cdot \phi_w= \begin{cases} -\phi_{\tilde s w}& \text{ if $\ell(\tilde sw)=\ell(w)-1$}\cr - \theta_s\cdot\phi_w& \text{ if $\ell(\tilde sw)=\ell(w)+1$.}\end{cases}
\end{equation*}
\end{itemize}
\end{remark}

Recall that we defined in \eqref{f:trace} the $G$-equivariant  map $\trace= \sum_{g\in G/I} \ev_g\ : \X\longrightarrow k$, where $k$ is endowed with the trivial action of $G$.
Note that the restriction of $\trace$ to $H=\X^I$ coincides with the  trivial character $\chi_{triv}: H\rightarrow k$ as defined in \S\ref{sec:charH}. This is because for $w\in\widetilde W$ we have
 $\trace(\tau_w)=\trace(\chara_{IwI})=\vert IwI/I\vert=\vert I/I_w\vert= q^{\ell(w)}.1_k$ by Corollary \ref{coro:known}.i.
Since $\X$  is generated by $\chara_I$ as a representation of $G$, it follows that $\trace$ is a morphism of
$(G,H)$ modules
$\X\rightarrow k_{triv}$   where  $k_{triv}$ denotes  the  $(G,H)$-bimodule $k$  with the trivial action of $G$ and the action  of $H$ via $\chi_{triv}$.

The cohomology group $H^d(I,k_{triv}) = \Ext^d_{\Mod(G)}(\mathbf{X},k_{triv})$ is naturally an $H$-bimodule where the left action comes from the right action on $\mathbf{X}$ and the right action comes from the trivial action on $k_{triv}$.

\begin{proposition}\phantomsection\label{Hdtriv}
\begin{itemize}
  \item[i.] The left as well as the right $H$-action on $H^d(I,k_{triv})$ are trivial, i.e., are through $\chi_{triv}$.
  \item[ii.] The map $\trace ^d$ induced on cohomology by $\trace$ yields an exact sequence of $H$-bimodules
\begin{equation}\label{ses:d}
   0\longrightarrow  \ker(\trace^d )\longrightarrow E^d \longrightarrow  H^d(I, k_{triv})\longrightarrow 0 \ .
\end{equation}
\end{itemize}
\end{proposition}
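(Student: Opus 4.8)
The plan is to derive both parts from two properties of the map $\trace^{d}$: that it is a morphism of $H$-bimodules, and that it is surjective. Granting these, part (ii) is formal, since then $\ker(\trace^{d})$ is automatically a sub-bimodule and \eqref{ses:d} is exact; and part (i) follows because $H^{d}(I,k_{triv})$ is then exhibited as a quotient $H$-bimodule of $E^{d}$, so it suffices to check $\trace^{d}(h\cdot\zeta)=\chi_{triv}(h)\,\trace^{d}(\zeta)$ and $\trace^{d}(\zeta\cdot h)=\chi_{triv}(h)\,\trace^{d}(\zeta)$ for all $h\in H$, $\zeta\in E^{d}$. For surjectivity I would use the short exact sequence $0\to\X_{0}\to\X\xrightarrow{\trace}k_{triv}\to 0$ in $\Mod(G)$, where $\X_{0}:=\ker(\trace\colon\X\to k)$ (this $\trace$ is onto since $\trace(\chara_{I})=1$), apply the long exact cohomology sequence of the profinite group $I$, and invoke that $I$ is a Poincar\'e group of dimension $d$ (recalled in \S\ref{subsec:d-i}), so that $H^{d+1}(I,\X_{0})=0$; hence $\trace^{d}\colon E^{d}=H^{d}(I,\X)\to H^{d}(I,k_{triv})$ is surjective. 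That $\trace^{d}$ is a bimodule map comes from $\trace\colon\X\to k_{triv}$ being a morphism of $(G,H)$-bimodules (established just before the proposition): right $H$-linearity is naturality of $H^{d}(I,-)$ in the coefficients, and left $H$-linearity is covariance of $\Ext^{d}_{\Mod(G)}(\X,-)$ in the second argument combined with the identification $H^{d}(I,k_{triv})=\Ext^{d}_{\Mod(G)}(\X,k_{triv})$.

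The right-hand identity is then essentially automatic: the right $H$-action on $H^{d}(I,k_{triv})$ is induced by functoriality from the right $H$-action on the coefficient module $k_{triv}$, and since $H$ acts on $k_{triv}$ through the character $\chi_{triv}$ (the element $\tau_{w}$ acting as the scalar $\chi_{triv}(\tau_{w})$), the induced action on $H^{d}(I,k_{triv})$ is again through $\chi_{triv}$; as $\trace^{d}$ is right $H$-linear this gives $\trace^{d}(\zeta\cdot h)=\trace^{d}(\zeta)\cdot h=\chi_{triv}(h)\,\trace^{d}(\zeta)$ on $E^{d}$.

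For the left-hand identity I would exploit the anti-automorphism $\anti$. Recall $\trace^{d}\circ\anti=\trace^{d}$ on $E^{d}$ (Corollary \ref{coro:SJ=S}), that $\anti(\alpha\cdot\beta)=\anti(\beta)\cdot\anti(\alpha)$ with no sign when $\alpha\in E^{0}$ (Proposition \ref{prop:anti+product}), and that $\chi_{triv}\circ\anti=\chi_{triv}$ on $H$ (because $\anti(\tau_{w})=\tau_{w^{-1}}$ and $\chi_{triv}(\tau_{w})=q^{\ell(w)}\cdot 1_{k}=q^{\ell(w^{-1})}\cdot 1_{k}$). Then for $h\in H$, $\zeta\in E^{d}$,
\begin{align*}
  \trace^{d}(h\cdot\zeta) &=\trace^{d}\big(\anti(h\cdot\zeta)\big)=\trace^{d}\big(\anti(\zeta)\cdot\anti(h)\big)\\
   &=\chi_{triv}\big(\anti(h)\big)\,\trace^{d}\big(\anti(\zeta)\big)=\chi_{triv}(h)\,\trace^{d}(\zeta),
\end{align*}
the third equality being the already established right-hand identity applied to $\anti(\zeta)\in E^{d}$ and $\anti(h)\in H$. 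Alternatively one can verify the left-hand identity directly from Proposition \ref{prop:Hdformulas}: for $\omega\in\widetilde\Omega$ one has $\trace^{d}(\tau_{\omega}\cdot\phi_{w})=\trace^{d}(\phi_{\omega w})$ with $\eta\circ\trace^{d}(\phi_{\omega w})=1=\chi_{triv}(\tau_{\omega})$; for $s\in S_{aff}$ with $\ell(\tilde s w)=\ell(w)+1$ one has $\tau_{n_{s}}\cdot\phi_{w}=0=\chi_{triv}(\tau_{n_{s}})$; and for $\ell(\tilde s w)=\ell(w)-1$ one computes $\eta\circ\trace^{d}(\tau_{n_{s}}\cdot\phi_{w})=1+|\kera|\cdot|\ima|=1+(q-1)=q=0=\chi_{triv}(\tau_{n_{s}})$, which then propagates to all of $H$ since the $\tau_{n_{s}}$ and $\tau_{\omega}$ generate $H$, $\trace^{d}$ is left $H$-linear, and $\chi_{triv}$ is a ring homomorphism.

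Putting this together yields (i), and then the exactness of \eqref{ses:d} is immediate. I expect the only point requiring a genuine idea, rather than bookkeeping, to be the triviality of the \emph{left} action: unlike the right action it is not visibly induced from the coefficient module, so the substantive input is the $\anti$-symmetry $\trace^{d}\circ\anti=\trace^{d}$ of Corollary \ref{coro:SJ=S}, which transports the problem to the right action. The routine care to take is in checking that the two descriptions of the left $H$-module structure on $H^{d}(I,k_{triv})$ (via precomposition on $\Ext^{d}_{\Mod(G)}(\X,k_{triv})$, and via pushing the $E^{0}$-module structure on $E^{d}$ through $\trace^{d}$) agree, which is exactly what makes $\trace^{d}$ left $H$-linear.
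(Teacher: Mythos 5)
Your proof is correct, but it reaches the crux of part i.\ --- the triviality of the \emph{left} action --- by a different route than the paper. The paper attacks it head on: it writes out the commutative diagram identifying the left action of $\tau_g$ on $\Ext^d_{\Mod(G)}(\X,k_{triv})=H^d(I,k_{triv})$ with the double coset operator $\cores\circ g_*\circ\res$ on $H^d(I,k)$, and then observes that for $\ell(w)>0$ the restriction $H^d(I,k)\to H^d(I_{g^{-1}},k)$ to a proper open subgroup vanishes in top degree (Remark \ref{rem:cores}), matching $\chi_{triv}(\tau_w)=0$, while for $\ell(w)=0$ the operator is $w_*$, which is the identity by Lemma \ref{lemma:trivOmega}. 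You instead transport the problem to the right action through the anti-involution, using $\trace^d\circ\anti=\trace^d$ (Corollary \ref{coro:SJ=S}), $\anti(h\cdot\zeta)=\anti(\zeta)\cdot\anti(h)$ with no sign for $h$ in degree $0$, and $\chi_{triv}\circ\anti=\chi_{triv}$; this is sound and avoids having to establish the Hecke-operator diagram, but it is not cheaper in substance, since Corollary \ref{coro:SJ=S} rests on Proposition \ref{prop:triviality}, hence on the same Lemmas \ref{lemma:trivOmega} and \ref{lemma:trivK}; what you gain is a shorter formal argument, what the paper's version buys is an explicit description of the left action that is reused elsewhere. Your direct check via Proposition \ref{prop:Hdformulas} (with $1+\vert\kera\vert\cdot\vert\ima\vert=q=0$ in $k$) is also valid as a second confirmation. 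The remaining differences are minor: for surjectivity of $\trace^d$ you invoke the long exact sequence and $\mathrm{cd}(I)=d$, whereas the paper cites Remark \ref{rema:tracew} (corestriction is an isomorphism in top degree); and your identification of the two left $H$-module structures, which you rightly flag as the point making $\trace^d$ left $H$-linear, is exactly the functoriality statement the paper records in its proof of part ii.
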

\begin{proof}
i. The triviality of the right action is obvious. For the left action we first consider the following diagram
\begin{equation*}
  \xymatrix{
     \Ext^*_{\Mod(G)}(\mathbf{X},V) \ar[d]_{=} \ar[rrr]^{\tau_{g}} &  &  & \Ext^*_{\Mod(G
     )}(\mathbf{X},V) \ar[d]^{=} \\
     H^*(I,V) \ar[dr]^{g_*} \ar[r]^-{\res} & H^*(I \cap g^{-1}Ig,V) \ar[dr]^{g_*} &  & H^*(I,V)  \\
     & H^*(gIg^{-1},V) \ar[r]^-{\res} & H^*(I \cap gIg^{-1},V) , \ar[ur]^{\cores} &    }
\end{equation*}
where $V$ is an arbitrary object in $\Mod(G)$ and where the upper horizontal arrow is the action of $\tau_g \in H$ induced by its right action on $\mathbf{X}$. For its commutativity it suffices, by using an injective resolution of $V$, to consider the case $* = 0$, i.e., the diagram
\begin{equation*}
  \xymatrix{
     \Hom_{k[G]}(\mathbf{X},V) \ar[d]_{=}^{f \mapsto f(\chara_I)} \ar[rrr]^{f \mapsto f(_-\tau_{g})} &  &  & \Hom_{k[G
     ]}(\mathbf{X},V) \ar[d]^{=}_{f \mapsto f(\chara_I)} \\
     V^I \ar[dr]^{g}  &  &  & V^I \\
     & V^{gIg^{-1}} \ar[r]^-{\subseteq} & V^{I \cap gIg^{-1}} \ar[ur]^{\sum_{h \in I/I \cap gIg^{-1}} h}. &    }
\end{equation*}
Its commutativity is verified easily by direct inspection. So our assertion reduces to the claim that the composed map
\begin{equation*}
  H^d(I,k) \xrightarrow{\res} H^d(I_{g^{-1}},k) \xrightarrow{g_*} H^d(I_g,k) \xrightarrow{\cores} H^d(I,k)
\end{equation*}
coincides with the multiplication by $\chi_{triv}(\tau_g)$. Suppose that $g \in IwI$ with $w \in \widetilde{W}$. By Cor.\ \ref{coro:known}.i we have $I_{g^{-1}} \subsetneqq I$ if and only if $\ell(w) > 0$. In this case the left restriction map above is the zero map by Remark \ref{rem:cores} and $\chi_{triv}(\tau_w) = 0$. If $\ell(w) = 0$ then $\chi_{triv}(\tau_w) = 1$  and the above composed map simply is the map $H^d(I,k) \xrightarrow{w_*} H^d(I,k)$, which is the identity by Lemma \ref{lemma:trivOmega}.

ii. The map $\trace ^d: \Ext^d_{\Mod(G)}(\mathbf{X},\mathbf{X}) \longrightarrow \Ext^d_{\Mod(G)}(\mathbf{X}, k_{triv})$ is surjective, for example, by Remark \ref{rema:tracew}. It is right $H$-equivariant because $\trace$ is right $H$-equivariant, and because  the right action of $H$ on the cohomology spaces is through the coefficients. Finally, it is left $H$-equivariant since the left actions are functorially induced by the right $H$-action on $\mathbf{X}$.
\end{proof}

\begin{remark}
Using directly the Hecke operators $\cores \circ g_* \circ \res$ in order to define the left $H$-action on $H^d(I,k_{triv})$ the Prop.\ \ref{Hdtriv}.i is proved independently in \cite{Koziol} Thm.\ 7.1 in case the root system is irreducible.
\end{remark}

The kernel  $\ker({\chi_{triv}})$ of $\chi_{triv}$ is a sub-$H$-bimodule of $E^0=H$.
Passing to duals, we have the restriction map
\begin{equation}\label{f:res1}
   (E^0)^\vee\longrightarrow \ker({\chi_{triv}})^\vee \ .
\end{equation}
Its  kernel is   a one dimensional vector space isomorphic to $\chi_{triv}$ as an $H$-bimodule.  It is generated by the linear map
\begin{equation}\label{f:upphi}
  \upphi: \tau_\omega\mapsto 1, \: \text{for $\omega\in \widetilde \Omega$ and }\tau_w\mapsto 0, \: \text{for $w\in \widetilde W$ with length $>0$}.
\end{equation}

\textbf{Assume that $\Omega$ is finite.} Then $\upphi$ lies in $(E^0)^{\vee, f}$  and we have a short exact sequence of $H$-bimodules
\begin{equation}\label{ses:0}
  0\longrightarrow k \upphi\longrightarrow (E^0)^{\vee, f}\longrightarrow \ker({\chi_{triv}})^{\vee, f} \longrightarrow  0
\end{equation}
where  $\ker(\chi_{triv})^{\vee,f}$ denotes the image of $(E^{0})^{\vee, f}\subset (E^0)^\vee$ by \eqref{f:res1}.

\begin{proposition}\label{prop:Omegafinite}
  If $\Omega$ is finite and $\vert\Omega\vert$ is invertible in $k$, then we have a decomposition of $(E^0)^{\vee,f}$ into a direct sum of $H$-bimodules
\begin{equation*}
  (E^0)^{\vee,f}\cong \chi_{triv}\oplus \ker(\chi_{triv})^{\vee,f}
\end{equation*}
where $\chi_{triv}$ is supported by the element $\upphi$ defined in \eqref{f:upphi} and  $\ker(\chi_{triv})^{\vee,f}$ is the image of $(E^0)^{\vee, f}$ by the restriction map \eqref{f:res1}.
Via the isomorphism \eqref{f:Deltad}, this corresponds to the decomposition
\begin{equation}
  E^d\cong H^d(I, k_{triv})\oplus \ker(\trace^d).
\end{equation}
\end{proposition}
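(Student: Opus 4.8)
The plan is to produce an explicit $H$-bimodule splitting of the short exact sequence \eqref{ses:d} and then read off the decomposition of $(E^0)^{\vee,f}$ through the bimodule isomorphism $\Delta^d$ of \eqref{f:Deltad}. Given Propositions \ref{prop:Hdformulas}, \ref{Hdtriv} and \ref{prop:bimodule} the argument is mostly bookkeeping; the one substantive point is why invertibility of $\vert\Omega\vert$ in $k$ is exactly the hypothesis that makes the splitting exist. The splitting element will be
\begin{equation*}
  \xi\ :=\ \sum_{\omega\in\widetilde\Omega}\phi_\omega\ \in\ E^d \ ,
\end{equation*}
which is well defined precisely because $\Omega$, hence $\widetilde\Omega$, is finite, and which is nonzero since the $\phi_\omega$ are part of a basis of $E^d$. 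First I would use Proposition \ref{prop:Hdformulas} to check that $k\xi$ is a sub-$H$-bimodule on which both actions are through $\chi_{triv}$: for $\omega'\in\widetilde\Omega$ formula \eqref{f:Hdomega} gives $\xi\cdot\tau_{\omega'}=\sum_\omega\phi_{\omega\omega'}=\xi=\chi_{triv}(\tau_{\omega'})\,\xi$ after re-indexing the group $\widetilde\Omega$, and symmetrically $\tau_{\omega'}\cdot\xi=\xi$; for $s\in S_{aff}$ one has $\ell(\omega\tilde s)=\ell(\tilde s\omega)=\ell(\tilde s)=\ell(\omega)+1$ for every $\omega\in\widetilde\Omega$, so \eqref{f:rightHd} and \eqref{f:leftHd} give $\phi_\omega\cdot\tau_{n_s}=\tau_{n_s}\cdot\phi_\omega=0$, whence $\xi\cdot\tau_{n_s}=\tau_{n_s}\cdot\xi=0=\chi_{triv}(\tau_{n_s})\,\xi$. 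Since the $\tau_\omega$ ($\omega\in\widetilde\Omega$) and the $\tau_{n_s}$ ($s\in S_{aff}$) generate $H$ as a $k$-algebra, this yields $h\cdot\xi=\chi_{triv}(h)\,\xi=\xi\cdot h$ for all $h\in H$.

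Next I would compute the image of $\xi$ under $\trace^d$. Because each $\phi_w$ is normalized by $\eta(\trace^d(\phi_w))=1$, we get $\eta(\trace^d(\xi))=\vert\widetilde\Omega\vert$ in $k$. Now $\vert\widetilde\Omega\vert=\vert\Omega\vert\cdot\vert T^0/T^1\vert$, and $\vert T^0/T^1\vert=\vert\mathbf T(\mathbb F_q)\vert=(q-1)^{\dim T}$ is prime to $p$, hence already a unit in $k$; therefore $\vert\widetilde\Omega\vert$ is invertible in $k$ if and only if $\vert\Omega\vert$ is, and under this hypothesis $\trace^d(\xi)\neq 0$, i.e.\ $\trace^d(\xi)$ spans the one-dimensional space $H^d(I,k_{triv})$. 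By Proposition \ref{Hdtriv}.i both $H$-actions on $H^d(I,k_{triv})$ are likewise through $\chi_{triv}$, so the $k$-linear map $s\colon H^d(I,k_{triv})\to E^d$ determined by $s(\trace^d(\xi)):=\xi$ is a homomorphism of $H$-bimodules with $\trace^d\circ s=\id$. Hence \eqref{ses:d} splits and $E^d=\ker(\trace^d)\oplus k\xi$ as $H$-bimodules, with $k\xi\cong H^d(I,k_{triv})$.

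Finally I would transport this through the $H$-bimodule isomorphism $\Delta^d$. By the definition of the basis $(\phi_w)$ one has $\Delta^d(\phi_w)=\tau_w^\vee$, so $\Delta^d(\xi)=\sum_{\omega\in\widetilde\Omega}\tau_\omega^\vee=\upphi$; thus $\Delta^d$ carries $k\xi$ onto $k\upphi$ and $\ker(\trace^d)$ onto an $H$-sub-bimodule of $(E^0)^{\vee,f}$ complementary to $k\upphi$. Since $k\upphi$ is precisely the kernel of the restriction map \eqref{f:res1} on $(E^0)^{\vee,f}$ (recorded just before \eqref{ses:0}), that map restricts to an isomorphism of $H$-bimodules $\Delta^d(\ker(\trace^d))\xrightarrow{\ \sim\ }\ker(\chi_{triv})^{\vee,f}$; the $\anti$-twist implicit in \eqref{f:Deltad} causes no trouble, since $\chi_{triv}\circ\anti=\chi_{triv}$ (because $\ell(w^{-1})=\ell(w)$) and the $\anti$-twist is a self-equivalence of the category of $H$-bimodules (Remark \ref{etavee}). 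Altogether $(E^0)^{\vee,f}=k\upphi\oplus\Delta^d(\ker(\trace^d))\cong\chi_{triv}\oplus\ker(\chi_{triv})^{\vee,f}$, and back through $\Delta^d$ this is $E^d\cong H^d(I,k_{triv})\oplus\ker(\trace^d)$. The only step requiring genuine thought is the observation that $\vert T^0/T^1\vert$ is automatically a unit in $k$, so that the stated hypothesis on $\vert\Omega\vert$ is exactly what is needed for $\trace^d(\xi)$ to be nonzero; everything else is formal once the formulas of Proposition \ref{prop:Hdformulas} are in hand.
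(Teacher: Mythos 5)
Your proof is correct and follows essentially the same route as the paper: both hinge on the computation $\eta(\trace^d(\sum_{\omega\in\widetilde\Omega}\phi_\omega))=\vert\widetilde\Omega\vert=\vert\Omega\vert\cdot\vert T^0/T^1\vert$, which is a unit in $k$ exactly when $\vert\Omega\vert$ is, and on transporting the resulting splitting of \eqref{ses:d} through $\Delta^d$ to the sequence \eqref{ses:0}. The only cosmetic difference is that you verify directly from Proposition \ref{prop:Hdformulas} that $k\,\sum_\omega\phi_\omega$ is a $\chi_{triv}$-isotypic sub-bimodule, whereas the paper obtains the same splitting by a diagram chase with the connecting map $\partial$ between the two short exact sequences.
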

\begin{proof}
Note that the image of $\upphi$ in $E^d$ is $\sum_{\omega\in\widetilde \Omega}\phi_\omega$.
Combining \eqref{ses:d}, \eqref{ses:0} and \eqref{f:Deltad}, we obtain
a diagram of morphisms of $H$-bimodules
\begin{equation*}
 \xymatrix{
0 \ar[r]&  {}^\anti (k\upphi)^\anti \ar[r] \ar@{.>}[d] ^{\partial} & ({}^\anti E^0\,^\anti)^{\vee, f} \ar[r] &({}^{\anti}\ker({\chi_{triv}})^\anti)^{\vee, f}  \ar[r]  & 0  \\
0 &\ar[l]  H^d(I, k_{triv}) &E^d \ar[u]^{\Delta^d}_{\cong}  \ar[l] ^{}&\ker(\trace^d) \ar[l] \ar@{.>}[u] ^{\varepsilon} & 0 \ar[l]  }
\end{equation*}
where $\partial$ and $\varepsilon$ are such that the diagram commutes. The map $\partial$ is a map between one dimensional vector spaces, and it is not trivial since
\begin{equation*}
  \partial(\upphi)=\trace^d(\sum_{\omega\in \widetilde \Omega}\phi_\omega)=\vert \widetilde\Omega \vert\eta^{-1}(1)=-\vert \Omega \vert \eta^{-1}(1)\neq 0 \ .
\end{equation*}
This implies that the second short exact sequence splits. By a standard argument, $\varepsilon$ is also an isomorphism and the first exact sequence also splits.
\end{proof}

Recall that we defined in \eqref{f:defifil} a decreasing filtration of $H$ as an $H$-bimodule. For the sake of homogeneity of notations, we denote it  here by $(F^nE^0)_{n\geq 0}$ and recall that
\begin{equation}\label{f:filE0}
   F^n E^0= \oplus_{\ell(w)\geq n} k\tau_w \ .
\end{equation}
For $n\geq 1$, define $(F^{n} E^0)^{\vee,f}$ to be the $H$-bimodule image of $(E^0)^{\vee,f}$ by the  surjective restriction map $(E^0)^{\vee}\rightarrow  ( F^n E^0)^\vee$. Since $\Omega$ is assumed to be finite, the kernel  of $(E^0)^{\vee, f}\twoheadrightarrow (F^{n} E^0)^{\vee,f}$ coincides with the dual space
$(E^0/F^nE^0)^{\vee}$. Furthermore $\ker(\chi_{triv})^{\vee,f}$ is the increasing union of the subspaces $( \ker({\chi_{triv}})/ F^nE^0)^\vee$ of  all linear maps $\varphi$  in $\ker(\chi_{triv})^{\vee}$  which are trivial on $F^n E^0$ for some  $n\geq 1$. The supersingular $H$-modules were defined in \S\ref{subsec:supersing}.

\begin{corollary}\label{coro:supersingEd}
   If $\mathbf G$ is semisimple simply connected with irreducible root system, then the $H$-module
   $\ker(\trace^d)$ is a union of $H$-bimodules which are (finite length) supersingular on the right and on the
   left.
\end{corollary}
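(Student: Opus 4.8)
The plan is to reduce the statement, via the duality isomorphism $\Delta^d$, to facts about the length filtration of $H$ that are already available. Since $\mathbf G$ is semisimple simply connected we have $G=G_{aff}$, hence $\Omega=\{1\}$ (finite, with $|\Omega|=1$ invertible in $k$) and $W=W_{aff}$, $H=H_{aff}$; so Proposition \ref{prop:Omegafinite} applies. Combining it with Remark \ref{etavee} and \eqref{f:Deltad}, I would identify $\ker(\trace^d)\subseteq E^d$, as an $H$-bimodule, with the two-sided twist ${}^\anti\big(\ker(\chi_{triv})^{\vee,f}\big)^\anti$. By the paragraph preceding the corollary, $\ker(\chi_{triv})^{\vee,f}$ is the increasing union over $n\geq 1$ of the $H$-sub-bimodules $(\ker(\chi_{triv})/F^nH)^\vee$ (note that $F^nH\subseteq\ker(\chi_{triv})$ for $n\geq 1$), so $\ker(\trace^d)$ is the increasing union of the sub-$H$-bimodules $M_n$ corresponding to $(\ker(\chi_{triv})/F^nH)^\vee$ under this identification. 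It then suffices to prove that each $M_n$ has finite length and is supersingular, both as a left and as a right $H$-module.

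Finite length will be immediate: $\ker(\chi_{triv})/F^nH$ is a subquotient of $H/F^nH=\bigoplus_{\ell(w)<n}k\tau_w$, and since $\widetilde W=\widetilde W_{aff}$ is an extension of the Coxeter group $W_{aff}$ by the finite group $T^0/T^1$ there are only finitely many $w$ with $\ell(w)<n$; hence $M_n$ is finite dimensional over $k$. For supersingularity the key input is Lemma \ref{lemma:F1/Fm}.ii, which gives $\mathfrak J^n\cdot[(1-e_1)F^0H+F^1H]\subseteq F^nH$ together with the right-hand analogue. To apply it I would first record the inclusion $\ker(\chi_{triv})\subseteq(1-e_1)F^0H+F^1H$: writing any $h\in H$ as $h=h_0+h_1$ with $h_0=\sum_t a_t\tau_t$ of length zero and $h_1\in F^1H$, the relation $e_1\tau_t=e_1$ from \eqref{f:elt} gives $e_1h_0=(\sum_t a_t)\,e_1=\chi_{triv}(h_0)\,e_1$ (using $\chi_{triv}(\tau_t)=1$ for $t\in T^0/T^1$), so $\chi_{triv}(h)=0$ forces $e_1h_0=0$, i.e.\ $h_0=(1-e_1)h_0$. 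It follows that $\mathfrak J^n\cdot\ker(\chi_{triv})\subseteq F^nH$ and $\ker(\chi_{triv})\cdot\mathfrak J^n\subseteq F^nH$, so $\mathfrak J^n$ annihilates the bimodule $\ker(\chi_{triv})/F^nH$ on both sides.

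It then remains to transport this annihilation through the $k$-linear dual and through the $\anti$-twist. Since $\mathfrak J\subseteq\mathcal Z^0(H)$ is central, a module killed by $\mathfrak J^n$ has its $k$-linear dual killed by $\mathfrak J^n$ as well (for a functional $\varphi$, an element $z\in\mathfrak J^n$ and $m$ in the module, $(\varphi z)(m)=\varphi(zm)=0$, and symmetrically on the other side); and since $\anti$ preserves $\mathfrak J$ by Remark \ref{rema:anti+ss}, hence $\mathfrak J^n$, the twist $M_n$ is again killed by $\mathfrak J^n$ on both sides, the twisted action of $z\in\mathfrak J^n$ being the original action of $\anti(z)\in\mathfrak J^n$. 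Being finite dimensional over $k$ and annihilated by the power $\mathfrak J^n$ of $\mathfrak J$, $M_n$ is a finite length supersingular $H$-module on the left and on the right in the sense of \S\ref{subsec:supersing}, and taking the union over $n$ completes the proof.

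I do not anticipate a real obstacle here: the substantive content — that the length-graded pieces $F^mH/F^{m+1}H$ for $m\geq 1$ and the piece $(1-e_1)(F^0H/F^1H)$ are built from supersingular characters, hence are $\mathfrak J$-torsion — is exactly Lemma \ref{lemma:F1/Fm}. The only points requiring care are the bookkeeping of the two $\anti$-twists (one already inside $\Delta^d$, one more on passing to the dual bimodule), the precise description of the finite dual $\ker(\chi_{triv})^{\vee,f}$ as an increasing union of duals of the finite-dimensional quotients $\ker(\chi_{triv})/F^nH$, and the small computation inside $H$ giving $\ker(\chi_{triv})\subseteq(1-e_1)F^0H+F^1H$.
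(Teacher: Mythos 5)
Your proposal is correct and follows essentially the same route as the paper: reduce via $\Delta^d$ and the splitting of Proposition \ref{prop:Omegafinite} to the finite-dimensional pieces $(\ker(\chi_{triv})/F^nE^0)^\vee$, use the identity $\ker(\chi_{triv})=(1-e_1)F^0E^0+F^1E^0$ (which the paper asserts from $\Omega=\{1\}$ and you verify directly) together with Lemma \ref{lemma:F1/Fm}.ii to get annihilation by $\mathfrak J^n$ on both sides, and transport this through the dual and the $\anti$-twist via Remark \ref{rema:anti+ss}. The only difference is that you spell out a few steps the paper leaves implicit (the inclusion $\ker(\chi_{triv})\subseteq(1-e_1)F^0H+F^1H$ and the finite-dimensionality of $H/F^nH$), which is harmless.
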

\begin{proof}
It suffices to prove, for $n\geq 1$ that the finite dimensional space ${}^\anti(( \ker({\chi_{triv}})/ F^nE^0)^\vee)^\anti$ is a supersingular  $H$-module on the left and on the right. Since $\Omega=\{1\}$, we have
\begin{equation*}
\ker({\chi_{triv}})= (1-e_1) F^0E^0+ F^1E^0 \ ,
\end{equation*}
and $\ker({\chi_{triv}})/ F^nE^0$, by Lemma \ref{lemma:F1/Fm}.ii, is annihilated by the action of $\mathfrak J^n$ on the left (resp. right). Therefore,  $( \ker({\chi_{triv}})/ F^nE^0)^\vee$ is annihilated by the action of
$\mathfrak J^n$ on the right (resp. left) and therefore supersingular. Due to Remark \ref{rema:anti+ss} this remains the case after twisting by $\anti$.
\end{proof}

\phantomsection

\printendnotes

\end{document}